\newtheorem{theorem}{Theorem}[section]
\newtheorem{corollary}[theorem]{Corollary}
\newtheorem{lemma}[theorem]{Lemma}
\newtheorem{proposition}[theorem]{Proposition}
\theoremstyle{definition}
\newtheorem{definition}[theorem]{Definition}
\newtheorem{remark}[theorem]{Remark}
\newcommand{\R}{\mathbb{R}}
\newcommand{\T}{\mathbb{T}}
\newcommand{\Z}{\mathbb{Z}}
\newcommand{\N}{\mathbb{N}}
\newcommand{\K}{\mathbb{K}}
\newcommand{\D}{\mathbb{D}}
\newcommand{\C}{\mathbb{C}}
\author[Sebastian Herr]{Sebastian Herr}
\address{Fakult\"at f\"ur Mathematik, Universit\"at Bielefeld, Postfach 10 01 31, 33501 Bielefeld, Germany}
\email{herr@math.uni-bielefeld.de}
\author[Akansha Sanwal]{Akansha Sanwal}
\address{Fakult\"at f\"ur Mathematik, Universit\"at Bielefeld, Postfach 10 01 31, 33501 Bielefeld, Germany}
\email{asanwal@math.uni-bielefeld.de}
\author[Robert Schippa]{Robert Schippa*}
\address{Karlsruhe Institute of Technology, Englerstrasse 2, 76131 Karlsruhe, Germany
}
\email{robert.schippa@kit.edu}
\keywords{KP-I equation, local well-posedness, short-time Fourier restriction norm method}
\thanks{*Corresponding author}
\subjclass[2020]{37L50, 42B37}
\begin{document}
	\title{Low regularity well-posedness of KP-I equations: the three-dimensional case}

	\begin{abstract} In this paper, low regularity local well-posedness results for the Kadomtsev--Petviashvili--I equation posed in spatial dimension $d =3$ are proved.
		Periodic, non-periodic and mixed settings as well as generalized dispersion relations are considered. In the weak dispersion regime, these initial value problems show a quasilinear behavior so that
		bilinear and energy estimates on frequency dependent time scales are used in the analysis.
	\end{abstract}
	
	\maketitle
	\section{Introduction}
	We consider the Cauchy problem for the dispersion generalized Kadomtsev--Petviashvili--I equation
	\begin{equation}
		\label{eq:FKPI}\tag{KP-I}
		\left. \begin{split}
			\partial_t u - \partial_{x} D_x^\alpha u - \partial_{x}^{-1} \Delta_y u &=  \partial_x (u^2),  \\
			u(0) &= \phi \in E^s(\D),
		\end{split} \right\}
	\end{equation}
	for real-valued functions $u:\D \times \R \rightarrow \R$ and dispersion parameter $\alpha \in [2,4]$.
	We treat the case of three-dimensional spatial domains $\D := \K_1 \times \K_2 \times \K_3$, where $\K_i \in \{\R ; \T \}$, for $\T := \R/(2\pi \Z)$. We write $u=u(x,y,t)$, for $t \in \R$, $x\in \K_1$, and $y=(y_1,y_2) \in \K_2 \times \K_3$ so that $ \Delta_y=\partial_{y_1}^2+\partial_{y_2}^2$. $D^{\alpha}_x$ is defined via the Fourier transform
	\begin{equation*}
		(D_x^{\alpha} f) \widehat{\,} (\xi,\eta) = |\xi|^{\alpha} \hat{f}(\xi,\eta),
	\end{equation*}
	If $\D = \T \times \K_2 \times \K_3$, then we additionally assume that
	\begin{equation*}
		\int_{\T} u(x,y) dx = 0
	\end{equation*}
	to make the linear evolution well-defined, and the nonlinear evolution preserves mean zero.
	Formally, real-valued solutions also conserve the $L^2(\D)$-norm and the energy (Hamiltonian)
	\begin{equation}\label{eq:e}\mathbf{e}(u)=
		\int_\D \frac12 |D_x^{\frac{\alpha}{2}} u|^2+\frac13 u^3 + \frac12 |\partial_x^{-1} \nabla_{y}  u|^2 dxdy.
	\end{equation}
	
	In two dimensions, the KP-I equation has been introduced as a model for the propagation of dispersive waves with small amplitude under weakly transverse effects \cite{KP} and later has been found to be completely integrable \cite{D74}. In this paper, we address the local well-posedness of the Cauchy problem in the three-dimensional case. In \cite{CR10}, it was rigorously derived in the long wave transonic limit regime (where the amplitude is close to one) from the Gross-Pitaevskii equation after suitable rescaling, see also \cite{KS12} for further information and references.

	We consider initial data in Sobolev spaces $E^s(\D)$ of real valued functions, which are defined for $s \geq 0$ via
	\begin{equation*}
		\| f \|_{E^{s}(\D)} := \| \langle \xi \rangle^s p(\xi,\eta) \hat{f}(\xi,\eta) \|_{L^2_{\xi,\eta}},
	\end{equation*}
	with respect to the three-dimensional product of Lebesgue or counting measures, see Section \ref{section:Notations} for more details. The symbol $p(\xi,\eta)= 1 + \frac{|\eta|}{|\xi|}$ is motivated by the last term in energy \eqref{eq:e} and crucially used in commutator arguments, see Section \ref{section:EnergyEstimates}.
	
	\medskip
	
	The main result of this paper is
	\begin{theorem}
		\label{thm:LWPAnisotropic}
		\begin{enumerate}[(i)]\item
			Let  $\alpha=2$. The Cauchy problem \eqref{eq:FKPI} is locally well-posed for initial data $\phi
			\in E^{s}(\D)$ provided that  $s>2$.
			\item Let  $\alpha\in (2,4)$ and $\D = \K_1 \times \R^2$.  The Cauchy problem \eqref{eq:FKPI} is locally well-posed for initial data $\phi
			\in E^{s}(\D)$ provided that $s> 3 - \frac{\alpha}{2}$.
		\end{enumerate}
	\end{theorem}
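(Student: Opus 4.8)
\medskip
\noindent\textbf{Proof strategy.}
The plan is to run the short-time Fourier restriction norm method in tandem with frequency-localized energy estimates, as pioneered for two-dimensional KP-I by Ionescu, Kenig and Tataru. First I would fix Littlewood--Paley projections $P_N$ in the $x$-variable and, for each dyadic $N\geq 1$, introduce a Bourgain-type space $X_N$ on time intervals of length $T_N\sim N^{1-\alpha}$ (so $T_N\sim N^{-1}$ when $\alpha=2$) built from the linear symbol $\omega(\xi,\eta)=\xi|\xi|^{\alpha}+|\eta|^2\xi^{-1}$ at the $\ell^1$-modulation level. These pieces are assembled by $P_N$ into a global short-time space $F^s$, together with a dual nonlinearity space $N^s$ and an energy space $B^s$ that carries the weight $p(\xi,\eta)=1+|\eta|/|\xi|$ and $\sup_t\|\cdot\|_{E^s}$. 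One then records the elementary embedding $\|u\|_{F^s}\lesssim\|u\|_{B^s}+\|(\partial_t-\partial_x D_x^{\alpha}-\partial_x^{-1}\Delta_y)u\|_{N^s}$ and establishes short-time and bilinear Strichartz estimates for $e^{t(\partial_x D_x^{\alpha}+\partial_x^{-1}\Delta_y)}$; the dispersive decay is driven by the Hessian of $\omega$ in $\eta$, which is nondegenerate only along Euclidean transverse directions, and this is the structural reason part (ii) is restricted to $\D=\K_1\times\R^2$. In periodic transverse directions one substitutes $L^2$-based counting bilinear estimates, which is why part (i) does not improve as $\alpha$ grows.

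\medskip
The technical core is a short-time bilinear estimate of the form
\[
\|\partial_x(uv)\|_{N^s}\lesssim \|u\|_{F^s}\|v\|_{F^{s'}}+\|u\|_{F^{s'}}\|v\|_{F^s},
\]
valid for $s$ above the stated threshold (with $s'$ a suitable lower index), proved by dyadic decomposition into high$\times$high$\to$low, high$\times$low$\to$high and low$\times$low frequency interactions. In the first two regimes the $x$-component of the resonance function, $\xi|\xi|^{\alpha}-\xi_1|\xi_1|^{\alpha}-\xi_2|\xi_2|^{\alpha}$ with $\xi=\xi_1+\xi_2$, has size $\sim N^{\alpha}L$ on the relevant set (large frequency $N$, small frequency $L$); this forces large modulation on one input, which is then paired with the bilinear Strichartz estimate. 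The single $x$-derivative from $\partial_x$ weighed against these gains is precisely what fixes the threshold at $s>3-\tfrac{\alpha}{2}$ (resp.\ $s>2$). Since this estimate loses derivatives relative to $F^s$, it does not close by a contraction mapping --- the equation is genuinely quasilinear in the weak dispersion regime.

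\medskip
To recover the derivative loss I would run a frequency-localized energy estimate: applying $\langle D_x\rangle^s p(D)$ to the equation and pairing with $u$, the skew-adjoint linear terms drop out, and symmetrizing the remaining trilinear form over $\xi_1+\xi_2+\xi_3=0$ gives, on a time interval $[0,T]$ with $T\leq 1$,
\[
\|u\|_{B^s}^2\lesssim \|\phi\|_{E^s}^2+T^{\theta}\,\|u\|_{F^s}^3
\]
for some $\theta>0$. The choice $p=1+|\eta|/|\xi|$ is essential here: being homogeneous of degree zero and matched to the $|\partial_x^{-1}\nabla_y u|^2$ term in the Hamiltonian \eqref{eq:e}, it makes the commutator generated by the nonlocal term $\partial_x^{-1}\Delta_y$ cancel in the symmetrized sum, so that no transverse derivatives are lost. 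Feeding the embedding, the bilinear estimate and the energy estimate into a continuity argument then yields the a priori bound $\|u\|_{F^s}\lesssim\|\phi\|_{E^s}$ on a time interval depending only on $\|\phi\|_{E^s}$, for smooth solutions.

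\medskip
Existence for merely $E^s$ data is then obtained by regularizing $\phi$, passing to a limit using the uniform bound, and verifying the equation; uniqueness and a Lipschitz bound for differences of solutions in a weaker norm follow from the bilinear estimate run at the lower index $s'$; and continuous dependence together with persistence of regularity $u\in C([0,T];E^s)$ comes from a Bona--Smith-type argument, the data-to-solution map not being expected to be locally Lipschitz on $E^s$. The main obstacle throughout is to make the bilinear estimate and the energy estimate mutually compatible: one must balance the $x$-derivative in the nonlinearity against the comparatively weak three-dimensional dispersion --- only two transverse directions supply decay --- while tuning the weight $p$ so that the singular nonlocal term does not spoil the energy estimate, and it is exactly this tension that pushes the regularity threshold up to $s>2$ for $\alpha=2$ and $s>3-\tfrac{\alpha}{2}$ in general.
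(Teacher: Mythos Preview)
Your overall architecture --- short-time $X^{s,b}$ spaces \`a la Ionescu--Kenig--Tataru, a tame bilinear estimate in $N^s$, a frequency-localized energy estimate exploiting the weight $p(\xi,\eta)=1+|\eta|/|\xi|$, and a bootstrap followed by a compactness/difference argument --- matches the paper's strategy. However, two ingredients at the technical heart of the argument are off, and as stated the scheme would not close at the threshold $s>3-\alpha/2$.

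First, the time scale. You propose $T_N\sim N^{1-\alpha}$; the paper uses $T(N)=N^{-(2-\alpha/2)}$. These agree at $\alpha=2$ but diverge for $\alpha\in(2,4)$: your intervals are far shorter (e.g.\ $N^{-3}$ versus $N^{0}$ at $\alpha=4$). The scale $N^{-(2-\alpha/2)}$ is not arbitrary --- it is calibrated so that the minimal modulation level $N^{2-\alpha/2}$ balances exactly against the gain $N^{-\alpha/4}$ from the bilinear Strichartz estimate, recovering one full $x$-derivative in the High$\times$Low interaction. With your scale the energy estimate would accumulate a factor $N^{\alpha-1}$ from summing over subintervals, which is too large to be absorbed.

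Second, and more importantly, your description of the bilinear mechanism is incomplete. You invoke only the non-resonant scenario, where $|\Omega^{(\alpha)}_{KdV}(\xi_1,\xi_2)|\sim N^{\alpha}N_{\min}$ forces large modulation on one factor. But the worst case is the \emph{resonant} one, $|\Omega_\alpha|\ll|\Omega^{(\alpha)}_{KdV}|$, where no modulation gain is available. The paper's key observation is that resonance forces transversality: one has $\big|\tfrac{\eta_1}{\xi_1}-\tfrac{\eta_2}{\xi_2}\big|\sim N^{\alpha/2}$, hence $|\nabla_\eta\omega(\xi_1,\eta_1)-\nabla_\eta\omega(\xi_2,\eta_2)|\gtrsim N^{\alpha/2}$, which yields a genuine bilinear $L^2$ estimate (Proposition~3.2 in the paper) with the gain $(L_{\max}/N^{\alpha/2})^{1/2}$. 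This transversal bilinear estimate, not the resonance-function bound, is what drives the derivative recovery and fixes the threshold. It is also the precise reason for the restriction $\D=\K_1\times\R^2$ in part (ii): on a periodic transverse direction the measure estimate acquires an additive $+1$ (counting vs.\ Lebesgue), which for $\alpha>2$ is no longer dominated by $L_{\max}/N^{\alpha/2}$ at the minimal modulation.

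Two smaller points. The paper's energy estimate does not carry a factor $T^{\theta}$; large data are handled instead by the scaling \eqref{eq:ScaleInvariant}, reducing to small data on a rescaled domain (with carefully tracked $\lambda^{0+}$ losses). And for continuous dependence the paper uses frequency envelopes rather than Bona--Smith; the two are closely related and either would work here.
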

	
	By local well-posedness, we refer to existence, uniqueness, and continuous dependence of solutions in function spaces $F^{s}(T) \hookrightarrow C([-T,T];E^{s}(\D))$.
	In two dimensions, \eqref{eq:FKPI} is known to be quasilinear for $\alpha=2$ in the sense that it cannot be solved by the contraction mapping principle.
	In order to improve the standard energy method, Ionescu--Kenig--Tataru \cite{IKT} introduced an approach based on localizations to frequency dependent time scales.
	Roughly speaking, this is the strategy we will use for the proof of the main result. The use of frequency-dependent time localization is justified in Appendix \ref{sec:IllPosed},
	where we prove on $\D=\R^3$ that for certain $\alpha \geq 2$ \eqref{eq:FKPI} cannot be solved by Picard iteration in anisotropic Sobolev spaces.
	The proof of well-posedness crucially makes use of short-time bilinear estimates,
	which become effective for resonant interactions (which are transverse).
	
	In the proof of Theorem \ref{thm:LWPAnisotropic},
	we use frequency-dependent time localization to overcome the derivative loss in the resonant interaction. For $\alpha \in [2,4)$ we choose
	\begin{equation*}
		T(N) = N^{-(2 - \frac{\alpha}{2})}.
	\end{equation*}
	This is again motivated by the bilinear Strichartz estimate we can prove in the resonant case on domains $\K \times \R^2$. This suffices to ameliorate one derivative in case a high frequency interacts with a low one in the resonant case.
	
	We remark that in the case $\alpha \in (2,4)$ one also obtains the well-posedness result with regularity threshold $s>2$ on general domains. This follows by choosing the time scale $T(N)=N^{-1}$ instead, we omit the details.
	However, it is not clear how to improve the regularity threshold $s>2$ on general domains in the case $\alpha \in (2,4)$.
	
	Since $\Delta_y$ is $\mathrm{O}(2)$-invariant, the result extends to more general $y-$domains with $d_1$ non-periodic and $d_2$ periodic transverse directions, where $d_1 + d_2=2$.
	
	\medskip
	
	The second main result is for the fifth-order KP-I equation, where we can use perturbative arguments. We show a  well-posedness result in anisotropic Sobolev spaces $H^{s_1,s_2}(\D)$, which are defined for $s_1,s_2\geq 0$ via
	\begin{equation*}
		\| \phi \|_{H^{s_1,s_2}(\D)} =\|\langle \xi \rangle^{s_1} \langle \eta \rangle^{s_2}\hat{\phi}(\xi,\eta) \|_{L^2_{\xi,\eta}}.
	\end{equation*}
	\begin{theorem}
		\label{thm:SemilinearLWP}
		Let $\alpha = 4$, $s_1,s_2 > \frac{1}{2}$, and $\D \in \{ \R^3; \T \times \R^2 \}$. The Cauchy problem \eqref{eq:FKPI} is locally well-posed for initial data $\phi \in H^{s_1,s_2}(\D)$, with a real-analytic flow map.
	\end{theorem}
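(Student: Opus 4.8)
The plan is to solve \eqref{eq:FKPI} for $\alpha=4$ by a contraction argument in Bourgain-type spaces adapted to the linear flow; since for this strongly dispersive equation the scheme should close without any derivative loss, one automatically obtains a real-analytic flow map. Write $\phi(\xi,\eta)=\xi|\xi|^{4}+|\eta|^{2}/\xi=\xi^{5}+|\eta|^{2}/\xi$ for the symbol of the linear propagator and, for $b\in\R$, introduce
\begin{equation*}
\|u\|_{X^{s_1,s_2,b}(\D)}=\big\| \langle\xi\rangle^{s_1}\langle\eta\rangle^{s_2}\langle\tau-\phi(\xi,\eta)\rangle^{b}\,\widetilde u(\tau,\xi,\eta) \big\|_{L^{2}_{\tau,\xi,\eta}},
\end{equation*}
together with its time-restricted version $X^{s_1,s_2,b}_{T}$; for $b>\tfrac12$ the latter embeds continuously into $C([-T,T];H^{s_1,s_2}(\D))$. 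By the usual linear, Duhamel and time-localization estimates in these spaces and the contraction mapping principle, local well-posedness with analytic dependence on the data on some interval $[-T,T]$, $T>0$ depending on $\|\phi\|_{H^{s_1,s_2}(\D)}$, reduces to the bilinear estimate
\begin{equation}\label{eq:bilin-goal}
\|\partial_x(u_1u_2)\|_{X^{s_1,s_2,b-1}(\D)}\lesssim\|u_1\|_{X^{s_1,s_2,b}(\D)}\,\|u_2\|_{X^{s_1,s_2,b}(\D)}
\end{equation}
for a suitable $b>\tfrac12$, the small positive power of $T$ being recovered in the standard way.

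The core is therefore \eqref{eq:bilin-goal}. After dualizing and decomposing the three $\xi$-frequencies into dyadic blocks of sizes $N_0,N_1,N_2$ and the modulations $\sigma_j=\tau_j-\phi(\xi_j,\eta_j)$ into dyadic sizes $L_0,L_1,L_2$, everything rests on the resonance identity
\begin{equation*}
\phi(\xi_0,\eta_0)-\phi(\xi_1,\eta_1)-\phi(\xi_2,\eta_2)=H,\qquad H=5\,\xi_1\xi_2\xi_0\,(\xi_1^{2}+\xi_1\xi_2+\xi_2^{2})-\frac{|\xi_2\eta_1-\xi_1\eta_2|^{2}}{\xi_1\xi_2\xi_0},
\end{equation*}
valid for $\xi_0=\xi_1+\xi_2$, $\eta_0=\eta_1+\eta_2$, which on the convolution support forces $\max(L_0,L_1,L_2)\gtrsim|H|$. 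The first term of $H$ is the fifth-order contribution; since $\xi_1^{2}+\xi_1\xi_2+\xi_2^{2}\gtrsim\max(N_1,N_2)^{2}$ it has size $\gtrsim N_0N_1N_2\max(N_1,N_2)^{2}$, so it is large unless all three frequencies are $O(1)$. Whenever $|H|$ is comparable to this fifth-order part, the gain $\max(L_0,L_1,L_2)^{b-1}$ on the output (or the gains $L_j^{-b}$ from the input norms) dominates the derivative $|\xi_0|\lesssim N_0$ coming from $\partial_x$, and \eqref{eq:bilin-goal} follows from Plancherel together with elementary Cauchy--Schwarz and Bernstein summations over the dyadic pieces; for $\D=\T\times\R^{2}$ one moreover has $|\xi|\ge1$, so the low-frequency difficulties disappear entirely.

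The genuine obstacle is the transverse-resonant regime, in which the fifth-order term and the transverse term $|\xi_2\eta_1-\xi_1\eta_2|^{2}/(\xi_1\xi_2\xi_0)$ have comparable size and nearly cancel, so that $|H|$ is small despite large frequencies and no modulation gain is available. For fixed $\xi_1,\xi_2$ this restricts $\eta$ to a thin neighbourhood of the curved hypersurface $\{\,|\xi_2\eta_1-\xi_1\eta_2|^{2}\sim\xi_1^{2}\xi_2^{2}\xi_0^{2}(\xi_1^{2}+\xi_1\xi_2+\xi_2^{2})\,\}\subset\R^{4}$; this is precisely the transverse resonant interaction. I expect this to be the main difficulty, and the way to handle it is to prove and invoke a bilinear Strichartz estimate for the $\alpha=4$ propagator on $\K_1\times\R^{2}$ that exploits the transverse curvature of the characteristic surface, encoded by $\partial_\eta^{2}\phi=2\xi^{-1}\mathrm{Id}$, via an $L^{2}$ transversality / C\'ordoba--Fefferman type argument in the $\eta$-variables; this supplies the missing smoothing at the cost of essentially half a transverse derivative. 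Balancing this against the remaining regimes is what dictates the thresholds $s_1,s_2>\tfrac12$; once these hold, all dyadic sums become geometric and \eqref{eq:bilin-goal} follows, proving the theorem. The case $\D=\T\times\R^{2}$ is treated identically, with sums over $\xi\in\Z\setminus\{0\}$ in place of integrals and with the low-frequency analysis becoming vacuous.
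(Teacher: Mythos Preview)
Your strategy---contraction in adapted spaces, dyadic decomposition, the resonance identity you wrote, and a transverse bilinear Strichartz estimate for the resonant regime---is exactly the paper's strategy. The difference is the function space: you propose $X^{s_1,s_2,b}$ with $b>\tfrac12$, while the paper uses the critical $U^2_S/V^2_S$ framework and states explicitly that this is needed ``in order to avoid a derivative loss.'' The reason is that the bilinear Strichartz estimate (Proposition~\ref{prop:GeneralBilinear} with $\alpha=4$, $d_2=0$) cancels the derivative $\partial_x$ \emph{exactly}, with no room to spare; in $X^{s,b}$ with $b>\tfrac12$ this becomes delicate and can close only because $s_1,s_2>\tfrac12$ provides a little slack, forcing $b-\tfrac12$ small relative to $s_i-\tfrac12$. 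The $U^2/V^2$ route avoids this by working at the endpoint and using the log-interpolation lemma of Hadac--Herr--Koch to pass from $U^2$ to $V^2$ in the High$\times$High$\to$Low case.

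There are two concrete gaps in your sketch. First, in the High$\times$High$\to$Low resonant interaction the bilinear Strichartz estimate can only be applied via duality to a pair containing the low-frequency output factor, and therefore produces a factor $L_0^{1/2}$ of the \emph{output} modulation; the $X^{s,b-1}$ norm supplies only $L_0^{1-b}$, and the shortfall $L_0^{b-1/2}\le (N_1^4 N)^{b-1/2}$ must be absorbed into the excess regularity $s_1-\tfrac12$. This is precisely the step that forces $b<\tfrac{s_1+2}{5}$ and is not addressed in your outline. Second, your claim that the non-resonant regime follows from ``Cauchy--Schwarz and Bernstein'' is not correct: the crude estimate (Lemma~\ref{lem:CauchySchwarz}) carries a full factor $M_{\min}$ from the two transverse $\eta$-variables, which cannot be summed using $s_2>\tfrac12$ alone since $M_{\min}$ is unconstrained by the $\xi$-frequencies. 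One needs the $L^4_{t,x,y}$ Strichartz estimate (Lemma~\ref{lem:L4Summary}), which trades $M_{\min}$ for $M_{\min}^{0+}$ at the cost of a harmless factor $(N_1 N_2)^{1/4}$; this is what the paper does in every non-resonant subcase of Section~\ref{section:lwp5}.
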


	\medskip
	
	If $u$ solves \eqref{eq:FKPI}, then so does
	\begin{equation}\label{eq:ScaleInvariant}
		u_{\lambda}(x,y,t) := \lambda^{- \frac{2\alpha}{\alpha + 2}} u(\lambda^{-\frac{2}{\alpha + 2}} x, \lambda^{-1} y, \lambda^{-\frac{2(\alpha+1)}{\alpha+2}} t),
	\end{equation}
	with scaled initial data $\phi_{\lambda}(x,y):=\lambda^{- \frac{2\alpha}{\alpha + 2}} \phi(\lambda^{-\frac{2}{\alpha + 2}} x, \lambda^{-1} y)$.
	For the corresponding homogeneous norms, we observe that
	\begin{equation}
		\|\phi_{\lambda}\|_{\dot{H}^{s_1,s_2}(\R^3)} = \lambda^{\frac{3-\alpha}{\alpha+2} - \frac{2 s_1}{\alpha+2} - s_2  } \|\phi\|_{\dot{H}^{s_1,s_2}(\R^3)}.
	\end{equation}
	Notice that for high frequencies, $E^s$ corresponds to $H^{s,0}\cap H^{s-1,1}$
	which indicates that the regularity assumptions both in Theorem \ref{thm:LWPAnisotropic} and in Theorem \ref{thm:SemilinearLWP} are sub-critical.

	Let
	\begin{equation*}
		s(\alpha) = 3 - \frac{\alpha}{2}.
	\end{equation*}
	We shall prove the following set of estimates for $s' \geqslant s >s(\alpha)$:
	\begin{equation*}
		\left.	\begin{array}{cl}
			\| u \|_{F^{s'}(T)} &\lesssim \| u \|_{B^{s'}(T)} + \| \partial_x (u^2) \|_{\mathcal{N}^{s'}(T)}, \\
			\| \partial_x (u^2) \|_{\mathcal{N}^{s'}(T)} &\lesssim \| u \|_{F^s(T)} \| u \|_{F^{s'}(T)}, \\
			\| u \|^2_{B^{s'}(T)} &\lesssim \| u_0 \|^2_{E^{s'}} + \| u \|^2_{F^s(T)} \| u \|_{F^{s'(T)}}.
		\end{array} \right\}
	\end{equation*}
	This yields a priori estimates in $E^{s'}$ for $s' \geq s$ and small initial data. The large data case will be handled by rescaling.
	
	The second set of estimates yields Lipschitz continuous dependence at the regularity $E^0$ depending on the norms of the initial data in $E^s$: Let $v = u_1 - u_2$ be a difference of two solutions. Then, we find the following estimates to hold:
	\begin{equation*}
		\left. \begin{array}{cl}
			\| v \|_{F^0(T)} &\lesssim \| v \|_{B^0(T)} + \| \partial_x (v (u_1+u_2)) \|_{\mathcal{N}^0(T)} \\
			\| \partial_x (v(u_1 + u_2)) \|_{\mathcal{N}^0(T)} &\lesssim \| v \|_{F^0(T)} (\| u_1 \|_{F^s(T)} + \| u_2 \|_{F^s(T)}) \\
			\| v \|^2_{B^0(T)} &\lesssim \| v(0) \|^2_{E^0} + \| v \|^2_{F^0(T)} ( \|u_1 \|_{F^s(T)} + \| u_2 \|_{F^s(T)}).
		\end{array} \right\}
	\end{equation*}
	Finally, we prove continuous dependence in $E^s$ using frequency envelopes (cf. \cite{IfrimTataru2020}).
	
	\medskip
	
	We remark that in the companion paper \cite{SS22} the second and third author address the dispersion-generalized KP-I equation in the case $\D=\R^2$.  Depending on the dispersion, that problem exhibits semi- and quasilinear behaviour.
	
	\medskip
	
	Concerning the proof of Theorem \ref{thm:SemilinearLWP}, we notice that it is a semilinear problem which can be treated by a direct iterative method.
	However, in order to avoid a derivative loss, we need to work with critical norms involving $U^2$ and $V^2$ spaces.
	
	\subsection*{Outline}
	In Section \ref{section:Notations}, we introduce notation and function spaces and prove linear estimates. In Section \ref{section:Resonance}, we study the resonance relation and prove bilinear estimates.
	In Section \ref{section:ShorttimeBilinear} we prove the short-time nonlinear estimates. In Section \ref{section:EnergyEstimates}, the energy estimates are proved. The proof of Theorem \ref{thm:LWPAnisotropic} is then concluded in Section \ref{section:Proof}. In Section \ref{section:lwp5} the fifth-order problem is treated. In Appendix  \ref{sec:anisotropicLeibniz}, we show a fractional anisotropic Leibniz rule on mixed domains, and in Appendix \ref{sec:IllPosed} we treat semilinear ill-posedness issues.

	\section{Notations and function spaces}\label{section:Notations}
	\subsection{Fourier transform}
	To prove local well-posedness for large data, we shall rescale the domain. This requires us to consider the rescaled tori $\T_\lambda = \R / (2\pi \lambda\Z)$ for $\lambda \geq 1$. We have to keep track of the dependence of the estimates with respect to $\lambda$. All quantities are defined so that Plancherel's theorem is valid with $\lambda$-independent constant.
	Let
	\begin{equation*}
		\D_\nu = \K_\nu \times (\R^{d_1} \times \T_\lambda^{d_2}), \text{ where } \K_\lambda \in \{ \R; \T_\lambda \}, \quad  d_1,d_2\in \N_0:= \N \cup \{ 0 \}, \quad \nu = \lambda^{\frac{2}{\alpha+2}}, \quad d_1 + d_2 = 2.
	\end{equation*}
	By symmetry of the equation in $y_1,y_2$
	we can assume this specific order to cover all cases considered in the main results.
	For the dual space, we let $\Z_\lambda = \{ \frac{k}{\lambda} : k \in \Z \}$. The dual domain is defined by
	\begin{equation*}
		\D_\lambda^* := \{(\xi,\eta) \in \K_\nu^* \times (\R^{d_1} \times \Z_\lambda^{d_2})\}, \text{ where } \T_\lambda^*=\Z_\lambda \text{ and }\R^*=\R.
	\end{equation*}
	We define the Fourier transform $\hat{f}: \D_\lambda^*\to \C$ of a function $f \in L^1(\D_\lambda)$ by
	\begin{equation*}
		\hat{f}(\xi,\eta) = \int_{\D_\lambda}  e^{-ix\xi} e^{-iy\cdot\eta} f(x,y)dx dy.
	\end{equation*}
	Its inverse is given by
	\begin{equation*}
		\check{f}(x,y) = \frac{1}{(2 \pi)^3} \int_{\D_\lambda^*} e^{i x\xi} e^{iy\cdot\eta} \hat{f}(\xi,\eta) d\xi d\eta,
	\end{equation*}
	where we use the normalized counting measure in $\Z_\lambda$, i.e.,
	\begin{equation*}
		\int_{\Z_\lambda} f(k) dk := \frac{1}{\lambda} \sum_{k \in \Z_\lambda} f(k),
	\end{equation*}
	and Lebesgue measure in $\R$ coordinates.
	In this setting, Plancherel's theorem gives
	\begin{equation*}
		\|f\|_{L_{x,y}^2(\D_\lambda)} =(2\pi)^{-\frac{3}{2}}\|\hat{f}\|_{L^2_{\xi,\eta}(\D_\lambda^*)},
	\end{equation*}
	Using this notation, the space-time Fourier transform and its inverse are given by
	\begin{equation*}
		\begin{split}
			\mathcal{F}f(\xi,\eta,\tau)&
			= \int_{\R} \widehat{f(\cdot,\cdot,t)}(\xi,\eta) e^{-it\tau} dt\\
			\mathcal{F}^{-1}f(x,y,t)&
			=\frac{1}{2\pi}\int_{\R} \widecheck{f(\cdot,\cdot,\tau)}(x,y) e^{it\tau} d\tau.
		\end{split}
	\end{equation*}
	
	The dispersion relation of the dispersion-generalized KP-I equation is denoted by
	\begin{equation}
		\label{eq:DispersionRelation}
		\omega_\alpha(\xi,\eta) = |\xi|^\alpha \xi + \frac{|\eta|^2}{\xi}, \quad \xi \in \K_\nu^* \backslash \{0\}, \quad \eta \in \R^{d_1} \times \Z_\lambda^{d_2}.
	\end{equation}
	Define the group of unitary operators on $L^2(\D_\lambda)$ (restricted to mean zero if $x \in \T_\lambda$) by
	\begin{equation*}
		\widehat{S_\alpha(t)u_0}(\xi,\eta) = e^{it\omega_\alpha(\xi,\eta)}\widehat{u_0}(\xi,\eta).
	\end{equation*}
	This is the propagator for the linear homogeneous equation
	\begin{equation*}
		\partial_t u - \partial_{x} D_x^\alpha u - \partial_{x}^{-1} \Delta_y u =0.
	\end{equation*}

	\subsection{Function spaces}
	\label{subsection:FunctionSpaces}
	Following  \cite{IKT, GO}, we introduce the short-time $X^{s,b}$ spaces now. We also refer to \cite[Section 2.5]{rsc} for an overview of their properties.
	
	Let $\phi_1 \subseteq C^\infty_c(-2,2)$ be symmetric and decreasing on $[0,\infty)$ with $\phi_1(\xi) = 1$ for $\xi \in [-1,1]$.	 We set for $N \in 2^{\N}$, $\phi_N(\xi) = \phi(\xi/N) - \phi(2\xi/N)$. This yields
	\begin{equation*}
		\sum_{N \in 2^{\N_0}} \phi_N\equiv 1.
	\end{equation*}
	We define inhomogeneous Littlewood-Paley projections: For $f \in \mathcal{S}'(\D_\lambda)$, and $N,K \in 2^{\N_0}$, let
	\begin{equation*}
		\widehat{P_N f} (\xi,\eta) = \phi_N(\xi) \hat{f}(\xi,\eta),
		\text{ and }	\widehat{P_{N,K} f} (\xi,\eta) = \phi_K(\eta) \widehat{P_N  f}(\xi,\eta).		\end{equation*}
	Here we abuse notation by writing $\phi_K(\eta)$ instead of $\phi_K(|\eta|)$.
	
	We define an inhomogeneous decomposition of Fourier space by $(A_N)_{N \in 2^{\N_0}}$:
	\begin{align*}
		A_N &= \{ (\xi,\eta) \in \D_\lambda^* : |\xi| \in [\frac{N}{8},8N] \}, \qquad N \in 2^{\N}, \; \\
		A_{1} &= \{ (\xi,\eta) \in \D_\lambda^* : |\xi| \leq 2 \}.
	\end{align*}
	The corresponding homogeneous decomposition is denoted by
	\begin{equation*}
		\tilde{A}_N = \{ (\xi,\eta) \in \D_\lambda^* : \frac{N}{8} \leq |\xi| \leq 8N \}.
	\end{equation*}
	We also consider the double (inhomogeneous) decomposition $(A_{N,K})_{N \in 2^{\N_0}, K \in 2^{\N_0} }$:
	\begin{align*}
		A_{N,K} &= A_N \cap \{ (\xi,\eta) \in \D_\lambda^* : \frac{K}{8} \leq |\eta| \leq 8 K \}, \qquad K \in 2^{\N_0}, \\
		A_{N,1} &= A_N \cap \{ (\xi,\eta) \in \D_\lambda^* : | \eta| \leq 2 \}.
	\end{align*}
	The corresponding decomposition which is homogeneous in $\xi$ (and inhomogeneous in $\eta$) is denoted by $(\tilde{A}_{N,K})_{N \in 2^{\Z}, K \in 2^{\N_0} }$ with $A_N$ replaced by $\tilde{A}_N$ in the previous display. In the following we write $\rho_J(\tau) = \phi_J(\tau)$ for $J \in 2^{\N_0}$ and let $\rho_{\leq J}(\tau) = \sum_{L \leq J} \rho_L(\tau)$, and for $J>1$, $\rho^\sharp_J=\rho_{J/2}+\rho_J+\rho_{2J} $, whereas $\rho_1^{\sharp} = \rho_1 + \rho_2$ (therefore $\rho^\sharp_J\rho_J=\rho^\sharp_J$). For $N,K,J \in 2^{\N_0}$, we define
	\begin{equation*}
		D_{N,K,J} = \{ (\xi,\eta,\tau) \in \D_\lambda^* \times \R : (\xi,\eta) \in A_{N,K} ,\, |\tau - \omega_\alpha(\xi,\eta)| \in  \mathrm{supp}(\rho^\sharp_J) \}, \;
		D_{N,K,\leq J} = \bigcup_{L \leq J} D_{N,K,L} \}.
	\end{equation*}
	As homogeneous counterpart for $N \in 2^{\Z}$, $K \in 2^{\N_0}$ we let
	\begin{equation*}
		\tilde{D}_{N,K,J} = \{ (\xi,\eta,\tau) \in \D_\lambda^* \times \R : (\xi,\eta) \in \tilde{A}_{N,K} ,\, |\tau - \omega_\alpha(\xi,\eta)| \in \mathrm{supp}(\rho^\sharp_J) \}.
	\end{equation*}
	We let
	\begin{equation*}
		X_N = \{ f \in L^2(\D_\lambda^* \times \R) : f \text{ is supported in } A_N \times \R, \; \| f \|_{X_N} < \infty \},
	\end{equation*}
	and
	\begin{equation*}
		\| f \|_{X_N} = \sum_{L \in 2^{\N_0}} L^{\frac{1}{2}} \| \rho_L(\tau - \omega_{\alpha}(\xi,\eta))f \|_{L^2_\tau L^2_{\xi,\eta}}.
	\end{equation*}
	Note that
	\begin{equation*}
		\Big\| \int_{\R} | f(\xi,\eta,\tau)| d\tau \Big\|_{L^2_{\xi,\eta}} \lesssim \| f \|_{X_{N}},
	\end{equation*}
	and we record the estimate
	\begin{equation}\label{prop}
		\begin{split}
			&\sum_{L \geq M} L^{\frac{1}{2}} \| \rho_L(\tau-\omega_{\alpha}(\xi,\eta)) \int |f(\xi,\eta,\tau')| L^{-1} (1+L^{-1} | \tau - \tau'|)^{-4} d \tau' \|_{L_{\xi,\eta,\tau}^2} \\
			&\quad + M^{\frac{1}{2}} \| \rho_{\leq M}(\tau - \omega_{\alpha}(\xi,\eta)) \int |f(\xi,\eta,\tau')| M^{-1} (1+M^{-1} |\tau - \tau'|)^{-4} d\tau' \|_{L_{\xi,\eta,\tau}^2} \\
			&\lesssim \| f \|_{X_N}.
		\end{split}
	\end{equation}
	For a given Schwartz function $\gamma \in \mathcal{S}(\R)$, the estimate
	\begin{equation*}
		\| \mathcal{F}_{x,y,t} [\gamma(M^{2-\frac{\alpha}{2}} (t - t_0)) \mathcal{F}^{-1}_{x,y,t}(f)] \|_{X_N} \lesssim_\gamma \| f \|_{X_N}.
	\end{equation*}
	holds for all  $M,N \in 2^{\N_0}$, $t_0 \in \R$, $f \in X_N$.
	We put the space-time Fourier transform of the original function into the $X_N$-spaces. Let
	\begin{equation*}
		E_N = \{ \phi : \D_\lambda \to \R \, : \, \hat{\phi} \text{ is supported in } A_N, \; \| \phi \|_{E_N} = \| \phi \|_{L^2} < \infty \}.
	\end{equation*}
	
	Next, define
	\begin{equation*}
		F_N = \{ u \in C(\R;E_N) \, : \, \| u \|_{F_N} = \sup_{t' \in \R} \| \mathcal{F}_{x,y,t}[u \cdot \rho_1(N^{(2-\frac{\alpha}{2})}(t-t'))] \|_{X_N} < \infty \}.
	\end{equation*}
	We place the solution into these short-time function spaces after dyadic frequency localization. For the nonlinearity, we consider correspondingly
	\begin{equation*}
		\mathcal{N}_N = \{ u \in C(\R;E_N) \, : \, \| u \|_{\mathcal{N}_N(T)} = \sup_{t' \in \R} \| (\tau- \omega_{\alpha}(\xi,\eta) + i N^{(2-\frac{\alpha}{2})})^{-1} \mathcal{F}_{x,y,t} [u \cdot \rho_1(N^{(2-\frac{\alpha}{2})}(t-t'))] \|_{X_N} < \infty\}.
	\end{equation*}
	We localize the spaces in time by the usual means: For $T \in (0,1]$, let
	\begin{equation*}
		\begin{split}
			F_N(T) &= \{ u \in C([-T,T];E_N) \, : \, \| u \|_{F_N(T)} = \inf_{\substack{\tilde{u} = u \text{ in } \\
					\D_\lambda \times [-T,T]}} \| \tilde{u} \|_{F_N} < \infty \}, \\
			\mathcal{N}_N(T) &= \{ u \in C([-T,T];E_N) \, : \, \| u \|_{\mathcal{N}_N(T)} = \inf_{\substack{\tilde{u} = u \text{ in } \\
					\D_\lambda \times [-T,T]}} \| \tilde{u} \|_{\mathcal{N}_N} < \infty \}.
		\end{split}
	\end{equation*}
	On the rescaled domains, we consider the weight
	\begin{equation*}
		p_\lambda(\xi,\eta) =\lambda^{-\frac{1}{2}} + \frac{|\eta|}{|\xi|},
	\end{equation*}
	and let
	\begin{equation*}
		E^s(\D_\lambda) = \{ f: \D_\lambda \to \C : \, \| \langle \xi \rangle^s p_\lambda(\xi,\eta) \hat{f}(\xi,\eta) \|_{L^2_{\xi,\eta}} < \infty \}.
	\end{equation*}
	It is easy to see that for $u_0 \in E^s$, $s \geq 0$, we have that $\lambda^{-\frac{2\alpha}{\alpha+2}} u_0(\lambda^{-\frac{2}{\alpha+2}} x, \lambda^{-1} y)$ (cf. \eqref{eq:ScaleInvariant}) converges to zero in $E^s(\D_\lambda)$ polynomially as $\lambda \to \infty$. Indeed, we find that $\lambda^{-\frac{1}{2}} \lambda^{-\frac{2\alpha}{\alpha+2}} \| u_0(\lambda^{-\frac{2}{\alpha+2}} x, \lambda^{-1} y) \|_{L^2}= \lambda^{-\beta} \| u_0 \|_{L^2}$ for some $\beta > 0$. The power $\lambda^{-\frac{1}{2}}$ is chosen to match powers $M^{\frac{1}{2}}$ with $M$ denoting the dyadic localization of the $y$-frequencies in the nonlinear estimates.

	\medskip
	
	Let $E^\infty(\D_\lambda) = \bigcap_{s \geq 0} E^{s}(\D_\lambda)$. We assemble the spaces $F^{s}(T)$, $\mathcal{N}^{s}(T)$, and $B^{s}(T)$ via Littlewood-Paley decomposition:
	\begin{equation*}
		\begin{split}
			F^{s}(T) &= \{ u \in C([-T,T];E^{\infty}) \, : \, \| u \|^2_{F^{s}(T)} = \sum_{N \in 2^{\mathbb{N}_0}} N^{2s} \| P_N p_\lambda(-i\partial_x, - i \nabla_y) u \|^2_{F_N(T)} < \infty \}, \\
			\mathcal{N}^{s}(T) &= \{ u \in C([-T,T];E^{\infty}) \, : \, \| u \|^2_{\mathcal{N}^{s}(T)} = \sum_{N \in 2^{\mathbb{N}_0}} N^{2s} \| P_N p_\lambda(-i\partial_x, - i \nabla_y) u \|^2_{\mathcal{N}_N(T)} < \infty \}, \\
			B^{s}(T) &= \{ u \in C([-T,T];E^{\infty}) \, : \, \| u \|^2_{B^{s}(T)} = \sum_{N \in 2^{\N_0}} N^{2s} \sup_{t\in [-T,T]} \| P_N p_\lambda(-i\partial_x, - i \nabla_y) u(t) \|^2_{E_N} < \infty \}. \\
		\end{split}
	\end{equation*}

	Recall the multiplier properties of admissible time-multiplication: For any $N \in 2^{\N_0}$, we define the set $S_N$ of $N$-acceptable time multiplication factors
	\begin{equation*}
		S_N  = \{ m_N: \R \to \R \, : \, \| m_N \|_{S_N} = \sum_{0 \leq j \leq 30} N^{-(2-\frac{\alpha}{2})j} \| \partial^j m_N \|_{L^\infty} < \infty \}.
	\end{equation*}
	We have for any $s \in \R_{\geq 0}$ and $T \in (0,1]$:
	\begin{equation}\label{TimeMult}
		\left.  \begin{split}
			\| \sum_{N \in 2^{\N_0}} (m_N(t) P_N u ) \|_{F^{s}(T)} &\lesssim \big( \sup\limits_{N \in 2^{\N_0}} \| m_N \|_{S_N} \big) \| u \|_{F^{s}(T)}, \\
			\| \sum_{N \in 2^{\N_0}} (m_N(t) P_N u ) \|_{\mathcal{N}^{s}(T)} &\lesssim \big( \sup\limits_{N \in 2^{\N_0}} \| m_N \|_{S_N} \big) \| u \|_{\mathcal{N}^{s}(T)}, \\
			\| \sum_{N \in 2^{\N_0}} (m_N(t) P_N u ) \|_{B^{s}(T)} &\lesssim \big( \sup\limits_{N \in 2^{\N_0}} \| m_N \|_{S_N} \big) \| u \|_{E^{s}(T)}.
		\end{split} \right\}
	\end{equation}
	
	We recall the embedding $F^{s}(T) \hookrightarrow C([-T,T];E^s)$ for short-time $X^{s,b}$-spaces.
	
	\begin{lemma}\label{lemma:Embed}
		Let $s \in \R_{\geq 0}$. For all $T \in (0,1]$ and $u\in F^{s}(T)$ we have
		\begin{equation}
			\sup_{t\in [-T,T]}\|u(t)\|_{H^{s} }\lesssim \|u\|_{F^{s}(T)}.
		\end{equation}
	\end{lemma}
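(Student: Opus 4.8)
The plan is to reduce the statement to the single-frequency bound
\begin{equation*}
	\sup_{t\in[-T,T]}\|v(t)\|_{L^2(\D_\lambda)}\lesssim\|v\|_{F_N(T)},
\end{equation*}
valid for every $N\in2^{\N_0}$ and every $v\in F_N(T)$, with implied constant independent of $N$, $T$ and $\lambda$, and then to sum the dyadic pieces. For the reduction I would apply this bound to $v_N:=P_N\,p_\lambda(-i\partial_x,-i\nabla_y)u$, which is Fourier supported in $A_N$; since the $\xi$-supports of distinct $v_N$ are almost disjoint and $\langle\xi\rangle^s\sim N^s$ on $A_N$, Plancherel's theorem yields $\|u(t)\|_{E^s}^2\sim\sum_N N^{2s}\|v_N(t)\|_{L^2}^2$, and hence
\begin{equation*}
	\sup_{t\in[-T,T]}\|u(t)\|_{E^s}^2\lesssim\sum_{N\in2^{\N_0}}N^{2s}\sup_{t\in[-T,T]}\|v_N(t)\|_{L^2}^2\lesssim\sum_{N\in2^{\N_0}}N^{2s}\|v_N\|_{F_N(T)}^2=\|u\|_{F^s(T)}^2 .
\end{equation*}
Since $p_\lambda\gtrsim\lambda^{-1/2}$ and $\lambda$ is fixed (indeed $p_\lambda\geq1$ on the unrescaled domain), this controls $\sup_t\|u(t)\|_{H^s}$, and continuity in $t$ with values in $E^s\hookrightarrow H^s$ is already built into the definition of $F^s(T)$ as a subspace of $C([-T,T];E^\infty)$.

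For the single-frequency bound I would argue as follows. Fix $t_0\in[-T,T]$ and $\varepsilon>0$, and pick an extension $\tilde v$ of $v$ to $\D_\lambda\times\R$ with $\|\tilde v\|_{F_N}\leq(1+\varepsilon)\|v\|_{F_N(T)}$. Set $g:=\tilde v\cdot\rho_1\bigl(N^{2-\frac{\alpha}{2}}(t-t_0)\bigr)$. Because $\rho_1(0)=1$ and $\tilde v(t_0)=v(t_0)$, one has $g(\cdot,\cdot,t_0)=v(\cdot,\cdot,t_0)$, and $\mathcal{F}_{x,y,t}g$ is still supported in $A_N\times\R$. Inverting the time Fourier transform at $t=t_0$ gives
\begin{equation*}
	\widehat{v(\cdot,\cdot,t_0)}(\xi,\eta)=\frac{1}{2\pi}\int_{\R}\mathcal{F}_{x,y,t}g(\xi,\eta,\tau)\,e^{it_0\tau}\,d\tau ,
\end{equation*}
hence the pointwise estimate $|\widehat{v(\cdot,\cdot,t_0)}(\xi,\eta)|\leq\frac{1}{2\pi}\int_{\R}|\mathcal{F}_{x,y,t}g(\xi,\eta,\tau)|\,d\tau$. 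Taking the $L^2_{\xi,\eta}$-norm, using the recorded estimate $\|\int_{\R}|f(\cdot,\tau)|\,d\tau\|_{L^2_{\xi,\eta}}\lesssim\|f\|_{X_N}$ for $f$ supported in $A_N\times\R$, and Plancherel's theorem, I would obtain $\|v(t_0)\|_{L^2(\D_\lambda)}\lesssim\|\mathcal{F}_{x,y,t}g\|_{X_N}\leq\|\tilde v\|_{F_N}\leq(1+\varepsilon)\|v\|_{F_N(T)}$, where the middle inequality is simply the definition of $\|\cdot\|_{F_N}$ with the shift parameter $t'=t_0$. Letting $\varepsilon\to0$ and taking the supremum over $t_0\in[-T,T]$ completes the proof.

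I do not expect a genuine obstacle here: this is a standard feature of short-time $X^{s,b}$ spaces and the argument is essentially bookkeeping. The two points that need care are the passage to an almost-extremal extension $\tilde v$ together with the translation freedom built into the $F_N$-norm, which is precisely what lets the cutoff $\rho_1$ be re-centered at the arbitrary evaluation time $t_0$, and the observation that the frequency-dependent time scale $N^{-(2-\frac{\alpha}{2})}$ plays no role in this estimate, since the key inequality $\|\int|f|\,d\tau\|_{L^2}\lesssim\|f\|_{X_N}$ does not see it. One should also keep in mind that what is controlled at unit cost is the weighted norm $E^s$, the discrepancy with the plain $H^s$ norm being exactly the weight $p_\lambda$.
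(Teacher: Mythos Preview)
Your proof is correct and is precisely the standard argument for this embedding in short-time $X^{s,b}$ spaces; the paper does not give its own proof but simply refers to \cite{IKT} and \cite{GO}, where exactly this argument appears. Your remark about the discrepancy between $H^s$ and $E^s$ is also apposite: the line preceding the lemma announces the embedding $F^s(T)\hookrightarrow C([-T,T];E^s)$, so the intended conclusion is really the $E^s$ bound you establish, and the passage to $H^s$ costs only the weight $p_\lambda$ as you note.
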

	For the KP-I equation on the plane, this was proved by Ionescu--Kenig--Tataru in \cite{IKT}. In the periodic case, we refer to Guo--Oh \cite{GO} for a proof in the Sobolev scale, which extends in a straight-forward way to the spaces defined here.
	
	Similarly, we have the linear inhomogeneous estimate (cf. \cite[Proposition~2.5.2]{rsc}):
	\begin{lemma}\label{lemma:LinShortTime}
		Let $s \in \R_{\geq 0}$. For all $ T \in (0,1]$ and (mild) solutions $ u \in C([-T,T],E^\infty(\D_\lambda))$ of
		\begin{equation*}
			\partial_t u - \partial_{x} D_x^\alpha u - \partial_{x}^{-1} \Delta_y u = \partial_x(u^2) \text{ in } \D_\lambda \times (-T,T),
		\end{equation*}
		we have
		\begin{equation}
			\|u\|_{F^{s}(T)} \lesssim \|u\|_{B^{s}(T)} + \|\partial_x (u^2) \|_{\mathcal{N}^{s}(T)}.
		\end{equation}
		
	\end{lemma}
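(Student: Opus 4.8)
We are proving the standard inhomogeneous linear estimate for short-time $X^{s,b}$-spaces, so the plan is to follow the scheme of Ionescu--Kenig--Tataru \cite{IKT} and Guo--Oh \cite{GO} (see also \cite[Proposition~2.5.2]{rsc}); I only sketch the structure. Write $b = 2-\frac{\alpha}{2}\in[0,1]$ for the exponent governing the time scale. \emph{Reduction to a single frequency block:} the weight $p_\lambda(-i\partial_x,-i\nabla_y)$ is a Fourier multiplier, hence commutes with $P_N$, with $\partial_x D_x^\alpha$, with $\partial_x^{-1}\Delta_y$ and with the propagator $S_\alpha(t)$. Consequently $v_N := P_N\, p_\lambda(-i\partial_x,-i\nabla_y)\,u$ has Fourier support in $A_N$, satisfies $\|v_N(t)\|_{E_N}=\|v_N(t)\|_{L^2}$, and is a mild solution on $[-T,T]$ of
\begin{equation*}
\partial_t v_N - \partial_x D_x^\alpha v_N - \partial_x^{-1}\Delta_y v_N = g_N, \qquad g_N := P_N\, p_\lambda(-i\partial_x,-i\nabla_y)\,\partial_x(u^2).
\end{equation*}
Squaring, multiplying by $N^{2s}$ and summing in $N$, it then suffices to prove, uniformly in $N$, $T$ and $\lambda$,
\begin{equation*}
\|v_N\|_{F_N(T)} \;\lesssim\; \sup_{t\in[-T,T]}\|v_N(t)\|_{L^2} \;+\; \|g_N\|_{\mathcal{N}_N(T)} .
\end{equation*}

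\emph{Construction of an extension.} To bound $\|v_N\|_{F_N(T)}=\inf_{\tilde v = v_N}\|\tilde v\|_{F_N}$, I would build $\tilde v$ window by window at the natural time scale $N^{-b}$. Cover $[-T,T]$ by $O(1+TN^b)$ intervals $I_j$ of length $\lesssim N^{-b}$, fix a subordinate partition of unity $\{\chi_j\}$ with each $\chi_j$ a Schwartz bump at scale $N^{-b}$, pick $t_j\in I_j$, and choose an extension $\tilde g$ of $g_N|_{[-T,T]}$, supported in an $N^{-b}$-neighbourhood of $[-T,T]$, with $\|\tilde g\|_{\mathcal{N}_N}\le 2\|g_N\|_{\mathcal{N}_N(T)}$. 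Since $v_N$ is a mild solution, Duhamel's formula gives for $t\in I_j$
\begin{equation*}
v_N(t) = S_\alpha(t-t_j)\,v_N(t_j) + \int_{t_j}^{t} S_\alpha(t-s)\,g_N(s)\,ds,
\end{equation*}
and I would set $\tilde v_j(t) := \beta_j(t)\big[S_\alpha(t-t_j)v_N(t_j) + \int_{t_j}^{t}S_\alpha(t-s)\beta_j(s)\tilde g(s)\,ds\big]$, where $\beta_j$ is a bump at scale $N^{-b}$ equal to $1$ on $\mathrm{supp}\,\chi_j$ and supported in a slight enlargement of $I_j$. Then $\tilde v:=\sum_j\chi_j\tilde v_j$ agrees with $v_N$ on $[-T,T]$ and is supported within $O(N^{-b})$ of $[-T,T]$; since for any $t'$ the cutoff $\rho_1(N^b(t-t'))$ overlaps only $O(1)$ of the $I_j$, the $X_N$-boundedness of multiplication by Schwartz cutoffs at time scale $N^{-b}$ recorded above yields $\|\tilde v\|_{F_N}\lesssim\sup_j\|\tilde v_j\|_{F_N}$.

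\emph{Estimating the pieces, and the main obstacle.} For the homogeneous piece $\beta_j(t)\,S_\alpha(t-t_j)v_N(t_j)$, localizing in time to scale $N^{-b}$ concentrates the modulation $\tau-\omega_\alpha(\xi,\eta)$ at $|\tau-\omega_\alpha|\lesssim N^b$ with rapidly decaying tails, so the weighted sum defining $\|\cdot\|_{X_N}$ is a convergent geometric series bounded by $\|v_N(t_j)\|_{L^2}\le\sup_{[-T,T]}\|v_N\|_{L^2}$. The inhomogeneous (Duhamel) piece is the crux: after pulling out $S_\alpha$ it becomes a convolution in $\tau$ against the kernel $(e^{it(\tau-\omega_\alpha)}-1)/(\tau-\omega_\alpha)$, which I would split into the region $|\tau-\omega_\alpha|\gtrsim N^b$, where the operator is essentially multiplication by $(\tau-\omega_\alpha)^{-1}$ and hence turns the defining weight of $\mathcal{N}_N$ into that of $X_N$, and the region $|\tau-\omega_\alpha|\lesssim N^b$, where one really uses that the time cutoff lives at scale $N^{-b}$: this is exactly the averaged-kernel bound packaged in \eqref{prop}. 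This gives $\|\tilde v_j\|_{F_N}\lesssim\|v_N(t_j)\|_{L^2}+\|\beta_j\tilde g\|_{\mathcal{N}_N}\lesssim\sup_{[-T,T]}\|v_N\|_{L^2}+\|g_N\|_{\mathcal{N}_N(T)}$ uniformly in $j$, completing the single-block estimate. I expect the only genuinely delicate point to be this low-modulation Duhamel bound; everything else is bookkeeping with the multiplier properties and \eqref{prop}, as carried out in \cite{IKT, GO} and \cite[Section~2.5]{rsc}.
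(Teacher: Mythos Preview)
Your sketch is correct and follows precisely the standard scheme of \cite{IKT,GO} and \cite[Proposition~2.5.2]{rsc}. The paper itself does not supply an independent proof of this lemma; it simply records the statement and cites those same references, so there is nothing to compare beyond noting that your outline reproduces the argument the paper defers to.
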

	
	\subsection{Linear Strichartz estimates}
	Next, we prove an $L^4_{x,y,t}$-Strichartz estimate which we will use for estimates of non-resonant interactions. For this purpose, it suffices to take advantage of the evolution in the $\eta$-variables to be approximately of Schr\"odinger type.
	Let $d_1, d_2 \in \N_0$ with $d_1 + d_2 =2$ and $\epsilon>0$ if $d_2=2$ and $\epsilon=0$ if $d_2<2$.
	We recall an $L^4$-Strichartz estimate for the Schr\"odinger equation.
	\begin{lemma}
		\label{lem:LinearStrichartz}
		For all $\lambda \geq 1$, the following estimate holds:
		\begin{equation*}
			\| e^{it \Delta} f \|_{L^4_{y,t}( \mathbb{R}^{d_1} \times \mathbb{T}_\lambda^{d_2} \times [0,1])} \lesssim  \lambda^{\epsilon } \| f \|_{H^\epsilon(\mathbb{R}^{d_1} \times \mathbb{T}_\lambda^{d_2})}.
		\end{equation*}
	\end{lemma}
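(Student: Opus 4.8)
The plan is to handle the three geometries $d_2=0,1,2$ separately, the first two loss-free and with a $\lambda$-independent constant, the last one carrying the Bourgain-type $\varepsilon$-loss and the factor $\lambda^\varepsilon$. When $d_2=0$ we have $\R^{d_1}\times\T_\lambda^{d_2}=\R^2$ (no periodic directions, no parameter $\lambda$), and the claim is exactly the classical $L^4_{t,y}(\R\times\R^2)$-Strichartz estimate for the Schr\"odinger group (the pair $(4,4)$ is admissible in two space dimensions), restricted from $t\in\R$ to $t\in[0,1]$; I would simply quote it, e.g.\ via the $TT^\ast$-argument based on $\|e^{it\Delta}\|_{L^1\to L^\infty}\lesssim|t|^{-1}$ and Hardy--Littlewood--Sobolev.

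For $d_2\le1$ (so $d_1\ge1$) I would prove the stronger statement, with $\varepsilon=0$ and a constant independent of $\lambda$, by a bilinear Fourier argument. Fix $\psi\in C_c^\infty(\R)$ with $\psi\equiv1$ on $[0,1]$ and set $w=\psi(t)\,e^{it\Delta}f$, so $\|e^{it\Delta}f\|_{L^4_{y,t}([0,1])}^2\le\|w^2\|_{L^2_{y,t}}$ and $\mathcal F_{y,t}(w)$ is supported within $O(1)$ of the paraboloid $\{\tau=-|\xi|^2-|\eta|^2\}$. Writing $\mathcal F_{y,t}(w^2)=\mathcal F_{y,t}(w)\ast\mathcal F_{y,t}(w)$, integrating out the temporal convolution variable, and using Cauchy--Schwarz in the spatial convolution variable, the estimate reduces to the bound
\begin{equation*}
	\sup_{(\xi,\eta,\tau)}\ \int\big|\Psi\big(\tau+|\zeta_1|^2+|\zeta-\zeta_1|^2\big)\big|\,d\zeta_1\ \lesssim\ 1,\qquad\zeta\in\R^{d_1}\times\Z_\lambda^{d_2},\ \ \Psi\in\mathcal S(\R),
\end{equation*}
with $d\zeta_1$ equal to Lebesgue measure in the $d_1$ non-periodic variables and the normalized counting measure in the $d_2$ periodic ones. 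Using $|\zeta_1|^2+|\zeta-\zeta_1|^2=\tfrac12|\zeta|^2+2|\zeta_1-\tfrac\zeta2|^2$, integration over the non-periodic components already yields a weight $\lesssim(1+|c|)^{-1/2}$ with rapid decay for $c\gtrsim1$, and the remaining sum over the (at most one) periodic component of this weight along the parabola $c=c(\zeta_1)$ is, after a direct co-area estimate, $O(1)$ uniformly in $\lambda\ge1$; this is the only delicate point here, and it works precisely because the resonant hypersurface is a paraboloid and at least one summation variable is continuous. This yields $\|w^2\|_{L^2}^2\lesssim\|f\|_{L^2}^4$, hence the claim. (Alternatively, for $d_2=1$ one may quote the known loss-free $L^4$-estimate on $\R\times\T$ and reduce $\lambda>1$ to $\lambda=1$ by the rescaling used below.)

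For $d_2=2$ (so $d_1=0$ and the domain is the rescaled torus $\T_\lambda^2$), I would first decompose $f=\sum_NP_Nf$ dyadically in frequency and treat each piece. Rescaling $y\mapsto\lambda y$ and $t\mapsto\lambda^2t$ conjugates $e^{it\Delta}$ on $\T_\lambda^2$ to $e^{it\Delta}$ on $\T^2$, sends the interval $[0,1]$ into $[0,\lambda^{-2}]\subseteq[0,1]$, and carries $P_Nf$ to data $g$ on $\T^2$ with $\widehat g$ supported in $\{|k|\lesssim\lambda N\}$; tracking the Jacobians and the $L^2$-normalization, the powers of $\lambda$ cancel and one is left with $\|e^{it\Delta}P_Nf\|_{L^4(\T_\lambda^2\times[0,1])}\lesssim\|e^{it\Delta}g\|_{L^4(\T^2\times[0,1])}$. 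Now I would apply Bourgain's periodic $L^4$-estimate $\|e^{it\Delta}g\|_{L^4(\T^2\times[0,1])}\lesssim_\varepsilon M^\varepsilon\|g\|_{L^2}$ for $\widehat g$ supported in $\{|k|\le M\}$ --- whose arithmetic core is the divisor bound $r_2(n)\ll_\varepsilon n^\varepsilon$ for the number of representations of $n$ as a sum of two squares, applied to shifted circles --- with $M=\lambda N$, which gives $\|e^{it\Delta}P_Nf\|_{L^4}\lesssim_\varepsilon(\lambda N)^\varepsilon\|P_Nf\|_{L^2}$. Finally I would reassemble via a Littlewood--Paley square function and Minkowski's inequality in $L^{4/2}=L^2$:
\begin{equation*}
	\|e^{it\Delta}f\|_{L^4}^2\ \lesssim\ \sum_N\|e^{it\Delta}P_Nf\|_{L^4}^2\ \lesssim_\varepsilon\ \lambda^{2\varepsilon}\sum_NN^{2\varepsilon}\|P_Nf\|_{L^2}^2\ \sim\ \lambda^{2\varepsilon}\|f\|_{H^\varepsilon}^2,
\end{equation*}
which is the assertion after relabelling $\varepsilon$.

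I expect the case $d_2=2$ to be the main obstacle: no loss-free $L^4$-estimate holds on a two-dimensional torus, so both the $\varepsilon$ and the factor $\lambda^\varepsilon$ are genuinely necessary, and one must carefully bookkeep the powers of $\lambda$ through the rescaling and the $L^2$-normalizations so that they cancel up to exactly $\lambda^\varepsilon$, in addition to invoking the number-theoretic input behind Bourgain's estimate. In the cases $d_2\le1$ the only substantial point is the uniform-in-$\lambda$ bound for the resonance level-set measure displayed above.
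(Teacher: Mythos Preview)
Your proposal is correct and follows essentially the same route as the paper: the paper does not give a proof at all but simply cites the classical Strichartz estimate for $d_2=0$, Takaoka--Tzvetkov for the semi-periodic case $d_2=1$, and Bourgain for $d_2=2$ at $\lambda=1$, then states that the general $\lambda$ follows by rescaling. You have supplied exactly these ingredients together with the details the paper omits --- in particular the explicit rescaling bookkeeping for $d_2=2$ and a sketch of the bilinear argument underlying the Takaoka--Tzvetkov result --- so your treatment is strictly more informative than the paper's, while being in full agreement with it.
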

	On Euclidean space, i.e., $d_2 = 0$ this is standard (cf. \cite[Chapter~2]{Tao2006}). If $\lambda=1$, this is due to Bourgain \cite{Bourgain1993} on the torus, and in the semi-periodic case, this was proved by Takaoka--Tzvetkov \cite{TakaokaTzvetkov2001}. The general case follows by rescaling.

	Next, we prove an $L^4_{x,y,t}$-Strichartz estimate for the linear propagator $S_\alpha$. We will assume certain lower bounds on the frequencies: If the $x$-variable is $\nu$-periodic, due to the mean zero assumption, the case that $x$-frequencies are much smaller than $\nu^{-1}$ is vacuous. Regarding the $y$-frequencies: We never decompose to a scale below $\lambda^{-1}$.
	
	\begin{lemma}
		\label{lem:L4StrichartzLocalized}
		Consider  $\lambda \in 2^{\N_0}$, $\D_\lambda = \K_\nu \times \R^{d_1} \times \T_\lambda^{d_2}$, and $K,N \in 2^{\Z}$, and additionally if $ \D_\lambda = \T_{\nu}\times \R^{d_1} \times \T_{\lambda}$, we suppose that $K,N \geq \nu^{-1}$. Let $I \subseteq \R$ be an interval with $|I| \sim K$ and $|\xi| \sim N$ for $\xi \in I$. Further, let $M \in 2^{\Z} \cap [\lambda^{-1},\infty)$ and $Q_M \subseteq \R^{2}$ be any cube of side-length $M$. For all $f \in L^2_{x,y}(\D_\lambda)$ with $\mathrm{supp}(\mathcal{F}_{x,y}(f))\subseteq I\times Q_M$ we have
		\begin{equation}
			\label{eq:L4StrichartzLocalized}
			\|  S_\alpha(t) u_0 \|_{L^4_{x,y,t}(\D_\lambda \times [0,1])} \lesssim K^{\frac{1}{4}}  C(N,M) \|  u_0 \|_{L^2_{x,y}}
		\end{equation}
		with $\epsilon $ as in Lemma \ref{lem:LinearStrichartz} and
		\begin{equation*}
			C(N,M) =
			\begin{cases}
				M^{\frac{1}{2}}, &\quad M \lesssim 1,\\
				\lambda^{\epsilon} (N^{\frac{1}{4}} \vee 1) M^\epsilon, &\quad M \gg 1.
			\end{cases}
		\end{equation*}
		
	\end{lemma}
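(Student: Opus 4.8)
The plan is to localize the $x$-frequency and exploit that the $y$-evolution of $S_\alpha$ is then a rescaled free Schr\"odinger flow, to which Lemma~\ref{lem:LinearStrichartz} applies, while the residual $x$- and $t$-dependence is absorbed by elementary Fourier estimates in $x$ alone. By the symmetry $\xi\mapsto-\xi$ of $\omega_\alpha$ we may assume $\xi>0$ on $I$; since $|\xi|\sim N$ there, the singularity of $\omega_\alpha$ at the origin is irrelevant.

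If $M\lesssim 1$ the estimate is elementary: using $|[0,1]|=1$, the unitarity of $S_\alpha(t)$ on $L^2_{x,y}(\D_\lambda)$, Bernstein's inequality in $(x,y)$ and the Fourier support hypothesis (together with $M\ge\lambda^{-1}$, so that $|I\times Q_M|\sim KM^2$ even in the periodic directions),
$$\|S_\alpha(t)u_0\|_{L^4_{x,y,t}(\D_\lambda\times[0,1])}\le\sup_{t\in[0,1]}\|S_\alpha(t)u_0\|_{L^4_{x,y}}\lesssim (KM^2)^{1/4}\|u_0\|_{L^2_{x,y}},$$
which is the asserted bound with $C(N,M)=M^{1/2}$.

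Now let $M\gg 1$. Setting $h_\xi:=\mathcal F_y^{-1}[\hat u_0(\xi,\cdot)]$, the definition of $\omega_\alpha$ gives, up to a fixed constant,
$$S_\alpha(t)u_0(x,y)=\int_I e^{ix\xi}\,e^{it|\xi|^\alpha\xi}\,\big(e^{-i(t/\xi)\Delta_y}h_\xi\big)(y)\,d\xi=:\int_I e^{ix\xi}G_\xi(y,t)\,d\xi$$
(a sum over $\xi\in I\cap\Z_\nu$ with the normalized measure if $\K_1=\T_\nu$; the sign in the Schr\"odinger factor is immaterial below). Estimating the $L^4_x$-norm for fixed $(y,t)$ by Hausdorff--Young in $x$, exchanging the $L^4_{y,t}$- and $L^{4/3}_\xi$-norms by Minkowski's inequality, and then applying H\"older in $\xi$ over $I$ (whose measure is $\sim K$, using $K\ge\nu^{-1}$ in the fully periodic case), one obtains
$$\|S_\alpha(t)u_0\|_{L^4_{x,y,t}}\lesssim |I|^{1/4}\Big(\int_I\|G_\xi\|_{L^4_{y,t}([0,1])}^2\,d\xi\Big)^{1/2}.$$
For fixed $\xi$ the factor $e^{it|\xi|^\alpha\xi}$ has modulus one and is independent of $y$, so $\|G_\xi\|_{L^4_{y,t}}=\|e^{-i(t/\xi)\Delta_y}h_\xi\|_{L^4_{y,t}([0,1])}$; substituting $s=t/\xi$ this equals $|\xi|^{1/4}\|e^{-is\Delta_y}h_\xi\|_{L^4_{y,s}([0,|\xi|^{-1}])}$. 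Covering $[0,|\xi|^{-1}]$ by $\lesssim 1+|\xi|^{-1}$ unit intervals and using that $e^{-ik\Delta_y}$ preserves the $L^2$-norm and the Fourier support, it suffices to bound $\|e^{-is\Delta_y}h\|_{L^4_{y,s}([0,1])}$ when $\mathcal F_y h$ is supported in a cube of side $M$. Here I invoke the Galilean invariance of the Schr\"odinger flow: modulating $h$ by $e^{-i\eta_0\cdot y}$, with $\eta_0$ a lattice point within $O(\lambda^{-1})$ of the center of the cube, recenters its Fourier support at the origin modulo a unimodular factor and a ($t$-dependent) translation in $y$, both $L^4_{y,s}$-preserving, whence Lemma~\ref{lem:LinearStrichartz} gives $\lesssim\lambda^\epsilon M^\epsilon\|h\|_{L^2}$. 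Therefore $\|G_\xi\|_{L^4_{y,t}}\lesssim |\xi|^{1/4}(1+|\xi|^{-1})^{1/4}\lambda^\epsilon M^\epsilon\|h_\xi\|_{L^2}\sim (N^{1/4}\vee 1)\lambda^\epsilon M^\epsilon\|h_\xi\|_{L^2}$, and inserting this into the previous display with $\big(\int_I\|h_\xi\|_{L^2_y}^2\,d\xi\big)^{1/2}\sim\|u_0\|_{L^2_{x,y}}$ yields \eqref{eq:L4StrichartzLocalized}.

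The main obstacle is the regime $M\gg 1$, and the delicate point is that the loss coming from Lemma~\ref{lem:LinearStrichartz} must be governed by the \emph{size} $M$ of the cube $Q_M$ rather than by its location; Galilean invariance, together with the freedom to recenter at a lattice point (licensed by $M\ge\lambda^{-1}$ in the periodic directions), is exactly what makes this work. Secondarily, one must keep careful track of the normalized counting measures so that Hausdorff--Young and H\"older in $x$ produce the factor $K^{1/4}$ uniformly over the domains $\D_\lambda$, which is where the hypothesis $K\ge\nu^{-1}$ in the case $\K_1=\T_\nu$ is used.
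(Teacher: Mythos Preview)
Your proof is correct and follows essentially the same approach as the paper: reduce to a fixed-$\xi$ Schr\"odinger evolution in $y$, rescale time, and apply Lemma~\ref{lem:LinearStrichartz} after Galilean recentering. The only cosmetic differences are that the paper reaches the intermediate bound $K^{1/4}\big(\int_I\|G_\xi\|_{L^4_{y,t}}^2\,d\xi\big)^{1/2}$ via Bernstein in $\xi$ followed by Plancherel and Minkowski (rather than your Hausdorff--Young plus H\"older), and for $M\lesssim 1$ it stays within the $\xi$-fibered framework and uses Bernstein in $y$ alone, whereas you treat that case directly by Bernstein in $(x,y)$---both routes are equivalent and equally short.
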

	\begin{proof}
		We can use Bernstein's inequality in the $\xi$-frequencies, Plancherel's theorem, and Minkowski's inequality to write
		\begin{equation*}
			\begin{split}
				\| \int e^{i(x.\xi + y.\eta + t \xi |\xi|^\alpha + t \eta^2/\xi)} \hat{f}(\xi,\eta) d\xi d\eta \big\|_{L^4_{x,y,t}(\D_\lambda \times [0,1])} &\lesssim K^{\frac{1}{4}} \big\| \big( \int d\xi \big| \int d\eta e^{i(y.\eta + t \eta^2/\xi)} \hat{f}(\xi,\eta) \big|^2 \big)^{\frac{1}{2}} \big\|_{L^4_{y,t}(\R^{d_1} \times \T_\lambda^{d_2} \times [0,1])} \\
				&\lesssim K^{\frac{1}{4}} \big( \int d\xi \big\| \int d\eta e^{i (y.\eta + t \eta^2/\xi)} \hat{f}(\xi,\eta) \big\|^2_{L^4_{y,t}(\R^{d_1} \times \T_\lambda^{d_2} \times [0,1])} \big)^{1/2}.
			\end{split}
		\end{equation*}
		For $M \lesssim 1$, we use Galilean invariance and Bernstein's inequality to conclude the bound. If $M \geq 1$, we find for $\xi > 0$ (the case $\xi < 0$ is treated by time-inversion)
		\begin{equation*}
			\big\| \int d\eta e^{i(y.\eta + t |\eta|^2 / \xi)} \hat{f}(\xi,\eta) \big\|_{L^4_{y,t}(\R^{d_1} \times \T_\lambda^{d_2} \times [0,1])} \lesssim N^{\frac{1}{4}} \big\| \int d\eta e^{i(y.\eta + t \eta^2)} \hat{f}(\xi,\eta) \big\|_{L^4_{y,t}(\R^{d_1} \times \T_\lambda^{d_2}\times [0,\xi^{-1}] )}.
		\end{equation*}
		We estimate by Galilean invariance and Strichartz estimates (Lemma \ref{lem:LinearStrichartz}) on $N^{-1} \vee 1$ unit time intervals
		\begin{equation*}
			\big\| \int d\eta e^{i(y.\eta + t |\eta|^2 / \xi)} \hat{f}(\xi,\eta) \big\|_{L^4_{y,t}(\R^{d_1} \times \T_\lambda^{d_2} \times [0,1])}	\lesssim (N^{\frac{1}{4}} \vee 1) M^{\epsilon}   \lambda^{\epsilon} \| \hat{f}(\xi,\cdot) \|_{L^2_\eta},
		\end{equation*}
		and finally we take the $L^2$-norm in $\xi$.
	\end{proof}
	The estimate could be further improved taking into account frequency dependent time localization, however, this is not required for the present analysis.
	
	We point out that only the $L^4_{y,t}$-Strichartz estimates on the two-dimensional torus is not scale-invariant and loses a factor $\lambda^{\epsilon}$.
	
	We note the following consequence for small $x$-frequencies.
	\begin{corollary}
		\label{cor:L4StrichartzSmallFrequencies}
		Under the assumptions of Lemma \ref{lem:L4StrichartzLocalized}, if $I\subseteq [-10^4,10^4]$ is fixed,  we have
		\begin{equation}
			\label{eq:L4StrichartzLocalizedSmallFrequencies}
			\|  S_\alpha(t) u_0 \|_{L^4_{x,y,t}(\D_\lambda \times [0,1])} \lesssim \lambda^\epsilon M^\epsilon \|  u_0 \|_{L^2_{x,y}}.
		\end{equation}
	\end{corollary}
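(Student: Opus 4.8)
The claim is an immediate specialization of Lemma~\ref{lem:L4StrichartzLocalized}, so the plan is simply to track how the constants there collapse once the $x$-frequencies are bounded. First I would record that the hypothesis $I \subseteq [-10^4,10^4]$ forces both $K \sim |I| \lesssim 1$ and $N \sim |\xi| \lesssim 1$ for $\xi \in I$ (and, in the doubly periodic case, this is consistent with the standing lower bounds $K,N \geq \nu^{-1}$). Inserting this into \eqref{eq:L4StrichartzLocalized}, the prefactor $K^{\frac14}$ is bounded by an absolute constant, and in the regime $M \gg 1$ one has $N^{\frac14}\vee 1 = 1$, so that $C(N,M) \lesssim \lambda^\epsilon M^\epsilon$.

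Next I would split according to the size of $M$. For $M \gg 1$, the two observations above turn Lemma~\ref{lem:L4StrichartzLocalized} directly into
\[
\| S_\alpha(t) u_0 \|_{L^4_{x,y,t}(\D_\lambda \times [0,1])} \lesssim \lambda^\epsilon M^\epsilon \| u_0 \|_{L^2_{x,y}},
\]
which is exactly \eqref{eq:L4StrichartzLocalizedSmallFrequencies}. For $M \lesssim 1$, the lemma gives the bound $M^{\frac12}\|u_0\|_{L^2}$; since $\epsilon \leq \tfrac12$ (indeed $\epsilon = 0$ unless $d_2 = 2$, and in the remaining case $\epsilon$ may be taken arbitrarily small) and $M \lesssim 1$, one has $M^{\frac12} \leq M^\epsilon \leq \lambda^\epsilon M^\epsilon$ using $\lambda \geq 1$, so the desired estimate holds here as well. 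Combining the two regimes proves the corollary.

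There is no genuine obstacle in this argument; the only point that requires a moment's attention is the regime $M \lesssim 1$, where one must observe that the Bernstein gain $M^{\frac12}$ coming from the transverse frequency localization dominates the Strichartz loss $M^\epsilon$ — which is precisely why $\epsilon$ is taken to be at most $\tfrac12$.
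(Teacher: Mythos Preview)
Your argument is correct if one reads ``under the assumptions of Lemma~\ref{lem:L4StrichartzLocalized}'' literally, in particular retaining the dyadic localization $|\xi|\sim N$ for $\xi\in I$: then $I\subseteq[-10^4,10^4]$ forces $K,N\lesssim 1$, and plugging into \eqref{eq:L4StrichartzLocalized} gives the claim exactly as you say.

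The paper's one-line proof instead performs a Littlewood--Paley decomposition in the $\xi$-variable, which indicates that the corollary is meant to apply even when the fixed interval $I\subseteq[-10^4,10^4]$ is \emph{not} dyadically localized (e.g.\ $I=[-1,1]$, where $|\xi|$ is not comparable to any single $N$, so the hypothesis $|\xi|\sim N$ of the lemma fails). Under that reading your direct invocation of Lemma~\ref{lem:L4StrichartzLocalized} is not justified. The paper's fix is to write $u_0=\sum_{N\lesssim 1}P_N u_0$, apply the lemma to each piece, and sum; the key point making the sum converge is that after localizing to $|\xi|\sim N$ one also has $K\sim N$, so the prefactor $K^{1/4}=N^{1/4}$ yields $\sum_{N\in 2^{\Z},\,N\lesssim 1}N^{1/4}\|P_N u_0\|_{L^2}\lesssim\|u_0\|_{L^2}$ by Cauchy--Schwarz. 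Your argument misses this summation step, though the gap is entirely routine once the decomposition is made.
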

	\begin{proof}
		After a Littlewood-Paley decomposition in the $x$-frequencies, this is a consequence of Lemma \ref{lem:L4StrichartzLocalized}.
	\end{proof}
	
	
	We record a consequence for the interaction of two functions with specified frequency and modulation support. We will use this estimate for non-resonant interactions (see Section \ref{section:Resonance}).
	\begin{lemma}
		\label{lem:L4Summary}
		Let $N_1, N_2 \in 2^{\Z}$, $N_2 \leq N_1$, $L_i \in 2^{\N_0}$, $M_i \in 2^{\Z} \cap [\lambda^{-1}, \infty)$ for $i=1,2,3$, and $N_i \geq \nu^{-1}$ if $\D_\lambda = \T_\nu \times \R^{d_1} \times \T^{d_2}_\lambda$. Let $f^{(i)}: \D_\lambda^* \times \R \to \R_+$ be supported in $\tilde{D}_{N_i,M_i,L_i}$ for $i=1,2$. Then the following estimate holds:
		\begin{equation*}
			\| \mathbf{1}_{\tilde{D}_{N_3,M_3,L_3}} (f^{(1)} * f^{(2)} ) \|_{L^2_{\xi,\eta,\tau}} \lesssim \lambda^\epsilon C(N_1,N_2) N_{\min}^{\frac{1}{2}} M_{\min}^{\epsilon} \prod_{i=1}^2 L_i^{\frac{1}{2}} \| f^{(i)} \|_{L^2}
		\end{equation*}
		with
		\begin{equation*}
			C(N_1,N_2) =
			\begin{cases}
				(N_1 N_2)^{\frac{1}{4}}, \; &N_2 \geq 1, \\
				N_1^{\frac{1}{4}}, \; &N_2 \leq 1 \leq N_1, \\
				1, \; &N_1 \leq 1.
			\end{cases}
		\end{equation*}
	\end{lemma}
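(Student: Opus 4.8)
The plan is to reduce the trilinear convolution estimate to the bilinear $L^4$-Strichartz estimate from Lemma~\ref{lem:L4StrichartzLocalized} via duality and H\"older. First I would rewrite the left-hand side by duality: for a test function $g \in L^2_{\xi,\eta,\tau}$ supported in $\tilde{D}_{N_3,M_3,L_3}$, we have
\begin{equation*}
\| \mathbf{1}_{\tilde{D}_{N_3,M_3,L_3}} (f^{(1)} * f^{(2)}) \|_{L^2} = \sup_{\|g\|_{L^2}=1} \int (f^{(1)} * f^{(2)})(\zeta) \overline{g(\zeta)} \, d\zeta,
\end{equation*}
and the convolution integral, after unfolding, is a trilinear expression $\int f^{(1)}(\zeta_1) f^{(2)}(\zeta_2) \overline{g(\zeta_1+\zeta_2)} \, d\zeta_1 d\zeta_2$. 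Passing to physical space via Plancherel, this becomes $\int_{\D_\lambda \times \R} F^{(1)} F^{(2)} \overline{G} \, dx\,dy\,dt$ where $F^{(i)} = \mathcal{F}^{-1}_{x,y,t} f^{(i)}$ and $G = \mathcal{F}^{-1}_{x,y,t} g$, and each $F^{(i)}$ has space-time Fourier support in $\tilde{D}_{N_i,M_i,L_i}$.

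Next I would apply H\"older's inequality in the form $L^4_{x,y,t} \times L^4_{x,y,t} \times L^2_{x,y,t}$, bounding the trilinear integral by $\|F^{(1)}\|_{L^4} \|F^{(2)}\|_{L^4} \|G\|_{L^2}$, with $\|G\|_{L^2} = \|g\|_{L^2} = 1$. The key point is to estimate $\|F^{(i)}\|_{L^4_{x,y,t}}$ by the linear Strichartz bound. Since $f^{(i)}$ is supported where $|\tau - \omega_\alpha(\xi,\eta)| \sim L_i$ and $(\xi,\eta) \in \tilde{A}_{N_i,M_i}$, one writes $F^{(i)}(x,y,t) = \int_{|\sigma| \sim L_i} e^{it\sigma} \big( S_\alpha(t) h^{(i)}_\sigma \big)(x,y)\, d\sigma$ where $h^{(i)}_\sigma$ has spatial Fourier support in $\tilde{A}_{N_i,M_i}$ and $\|h^{(i)}_\sigma\|_{L^2_{x,y}}$ depends on $\sigma$. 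Applying Minkowski's inequality to pull the $d\sigma$ integral outside the $L^4_{x,y,t}$-norm, then Lemma~\ref{lem:L4StrichartzLocalized} on $O(1)$-length time intervals (here $|I| \sim K$ in that lemma corresponds to taking $K \sim 1$ after covering the $\xi$-support of width $\sim N_i$ by unit intervals, or more carefully keeping track that the $\xi$-support has measure $\sim N_i$ when $N_i \gtrsim 1$), and finally Cauchy--Schwarz in $\sigma$ over the interval of length $\sim L_i$, yields $\|F^{(i)}\|_{L^4} \lesssim L_i^{1/2} \cdot (\text{Strichartz constant}) \cdot \|f^{(i)}\|_{L^2}$. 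The Strichartz constant from Lemma~\ref{lem:L4StrichartzLocalized}, with $|I| \sim N_i$ and cube side-length $M_i$, contributes a factor controlled by $C(N_i, M_i) N_i^{1/4}$ in the generic case and by $M_i^{1/2}$ when $M_i \lesssim 1$; multiplying the two such factors for $i=1,2$ and bookkeeping the cases for $N_1, N_2$ reproduces $\lambda^\epsilon C(N_1,N_2) N_{\min}^{1/2} M_{\min}^\epsilon$.

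The main obstacle, and the place where care is required, is getting the frequency-size bookkeeping to collapse to exactly $C(N_1,N_2) N_{\min}^{1/2} M_{\min}^\epsilon$ rather than a worse power. A naive product of the two linear bounds would give something like $(N_1 N_2)^{1/2}$ rather than $(N_1 N_2)^{1/4} N_{\min}^{1/2}$; the improvement comes from exploiting the convolution constraint $\xi_1 + \xi_2 = \xi_3$ together with the fact that one of the three outputs is placed in $L^2$ (not $L^4$), so only one factor of $N$ gets the full Bernstein/Strichartz loss while the other is gained back as a gain. Concretely one should not apply the crude linear $L^4$ bound to \emph{both} $F^{(1)}$ and $F^{(2)}$ independently; instead, the factor $N_{\min}^{1/2}$ should be extracted by an orthogonality/Bernstein argument on the smaller-frequency function restricted to the convolution support, which limits its effective support, while the $(N_1 N_2)^{1/4}$ comes from the $K^{1/4}$ factor in \eqref{eq:L4StrichartzLocalized} applied with $K \sim N_i$. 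Similarly, the $M_i$-dependence must be tracked so that only $M_{\min}^\epsilon$ survives: this uses that the output cube $Q_{M_3}$ together with the convolution constraint forces $M_3 \lesssim \max(M_1,M_2)$, and the Strichartz constant only sees $M^\epsilon$ for $M \gg 1$. The case distinctions $N_2 \geq 1$, $N_2 \leq 1 \leq N_1$, $N_1 \leq 1$ then follow by specializing Lemma~\ref{lem:L4StrichartzLocalized} (using $C(N,M) = M^{1/2}$ for $M \lesssim 1$ and $N^{1/4}\vee 1$ otherwise) in each regime, with Corollary~\ref{cor:L4StrichartzSmallFrequencies} handling the small-$x$-frequency situation $N_1 \leq 1$.
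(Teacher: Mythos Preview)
Your overall strategy matches the paper's: pass to physical space via Plancherel, apply H\"older $L^4 \times L^4$, and invoke Lemma~\ref{lem:L4StrichartzLocalized}. (The duality step is unnecessary: Plancherel gives $\|f^{(1)}*f^{(2)}\|_{L^2_{\xi,\eta,\tau}} = \|F^{(1)} F^{(2)}\|_{L^2_{x,y,t}}$ directly.) However, the bookkeeping in your final paragraph is tangled and, as written, would not close.

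The paper's argument is the almost-orthogonality reduction you allude to, but carried out more cleanly: the convolution constraint $(\xi_1+\xi_2,\eta_1+\eta_2)=(\xi,\eta)$ allows one to decompose \emph{both} $f^{(1)}$ and $f^{(2)}$ into pieces supported on $\xi$-intervals of length $N_{\min}$ and $\eta$-cubes of side $M_{\min}$, with different pairs of pieces landing in orthogonal output regions. After this reduction, Lemma~\ref{lem:L4StrichartzLocalized} is applied to each factor with $K = N_{\min}$ (not $K \sim N_i$ as you suggest). The two $K^{1/4}$ factors then combine to $N_{\min}^{1/2}$, while $C(N_1,N_2)$ comes from the product of the $(N_i^{1/4}\vee 1)$ contributions inside $C(N_i,M_{\min})$. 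Your attribution is backwards: taking $K \sim N_i$ in the Strichartz lemma would reproduce exactly the naive $(N_1 N_2)^{1/2}$ loss you are trying to avoid, and there is no separate Bernstein step producing $N_{\min}^{1/2}$ on top of that. The $M_{\min}^\epsilon$ factor arises for the same reason: after orthogonality both $\eta$-supports have side $M_{\min}$, so each $C(N_i,M_{\min})$ contributes only $M_{\min}^\epsilon$.
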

	\begin{proof}
		By almost orthogonality, we can suppose that $\text{supp} (f^{(i)}) \subseteq I_i \times Q_i \times \R$ with $I_i \subseteq \R$ an interval of length $N_{\min}$ and $Q_i$ a cube of length $M_{\min}$. Then, we can use two $L^4_{x,y,t}$-Strichartz estimates \eqref{eq:L4StrichartzLocalized} to find in case of $N_2 \geq 1$:
		\begin{equation*}
			\begin{split}
				\| \mathbf{1}_{\tilde{D}_{N_3,M_3,J_3}} (f^{(1)} * f^{(2)} ) \|_{L^2_{\xi,\eta,\tau}} &\lesssim
				\| \mathcal{F}^{-1} f^{(1)} \|_{L^4_{x,y,t}} \| \mathcal{F}^{-1} f^{(2)} \|_{L^4_{x,y,t}} \\
				&\lesssim \lambda^\epsilon N_{\min}^{\frac{1}{2}} (N_1 N_2)^{\frac{1}{4}} M_{\min}^{ \epsilon} \prod_{i=1}^2 L_i^{\frac{1}{2}} \| f^{(i)} \|_{L^2}.
			\end{split}
		\end{equation*}
		If $N_2 \leq 1 \leq N_1$, we find by \eqref{eq:L4StrichartzLocalized}
		\begin{equation*}
			\begin{split}
				\| \mathbf{1}_{\tilde{D}_{N_3,M_3,L_3}} (f^{(1)} * f^{(2)}) \|_{L^2_{\xi,\eta,\tau}} &\lesssim
				\| \mathcal{F}^{-1} f^{(1)} \|_{L^4_{x,y,t}} \| \mathcal{F}^{-1} f^{(2)} \|_{L^4_{x,y,t}} \\
				&\lesssim \lambda^\epsilon N_{\min}^{\frac{1}{2}} N_1^{\frac{1}{4}} M_{\min}^{ \epsilon} \prod_{i=1}^2 L_i^{\frac{1}{2}} \| f^{(i)} \|_{L^2}.
			\end{split}
		\end{equation*}
		Lastly, if $N_1 \leq 1$, \eqref{eq:L4StrichartzLocalized} yields
		\begin{equation*}
			\begin{split}
				\| \mathbf{1}_{\tilde{D}_{N_3,M_3,L_3}} (f^{(1)} * f^{(2)} ) \|_{L^2_{\xi,\eta,\tau}} &\lesssim
				\| \mathcal{F}^{-1} f^{(1)} \|_{L^4_{x,y,t}} \| \mathcal{F}^{-1} f^{(2)}  \|_{L^4_{x,y,t}} \\
				&\lesssim \lambda^\epsilon N_{\min}^{\frac{1}{2}} M_{\min}^{ \epsilon} \prod_{i=1}^2 L_i^{\frac{1}{2}} \| f^{(i)} \|_{L^2}.
			\end{split}
		\end{equation*}
		This finishes the proof.
	\end{proof}
	For comparison, we note the following trivial estimate which we employ in certain non-resonant cases:
	\begin{lemma}
		\label{lem:CauchySchwarz}
		Let $N_i \in 2^{\Z}$, $M_i \in 2^{\Z} \cap [\lambda^{-1},\infty)$, $L_i \in 2^{\N_0}$ and $f^{(i)}: \D_\lambda \times \R \rightarrow \R_{+}$ be $L^2$ functions supported in $\tilde{D}_{N_i,M_i,L_i}$, for $i=1,2,3$, respectively. If $\D_\lambda = \T_\nu \times \R^{d_1} \times \T_\lambda^{d_2}$, we suppose that $N_i \geq \nu^{-1}$ in addition. Then
		\begin{equation}\label{eq:NonReso}
			\int_{\D_\lambda^* \times \R}(f^{(1)} \ast f^{(2)})\cdot f^{(3)} \lesssim  (N_{\min} M^2_{\min} L_{\min})^{\frac{1}{2}} \cdot \prod_{i=1}^3 \| f^{(i)} \|_{L^2}.
		\end{equation}
		\begin{proof}
			The estimate follows from applying the Cauchy-Schwarz inequality.
		\end{proof}
	\end{lemma}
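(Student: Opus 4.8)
The plan is to prove \eqref{eq:NonReso} by peeling off the modulation variable $\tau$, then the transverse frequency $\eta$, then the frequency $\xi$, each time with a single application of the following elementary convolution bound: if $Y$ is $\R$ or a (rescaled) integer lattice with its (normalized) Haar measure, and $\phi_1,\phi_2,\phi_3\colon Y\to\R_+$ lie in $L^2(Y)$ with $\phi_i$ supported in a set $E_i$ of finite measure, then
\begin{equation*}
\int_Y\!\int_Y \phi_1(s_1)\,\phi_2(s_2)\,\phi_3(s_1+s_2)\,ds_1\,ds_2 \;\le\; \Big(\min_{i}|E_i|\Big)^{1/2}\prod_{i=1}^3\|\phi_i\|_{L^2(Y)}.
\end{equation*}
Indeed, $\int_Y(\phi_1\ast\phi_2)\phi_3\le\|\phi_1\ast\phi_2\|_{L^\infty}\|\phi_3\|_{L^1}\le\|\phi_1\|_{L^2}\|\phi_2\|_{L^2}\,|E_3|^{1/2}\|\phi_3\|_{L^2}$ by Cauchy--Schwarz, and the left-hand side is invariant under permuting $\phi_1,\phi_2$ and under the reflection $\phi_3(\cdot)\mapsto\phi_3(-\cdot)$, so the minimum over $i$ may be inserted.

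First I would write the left-hand side of \eqref{eq:NonReso}, with $\zeta_i=(\xi_i,\eta_i,\tau_i)$, in the symmetric form $\int\!\int f^{(1)}(\zeta_1)\,f^{(2)}(\zeta_2)\,f^{(3)}(\zeta_1+\zeta_2)\,d\zeta_1\,d\zeta_2$. Freezing $(\xi_1,\eta_1)$ and $(\xi_2,\eta_2)$ and applying the bound above in the variable $\tau$: each slice $f^{(i)}(\xi_i,\eta_i,\cdot)$ is supported in $\{|\tau-\omega_\alpha(\xi_i,\eta_i)|\lesssim L_i\}$, an interval of length $\lesssim L_i$, so this step yields the factor $L_{\min}^{1/2}$ and replaces $f^{(i)}$ by $g^{(i)}(\xi,\eta):=\|f^{(i)}(\xi,\eta,\cdot)\|_{L^2_\tau}$, which is supported in $\{|\xi|\sim N_i,\,|\eta|\sim M_i\}$ with $\|g^{(i)}\|_{L^2_{\xi,\eta}}=\|f^{(i)}\|_{L^2}$. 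Freezing next $\xi_1,\xi_2$ and applying the bound in the two-dimensional variable $\eta$, whose relevant support has measure $\lesssim M_i^2$, produces the factor $(M_{\min}^2)^{1/2}=M_{\min}$ and leaves $k^{(i)}(\xi):=\|g^{(i)}(\xi,\cdot)\|_{L^2_\eta}$, supported in $\{|\xi|\sim N_i\}$ (measure $\lesssim N_i$) with $\|k^{(i)}\|_{L^2_\xi}=\|f^{(i)}\|_{L^2}$. A last application in $\xi$ produces $N_{\min}^{1/2}$. Multiplying the three gained factors gives the claimed bound $(N_{\min}M_{\min}^2 L_{\min})^{1/2}\prod_i\|f^{(i)}\|_{L^2}$.

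Since this is a soft estimate I do not expect a genuine obstacle, but two bookkeeping points deserve care. First, to read off the \emph{separate} minima $N_{\min},M_{\min},L_{\min}$, rather than the weaker $\min_i|\mathrm{supp}\,f^{(i)}|^{1/2}$ that a single Cauchy--Schwarz would yield, it is essential to carry out the estimate one coordinate block at a time as above. Second, in the periodic directions the counting measure carries the normalizing factors $\lambda^{-1}$ and, for $\K_\nu=\T_\nu$, $\nu^{-1}$; the hypotheses $M_i\ge\lambda^{-1}$ and $N_i\ge\nu^{-1}$ are precisely what guarantees that a dyadic block of size $M_i$ in $\eta$, resp.\ $N_i$ in $\xi$, has normalized measure $\lesssim M_i^2$, resp.\ $\lesssim N_i$, so that the elementary bound applies with uniform constants across all the domains $\D_\lambda$ under consideration.
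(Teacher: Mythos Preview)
Your proof is correct and is exactly the argument the paper has in mind: the paper's own proof is the single sentence ``The estimate follows from applying the Cauchy--Schwarz inequality,'' and your iterated-coordinate version is the standard way to unpack that sentence so as to obtain the separate factors $N_{\min}^{1/2}$, $M_{\min}$, and $L_{\min}^{1/2}$. Your remarks on why the hypotheses $M_i\ge\lambda^{-1}$ and (in the periodic-$x$ case) $N_i\ge\nu^{-1}$ ensure uniform measure bounds on the dyadic blocks are accurate bookkeeping.
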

	
	\section{Resonance and bilinear estimates}
	\label{section:Resonance}
	We analyze the resonance function to obtain good bilinear estimates in the resonant case. For  $\xi=\xi_1+\xi_2 \in \K^*$ (non-vanishing) and $\eta=\eta_1+\eta_2 \in \R^{2}$, we have
	\begin{equation}\label{resfun}
		\begin{split}
			\Omega_\alpha(\xi_1,\eta_1,\xi_2,\eta_2)&:= \omega_\alpha(\xi_1,\eta_1)+\omega_\alpha(\xi_2,\eta_2)-\omega_\alpha(\xi_1+\xi_2,\eta_1+\eta_2)\\
			&=- \Omega_{KdV}^{(\alpha)} + \frac{\xi_1 \xi_2}{\xi_1 + \xi_2} \Big| \frac{\eta_1}{\xi_1} - \frac{\eta_2}{\xi_2} \Big|^2.
		\end{split}
	\end{equation}
	Above $\Omega^{(\alpha)}_{KdV}(\xi_1,\xi_2)$ denotes the resonance for the dispersion generalized KdV equation:
	\begin{equation*}
		\Omega^{(\alpha)}_{KdV}(\xi_1,\xi_2) = |\xi_1+\xi_2|^\alpha (\xi_1+\xi_2) - |\xi_1|^\alpha \xi_1 - |\xi_2|^\alpha \xi_2.
	\end{equation*}
	We define the resonant case for the higher-dimensional KP-I equations by
	\begin{equation}
		\label{eq:rescon}
		|\Omega_\alpha(\xi_1,\eta_1,\xi_2,\eta_2)| \ll |\Omega^{(\alpha)}_{KdV}(\xi_1,\xi_2)|.
	\end{equation}
	Suppose that $|\xi_1+\xi_2| \sim \max(|\xi_1|,|\xi_2|)$, and let $N_{\max},N_{\min} \in 2^{\Z}$ such that
	\begin{equation*}
		N_{\max} \sim \max (|\xi_1|,|\xi_2|,|\xi_1+\xi_2|), \quad N_{\min} \sim \min(|\xi_1|,|\xi_2|,|\xi_1+\xi_2|).
	\end{equation*}
	The resonance condition requires
	\begin{equation*}
		N_{\max}^\alpha \sim \Big| \frac{\eta_1}{\xi_1} - \frac{\eta_2}{\xi_2} \Big|^2.
	\end{equation*}
	
	The gradient of the dispersion relation is given by
	\begin{equation}\label{eq:grad}
		\nabla \omega_\alpha(\xi,\eta) = \Big((\alpha + 1) |\xi|^\alpha -\frac{|\eta|^2}{\xi^2}, \frac{2\eta}{\xi}\Big).
	\end{equation}
	Consequently, in case of \eqref{eq:rescon}, we find
	\begin{equation*}
		|\nabla\omega_\alpha(\xi_1,\eta_1) - \nabla \omega_\alpha(\xi_2,\eta_2)| \gtrsim \Big| \frac{\eta_1}{\xi_1} -\frac{\eta_2}{\xi_2}\Big| \sim N_{\max}^\frac{\alpha}{2}.
	\end{equation*}
	This means that in case of a resonant interaction with $|\xi_1+\xi_2| \sim \max(|\xi_1|,|\xi_2|)$ the waves are transverse.
	
	\begin{lemma}\label{lemma:elem}
		Let $I, J$ be intervals and $f: J \rightarrow \R$ be a smooth function. Then
		\begin{equation}
			|\{x: f(x) \in I\}| \leqslant \frac{|I|}{\inf_{y}|f'(y)|}.
		\end{equation}
		\begin{proof}
			This is a consequence of the mean value theorem. Let $x_1,x_2 \in J$ be such that $f(x_1),f(x_2) \in I$. Then
			\begin{equation*}
				|x_1-x_2| =\frac{|f(x_1)-f(x_2)|}{|f'(\xi)|} \leqslant \frac{|I|}{\inf_y |f'(y)|}.
			\end{equation*}
		\end{proof}
	\end{lemma}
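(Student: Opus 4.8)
The plan is to reduce everything to the mean value theorem, which is natural given that $\inf_y |f'(y)|$ sits in the denominator of the asserted bound; in particular the inequality is vacuous unless this infimum is strictly positive, so we may as well assume $f'$ does not vanish on $J$. Write $S = \{ x \in J : f(x) \in I \}$. The first step is to bound the diameter of $S$: for any $x_1, x_2 \in S$ the mean value theorem supplies a point $\xi$ between them with $f(x_1) - f(x_2) = f'(\xi)(x_1 - x_2)$, so that
\begin{equation*}
	|x_1 - x_2| = \frac{|f(x_1) - f(x_2)|}{|f'(\xi)|} \leq \frac{|I|}{\inf_y |f'(y)|},
\end{equation*}
where the numerator is controlled because $f(x_1), f(x_2) \in I$, and the denominator by the definition of the infimum. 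Taking the supremum over $x_1, x_2 \in S$ yields $\operatorname{diam}(S) \leq |I| / \inf_y |f'(y)|$.

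The second step is to pass from this diameter bound to the desired measure bound: since $S \subseteq [\inf S, \sup S]$ and the latter interval has length $\operatorname{diam}(S)$, monotonicity of Lebesgue measure gives $|S| \leq \operatorname{diam}(S) \leq |I| / \inf_y |f'(y)|$, which is exactly the claim. I do not expect any genuine obstacle here; the only subtlety worth naming is that $S$ need not obviously be an interval, so one should keep the (trivial) diameter-to-measure step rather than silently treat $S$ as an interval. In fact, since $f'$ has constant sign on $J$, the function $f$ is strictly monotone and $S = f^{-1}(I)$ is automatically an interval, which provides an alternative way to conclude.
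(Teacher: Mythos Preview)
Your proof is correct and follows essentially the same approach as the paper: both invoke the mean value theorem to bound $|x_1 - x_2|$ for arbitrary $x_1, x_2$ in the preimage set. Your version is slightly more explicit in spelling out the passage from the diameter bound to the measure bound and in noting the implicit assumption $\inf_y |f'(y)| > 0$, but the core argument is identical.
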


	\begin{proposition}{(Transverse $L^2$ bilinear estimate)}\label{prop:GeneralBilinear}
		Let $d_1+d_2 =2$, $N \in 2^{\N_0}$, and $u_1,u_2 \in L^2(\R^{d_1} \times \T_\lambda^{d_2} \times \R)$ have their Fourier supports in $\tilde{D}_{N_i,M_i,L_i}$ for $i=1,2$, respectively, with $N_i \in 2^{\Z} $ ,and additionally $N_i \geq \nu^{-1}$ for $d_1 = 0$, and $M_i \in 2^{\Z} \cap [\lambda^{-1},\infty)$, and let $N \sim \max(N_1,N_2)$. Suppose that for frequencies in the Fourier support, the resonance condition \eqref{eq:rescon} holds. Then, we have
		\begin{equation}\label{eq:dyadicbilinear}
			\Vert P_N( u_1 u_2) \Vert_{L^2_{x,y,t}} \lesssim M_{\min}^{\frac{1}{2}} N_{\min}^{\frac{1}{2}} L_{\min}^{\frac{1}{2}} \Big (d_2+\frac{L_{\max}}{N^{\frac{\alpha}{2}}}\Big )^{\frac{1}{2}} \Vert u_1 \Vert_{L^2_{x,y,t}} \| u_2 \|_{L^2_{x,y,t}}.
		\end{equation}
		\begin{proof}
			From Plancherel and Cauchy-Schwarz, we have
			\begin{equation}\label{con}
				\begin{split}
					\| u_1u_2\|_{L^2} &= \Big| \int_{\D_\lambda^* \times \R}\hat{u}_1(\xi_1,\eta_1,\tau_1) \hat{u}_2(\xi-\xi_1,\eta-\eta_1,\tau-\tau_1,) d\xi_1d\eta_1 d\tau_1\Big|   \\
					&\lesssim L_{\min}^{\frac{1}{2}} \sup_{\xi,\eta,\tau: \, |\xi|\sim N, }|E(\tau,\xi,\eta)|^{\frac{1}{2}} \| u_1 \Vert_{L^2} \| u_2 \|_{L^2},
				\end{split}
			\end{equation}
			where $|\cdot|$ denotes the measure on $\D^*_{\lambda}$ and the set $E(\tau,\xi,\eta) \subset \D^*_{\lambda}$ is given by
			\begin{equation*}
				E(\tau,\xi,\eta):= \{(\xi_1,\eta_1)\in  A_{N_1,M_1}: \eqref{eq:rescon} \text{ holds}, \; |\tau-\omega_{\alpha}(\xi_1,\eta_1)-\omega_{\alpha}(\xi-\xi_1,\eta-\eta_1)|\leqslant L_{\max}, (\xi-\xi_1,\eta-\eta_1) \in A_{N_2,M_2}\}.
			\end{equation*}
			It remains to estimate $|E(\tau,\xi,\eta)|$. Define
			\begin{equation*}
				\phi(\xi,\xi_1,\eta,\eta_1,\tau):= \tau-\omega_{\alpha}(\xi_1,\eta_1)-\omega_{\alpha}(\xi-\xi_1,\eta-\eta_1) = \tau-|\xi_1|^\alpha \xi_1 -\frac{|\eta_1|^2}{\xi_1} -|\xi-\xi_1|^\alpha (\xi - \xi_1) -\frac{|\eta-\eta_1|^2}{\xi-\xi_1}  .
			\end{equation*}
			From \eqref{eq:rescon}, we have
			\begin{equation}\label{eq:Gain}
				\big|\frac{\partial \phi}{\partial{\eta_{11}}}\big| \gtrsim N^{\frac{\alpha}{2}} \text{ or }\big|\frac{\partial \phi}{\partial{\eta_{12}}}\big| \gtrsim N^{\frac{\alpha}{2}},
			\end{equation}
			where $\eta_{1i}$ denotes the $i$th component of $\eta_1$.

			Similarly, in the periodic or mixed setting, we have
			\begin{equation*}
				|E(\tau,\xi,\eta)| \lesssim N_{\min}\sup_{|\xi_1|\sim N_1, |\xi|\sim N}|E_1(\xi,\xi_1)|,
			\end{equation*}
			where $E_1(\xi,\xi_1)\subseteq \R^{d_1} \times \Z_\lambda^{d_2}$ is given as
			\begin{equation*}
				E_1(\xi,\xi_1)= I_1(\xi,\xi_1)\cup I_2(\xi,\xi_1),
			\end{equation*}
			where
			\begin{equation*}
				\begin{split}
					I_i(\xi,\xi_1):=  \Big\{|\eta_1|\sim M_1 :\big|\frac{\partial \phi}{\partial{\eta_{1i}}}\big| \gtrsim N^\frac{\alpha}{2}  \Big\}.
				\end{split}
			\end{equation*}
			Using Lemma \ref{lemma:elem}, we have the following bound
			\begin{equation*}
				|E_1(\xi,\xi_1)|\lesssim N_{\min} \Big (1+\frac{L_{\max}}{N^{\frac{\alpha}{2}}}\Big) M_{\min},
			\end{equation*}
			where the summand $1$ could be avoided in case $d_2=0$.
			Substituting this in \eqref{con}, we get the desired estimate.
		\end{proof}
	\end{proposition}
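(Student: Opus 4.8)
The plan is to bound the $L^2$-norm of the product via a duality/Cauchy--Schwarz argument in frequency space, reducing everything to measuring the size of a sublevel set of the convolution constraint. First I would use Plancherel and write $\|u_1 u_2\|_{L^2}$ as an integral of $\hat{u}_1 \hat{u}_2$ over the convolution hyperplane; applying Cauchy--Schwarz in the $(\xi_1,\eta_1,\tau_1)$ variables, one peels off $L_{\min}^{1/2}$ from the modulation localization of the lower-modulation factor and is left with the supremum over $(\xi,\eta,\tau)$ of $|E(\tau,\xi,\eta)|^{1/2}$, where $E$ is the set of $(\xi_1,\eta_1)$ with $(\xi_1,\eta_1)\in A_{N_1,M_1}$, $(\xi-\xi_1,\eta-\eta_1)\in A_{N_2,M_2}$, the resonance condition \eqref{eq:rescon} holding, and $|\phi(\xi,\xi_1,\eta,\eta_1,\tau)|\le L_{\max}$ with $\phi = \tau - \omega_\alpha(\xi_1,\eta_1) - \omega_\alpha(\xi-\xi_1,\eta-\eta_1)$. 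This is exactly the scheme already set up in the displayed computation \eqref{con}.

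**Measuring $E$.** The key point is that, under the resonance condition \eqref{eq:rescon}, one of the two transversality bounds \eqref{eq:Gain} holds, i.e. $|\partial\phi/\partial\eta_{11}|\gtrsim N^{\alpha/2}$ or $|\partial\phi/\partial\eta_{12}|\gtrsim N^{\alpha/2}$. I would split $E$ into the $\xi_1$-slices and, for each fixed $\xi_1$ (and fixed $\xi,\eta,\tau$), integrate in $\eta_1$. For the component in whose direction the derivative of $\phi$ is large, Lemma \ref{lemma:elem} gives that the set of admissible values has length $\lesssim (1 + L_{\max} N^{-\alpha/2})$ in the Euclidean case, or $\lesssim M_{\min}(1 + L_{\max} N^{-\alpha/2})$ once one accounts for the fact that on $\T_\lambda$ the $\eta_1$-variable only ranges over $\lesssim \lambda M_{\min}$ lattice points of spacing $\lambda^{-1}$, which contributes the additive $d_2$ (the summand $1$ in Lemma \ref{lemma:elem}'s conclusion can be dropped when $d_2 = 0$). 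The orthogonal $\eta_1$-direction is confined to an interval of length $\lesssim M_{\min}$ simply by the frequency support in $A_{N_i,M_i}$, and the $\xi_1$-range is $\lesssim N_{\min}$ by the assumption $|\xi_1+\xi_2|\sim\max(|\xi_1|,|\xi_2|)$ together with the $A_{N_i}$ localizations. Multiplying the three one-dimensional bounds gives $|E(\tau,\xi,\eta)| \lesssim N_{\min} M_{\min}\big(d_2 + L_{\max} N^{-\alpha/2}\big)$, and substituting into \eqref{con} yields \eqref{eq:dyadicbilinear}.

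**Main obstacle.** The delicate point is the justification of the transversality bound \eqref{eq:Gain} and the bookkeeping of which variable plays which role. From \eqref{eq:grad}, $\partial\phi/\partial\eta_{1i} = -2\eta_{1i}/\xi_1 + 2(\eta_i-\eta_{1i})/(\xi-\xi_1)$, which is (up to sign) the $i$-th component of $\nabla_\eta\omega_\alpha(\xi_1,\eta_1) - \nabla_\eta\omega_\alpha(\xi-\xi_1,\eta-\eta_1)$, and the computation preceding the proposition shows that under \eqref{eq:rescon} this vector has modulus $\gtrsim |\eta_1/\xi_1 - \eta_2/\xi_2| \sim N_{\max}^{\alpha/2}$; hence at least one component is $\gtrsim N^{\alpha/2}$, which is \eqref{eq:Gain}. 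One must be slightly careful that this largeness of $\partial\phi/\partial\eta_{1i}$ persists uniformly over the slice (not merely at one point), so that Lemma \ref{lemma:elem} applies with $\inf_y |f'(y)| \gtrsim N^{\alpha/2}$; this is where the hypothesis $|\xi_1+\xi_2|\sim\max(|\xi_1|,|\xi_2|)$ is used, guaranteeing $|\xi_1|,|\xi-\xi_1|\gtrsim N_{\min}$ do not degenerate and that the resonance relation $N_{\max}^\alpha \sim |\eta_1/\xi_1 - \eta_2/\xi_2|^2$ controls the gradient difference throughout $E$. The remaining steps — the Cauchy--Schwarz peeling of $L_{\min}^{1/2}$ and the slicing of the measure — are routine.
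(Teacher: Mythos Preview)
Your proposal is correct and follows essentially the same route as the paper: Plancherel and Cauchy--Schwarz to reduce to a sublevel-set measure, then slicing in $\xi_1$ and using the transversality bound \eqref{eq:Gain} together with Lemma~\ref{lemma:elem} in the good $\eta_{1i}$-direction, with the remaining $\eta_1$-direction and $\xi_1$ bounded trivially by $M_{\min}$ and $N_{\min}$. Your remark that the gradient lower bound must persist uniformly over the slice (since $\partial\phi/\partial\eta_{1i}$ is constant in $\eta_{1i}$ for fixed $\xi,\xi_1,\eta$ and the other $\eta_1$-component) is a useful clarification that the paper leaves implicit.
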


	\section{Short-time bilinear estimates}
	\label{section:ShorttimeBilinear}
	The purpose of this section is to prove short-time bilinear estimates, which we need to propagate the nonlinearity. For the remainder of the section, let $d = 3$, $\alpha \in [2,4)$. Recall that the frequency dependent time localization is given by
	\begin{equation*}
		T(N) = N^{-(2-\frac{\alpha}{2})}.
	\end{equation*}
	We consider the domains (recall $\nu = \lambda^{\frac{2}{\alpha+2}}$)
	\begin{equation*}
		\D_\lambda =
		\begin{cases}
			\K_\nu^{(1)} \times \K_\lambda^{(2)} \times \K_\lambda^{(3)}, \quad &\alpha = 2, \\
			\K_\nu \times \R^2, \quad &\alpha \in (2,4),
		\end{cases}
	\end{equation*}
	where $\K_\lambda^{(i)}$, $\K_\lambda \in \{ \R; \T_\lambda \}$.
	In the following we write for the sake of brevity
	\begin{equation*}
		A \lesssim_\lambda B :\Leftrightarrow
		A \lesssim \lambda^{0+} B
	\end{equation*}
	with implicit constants independent of $\lambda\geq 1$. In the following, we put the factor $\lambda^{0+}$ regardless of the domain since this does not matter for the following analysis and it simplifies the exposition.
	
	\begin{proposition}\label{prop:ShortTimeBilinear}
		Let $s \geqslant r >\max \{\frac{5}{2} - \frac{\alpha}{2},1\}$. For all $T\in(0,1]$, we find the following estimates to hold:
		\begin{align}
			\label{eq:L2ShorttimeBilinearEstimate}
			\| \partial_x(uv)\|_{\mathcal{N}^{0}(T)} &\lesssim_\lambda \|u\|_{F^{0}(T)} \|v\|_{F^{s}(T)}, \\
			\label{eq:ShorttimeBilinearEstimateRegularity}
			\|\partial_x(uv)\|_{\mathcal{N}^{s}(T)} &\lesssim_\lambda \|u\|_{F^{s}(T)} \|v\|_{F^r(T)} + \| u \|_{F^r(T)} \| v \|_{F^s(T)} .
		\end{align}
	\end{proposition}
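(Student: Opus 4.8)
The plan is to follow the standard short-time Fourier restriction framework: first reduce to dyadic pieces, then dispose of the frequency-dependent time cutoffs, and finally estimate each dyadic block using the two bilinear tools available from Section~\ref{section:Resonance}, namely the transverse $L^2$ bilinear estimate (Proposition~\ref{prop:GeneralBilinear}) in the resonant regime and the $L^4$-Strichartz consequence (Lemma~\ref{lem:L4Summary}) together with the Cauchy--Schwarz estimate (Lemma~\ref{lem:CauchySchwarz}) in the non-resonant regime. First I would Littlewood--Paley decompose $u = \sum_{N_1} P_{N_1} u$, $v = \sum_{N_2} P_{N_2} v$ and $\partial_x(uv)$ into $\sum_{N_3} P_{N_3}\partial_x(uv)$, so that everything reduces to bounding $\|P_{N_3}\partial_x(P_{N_1} u\, P_{N_2} v)\|_{\mathcal{N}_{N_3}(T)}$. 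By symmetry and the usual trichotomy this splits into the high-low $N_1 \sim N_3 \gg N_2$ (resp. $N_2 \sim N_3 \gg N_1$) interactions and the high-high $N_1 \sim N_2 \gtrsim N_3$ interaction; summing these dyadic estimates against $\ell^2$ weights $N_3^{2\sigma}$ is routine once each block carries a summable off-diagonal gain, so the heart is the single-block estimate. For a fixed triple I would introduce the extensions $\tilde u, \tilde v$ realizing the $F_{N}(T)$-norms, insert partitions of unity $\sum_{t'} \rho_1(N_3^{2-\alpha/2}(t-t'))$ adapted to the shortest relevant time scale $T(N_{\max}) = N_{\max}^{-(2-\alpha/2)}$, and use the admissible time-multiplier bounds \eqref{TimeMult} together with the multiplier property stated after \eqref{prop} to pass from $F_{N}(T)$ and $\mathcal{N}_{N}(T)$ to the localized $X_N$-spaces. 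This reduces everything to a bilinear convolution estimate $\|\mathbf{1}_{D_{N_3,\ast,\ast}}(f_1 * f_2)\|_{X_{N_3}}$ with $f_i$ supported on $\tilde D_{N_i, M_i, L_i}$.

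The next step is the modulation analysis. On the support of the product, the resonance identity \eqref{resfun} gives $|\Omega_\alpha| \lesssim L_{\max}$ where $L_{\max} = \max(L_1,L_2,L_3)$ is the largest modulation (bounded below by $N_{\max}^{2-\alpha/2}$ due to the time localization). Comparing with $\Omega^{(\alpha)}_{KdV}(\xi_1,\xi_2) \sim N_{\max}^{\alpha} N_{\min}$ (in the high-low case, where $N_{\min}$ here is the small $x$-frequency) splits the analysis: if $L_{\max} \gtrsim |\Omega^{(\alpha)}_{KdV}|$ we are in the non-resonant case and use Lemma~\ref{lem:L4Summary}, gaining a full power of $N_{\max}^{1/4} (N_1 N_2)^{1/4}$-type factors plus $L_i^{1/2}$ weights that are absorbed by the $X_N$-structure and the large modulation; if $L_{\max} \ll |\Omega^{(\alpha)}_{KdV}|$ the resonance condition \eqref{eq:rescon} holds, so Proposition~\ref{prop:GeneralBilinear} applies and yields the extra transversality gain $(d_2 + L_{\max}/N^{\alpha/2})^{1/2}$. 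The key bookkeeping is that $\partial_x$ contributes $N_3 \sim N_{\max}$; in the high-low resonant interaction the bilinear gain from Proposition~\ref{prop:GeneralBilinear} is exactly $N_{\max}^{-\alpha/4}$ (since $L_{\max} \ll N_{\max}^\alpha N_{\min}$ but we only need $L_{\max} \lesssim N_{\max}^{2-\alpha/2}\cdot(\text{time localization})$, and with the choice $T(N) = N^{-(2-\alpha/2)}$ one gets $L_{\max}/N^{\alpha/2} \lesssim N^{2-\alpha}$), which combined with the $p_\lambda$-weight normalization recovers one derivative — precisely the ``ameliorate one derivative'' mechanism advertised in the introduction; the leftover $N_{\max}^{1-\alpha/4}$ versus the regularity threshold $r > 5/2 - \alpha/2$ gives the off-diagonal summation. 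For the high-high-to-low interaction, there is no derivative loss from $\partial_x$ relative to the two high inputs, and the $L^4$ or transverse bilinear estimate with a factor $N_3^{1/2}$ from the low output frequency suffices; here the condition $r > 1$ ensures $\ell^2$-summability in $N_3 \ll N_1 \sim N_2$.

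The main obstacle I expect is the resonant high-low interaction: one must check that the combination of (a) the derivative from $\partial_x$, (b) the $M_{\min}^{1/2} N_{\min}^{1/2}$ and the $p_\lambda$-weight ratios $|\eta|/|\xi|$ entering through the $F^s$-norms, and (c) the transversality gain $N^{-\alpha/4}$ from Proposition~\ref{prop:GeneralBilinear}, together with the modulation weights $L^{1/2}$ against the $X_N$ summation, leaves a net power $N_{\max}^{-\delta}$ for some $\delta > 0$ exactly when $r > \max\{5/2 - \alpha/2, 1\}$ — this is where the regularity threshold is sharp and where careful tracking of the $p_\lambda$ weights (which is why the symbol $p$ was built into $E^s$) is essential. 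A secondary technical point is handling the small-frequency outputs $N_3 \lesssim 1$ or small transverse frequencies $M \lesssim 1$, where one falls back on Corollary~\ref{cor:L4StrichartzSmallFrequencies} and Lemma~\ref{lem:CauchySchwarz}; these cases are not resonant in an essential way and cause no loss, but they must be separated out so that the division by $\xi$ in $\omega_\alpha$ and in $p_\lambda$ is harmless. Once all cases produce a geometrically decaying factor in $\max(N_1,N_2)/\min$, Schur's test finishes the $\ell^2_{N_3}$ summation and yields \eqref{eq:L2ShorttimeBilinearEstimate}--\eqref{eq:ShorttimeBilinearEstimateRegularity}.
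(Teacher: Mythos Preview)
Your overall architecture matches the paper's: dyadic decomposition in both $x$- and $y$-frequencies, reduction to $X_N$-convolution estimates via the time-multiplier bounds, and a resonant/non-resonant split handled by Proposition~\ref{prop:GeneralBilinear} versus Lemma~\ref{lem:L4Summary} and Lemma~\ref{lem:CauchySchwarz}. The tools and the reduction are right.

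The genuine gap is that you have the critical case backwards. You claim the threshold $r>\tfrac{5}{2}-\tfrac{\alpha}{2}$ is forced by the resonant $High\times Low\to High$ interaction and that $High\times High\to Low$ is benign (``there is no derivative loss from $\partial_x$ relative to the two high inputs \ldots\ the condition $r>1$ ensures $\ell^2$-summability''). In the paper it is exactly the opposite: the $High\times Low\to High$ block closes with $C(N,N_1,N_2)=N_{\min}^{1/2}$, which after trading $M_{\min}^{1/2}$ into the weight only needs $r>1$. The $\tfrac{5}{2}-\tfrac{\alpha}{2}$ threshold comes from $High\times High\to Low$, and it comes from two losses you do not account for. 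First, the $\mathcal{N}_N$-norm lives at time scale $T(N)=N^{-(2-\alpha/2)}$ while the inputs must be localized at $T(N_1)$; covering the longer interval costs a factor $(N_1/N)^{2-\alpha/2}$ (this is the prefactor in \eqref{eq:HHLowI}--\eqref{eq:HHLowII}). Second, and more seriously, when the output carries the weight $p_\lambda(\xi,\eta)\supset |\eta|/|\xi|$ with $|\xi|\sim N$ small, the only way to control $|\eta|$ is via $|\eta|\le 2\max(|\eta_1|,|\eta_2|)\lesssim p_\lambda(\xi_i,\eta_i)\,|\xi_i|$ with $|\xi_i|\sim N_1$; this costs an \emph{additional} factor $N_1/N$. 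With both losses, the resonant $High\times High\to Low$ estimate (the analogue of \eqref{eq:HHLowResonantC}) becomes
\[
\frac{N_1}{N}\,\Big(\frac{N_1}{N}\Big)^{2-\frac{\alpha}{2}} N \cdot N^{1/2} N_1^{-\alpha/2} M_{\min}^{1/2}\log(N_1^\alpha N)
\;\sim\; N_1^{2-\frac{\alpha}{2}} N^{-\frac{3}{2}+\frac{\alpha}{2}} M_{\min}^{1/2}\log(N_1^\alpha N),
\]
and it is precisely this estimate that forces $s>\tfrac{5}{2}-\tfrac{\alpha}{2}$ for $2\le\alpha<3$ (and $s>1$ for $3\le\alpha<4$). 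Your plan as written would close $High\times High\to Low$ with only $r>1$, which is false once the weight is present; the $|\eta|/|\xi|$ part of $p_\lambda$ on a low-$\xi$ output is where the proposition is sharp, and this case needs to be revisited separately after the unweighted dyadic bounds are established.
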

	
	\subsection{Dyadic estimates} We prove the dyadic estimates which can then be summed up to obtain Proposition \ref{prop:ShortTimeBilinear}.
	We decompose in the $\eta$ variable as follows
	\begin{equation*}
		P_{N}(\partial_{x}(u_{N_1}v_{N_2})) = \sum_{\star} P_{N,M}(\partial_{x}(u_{N_1,M_1} v_{N_2,M_2})),
	\end{equation*}
	where $\star$ denotes a non-trivial relation between the size of the $y$ frequencies $\{ M, M_1, M_2 \} \subseteq 2^{\Z} \cap [\lambda^{-1},\infty)$ and $\{ N, N_1, N_2 \} \subseteq 2^{\Z}$ and additionally $N_i,N  \geq \nu^{-1}$ if $d_1 = 0$, i.e., if the $x$-variable is periodic. Note that for the norm in the LHS of the above equation to be non-zero, we require that the size of at least two $y$ frequencies be comparable. Also, by another almost orthogonal decomposition, we can suppose that the $\eta$-support of $u_{N_1,M_1}$, $v_{N_2,M_2}$ is localized to cubes of length $M_{\min}$. This becomes useful in case $M \leq 2^{-10} M_1$. The key dyadic estimate, which yields \eqref{eq:L2ShorttimeBilinearEstimate} and \eqref{eq:ShorttimeBilinearEstimateRegularity}, reads
	\begin{equation}\label{DyadicShorttimeEstimate}
		\| P_{N,M}(\partial_x(u_{N_1,M_1}v_{N_2,M_2}))\|_{\mathcal{N}_N} \lesssim_\lambda C(N,N_1,N_2) M_{\min}^{\frac{1}{2},\frac{1}{2}+} \|u_{N_1,M_1}\|_{F_{N_1}} \|v_{N_2,M_2}\|_{F_{N_2}}.
	\end{equation}
	Above we denote
	\begin{equation*}
		C(N,N_1,N_2) =
		\begin{cases}
			N_{\min}^{\frac{1}{2}}, \quad &\exists i\in \{1,2\}: N_i \ll N, \\
			N_1^{\frac{1}{2}+}, \quad &\text{else},
		\end{cases}
		\quad M^{s_1,s_2} =
		\begin{cases}
			M^{s_1}, \quad &M \leq 1, \\
			M^{s_2}, \quad &M > 1.
		\end{cases}
	\end{equation*}
	\eqref{eq:L2ShorttimeBilinearEstimate} then follows from trading powers of $|\eta|$ to $|\xi|$ using the weight. For $M_{\min} = \lambda^{-1}$, this is clear. For $|\eta| \gtrsim \lambda^{-1}$, we note
	\begin{equation*}
		|\eta|^{\frac{1}{2}} \lesssim
		\begin{cases}
			(1+|\xi|)^{\frac{1}{2}}, &\text{ if } |\eta| \leq |\xi|, \\
			\frac{|\eta|}{|\xi|} (1+|\xi|)^{\frac{1}{2}}, &\text{ if } |\eta| > |\xi|.
		\end{cases}
	\end{equation*}
	Furthermore, we can decompose the weight
	\begin{equation*}
		p_\lambda(\xi,\eta) = \lambda^{-\frac{1}{2}} + \frac{|\eta|}{|\xi|}.
	\end{equation*}
	The constant term can be perceived as part of the weight of a function on the RHS, for $\frac{|\eta|}{|\xi|}$ we can use dyadic localization in $\xi$ and $\eta$ to conclude.
	For the remainder of the section, we suppose that $M_i \geq \lambda^{-1}$.
	
	\medskip
	
	We consider the $High \times Low \rightarrow High$ case first.
	\begin{lemma}\label{lemma:HLDyadic}
		Let $N \gg 1$, $N_1,N_2 \in 2^{\N_0}$ such that $N_2 \ll N \sim N_1$. Suppose that $u_{N_1,M_1}\in F_{N_1}, \; v_{N_2,M_2} \in F_{N_2}$. Then,
		\begin{equation}\label{eq:HLDyadic}
			\| P_{N,M} ( \partial_x (u_{N_1,M_1} v_{N_2,M_2}))\|_{\mathcal{N}_N} \lesssim_\lambda N_2^{\frac{1}{2}} M_{\min}^{\frac{1}{2},\frac{1}{2}+} \| u_{N_1,M_1}\|_{F_{N_1}} \| v_{N_2, M_2}\|_{F_{N_2}}.
		\end{equation}
	\end{lemma}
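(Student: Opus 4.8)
The plan is to reduce \eqref{eq:HLDyadic} to a frequency- and modulation-localized bilinear estimate and then to split according to whether the interaction is resonant in the sense of \eqref{eq:rescon}. Since $N_1 \sim N \gg N_2$, the time scale $T(N) = N^{-(2-\frac{\alpha}{2})}$ satisfies $T(N) \sim T(N_1) \leq T(N_2)$, so restricting all three functions to a common time interval of length $T(N)$ is harmless; an extension argument together with the multiplier properties of the short-time spaces recorded in Section~\ref{subsection:FunctionSpaces} (the modulation-averaging estimate \eqref{prop} and the $S_N$-bound \eqref{TimeMult}) then reduces the claim to an estimate of the schematic form
\[
N \sum_{L,L_1,L_2} \frac{L^{\frac{1}{2}}}{\max\bigl(L,\,N^{2-\frac{\alpha}{2}}\bigr)}\,\bigl\| \mathbf{1}_{\tilde{D}_{N,M,L}}\bigl(f^{(1)}_{L_1} * f^{(2)}_{L_2}\bigr)\bigr\|_{L^2_{\xi,\eta,\tau}} \lesssim_\lambda N_2^{\frac{1}{2}}\, M_{\min}^{\frac{1}{2},\frac{1}{2}+}\,\|u_{N_1,M_1}\|_{F_{N_1}}\,\|v_{N_2,M_2}\|_{F_{N_2}},
\]
where $f^{(i)}_{L_i}$ is the modulation-$L_i$ piece of (the space-time Fourier transform of) a time-localized extension of the $i$-th factor, so that $\sum_{L_i} L_i^{\frac{1}{2}}\|f^{(i)}_{L_i}\|_{L^2} \lesssim \|\,\cdot\,\|_{F_{N_i}}$, and where, after an almost orthogonal decomposition, the $\xi$- and $\eta$-supports of the $f^{(i)}$ lie in an interval of length $N_2 = N_{\min}$ and a cube of side $M_{\min}$, respectively.

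\emph{Non-resonant case.} When \eqref{eq:rescon} fails, the identity $\tau - \omega_\alpha = (\tau_1 - \omega_\alpha) + (\tau_2 - \omega_\alpha) + \Omega_\alpha$ combined with $|\Omega^{(\alpha)}_{KdV}(\xi_1,\xi_2)| \sim N^\alpha N_2$ --- which follows from the mean value theorem since $|\xi| \sim |\xi_1| \sim N \gg N_2 \sim |\xi_2|$ --- forces $\max(L,L_1,L_2) \gtrsim N^\alpha N_2$. Hence one of the three factors carries modulation $\gtrsim N^\alpha N_2$, producing a gain of at least $(N^\alpha N_2)^{-\frac{1}{2}}$: either by summing the weight $L^{\frac{1}{2}}/\max(L,N^{2-\frac{\alpha}{2}})$ over $L \gtrsim N^\alpha N_2$ (output modulation largest), or from $\|f^{(i)}_{L_i}\|_{L^2} \leq L_i^{-\frac{1}{2}}\|\,\cdot\,\|_{F_{N_i}}$ with $L_i \gtrsim N^\alpha N_2$ (input modulation largest). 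For the remaining bilinear convolution one uses the $L^4_{x,y,t}$-Strichartz bound of Lemma~\ref{lem:L4Summary} when $M_{\min} \gtrsim 1$ and the Cauchy--Schwarz bound of Lemma~\ref{lem:CauchySchwarz} when $M_{\min} \lesssim 1$; in all cases the gain $(N^\alpha N_2)^{-\frac{1}{2}}$ absorbs the $N$ from $\partial_x$ and the $N^{2-\frac{\alpha}{2}}$ from the time localization, uniformly for $\alpha \in [2,4)$.

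\emph{Resonant case.} When \eqref{eq:rescon} holds on the Fourier supports, the two waves are transverse (Section~\ref{section:Resonance}), and Proposition~\ref{prop:GeneralBilinear}, applied with $N \sim N_1$ and $N_{\min} = N_2$, gives the shell-by-shell estimate
\[
\bigl\| \mathbf{1}_{\tilde{D}_{N,M,L}}\bigl(f^{(1)}_{L_1}*f^{(2)}_{L_2}\bigr)\bigr\|_{L^2} \lesssim M_{\min}^{\frac{1}{2}}\,N_2^{\frac{1}{2}}\,L_{\min}^{\frac{1}{2}}\Bigl(d_2 + \frac{L_{\max}}{N^{\frac{\alpha}{2}}}\Bigr)^{\frac{1}{2}}\|f^{(1)}_{L_1}\|_{L^2}\|f^{(2)}_{L_2}\|_{L^2},
\]
with $L_{\min} = \min(L,L_1,L_2)$, $L_{\max} = \max(L,L_1,L_2)$. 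On $\K_\nu \times \R^2$ one has $d_2 = 0$, so this carries a transversality gain $N^{-\frac{\alpha}{4}}$; the case $d_2 \geq 1$ arises only for $\alpha = 2$, where the bound is break-even. The extremal case is a low output modulation $L \lesssim N^{2-\frac{\alpha}{2}}$, where the $\mathcal{N}_N$-weight contributes $N^{-(2-\frac{\alpha}{2})}$; together with the $N$ from $\partial_x$ the net power of $N$ is $N^{1-(2-\frac{\alpha}{2})-\frac{\alpha}{4}} = N^{-1+\frac{\alpha}{4}} \leq 1$ if $d_2 = 0$, and $N^{1-(2-\frac{\alpha}{2})} = 1$ if $d_2 \geq 1$ and $\alpha = 2$; larger output modulations only help, since the extra $L^{-\frac{1}{2}}$ from the $\mathcal{N}_N$-weight dominates the $L_{\max}^{\frac{1}{2}}$ above. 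Summing over $L, L_1, L_2$ using the geometric weights built into the $X_N$-norms and over the $\eta$-cube decomposition yields \eqref{eq:HLDyadic}.

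\emph{Main obstacle.} The crux is the resonant case, in which a full $x$-derivative is stuck on the high frequency and cannot be absorbed by the energy method. The estimate closes only because the transversality gain $N^{-\frac{\alpha}{4}}$ from Proposition~\ref{prop:GeneralBilinear} --- available precisely because resonance forces $|\nabla\omega_\alpha(\xi_1,\eta_1) - \nabla\omega_\alpha(\xi_2,\eta_2)| \gtrsim N^{\frac{\alpha}{2}}$, cf.\ \eqref{eq:grad} --- is exactly matched against the loss $N^{2-\frac{\alpha}{2}}$ introduced by the short-time localization at scale $T(N) = N^{-(2-\frac{\alpha}{2})}$; the time scale is chosen so that $1 - (2 - \frac{\alpha}{2}) - \frac{\alpha}{4} \leq 0$. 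A secondary point is the careful accounting of the three modulation variables and of the $M_{\min}$-loss (handled by invoking Proposition~\ref{prop:GeneralBilinear} resp.\ Lemma~\ref{lem:L4Summary} rather than Lemma~\ref{lem:CauchySchwarz} whenever $M_{\min} \gtrsim 1$), which must all be done with constants independent of $\lambda \geq 1$.
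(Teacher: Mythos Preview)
Your overall architecture matches the paper: reduce via \eqref{prop} and \eqref{TimeMult} to the dyadic bound \eqref{eq:HighLowEstimate}, then split into a resonant regime (handled by Proposition~\ref{prop:GeneralBilinear}) and a non-resonant regime (handled by Lemmas~\ref{lem:L4Summary} and \ref{lem:CauchySchwarz}). Your resonant case is essentially the paper's case~(i), and the power count there is correct.

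The gap is in your non-resonant case. The dichotomy ``use $L^4$ when $M_{\min}\gtrsim 1$, use Cauchy--Schwarz when $M_{\min}\lesssim 1$'' is too coarse, and your sentence that the gain $(N^\alpha N_2)^{-1/2}$ ``absorbs the $N$ from $\partial_x$ and the $N^{2-\frac{\alpha}{2}}$ from the time localization'' is not a correct accounting (there is no extra loss $N^{2-\frac{\alpha}{2}}$ here since $N\sim N_1$). Concretely, take $\alpha=2$, $N_2\sim 1$, $1\lesssim M_{\min}\ll N$, and suppose the low-frequency input carries the large modulation $L_2\sim N_1^\alpha N_2$ while $L$ ranges over $[N^{2-\alpha/2},N_1^\alpha N_2]$. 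After duality one applies two $L^4$-estimates to the high-frequency factors; Lemma~\ref{lem:L4Summary} then gives
\[
N\sum_{L} L^{-\frac12}\bigl\|\mathbf{1}_{D_{N,M,L}}(f^{(1)}*f^{(2)})\bigr\|_{L^2}
\;\lesssim_\lambda\; N^{\frac32-\frac{\alpha}{2}}\,\log(N_1^\alpha N_2)\,M_{\min}^{0+}\prod_i L_i^{\frac12}\|f^{(i)}\|_{L^2},
\]
which for $\alpha=2$ is $N^{1/2}\log N$ and does \emph{not} meet the target $N_2^{1/2}M_{\min}^{1/2+}\sim 1$. The paper closes exactly this sub-case by switching to Lemma~\ref{lem:CauchySchwarz} whenever $M_{\min}\leq N_1$ (even if $M_{\min}\gg 1$), obtaining the bound $N_1^{-\alpha/4}M_{\min}$, which is acceptable precisely under $M_{\min}\leq N_1$; conversely the $L^4$ route is used only when $M_{\min}\geq N_1$. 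The same issue forces the paper's additional homogeneous decomposition of $N_2\in 2^{\Z}$ and the separate treatment of $N_2\leq N_1^{-\alpha}$ and $N_1^{-\alpha}\leq N_2\leq 1$. So the non-resonant case requires a finer case split on $M_{\min}$ versus $N_1$ (and on the size of $N_2$) than you have written; without it the estimate does not close for $\alpha$ near $2$.
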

	
	\begin{proof}
		
		Using the definition of the $\mathcal{N}_N$ norm, we can bound the left-hand side of \eqref{eq:HLDyadic} by
		\begin{align*}
			&\sup_{t_N \in \R}\| (\tau-\omega(\xi,\eta)+i N^{(2-\frac{\alpha}{2})})^{-1} \cdot \xi \mathbf{1}_{A_{N,M}}(\xi,\eta) \cdot \mathcal{F}[u_{N_1,M_1} \cdot\rho_1(N^{(2-\frac{\alpha}{2})} (t-t_N))] \\
			&\quad \ast \mathcal{F}[v_{N_2,M_2}\cdot \rho_1(N^{(2-\frac{\alpha}{2})}(t-t_N))]\|_{X_N}.
		\end{align*}
		Let
		\begin{equation*}
			f^{(1)} := \mathcal{F}[u_{N_1,M_1}\cdot \rho_1(N^{(2-\frac{\alpha}{2})}(t-t_N))],  \text{ and } f^{(2)}:= \mathcal{F}[v_{N_2,M_2}\cdot \rho_1(N^{(2-\frac{\alpha}{2})}(t-t_N))].
		\end{equation*}
		Using properties \eqref{prop} and \eqref{TimeMult} of the function spaces, it suffices to prove that if $L_1,L_2\geqslant N^{(2-\frac{\alpha}{2})}$ and $f^{(i)}: \D_\lambda^* \times \R \rightarrow \R_{+}$ is supported in $D_{N_i,M_i,L_i}$ for $i=1,2$, then
		\begin{equation}
			\label{eq:HighLowEstimate}
			N \sum_{L \geqslant N^{(2-\frac{\alpha}{2})}} L^{-\frac{1}{2}} \| \mathbf{1}_{D_{N,M,L}} (f^{(1)} \ast f^{(2)} )\|_{L^2} \lesssim_\lambda N_2^{\frac{1}{2}} M^{\frac{1}{2},\frac{1}{2}+}_{\min}  \prod_{i=1}^{2} L_i^{\frac{1}{2}} \| f^{(i)} \|_{L^2}.
		\end{equation}
		This will be proved by a case-by-case analysis. For $L_{\max}=\max(L,L_1,L_2)$, we consider two cases:
		
		\textbf{(i)} \underline{$L_{\max}\leq N_1^\alpha N_2$}:
		In this case, we can use the bilinear Strichartz estimate from Proposition \ref{prop:GeneralBilinear} to find:
		\begin{align}
			\label{eq:HighLowABilinear}
			\| \mathbf{1}_{D_{N,M,L}} (f^{(1)} \ast f^{(2)} )\|_{L^2}
			\lesssim N_2^{\frac{1}{2}} N_1^{-\frac{\alpha}{4}} M_{\min}^{\frac{1}{2}} \prod_{i=1}^2 L_i^{\frac{1}{2}} \| f^{(i)} \|_{L^2}.
		\end{align}
		By summing up \eqref{eq:HighLowABilinear} we obtain
		\begin{equation*}
			\sum_{L \geq N^{(2-\frac{\alpha}{2})}} L^{-\frac{1}{2}} \| \mathbf{1}_{D_{N,M,L}} (f^{(1)} \ast f^{(2)}) \|_{L^2_{x,y,t}} \lesssim N_1^{-1} N_2^{\frac{1}{2}} M_{\min}^{\frac{1}{2}} \prod_{i=1}^2 L_i^{\frac{1}{2}} \| f^{(i)} \|_{L^2}.
		\end{equation*}
		
		\medskip
		
		\textbf{(ii)} \underline{$L_{\max} > N_1^\alpha N_2$}:
		This seemingly easier case requires to distinguish into more subcases. We shall analyze the size of the resonance in case $N_2=1$ more carefully. To this end, we make an additional dyadic decomposition with $N_2 \in 2^{\Z}$, which means a dyadic decomposition of frequencies $\lesssim 1$. Depending on the size of $N_1$ and $N_2$, we consider the following subcases:
		
		\textbf{(a)} $N_2 \geq 1$: In this case the resonance is very favorable, and we shall take advantage of $L_{\max} > N_1^\alpha N_2$. \\
		$\bullet ~L \geq N_1^\alpha N_2$. If $M_{\min} \lesssim 1$, then by Lemma \ref{lem:CauchySchwarz} we find
		\begin{equation*}
			N \sum_{L \geq N_1^\alpha N_2} L^{-\frac{1}{2}} \| \mathbf{1}_{D_{N,M,L}} ( f^{(1)} \ast f^{(2)}) \|_{L^2_{\xi,\eta,\tau}} \lesssim N (N_1^{\alpha}N_2)^{-\frac{1}{2}} N_2^{\frac{1}{2}} M_{\min} \prod_{i=1}^2 L_i^{\frac{1}{2}} \| f^{(i)} \|_{L^2}.
		\end{equation*}
		Hence, we can suppose that $M_{\min} \gtrsim 1$. In this case, we use two $L^4_{x,y,t}$-Strichartz estimates by Lemma \ref{lem:L4Summary}:
		\begin{equation*}
			\begin{split}
				N \sum_{L \geq N_1^\alpha N_2} L^{-\frac{1}{2}} \| \mathbf{1}_{D_{N,M,L}} ( f^{(1)} \ast f^{(2)}) \|_{L^2_{\xi,\eta,\tau}} &\lesssim_\lambda N^{1-\frac{\alpha}{2}} N_2^{-\frac{1}{2}} \| \mathcal{F}^{-1} (f^{(1)}) \|_{L^4_{x,y,t}} \| \mathcal{F}^{-1} (f^{(2)}) \|_{L^4_{x,y,t}} \\
				&\lesssim_\lambda N^{1-\frac{\alpha}{2}} (N_1 N_2)^{\frac{1}{4}} M_{\min}^{ 0+} \prod_{i=1}^2 L_i^{\frac{1}{2}} \| f^{(i)} \|_{L^2}.
			\end{split}
		\end{equation*}
		
		This is acceptable for $N_1 \leq M_{\min}$. If $N_1 \geq M_{\min}$, then an application of Lemma \ref{lem:CauchySchwarz} yields
		\begin{equation*}
			\begin{split}
				N \sum_{L \geq N_1^\alpha N_2} L^{-\frac{1}{2}} \| \mathbf{1}_{D_{N,M,L}} ( f^{(1)} \ast f^{(2)}) \|_{L^2_{\xi,\eta,\tau}} &\lesssim N_1^{1-\frac{\alpha}{2}} M_{\min} \prod_{i=1}^2 L_i^{\frac{1}{4}} \| f^{(i)} \|_{L^2} \\
				&\lesssim N_1^{-\frac{\alpha}{4}} M_{\min} \prod_{i=1}^2 L_i^{\frac{1}{2}} \| f^{(i)}  \|_{L^2} \\
				&\lesssim M_{\min}^{\frac{1}{2}} \prod_{i=1}^2 L_i^{\frac{1}{2}} \| f^{(i)} \|_{L^2}.
			\end{split}
		\end{equation*}
		$\bullet ~N_1^{2-\frac{\alpha}{2}} \leq L \leq N_1^\alpha N_2$: We can use the Cauchy-Schwarz inequality through Lemma \ref{lem:CauchySchwarz} to still find
		\begin{equation*}
			\begin{split}
				N \sum_{N_1^{(2-\frac{\alpha}{2})} \leq L \leq N_1^\alpha N_2} L^{-\frac{1}{2}} \| \mathbf{1}_{D_{N,M,L}} ( f^{(1)} * f^{(2)}) \|_{L^2_{\xi,\eta,\tau}} &\lesssim N N_1^{-1} N_1^{\frac{\alpha}{4}} M_{\min} N_2^{\frac{1}{2}} (N_1^\alpha N_2)^{-\frac{1}{2}} \prod_{i=1}^2 L_i^{\frac{1}{2}} \| f^{(i)} \|_{L^2} \\
				&\lesssim N_1^{-\frac{\alpha}{4}} M_{\min} \prod_{i=1}^2 L_i^{\frac{1}{2}} \| f^{(i)} \|_{L^2}.
			\end{split}
		\end{equation*}
		This is acceptable for $M_{\min} \leq N_1$. For $M_{\min} \geq N_1$, we can in the same range of $L$ consider two $L^4_{x,y,t}$-Strichartz estimates by Lemma \ref{lem:L4Summary} after duality and take out the function with highest modulation in $L_{\xi,\eta,\tau}^2$. In the worst case, this function is at small frequencies $N_2$, i.e., $L_{\max}=L_2$. In this case, we find
		\begin{equation*}
			\begin{split}
				N \sum_{N_1^{(2-\frac{\alpha}{2})} \leq L \leq N_1^\alpha N_2} L^{-\frac{1}{2}} \| \mathbf{1}_{D_{N,M,L}} ( f^{(1)}  * f^{(2)} ) \|_{L^2_{\xi,\eta,\tau}}  &\lesssim_\lambda N_1 \log(N_1^\alpha N_2) (N_1^\alpha N_2)^{- \frac{1}{2}} M_{\min}^{ 0+} (N_1 N_2)^{\frac{1}{2}} \prod_{i=1}^2 L_i^{\frac{1}{2}} \| f^{(i)} \|_{L^2} \\
				&\lesssim_\lambda N_1^{\big( 1 - \frac{\alpha}{2} \big)} M_{\min}^{ \frac{1}{2}+} \prod_{i=1}^2 L_i^{\frac{1}{2}} \| f^{(i)} \|_{L^2}.
			\end{split}
		\end{equation*}
		
		\textbf{(b)} $N_2 \leq N_1^{-\alpha}$. which yields $N_2 N_1^\alpha \leq 1$. Note that this case is possibly vacuous if the $x$-variable is periodic. By two $L^4_{x,y,t}$-Strichartz estimates as in Lemma \ref{lem:L4Summary}, we infer
		\begin{equation}
			\label{eq:L4AuxEstimate}
			\begin{split}
				N \sum_{L \geq N_1^{( 2-\frac{\alpha}{2} )}} L^{-\frac{1}{2}} \| \mathbf{1}_{D_{N,M,L}} ( f^{(1)} \ast f^{(2)}) \|_{L^2_{\xi,\eta,\tau}} &\lesssim_\lambda N_1^{\frac{\alpha}{4}} N_2^{\frac{1}{4}} N_1^{\frac{1}{4}} M_{\min}^{ 0+}  \prod_{i=1}^2 L_i^{\frac{1}{2}} \| f^{(i)} \|_{L^2} \\
				&\lesssim_\lambda M_{\min}^{ 0+} N_1^{\frac{1}{4}} \prod_{i=1}^2 L_i^{\frac{1}{2}} \| f^{(i)} \|_{L^2}.
			\end{split}
		\end{equation}
		
		This is acceptable for $N_1 \leq M_{\min}^2$. If $M_{\min}^2 \leq N_1$, then we can use the Cauchy-Schwarz inequality via Lemma \ref{lem:CauchySchwarz} to find
		\begin{equation*}
			N \sum_{L \geq N_1^{2-\frac{\alpha}{2}}} L^{-\frac{1}{2}} \| \mathbf{1}_{D_{N,M,L}} (f^{(1)} \ast f^{(2)}) \|_{L^2_{\xi,\eta,\tau}} \lesssim N_1^{-(2-\frac{\alpha}{2})} N_1 N_2^{\frac{1}{2}} M_{\min} \prod_{i=1}^2 L_i^{\frac{1}{2}} \| f_i \|_{L^2}.
		\end{equation*}
		This suffices by summation over $N_2 \leq N_1^{-\alpha}$.
		
		\textbf{(c)} $N_1^{-\alpha} \leq N_2 \leq 1$. By the estimate \eqref{eq:L4AuxEstimate}, which is still valid, we can suppose that $N_1 \geq M_{\min}^2$.\\
		$\bullet~ L \geq N_1^\alpha N_2$: By Lemma \ref{lem:CauchySchwarz} we find
		\begin{equation*}
			\begin{split}
				N \sum_{L \geq N_1^\alpha N_2} L^{-\frac{1}{2}} \| \mathbf{1}_{D_{N,M,L}} ( f^{(1)} * f^{(2)}) \|_{L^2_{\xi,\eta,\tau}} &\lesssim N^{1-\frac{\alpha}{2}} N_1^{-1+\frac{\alpha}{4}} M_{\min} \prod_{i=1}^2 L_i^{\frac{1}{2}} \| f^{(i)} \|_{L^2} \\
				&\lesssim M_{\min} N_1^{-\frac{\alpha}{4}} \prod_{i=1}^2 L_i^{\frac{1}{2}} \| f^{(i)} \|_{L^2}.
			\end{split}
		\end{equation*}
		$\bullet~ N_1^{(2-\frac{\alpha}{2})} \leq L \leq N_1^\alpha N_2$: Another application of the Cauchy-Schwarz inequality via Lemma \ref{lem:CauchySchwarz} yields
		\begin{equation*}
			\begin{split}
				N \sum_{L \geq N_1^{(2-\frac{\alpha}{2})}} L^{-\frac{1}{2}} \| \mathbf{1}_{D_{N,M,L}} ( f^{(1)} * f^{(2)}) \|_{L^2_{\xi,\eta,\tau}} &\lesssim N \log(N_1^\alpha N_2) M_{\min} N_1^{-\frac{\alpha}{2}} N_1^{-\frac{2-\alpha/2}{2}} \prod_{i=1}^2 L_i^{\frac{1}{2}} \| f^{(i)} \|_{L^2} \\
				&\lesssim \log(N_1^\alpha N_2) N_1^{-\frac{\alpha}{4}} M_{\min} \prod_{i=1}^2 L_i^{\frac{1}{2}} \| f^{(i)} \|_{L^2} \\
				&\lesssim M_{\min}^{\frac{1}{2}} \prod_{i=1}^2 L_i^{\frac{1}{2}} \| f^{(i)} \|_{L^2}.
			\end{split}
		\end{equation*}
		The proof is complete.
	\end{proof}
	
	Next, we consider the $High \times High \rightarrow Low$ interaction.
	
	\begin{lemma}\label{lemma:HHDyadic}
		Let $N_1,N_2,N \in 2^{\N_0}$, $N_1 \gg 1$ such that $N \ll N_1 \sim N_2$, and $ u_{N_1,M_1} \in F_{N_1}, v_{N_2,M_2} \in F_{N_2}$. Then, the following estimate holds:
		\begin{equation}\label{eq:HHDyadic}
			\| P_{N,M} ( \partial_x (u_{N_1,M_1} v_{N_2,M_2}))\|_{\mathcal{N}_{N}} \lesssim_\lambda N_1^{\frac{1}{2}+} M_{\min}^{\frac{1}{2},\frac{1}{2}+} \| u_{N_1,M_1} \|_{F_{N_1}} \| v_{N_2,M_2} \|_{F_{N_2}}.
		\end{equation}
	\end{lemma}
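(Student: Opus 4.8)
The proof follows the template of the $High\times Low$ estimate (Lemma~\ref{lemma:HLDyadic}), with one genuinely new ingredient: since the output frequency $N$ is much smaller than $N_1\sim N_2$, the time scale $T(N)=N^{-(2-\frac\alpha2)}$ entering the $\mathcal{N}_N$-norm is much longer than the scale $T(N_1)$ on which the two inputs are controlled. I would therefore first \emph{subdivide}. Inserting a cutoff $\rho_1(N^{2-\frac\alpha2}(t-t_N))$ and writing it as a sum of $\sim (N_1/N)^{2-\frac\alpha2}$ smooth bumps $\chi_j$ at the scale $T(N_1)$, one expands $u_{N_1,M_1}v_{N_2,M_2}\rho_1(\cdot)=\sum_{j,j'}(u_{N_1,M_1}\chi_j)(v_{N_2,M_2}\chi_{j'})\rho_1(\cdot)$ and keeps only the $O(1)$ pairs with overlapping supports. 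On each piece the inputs are time-localised at the correct scale, so by the definition of $F_{N_i}$ together with \eqref{prop} and \eqref{TimeMult} one may assume their modulations are $\gtrsim N_1^{2-\frac\alpha2}$; since the $\chi_j$ are essentially disjoint in time, the reassembly of the pieces in $\mathcal{N}_N$ can be performed in $\ell^2$ on the portion of the output with modulation $\gtrsim N_1^{2-\frac\alpha2}$, and only the complementary low-modulation portion costs a factor of at most $(N_1/N)^{(2-\frac\alpha2)/2}\le N_1^{1/2}$. This reduces \eqref{eq:HHDyadic} to dyadic convolution estimates for $f^{(i)}$ supported in $D_{N_i,M_i,L_i}$ with $L_1,L_2\ge N_1^{2-\frac\alpha2}$, of the schematic form $N\sum_{L\ge N^{2-\frac\alpha2}}L^{-\frac12}\|\mathbf 1_{D_{N,M,L}}(f^{(1)}*f^{(2)})\|_{L^2}\lesssim_\lambda (\cdots)\,\prod_i L_i^{\frac12}\|f^{(i)}\|_{L^2}$, leaving a budget of order $N_1^{0+}$ on the low output modulations and $N_1^{\frac12+}$ elsewhere.

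For these dyadic estimates I would split according to whether the interaction is resonant. Here $\xi_1\sim-\xi_2$ and $|\xi_1+\xi_2|\sim N\ll N_1$, so $|\Omega^{(\alpha)}_{KdV}|\sim N_1^\alpha N$, and the resonant case \eqref{eq:rescon}, $L_{\max}\ll N_1^\alpha N$, forces via \eqref{resfun} the transversality $|\eta_1/\xi_1-\eta_2/\xi_2|\sim N_1^{\frac\alpha2-1}N$. Since the hypothesis $|\xi_1+\xi_2|\sim\max(|\xi_1|,|\xi_2|)$ of Proposition~\ref{prop:GeneralBilinear} is not available, I would re-run its proof in the present geometry: for fixed output frequency the admissible $\xi_1$ fill a slab of length $\sim N_1$, and Lemma~\ref{lemma:elem} applied in the transverse $\eta_1$-direction — combined with the reduction of the $\eta$-supports to cubes of side $M_{\min}$ — produces a transverse $L^2$-bilinear bound in which the gain from the frequency-dependent time localisation and the smallness of the derivative $\partial_x$ (of size $N$, not $N_1$) are exactly what is needed. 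For larger $M_{\min}$ one instead uses two $L^4_{x,y,t}$-Strichartz estimates from Lemma~\ref{lem:L4Summary}, and in the subcase where the modulation localisation already saturates the transversality one exploits that $L_1\sim L_2$ are then both large.

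In the non-resonant case $L_{\max}\gtrsim N_1^\alpha N$ one argues by the same case distinction as in Lemma~\ref{lemma:HLDyadic}: according to which of $L,L_1,L_2$ is maximal and whether $M_{\min}\lesssim1$ (Cauchy--Schwarz, Lemma~\ref{lem:CauchySchwarz}) or $M_{\min}\gg1$ (two $L^4_{x,y,t}$-Strichartz estimates, Lemma~\ref{lem:L4Summary}); because $\partial_x$ only produces the low frequency $N\le N_1$, the summation $\sum_L L^{-1/2}$ is controlled by $(N_1^\alpha N)^{-1/2}$ when $L=L_{\max}$ and otherwise by the largeness of the maximal input modulation together with $L\le L_{\max}$. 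I expect the bookkeeping around the two time scales to be the main obstacle: one has to ensure the sub-interval count $(N_1/N)^{2-\frac\alpha2}$ enters only on the output modulations below $N_1^{2-\frac\alpha2}$, and then verify in each subcase that the interplay of the $\partial_x$-gain, the resonance gain of size $N_1^\alpha N$, and the available $L$-summation stays within the budget $N_1^{\frac12+}$ — which works out precisely because $\alpha\ge2$.
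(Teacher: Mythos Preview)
Your outline follows the right template, but there are two points where it diverges from the paper in ways that matter.

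\textbf{Sub-interval reassembly.} The paper does not attempt the $\ell^2$ refinement you describe; it simply pays the full $\ell^1$ factor $(N_1/N)^{2-\frac{\alpha}{2}}$ from the sub-interval count (for $N\ge 1$; for $N\le 1$ the factor is $N_1^{2-\frac{\alpha}{2}}$ after an additional homogeneous decomposition of the output). Your claim that the high-modulation portion reassembles in $\ell^2$ is plausible-sounding but would need a careful justification, and in any case is not needed.

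\textbf{The resonant bilinear estimate.} This is the real gap. You correctly observe that the hypothesis $|\xi_1+\xi_2|\sim\max(|\xi_1|,|\xi_2|)$ of Proposition~\ref{prop:GeneralBilinear} fails for the pair $(f^{(1)},f^{(2)})$, and your computation of the transversality $|\eta_1/\xi_1-\eta_2/\xi_2|\sim N_1^{\frac{\alpha}{2}-1}N$ is right. But re-running the proof of Proposition~\ref{prop:GeneralBilinear} in this geometry gives a $\xi_1$-slab of length $\sim N_1$ (both inputs are at frequency $\sim N_1$) and a transversal gain of only $N_1^{\frac{\alpha}{2}-1}N$, producing a bilinear constant of order $N_1^{1-\frac{\alpha}{4}}N^{-\frac12}$. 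Compare this with what the paper actually does: it uses \emph{duality}, pairing $\mathbf{1}_{D_{N,M,L}}(f^{(1)}*f^{(2)})$ against a test function $g$ at the output frequency $N$ and applying Proposition~\ref{prop:GeneralBilinear} to the pair $(g,f^{(j)})$. For that pair the output is at frequency $\sim N_1$, so the hypothesis holds, $N_{\min}=N$, and the transversality is the full $N_1^{\frac{\alpha}{2}}$; the resulting constant is $N^{\frac12}N_1^{-\frac{\alpha}{4}}$, better than yours by exactly $N_1/N$. With the full $\ell^1$ sub-interval factor this is precisely what is needed to reach $N_1^{\frac12+}$ (see the computations \eqref{eq:HHResonantI} and \eqref{eq:HHLowResonantC}); your direct bound, even granting the $\ell^2$ saving, would fall short by a factor of roughly $N_1/N$ in the worst case (e.g.\ $\alpha=2$, $N\sim 1$). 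So the missing idea is to dualize before invoking the transverse bilinear estimate.
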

	\begin{proof}
		We have to add time localization to estimate the functions $u_{N_1,M_1}$, $ v_{N_2,M_2}$ in the short-time norms. This amounts to a factor of $(N_1/N)^{(2-\frac{\alpha}{2})}$ for $N \geq 1$. For $N \geq 1$, we have to show
		\begin{equation}\label{eq:HHLowI}
			\begin{split}
				N \big( \frac{N_1}{N} \big)^{(2-\frac{\alpha}{2})} \sum_{L \geq N^{(2-\frac{\alpha}{2})}} L^{- \frac{1}{2}} \| \mathbf{1}_{D_{N,M,L}}(f^{(1)} \ast f^{(2)})\|_{L^2}
				\lesssim_\lambda N_1^{\frac{1}{2}+} C(M_{\min}) \prod_{i=1}^2 L_i^{\frac1 2} \| f^{(i)} \|_{L^2}
			\end{split}
		\end{equation}
		for $f^{(i)}$ supported in $D_{N_i,M_i,L_i}$, $i=1,2$.
		If $N = 1$, we make an additional dyadic decomposition in the $x$ frequencies such that $N \in 2^{\Z}$ now (which changes the Fourier support to   $\tilde{D}_{N,M,L}$), and additionally suppose that $N \geq \nu^{-1}$, if the $x$-variable is periodic.
		In this case, it suffices to prove
		\begin{equation}\label{eq:HHLowII}
			\begin{split}
				N N_1^{(2-\frac{\alpha}{2})} \sum_{L \geqslant 1} L^{-\frac{1}{2}} \| \mathbf{1}_{\tilde{D}_{N,M,L}}(f^{(1)} \ast f^{(2)})\|_{L^2}
				\lesssim_\lambda N_1^{\frac{1}{2}+} C(M_{\min}) \prod_{i=1}^2 L_i^{\frac{1}{2}} \| f^{(i)} \|_{L^2}.
			\end{split}
		\end{equation}
		We turn to the proof of \eqref{eq:HHLowII} first. 
		We do a case-by-case analysis depending on the size of $N$ and $N_1$:
		
		\textbf{(i)} \underline{$N \leq N_1^{-\alpha}$}: In this case resonance considerations are irrelevant, and for $L \geq N_1^{2-\frac{\alpha}{2}}$, we conclude by two $L^4_{x,y,t}$-Strichartz estimates due to Lemma \ref{lem:L4Summary} on $f^{(i)}$:
		\begin{equation}
			\label{eq:HighHighLowI}
			\begin{split}
				N N_1^{(2-\frac{\alpha}{2})} \sum_{L \geq N_1^{2-\frac{\alpha}{2}}} L^{-\frac{1}{2}} \| \mathbf{1}_{\tilde{D}_{N,M,L}} (f^{(1)} * f^{(2)} ) \|_{L^2} &\lesssim_\lambda N^{\frac{3}{2}} N_1^{\frac{1}{2}} N_1^{1-\frac{\alpha}{4}} M_{\min}^{0+} \prod_{i=1}^2 L_i^{\frac{1}{2}} \| f^{(i)} \|_{L^2} \\
				&\lesssim_\lambda N_1^{\frac{3}{2} - \frac{3 \alpha}{2} - \frac{\alpha}{4}} M_{\min}^{0+} \prod_{i=1}^2 L_i^{\frac{1}{2}} \| f^{(i)} \|_{L^2}.
			\end{split}
		\end{equation}
		This is very favorable for $M_{\min} \geq N_1$. For $1 \leq L \leq N_1^{2- \frac{\alpha}{2}}$, we apply duality and two $L^4_{x,y,t}$-Strichartz estimates to find the above estimate to hold
		(actually, a slightly better estimate holds due to an improved $L^4$-Strichartz estimates for a function with small frequencies).
		
		On the other hand, an application of the Cauchy-Schwarz inequality through Lemma \ref{lem:CauchySchwarz} yields
		\begin{equation}
			\label{eq:HighHighLowII}
			N N_1^{(2-\frac{\alpha}{2})} \sum_{L \geq N_1^{2-\frac{\alpha}{2}}} L^{-\frac{1}{2}} \| \mathbf{1}_{\tilde{D}_{N,M,L}} (f^{(1)} * f^{(2)} ) \|_{L^2} \lesssim N^{\frac{3}{2}} N_1^{1-\frac{\alpha}{4}} M_{\min} \prod_{i=1}^2 L_i^{\frac{1}{2}} \| f^{(i)} \|_{L^2}.
		\end{equation}
		For $M_{\min} \leq N_1$, we find
		\begin{equation*}
			\lesssim N^{\frac{3}{2}} N_1^{\frac{3}{2}} N_1^{-\frac{\alpha}{4}} M_{\min}^{\frac{1}{2}} \prod_{i=1}^2 L_i^{\frac{1}{2}} \| f^{(i)} \|_{L^2}.
		\end{equation*}
		For $1 \leq L \leq N_1^{2- \frac{\alpha}{2}}$, the same estimate holds up to an additional logarithm in $N_1$.
		
		\textbf{(ii)} \underline{$N_1^{-\alpha} \leq N \leq 1$}: In this regime we distinguish between resonant and non-resonant case.
		
		\textbf{(a)} $L_{\max} \leq N_1^\alpha N$. We conclude by duality and Proposition \ref{prop:GeneralBilinear} applied to the dual function and an $f^{(i)}$:
		\begin{equation}
			\label{eq:HHResonantI}
			\begin{split}
				&\quad N_1^{(2-\frac{\alpha}{2})} N \sum_{L \leq N_1^\alpha N} L^{-\frac{1}{2}} \| \mathbf{1}_{\tilde{D}_{N,M,L}} (f^{(1)} * f^{(2)}) \|_{L^2} \\
				&\lesssim N_1^{(2-\frac{\alpha}{2})} N \sum_{L \leq N_1^\alpha N} L^{-\frac{1}{2}} L^{\frac{1}{2}} L_1^{\frac{1}{2}} \frac{N^{\frac{1}{2}}}{N_1^{\frac{\alpha}{4}}} M_{\min}^{\frac{1}{2}} \| f^{(1)} \|_{L^2} N_1^{-\frac{2-\alpha/2}{2}} L_2^{\frac{1}{2}} \| f^{(2)} \|_{L^2} \\
				&\lesssim \log (N_1^\alpha N) N^{\frac{3}{2}} M_{\min}^{\frac{1}{2}} N_1^{(1-\frac{\alpha}{2})} \prod_{i=1}^2 L_i^{\frac{1}{2}} \| f^{(i)} \|_{L^2}.
			\end{split}
		\end{equation}
		This is acceptable.\\
		\textbf{(b)} $L_{\max} \geq N_1^\alpha N$.\\
		$\bullet ~L \geq N_1^\alpha N$. We can use two $L^4_{x,y,t}$-Strichartz estimates by Lemma \ref{lem:L4Summary}
		\begin{equation*}
			N_1^{( 2 - \frac{\alpha}{2} )} N \sum_{L \geq N_1^\alpha N} L^{-\frac{1}{2}} \| \mathbf{1}_{\tilde{D}_{N,M,L}} (f^{(1)} * f^{(2)}) \|_{L^2_{\xi,\eta,\tau}}
			\lesssim_\lambda N_1^{ ( 2 - \alpha )} N^{\frac{1}{2}} (N N_1)^{\frac{1}{2}} M_{\min}^{ 0+} \prod_{i=1}^2 L_i^{\frac{1}{2}} \| f^{(i)} \|_{L^2}.
		\end{equation*}
		For $N_1 \leq M_{\min}$, this gives
		\begin{equation}
			\label{eq:HHResonantA}
			\lesssim_\lambda N_1^{2-\alpha} N M_{\min}^{\frac{1}{2}+} \prod_{i=1}^2 L_i^{\frac{1}{2}} \| f^{(i)} \|_{L^2}.
		\end{equation}
		For $N_1 \geq M_{\min}$, we can use Lemma \ref{lem:CauchySchwarz} to find
		\begin{equation}
			\label{eq:HHResonantB}
			\begin{split}
				N_1^{(2-\frac{\alpha}{2})} N \sum_{L \geq N_1^\alpha N} L^{-\frac{1}{2}} \| \mathbf{1}_{\tilde{D}_{N,M,L}}( f^{(1)} * f^{(2)} )\|_{L^2_{\xi,\eta,\tau}} &\lesssim N_1^{2-\frac{\alpha}{2}} N^{\frac{3}{2}} M_{\min} (N_1^\alpha N)^{-\frac{1}{2}} N_1^{-1 + \frac{\alpha}{4}} \prod_{i=1}^2 L_i^{\frac{1}{2}} \| f^{(i)} \|_{L^2} \\
				&\lesssim N_1^{\frac{3}{2} - \frac{3 \alpha}{4}} N M_{\min}^{\frac{1}{2}} \prod_{i=1}^2 L_i^{\frac{1}{2}} \| f^{(i)} \|_{L^2}.
			\end{split}
		\end{equation}
		$\bullet ~1\leq L \leq N_1^\alpha N$. In this case we can argue like above after applying duality (since $\exists i: L_i \geq N_1^\alpha N$). This yields the same estimates up to a logarithm
		in $N_1$ from summing over $ 1 \leq L \leq N_1^\alpha N$.
		
		\textbf{(iii)} $N \geq 1$. In this case we shall prove \eqref{eq:HHLowI} by considering resonant and non-resonant interactions: \\
		\textbf{(a)} $L_{\max} \leq N_1^\alpha N$. We obtain by the bilinear Strichartz estimate from Proposition \ref{prop:GeneralBilinear} and duality
		\begin{equation}
			\label{eq:HHLowResonantC}
			\begin{split}
				&\quad N \big( \frac{N_1}{N} \big)^{(2-\frac{\alpha}{2})} \sum_{N^{(2-\frac{\alpha}{2})} \leq L_{\max} \leq N_1^\alpha N} L^{-\frac{1}{2}} \| \mathbf{1}{_{D_{N,M,L}}} (f^{(1)} * f^{(2)}) \|_{L^2} \\
				&\lesssim N \big( \frac{N_1}{N} \big)^{(2-\frac{\alpha}{2})} \log( N_1^\alpha N) N_1^{-(1-\frac{\alpha}{4})} M_{\min}^{\frac{1}{2}} \frac{N^{\frac{1}{2}}}{N_1^{\frac{\alpha}{4}}} \prod_{i=1}^2 L_i^{\frac{1}{2}} \| f^{(i)} \|_{L^2}\\
				&= N^{\frac{\alpha}{2}-\frac{1}{2}}N_1^{1-\frac{\alpha}{2}}M_{\min}^{\frac{1}{2}} \log( N_1^\alpha N) \prod_{i=1}^2 L_i^{\frac{1}{2}} \| f^{(i)} \|_{L^2}.
			\end{split}
		\end{equation}
		This is acceptable.\\
		\textbf{(b)} $L_{\max} \geq N_1^\alpha N$. If $L \geq N_1^\alpha N$, we can use two $L^4_{x,y,t}$-Strichartz estimates to find by Lemma \ref{lem:L4Summary}
		\begin{equation*}
			\big( \frac{N_1}{N} \big)^{(2-\frac{\alpha}{2})} N \sum_{L \geq N_1^\alpha N} L^{-\frac{1}{2}} \|\mathbf{1}_{D_{N,M,L}} (f^{(1)} * f^{(2)} ) \|_{L^2} \lesssim_\lambda \big( \frac{N_1}{N} \big)^{(2-\frac{\alpha}{2})} \frac{N^{\frac{1}{2}}}{N_1^\frac{\alpha}{2}} (NN_1)^{\frac{1}{2}} M_{\min}^{0+} \prod_{i=1}^2 L_i^{\frac{1}{2}} \| f^{(i)} \|_{L^2}.
		\end{equation*}
		This is acceptable if $N_1 \leq M_{\min}$:
		\begin{equation}
			\label{eq:HHLowResonantD}
			\lesssim_\lambda \big( \frac{N_1}{N} \big)^{2 - \frac{\alpha}{2}} \frac{N}{N_1^{\frac{\alpha}{2}}} M_{\min}^{\frac{1}{2}+} \prod_{i=1}^2 L_i^{\frac{1}{2}} \| f^{(i)} \|_{L^2} = N^{\frac{\alpha}{2}-1}N_1^{2-\alpha}M_{\min}^{\frac{1}{2}+} \prod_{i=1}^2 L_i^{\frac{1}{2}} \| f^{(i)} \|_{L^2}.
		\end{equation}
		
		If $M_{\min} \leq N_1$, we can apply Lemma \ref{lem:CauchySchwarz} to find
		\begin{equation}
			\label{eq:HHLowResonantE}
			\begin{split}
				N\big( \frac{N_1}{N} \big)^{(2-\frac{\alpha}{2})} \sum_{L \geq N_1^\alpha N} L^{-\frac{1}{2}} \|\mathbf{1}_{D_{N,M,L}} (f^{(1)} * f^{(2)} ) \|_{L^2} &\lesssim \big( \frac{N_1}{N} \big)^{(2-\frac{\alpha}{2})} \frac{N^{\frac{1}{2}}}{N_1^{\alpha/2}} M_{\min} N_1^{-(1-\frac{\alpha}{4})} \prod_{i=1}^2 L_i^{\frac{1}{2}} \| f^{(i)} \|_{L^2} \\
				&\lesssim N_1^{1 - \frac{3\alpha}{4}} N^{-\frac{3}{2} + \frac{\alpha}{2}} M_{\min}^{\frac{1}{2}} \prod_{i=1}^2 L_i^{\frac{1}{2}} \| f^{(i)} \|_{L^2}.
			\end{split}
		\end{equation}
		Summation over $N \leq N_1$ yields
		\begin{equation*}
			\lesssim N_1^{\frac{3}{2} - \frac{\alpha}{2} + } M_{\min}^{\frac{1}{2}} \prod_{i=1}^2 L_i^{\frac{1}{2}} \| f^{(i)} \|_{L^2}.
		\end{equation*}
		
		If $L \leq N_1^\alpha N$, then there is $L_i \geq N_1^\alpha N$ for some $i \in \{1,2 \}$. The argument follows the above lines, estimating the factor with high modulation in $L^2$ and the remaining factors via $L^4$-Strichartz or Cauchy-Schwarz. This gives an additional $\log(N_1^\alpha N)$ from summing over $L$, which can easily be absorbed into $N_1^{\frac{1}{2}+}$.
	\end{proof}
	
	We consider the case when the frequencies are of comparable size and much higher than $1$.
	\begin{lemma}
		Let $N_1,N_2,N \in 2^{\N_0}$, $N_1 \geq 2^{5}$, and $N_1 \sim N_2 \sim N_3$. Then the following estimate holds:
		\begin{equation}
			\label{eq:HHH}
			\| P_{N,M} \partial_x (P_{N_1,M_1} u P_{N_2,M_2} v) \|_{F_N} \lesssim_\lambda M_{\min}^{\frac{1}{2}} N^{\frac{1}{2}+} \| P_{N_1,M_1} u \|_{F_{N_1}} \| P_{N_2,M_2} v \|_{F_{N_2}}.
		\end{equation}
	\end{lemma}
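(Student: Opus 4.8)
The plan is to follow the scheme of Lemmas~\ref{lemma:HLDyadic} and \ref{lemma:HHDyadic}. First I would insert the frequency dependent time cut-offs $\rho_1(N^{(2-\frac{\alpha}{2})}(t-t_N))$ on each of the two factors; since $N_1\sim N_2\sim N$, the time scales $T(N_i)=N_i^{-(2-\frac{\alpha}{2})}$ are all comparable, so this step is harmless. Using the properties \eqref{prop} and \eqref{TimeMult} of the short time function spaces, the claim \eqref{eq:HHH} then reduces to the dyadic convolution estimate
\begin{equation*}
	N\sum_{L\geq N^{(2-\frac{\alpha}{2})}}L^{-\frac12}\big\|\mathbf{1}_{D_{N,M,L}}(f^{(1)}\ast f^{(2)})\big\|_{L^2}\lesssim_\lambda M_{\min}^{\frac12}N^{\frac12+}\prod_{i=1}^2 L_i^{\frac12}\|f^{(i)}\|_{L^2},
\end{equation*}
for nonnegative $f^{(i)}$ supported in $D_{N_i,M_i,L_i}$ with $L_i\geq N^{(2-\frac{\alpha}{2})}$, $i=1,2$.

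The key structural feature of the comparable-frequency regime is that the KdV resonance is uniformly large: since $|\xi_1|\sim|\xi_2|\sim|\xi_1+\xi_2|\sim N\gg 1$ and $\xi\mapsto|\xi|^{\alpha}\xi$ is strictly convex on each half-line, one has $|\Omega^{(\alpha)}_{KdV}(\xi_1,\xi_2)|\sim N^{\alpha+1}$ throughout the relevant Fourier support. Moreover, on the support of $f^{(1)}\ast f^{(2)}$ restricted to $D_{N,M,L}$ we have, by \eqref{resfun}, $|\Omega_\alpha(\xi_1,\eta_1,\xi_2,\eta_2)|\lesssim L_{\max}:=\max(L,L_1,L_2)$. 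Accordingly I would split into two regimes: the \emph{resonant regime} $L_{\max}\ll N^{\alpha+1}$, in which the resonance condition \eqref{eq:rescon} is automatically satisfied and the interaction is transverse, and the \emph{high modulation regime} $L_{\max}\gtrsim N^{\alpha+1}$.

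In the resonant regime I would apply the transverse bilinear estimate Proposition~\ref{prop:GeneralBilinear} directly (this is possible since $N\sim\max(N_1,N_2)$), obtaining, with $N_{\min}\sim N$,
\begin{equation*}
	\big\|\mathbf{1}_{D_{N,M,L}}(f^{(1)}\ast f^{(2)})\big\|_{L^2}\lesssim M_{\min}^{\frac12}N^{\frac12}\min(L_1,L_2)^{\frac12}\Big(d_2+\tfrac{\max(L_1,L_2)}{N^{\alpha/2}}\Big)^{\frac12}\prod_{i=1}^2\|f^{(i)}\|_{L^2}.
\end{equation*}
Summing in the output modulation via $\sum_{L\geq N^{(2-\frac{\alpha}{2})}}L^{-\frac12}\sim N^{-(1-\frac{\alpha}{4})}$, the $\partial_x$-factor $N$, this summation factor, the factor $N_{\min}^{\frac12}\sim N^{\frac12}$ and the transverse gain $N^{-\alpha/4}$ combine to exactly $N^{\frac12}$; the term $d_2$ (nonzero only for $\alpha=2$, $d_2=2$) is absorbed using the modulation floor $L_i\geq N^{(2-\frac{\alpha}{2})}=N$, so that $\min(L_1,L_2)^{\frac12}=(L_1L_2)^{\frac12}\max(L_1,L_2)^{-\frac12}\leq N^{-\frac12}(L_1L_2)^{\frac12}$. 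This yields the claimed bound in the resonant regime with the power $M_{\min}^{1/2}$, and even without the $N^{0+}$.

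In the high modulation regime I would distinguish which of $L,L_1,L_2$ realizes $L_{\max}$ and split according to the size of $M_{\min}$: put the factor of largest modulation in $L^2$ and estimate the product of the remaining two factors (after duality, this includes the output function) either by two $L^4_{x,y,t}$-Strichartz estimates through Lemma~\ref{lem:L4Summary} when $M_{\min}\gtrsim 1$, or by Cauchy--Schwarz through Lemma~\ref{lem:CauchySchwarz} when $M_{\min}\lesssim 1$. As all frequencies are $\sim N$, Lemma~\ref{lem:L4Summary} costs $C(N_1,N_2)N_{\min}^{\frac12}\sim N$, while the gain $L_{\max}^{-\frac12}\lesssim N^{-(\alpha+1)/2}$—extracted either from the $L^{-1/2}$-summation when $L=L_{\max}$, or from the factor $L_1^{\frac12}$ (resp.\ $L_2^{\frac12}$) already present on the right-hand side when an input realizes $L_{\max}$—more than compensates, leaving $N^{(3-\alpha)/2}\leq N^{\frac12}$ for $\alpha\in[2,4)$, with any logarithmic factor from the $L$-sum absorbed into $N^{0+}$; for $M_{\min}\leq 1$ Cauchy--Schwarz produces the power $M_{\min}\leq M_{\min}^{1/2}$, and for $M_{\min}>1$ the Strichartz estimate produces $M_{\min}^{0+}\leq M_{\min}^{1/2}$. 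I expect the main obstacle to be the resonant regime, which closes with no slack in the power of $N$: the balance between the transverse gain, the output-modulation summation and the derivative is exact (and sharp at $\alpha=2$), so one must keep precise track of the frequency dependent modulation floor $N^{(2-\frac{\alpha}{2})}$—which is crucial both for the convergence of the $L$-sum and, in the periodic directions, for absorbing the $d_2$-term and for compensating the Strichartz loss $\sim N$ in the high modulation regime.
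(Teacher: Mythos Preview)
Your proposal is correct and follows essentially the same approach as the paper: reduce to the dyadic convolution estimate, split into the resonant regime $L_{\max}\lesssim N^{\alpha+1}$ (handled by the transverse bilinear estimate of Proposition~\ref{prop:GeneralBilinear}) and the high modulation regime $L_{\max}\gtrsim N^{\alpha+1}$ (handled by two $L^4$-Strichartz estimates via Lemma~\ref{lem:L4Summary} for $M_{\min}\gtrsim 1$, or Cauchy--Schwarz via Lemma~\ref{lem:CauchySchwarz} for $M_{\min}\lesssim 1$, with duality when an input carries the high modulation). Your explicit remark that the $d_2$-term is harmless because for $\alpha=2$ the modulation floor gives $L_{\max}\geq N=N^{\alpha/2}$ is a detail the paper leaves implicit.
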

	\begin{proof}
		By the above reductions, we have to prove
		\begin{equation}
			\label{eq:HHHDyadic}
			N \sum_{L \geq N^{(2-\frac{\alpha}{2})}} L^{-\frac{1}{2}} \| \mathbf{1}_{D_{N,M,L} }(f^{(1)} * f^{(2)}) \|_{L^2} \lesssim_\lambda M_{\min}^{\frac{1}{2}} N^{\frac{1}{2}+} \prod_{i=1}^2 L_i^{\frac{1}{2}} \| f^{(i)} \|_{L^2}
		\end{equation}
		for $f^{(i)}$ supported in $D_{N_i,M_i,L_i}$. We consider the resonant and non-resonant interactions:\\
		\textbf{(i)} $L_{\max} \leq N_1^{\alpha+1}$: We apply a bilinear estimate due to Proposition \ref{prop:GeneralBilinear} to $f^{(1)}$ and $f^{(2)}$ to obtain
		\begin{equation*}
			\begin{split}
				N \sum_{L \geq N^{(2-\frac{\alpha}{2})}} L^{-\frac{1}{2}} \| \mathbf{1}_{D_{N,M,L} } (f^{(1)} * f^{(2)} \|_{L^2} &\lesssim \big( \frac{N^{\frac{1}{2}}}{N^{\frac{\alpha}{4}}} N N^{-1} N^{\frac{\alpha}{4}} \big) M_{\min}^{\frac{1}{2}} \prod_{i=1}^2 L_i^{\frac{1}{2}} \| f^{(i)} \|_{L^2} \\
				&\lesssim N^{\frac{1}{2}} M_{\min}^{\frac{1}{2}} \prod_{i=1}^2 L_i^{\frac{1}{2}} \| f^{(i)} \|_{L^2}.
			\end{split}
		\end{equation*}
		\textbf{(ii)} $L_{\max} \geq N_1^{\alpha+1}$: For $L \geq N_1^{\alpha +1}$, we use two $L^4_{x,y,t}$-Strichartz estimates on $f^{(i)}$, $i=1,2$ by Lemma \ref{lem:L4Summary} to find
		\begin{equation*}
			\begin{split}
				N \sum_{L \geq N_1^{\alpha+1}} L^{-\frac{1}{2}} \| \mathbf{1}_{D_{N,M,L}} (f^{(1)} * f^{(2)} ) \|_{L^2} &\lesssim_\lambda N M_{\min}^{ 0 +} N^{-\frac{\alpha+1}{2}} N \prod_{i=1}^2 L_i^{\frac{1}{2}} \| f^{(i)} \|_{L^2} \\
				&\lesssim_\lambda N^{\frac{3}{2}-\frac{\alpha}{2}} M_{\min}^{0+} \prod_{i=1}^2 L_i^{\frac{1}{2}} \| f^{(i)} \|_2.
			\end{split}
		\end{equation*}
		This suffices for $M_{\min} \geq 1$. If $M_{\min} \leq 1$, an application of Lemma \ref{lem:CauchySchwarz} gives
		\begin{equation*}
			\begin{split}
				N \sum_{L \geq N_1^{\alpha + 1}} L^{-\frac{1}{2}} \| \mathbf{1}_{D_{N,M,L}} (f^{(1)} \ast f^{(2)} ) \|_{L^2} &\lesssim \frac{N^{\frac{3}{2}}}{N_1^{\frac{\alpha+1}{2}}} M_{\min} N^{\frac{\alpha}{4}-1} \prod_{i=1}^2 L_i^{\frac{1}{2}} \| f^{(i)} \|_{L^2} \\
				&\lesssim N^{-\frac{\alpha}{4}} \prod_{i=1}^2 L_i^{\frac{1}{2}} \| f^{(i)} \|_{L^2}.
			\end{split}
		\end{equation*}
		This is acceptable.
		If $L \leq N_1^{\alpha+1}$, the arguments from above apply after using duality. The dyadic sum over $N^{2-\frac{\alpha}{2}} \leq L \leq N_1^{\alpha+1}$ gives an additional factor of $\log(N_1)$ which is acceptable.
	\end{proof}
	
	We consider now very small frequencies.
	\begin{lemma}
		Let $\{ N, N_1, N_2 \} \subseteq 2^{\N_0} \cap (-\infty,2^{10}] $. Then the following estimate holds:
		\begin{equation}\label{eq:VLowDyadic}
			\| P_{N,M} ( \partial_x (u_{N_1,M_1} v_{N_2,M_2}))\|_{\mathcal{N}_N} \lesssim_\lambda M_{\min}^{\frac{1}{2}} \| u_{N_1,M_1} \|_{F_{N_1}} \| u_{N_2,M_2} \|_{F_{N_2}}.
		\end{equation}
	\end{lemma}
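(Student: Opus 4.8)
The plan is to run the scheme of Lemmas~\ref{lemma:HLDyadic} and~\ref{lemma:HHDyadic}, exploiting that here \emph{all} $x$-frequencies are bounded, so that there is no derivative to recover and the time scale $T(N)=N^{-(2-\frac{\alpha}{2})}$ is comparable to $1$. First I would unwind the definition of the $\mathcal{N}_N$-norm, and---using the multiplier bounds \eqref{prop} and \eqref{TimeMult}, and (when $N=1$) an extra homogeneous decomposition in the $x$-frequency so that $N,N_1,N_2\in 2^{\Z}$ with $N,N_i\geq \nu^{-1}$ in the periodic-$x$ case---reduce \eqref{eq:VLowDyadic} to the dyadic modulation estimate
\begin{equation*}
N\sum_{L\geq N^{2-\frac{\alpha}{2}}}L^{-\frac12}\big\|\mathbf{1}_{\tilde D_{N,M,L}}(f^{(1)}\ast f^{(2)})\big\|_{L^2}\lesssim_\lambda M_{\min}^{\frac12}\prod_{i=1}^{2}L_i^{\frac12}\|f^{(i)}\|_{L^2},
\end{equation*}
for nonnegative $f^{(i)}$ supported in $\tilde D_{N_i,M_i,L_i}$, where now $N,N_1,N_2\lesssim 1$ and $L,L_1,L_2\geq 1$ (for $N\geq 2$ one may equally keep $\mathbf{1}_{D_{N,M,L}}$).

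The key point is that on the Fourier support of $f^{(1)}\ast f^{(2)}$ one has $|\xi|\lesssim 1$, so the prefactor $N$ coming from $\partial_x$ is harmless, and in the same way $N_{\min}\lesssim 1$ and $C(N_1,N_2)\lesssim 1$ in Lemma~\ref{lem:L4Summary}. Hence no resonance analysis via Proposition~\ref{prop:GeneralBilinear} is needed; it is enough to combine the Cauchy--Schwarz bound of Lemma~\ref{lem:CauchySchwarz} with the $L^4_{x,y,t}$-bilinear bound of Lemma~\ref{lem:L4Summary}, distinguishing $M_{\min}\leq 1$ from $M_{\min}>1$.

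For $M_{\min}\leq 1$ I would dualize and apply Lemma~\ref{lem:CauchySchwarz}, which yields $\|\mathbf{1}_{\tilde D_{N,M,L}}(f^{(1)}\ast f^{(2)})\|_{L^2}\lesssim (\min(N,N_1,N_2)\,\min(M,M_1,M_2)^2\,\min(L,L_1,L_2))^{\frac12}\|f^{(1)}\|_{L^2}\|f^{(2)}\|_{L^2}$. Since $\min(M,M_1,M_2)\leq M_{\min}\leq 1$ gives $\min(M,M_1,M_2)^2\leq M_{\min}$, and since $\sum_{L\geq 1}L^{-\frac12}\min(L,L_1,L_2)^{\frac12}\lesssim\prod_{i}L_i^{\frac12}$ (the incurred logarithm absorbed into $L_1^{0+}$), summing in $L$ and using $N\,\min(N,N_1,N_2)^{\frac12}\lesssim 1$ together with $M_{\min}\leq M_{\min}^{\frac12}$ gives the claim. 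For $M_{\min}>1$ I would instead estimate both factors in $L^4_{x,y,t}$ by Lemma~\ref{lem:L4Summary}, which produces a bound $\lesssim_\lambda M_{\min}^{0+}\prod_{i}L_i^{\frac12}\|f^{(i)}\|_{L^2}\lesssim_\lambda M_{\min}^{\frac12}\prod_{i}L_i^{\frac12}\|f^{(i)}\|_{L^2}$ that is independent of $L$, so $\sum_{L\geq 1}L^{-\frac12}$ converges and the factor $N\lesssim 1$ finishes the estimate.

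I do not expect a genuine obstacle: this is the mildest of the dyadic cases precisely because bounded frequencies mean there is no derivative to recover, so the two elementary bilinear inputs already suffice with room to spare. The only points that need some care are the bookkeeping in the reduction to the dyadic estimate in the periodic-$x$ setting (the lower bounds $\geq\nu^{-1}$, and the degenerate $N=1$ piece handled by a homogeneous sub-decomposition), and checking that Cauchy--Schwarz and $L^4$ together cover the whole range of $M_{\min}$ without ever falling short of the target power $M_{\min}^{\frac12}$.
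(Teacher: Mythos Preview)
Your approach is essentially identical to the paper's: after the standard reduction to the dyadic modulation estimate, the paper also splits into $M_{\min}\leq 1$ (handled by Lemma~\ref{lem:CauchySchwarz}) and $M_{\min}\geq 1$ (handled by two $L^4_{x,y,t}$-Strichartz estimates, i.e., Lemma~\ref{lem:L4Summary}). The extra homogeneous sub-decomposition in $N$ you propose is not needed here since $N,N_1,N_2\in 2^{\N_0}\cap(-\infty,2^{10}]$ are already bounded below by $1$, but it does no harm; likewise the logarithmic loss in the $L$-sum is easily absorbed since $\log(\min(L_1,L_2))\lesssim (L_1L_2)^{1/2}$ directly.
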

	\begin{proof}
		We use the same notation as in Lemma \ref{lemma:HLDyadic}. It is then sufficient to prove that if $L_1,L_2 \in 2^{\N_0}$ and $f^{(i)}: \D_\lambda^* \times \R \rightarrow \R_+$ are functions supported in $D_{N_i,M_i,L_i}$, $i=1,2$ and $M_1 \geq M_2$, then
		\begin{equation}\label{eq:ToProve}
			\sum_{ L \geq 1} L^{-\frac{1}{2}} \| \mathbf{1}_{D_{N,M,L}} (f^{(1)} \ast f^{(2)} )\|_{L^2} \lesssim_\lambda
			M_{\min}^{\frac{1}{2}} \prod_{i=1}^2 L_i^{\frac{1}{2}} \| f^{(i)} \|_{L^2}.
		\end{equation}
		This follows from the estimate from Cauchy-Schwarz inequality (Lemma \ref{lem:CauchySchwarz}) for $M_{\min} \leq 1$ and two $L^4_{x,y,t}$-Strichartz estimates for $M_{\min} \geq 1$.
	\end{proof}

	\subsection{Proof~of~Proposition~\ref{prop:ShortTimeBilinear}}
	We estimate the interactions as laid out above separately, i.e.
	\begin{itemize}
		\item High x Low $\to$ High,
		\item High x High $\to$ High,
		\item High x High $\to$ Low,
		\item Low x Low $\to$ Low,
	\end{itemize}
	The key ingredients are the dyadic estimate and the decomposition of the weight $p_{\lambda}(\xi, \eta)$.
	
	We begin with High x Low $\to$ High-interaction. Recall that
	\begin{equation*}
		(1+|\xi|)^s p_\lambda(\xi,\eta) = \lambda^{-\frac{1}{2}} (1+|\xi|)^s + (1+|\xi|)^s \frac{|\eta|}{|\xi|}.
	\end{equation*}
	Let $N \sim N_1 \gg N_2$. For $s \geq r > 1$, we find
	\begin{equation*}
		\lambda^{-\frac{1}{2}} N^s \| P_{N,M} \partial_x (u_{N_1,M_1} v_{N_2,M_2}) \|_{\mathcal{N}_N} \lesssim \lambda^{-\frac{1}{2}} N^s N_{\min}^{\frac{1}{2}} M_{\min}^{\frac{1}{2},\frac{1}{2}+} \| u_{N_1,M_1} \|_{F_{N_1}} \| v_{N_2,M_2} \|_{F_{N_2}}.
	\end{equation*}
	The estimates \eqref{eq:L2ShorttimeBilinearEstimate} and \eqref{eq:ShorttimeBilinearEstimateRegularity} follow by summation: Suppose $M \sim M_1 \gtrsim M_2$. Summation
	over $N_2$ and $M_2$ and using $M_2^{\frac{1}{2},\frac{1}{2}+} \lesssim ( \lambda^{-\frac{1}{2}} + \frac{M_2}{N_2}) (1+N_2)^{\frac{1}{2}}$ as follows:
	\begin{equation*}
		\lambda^{-\frac{1}{2}} N^s \| P_{N,M} \partial_x (u_{N_1,M_1} v_{N_2,M_2}) \|_{\mathcal{N}_N} \lesssim \lambda^{-\frac{1}{2}} N^s \| u_{N_1,M_1} \|_{F_{N_1}} \| v \|_{F^r},
	\end{equation*}
	the claim follows from square summation.
	
	Suppose $M \sim M_2 \gg M_1$. In this case summation over $M_1$ gives
	\begin{equation*}
		\lambda^{-\frac{1}{2}} N^s \| P_{N,M} \partial_x (u_{N_1} v_{N_2,M_2}) \|_{\mathcal{N}_N} \lesssim \lambda^{-\frac{1}{2}} N_1^s \| u_{N_1} \|_{F_{N_1}} N_2^{\frac{1}{2}} M_2^{\frac{1}{2},\frac{1}{2}+} \| v_{N_2,M_2} \|_{F_{N_2}}.
	\end{equation*}
	Similar arguments show that summability is provided for $s \geq r > 1$ for High x High $\to$ High and Low x Low $\to$ Low interaction.
	
	\medskip
	
	For High x High $\to$ Low-interactions we need a different argument. The reason is that the weight $\frac{|\eta|}{|\xi|}$ eliminates the derivative. To estimate $|\eta| \leq 2 \max(|\eta_1|, |\eta_2|)$ in terms of the weight $p_\lambda(\xi_i,\eta_i)$ one has to take into account a high
	frequency $|\xi_i| \sim N_i$. This gives an additional derivative loss in the high frequency.
	
	We revisit the dyadic estimates from Lemma \ref{lemma:HHDyadic}: The estimate \eqref{eq:HighHighLowI} becomes with the additional factor $\frac{N_1}{N}$ and $(N_1 \leq M_{\min}, \; N_1 \leq M_{\min})$:
	\begin{equation*}
		\begin{split}
			\frac{N_1}{N} (N N_1^{2-\frac{\alpha}{2}}) \sum_{L \geq N_1^{2-\frac{\alpha}{2}}} L^{-\frac{1}{2}} \| \mathbf{1}_{\tilde{D}_{N,M,L}} (f^{(1)} \ast f^{(2)}) \|_{L^2} &\lesssim_\lambda N^{\frac{1}{2}} N_1^{\frac{5}{2} - \frac{\alpha}{4}} M_{\min}^{0+} \prod_{i=1}^2 L_i^{\frac{1}{2}} \| f^{(i)} \|_{L^2} \\
			&\lesssim_\lambda N_1^{2-\frac{\alpha}{2} - \frac{\alpha}{4}} M_{\min}^{\frac{1}{2}+} \prod_{i=1}^2 L_i^{\frac{1}{2}} \| f^{(i)} \|_{L^2}.
		\end{split}
	\end{equation*}
	\eqref{eq:HighHighLowII} becomes with $\frac{N_1}{N}$ and recall that $(N \leq N_1^{-\alpha}, \; M_{\min} \leq N_1)$:
	\begin{equation*}
		\begin{split}
			\frac{N_1}{N} N N_1^{2- \frac{\alpha}{2}} \sum_{L \geq N_1^{2-\frac{\alpha}{2}}} L^{-\frac{1}{2}} \| \mathbf{1}_{\tilde{D}_{N,M,L}} (f^{(1)} \ast f^{(2)}) \|_{L^2} &\lesssim N^{\frac{1}{2}} N_1^{2-\frac{\alpha}{4}} M_{\min} \prod_{i=1}^2 L_i^{\frac{1}{2}} \| f^{(i)} \|_{L^2} \\
			&\lesssim N_1^{\frac{5}{2} - \frac{3 \alpha}{4}} M_{\min}^{\frac{1}{2}} \prod_{i=1}^2 L_i^{\frac{1}{2}} \| f^{(i)} \|_{L^2}.
		\end{split}
	\end{equation*}
	\eqref{eq:HHResonantI} becomes with $\frac{N_1}{N}$ and recall $(N_1^{-\alpha} \leq N \leq 1, \; L_{\max} \leq N_1^\alpha N)$:
	\begin{equation*}
		\frac{N_1}{N} N_1^{2-\frac{\alpha}{2}} N \sum_{L \leq N_1^\alpha N} L^{-\frac{1}{2}} \| \mathbf{1}_{\tilde{D}_{N,M,L}} (f^{(1)} \ast f^{(2)}) \|_{L^2} \lesssim \log(N_1^\alpha N) N^{\frac{1}{2}} M_{\min}^{\frac{1}{2}} N_1^{2- \frac{\alpha}{2}} \prod_{i=1}^2 L_i^{\frac{1}{2}} \| f^{(i)} \|_{L^2}.
	\end{equation*}
	\eqref{eq:HHResonantA} becomes with $\frac{N_1}{N}$ and $(N_1 \leq M_{\min}, \; L_{\max} \geq N_1^\alpha N)$:
	\begin{equation*}
		\frac{N_1}{N} N_1^{2-\frac{\alpha}{2}} N \sum_{L \geq N_1^\alpha N} L^{-\frac{1}{2}} \| \mathbf{1}_{\tilde{D}_{N,M,L}} (f^{(1)} \ast f^{(2)} \|_{L^2} \lesssim N_1^{3 - \alpha} M_{\min}^{\frac{1}{2}+} \prod_{i=1}^2 L_i^{\frac{1}{2}} \| f^{(i)} \|_{L^2}.
	\end{equation*}
	For $N_1 \geq M_{\min}$, we find for \eqref{eq:HHResonantB} with $(N_1 \geq M_{\min}, \; L_{\max} \geq N_1^\alpha N)$:
	\begin{equation*}
		\frac{N_1}{N} N_1^{2-\frac{\alpha}{2}} N \sum_{L \geq N_1^\alpha N} L^{-\frac{1}{2}} \| \mathbf{1}_{\tilde{D}_{N,M,L}} (f^{(1)} \ast f^{(2)} \|_{L^2} \lesssim N_1^{\frac{5}{2}-\frac{3 \alpha}{4}} M_{\min}^{\frac{1}{2}} \prod_{i=1}^2 L_i^{\frac{1}{2}} \| f^{(i)} \|_{L^2}.
	\end{equation*}
	
	\eqref{eq:HHLowResonantC} becomes with $\frac{N_1}{N}$ and recall $(N \geq 1, \; L_{\max} \leq N_1^\alpha N)$:
	\begin{equation*}
		\begin{split}
			&\quad \frac{N_1}{N} N \big( \frac{N_1}{N} \big)^{(2-\frac{\alpha}{2})} \sum_{N^{(2-\frac{\alpha}{2})} \leq L_{\max} \leq N_1^\alpha N} L^{-\frac{1}{2}} \| \mathbf{1}{_{D_{N,M,L}}} (f^{(1)} * f^{(2)}) \|_{L^2} \\
			&\lesssim N_1^{2-\frac{\alpha}{2}} N^{-\frac{3}{2}+\frac{\alpha}{2}} M_{\min}^{\frac{1}{2}} \log(N_1^\alpha N) \prod_{i=1}^2 L_i^{\frac{1}{2}} \| f^{(i)} \|_{L^2}.
		\end{split}
	\end{equation*}
	This is the estimate, which requires us to suppose that
	\begin{equation*}
		s> \begin{cases}
			\frac{5}{2}-\frac{\alpha}{2}, &\quad 2 \leq \alpha < 3, \\
			1, &\quad 3 \leq \alpha < 4.
		\end{cases}
	\end{equation*}
	\eqref{eq:HHLowResonantD} becomes for $N_1 \leq M_{\min}$ with $\frac{N_1}{N}$ and $(N \geq 1, \; L_{\max} \geq N_1^\alpha N)$:
	\begin{equation*}
		\lesssim N_1^{3-\alpha} N^{\frac{\alpha}{2}-2} M_{\min}^{\frac{1}{2}} \prod_{i=1}^2 L_i^{\frac{1}{2}} \| f^{(i)} \|_{L^2}.
	\end{equation*}
	\eqref{eq:HHLowResonantE} becomes for $(N \geq 1, \; M_{\min} \leq N_1, \; L_{\max} \geq N_1^\alpha N)$ with the factor $\frac{N_1}{N}$:
	\begin{equation*}
		\lesssim N_1^{2 - \frac{3\alpha}{4}} N^{-\frac{5}{2} + \frac{\alpha}{2}} M_{\min}^{\frac{1}{2}} \prod_{i=1}^2 L_i^{\frac{1}{2}} \| f^{(i)} \|_{L^2}.
	\end{equation*}
	
	This completes the proof of Proposition~\ref{prop:ShortTimeBilinear}.
	
	\section{Energy estimates}\label{section:EnergyEstimates}
	The purpose of this section is to propagate the energy norms in terms of short-time norms. We shall do this for solutions in $F^s$ and for differences of solutions in $F^0$. The key ingredient
	is the dyadic estimate in Proposition \ref{prop:DyadicEnergyEstimate}, to which the estimates are reduced after suitable integration by parts and substitutions.
	\subsection{Energy estimates for the solution}
	We begin with energy estimates for solutions.
	\begin{proposition}\label{prop:EnergyEstimates}
		Let $\alpha \in [2,4)$. For all $T\in (0,1]$ and solutions $u \in C([-T,T]; L^2) \cap L_T^\infty E^s$ to the following
		\begin{equation}\label{eq:SolutionEnergyEstimates}
			\left\{ \begin{array}{cl}
				\partial_t u - \partial_{x} D_x^\alpha u - \partial_{x}^{-1} \Delta_y u &= \partial_x (u^2), \quad (x,y) \in \D_\lambda, ~~t \in
				[-T,T], \\
				u(0) &= \phi,
			\end{array} \right.
		\end{equation}
		we have
		\begin{equation}\label{eq:ShorttimeEnergyEstimate}
			\|u\|_{B^{s}(T)}^2 \lesssim \| \phi\|_{E^{s} }^2 + \lambda^{0+} \|u\|^2_{F^{s}(T)} \| u \|_{F^{r}(T)}
		\end{equation}
		provided that $s \geq r > 3 - \frac{\alpha}{2}$ .
	\end{proposition}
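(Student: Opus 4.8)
\emph{Overview and Step 1 (reduction).} The plan is to run the short-time energy method of Ionescu--Kenig--Tataru: differentiate the weighted, frequency-localized $L^2$-energies, integrate in time, symmetrize, and reduce \eqref{eq:ShorttimeEnergyEstimate} to a dyadic trilinear estimate that is then closed with the resonance and Strichartz input of Sections~\ref{section:Resonance}--\ref{section:ShorttimeBilinear}. We work on the rescaled domain $\D_\lambda$ with the harmless $\lambda^{0+}$-loss and $\lambda$-independent constants fixed in Section~\ref{section:ShorttimeBilinear}. Since $P_N\,p_\lambda(-i\partial_x,-i\nabla_y)$ is a Fourier multiplier commuting with the skew-adjoint linear part of \eqref{eq:SolutionEnergyEstimates}, a solution $u$ obeys
\[
\frac{d}{dt}\,\|P_N p_\lambda u(t)\|_{L^2(\D_\lambda)}^2
= 2\int_{\D_\lambda}(P_N p_\lambda u)(P_N p_\lambda\,\partial_x(u^2))\,dx\,dy .
\]
Integrating on $[0,t']$, taking $\sup_{|t'|\le T}$, multiplying by $N^{2s}$, summing over $N\in2^{\N_0}$, and replacing $u$ by extensions to $\R$ that nearly realize the $F^{r}(T)$- and $F^{s}(T)$-norms, \eqref{eq:ShorttimeEnergyEstimate} reduces to
\[
\sum_{N\in2^{\N_0}} N^{2s}\sup_{|t'|\le T}\Bigl|\int_0^{t'}\!\!\int_{\D_\lambda}(P_N p_\lambda u)(P_N p_\lambda\,\partial_x(u^2))\,dx\,dy\,ds\Bigr|
\lesssim_\lambda \|u\|_{F^{s}(T)}^2\,\|u\|_{F^{r}(T)} .
\]
Expanding $u=\sum_{N_1}u_{N_1}$, decomposing in the transverse frequencies $M,M_1,M_2$ and localizing the $\eta$-supports of the two input factors to cubes of side $M_{\min}$ as in Section~\ref{section:ShorttimeBilinear}, and cutting the $s$-integral into subintervals of length $T(N_{\max})$, it suffices to prove a dyadic trilinear estimate (Proposition~\ref{prop:DyadicEnergyEstimate}) whose constants sum in $N,N_1,N_2,M$.

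\emph{Step 2 (symmetrization and distribution of the weight).} Summing over the output frequency $N$, the relevant trilinear form is governed by the symmetrized multiplier $\sum_{i=1}^3 b(\xi_i,\eta_i)\,\xi_i$ restricted to $\{\xi_1+\xi_2+\xi_3=0,\ \eta_1+\eta_2+\eta_3=0\}$, where $b=\langle\xi\rangle^{2s}p_\lambda^2$; since this multiplier vanishes when $b$ is constant (conservation of $\|u\|_{L^2}$), a commutator computation gains a factor $\sim N_{\min}/N_{\max}$ whenever two $x$-frequencies are comparable and dominate the third. This gain is available for the $\langle\xi\rangle^{2s}$-factor and for the $x$-dependence of $p_\lambda$, and it makes the \emph{High}$\times$\emph{Low/High}$\to$\emph{High} and \emph{Low}$\times$\emph{Low}$\to$\emph{Low} interactions summable for $r>1$, exactly as in the proof of Proposition~\ref{prop:ShortTimeBilinear}, the low frequencies being summed by the transverse bilinear gain $N_{\min}^{1/2}$ of Proposition~\ref{prop:GeneralBilinear}. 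The decisive interaction is \emph{High}$\times$\emph{High}$\to$\emph{Low}, $N\ll N_1\sim N_2$: the transverse weight $p_\lambda$ depends on $\eta$, and when $|\eta_1|\sim|\eta_2|$ are large with small resultant transverse frequency the commutator in $x$ yields no gain, so after the bound $p_\lambda(\xi,\eta)|\xi|\lesssim N_1\,p_\lambda(\xi_1,\eta_1)$ (together with the decomposition $p_\lambda=\lambda^{-1/2}+|\eta|/|\xi|$ and the dyadic $\eta$-localization) one is left with a derivative loss of essentially $N_1$ placed on the highest frequency, compounded with the factor $(N_1/N)^{2-\frac{\alpha}{2}}$ from localizing the two high-frequency factors to the time scale $T(N_1)\ll T(N)$.

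\emph{Step 3 (the High$\times$High$\to$Low interaction and the threshold).} This loss is recovered as in Section~\ref{section:ShorttimeBilinear}: in the resonant range $L_{\max}\le N_1^\alpha N$ the transverse bilinear estimate of Proposition~\ref{prop:GeneralBilinear}, invoking the resonance condition \eqref{eq:rescon} and the transversality \eqref{eq:grad} it forces, gains $N_1^{-\alpha/4}$, whereas in the non-resonant range one uses two $L^4_{x,y,t}$-Strichartz estimates (Lemma~\ref{lem:L4Summary}) or the Cauchy--Schwarz bound (Lemma~\ref{lem:CauchySchwarz}); one trades transverse powers $M^{1/2,1/2+}\lesssim(\lambda^{-1/2}+\tfrac{|\eta|}{|\xi|})(1+|\xi|)^{1/2}$ against $\langle\xi\rangle$-powers via the $p_\lambda$-weight (the $\lambda^{-1/2}$ in $p_\lambda$ absorbing the $\lambda$-powers produced by the Strichartz estimates on the two-dimensional torus) and sums the dyadic series in $N\le N_1$ --- producing the logarithmic factors already met around \eqref{eq:HHLowResonantC} --- and in $M,N_1,N_2$. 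The upshot is that every contribution sums under $s\ge r>3-\frac{\alpha}{2}$; compared with the threshold $r>\max\{\tfrac52-\tfrac{\alpha}{2},1\}$ of Proposition~\ref{prop:ShortTimeBilinear}, the extra half-derivative of room is consumed by the power of $N_1$ that the $p_\lambda$-weight relocates to the top frequency in this interaction.

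\emph{Main obstacle.} The crux is precisely this weighted \emph{High}$\times$\emph{High}$\to$\emph{Low} interaction: the symbol $p_\lambda$, indispensable both for the $\partial_x^{-1}\nabla_y$-term of the energy \eqref{eq:e} and for the commutator structure, cancels the $\partial_x$-derivative in the nonlinearity only at the cost of a full power of $N_1$ on the highest frequency when the transverse frequencies are large and nearly cancel, and closing the estimate hinges on balancing the transverse bilinear gain $N_1^{-\alpha/4}$, the frequency-dependent time localization, and the $L^4$-Strichartz estimates --- which is what forces $s>3-\frac{\alpha}{2}$. The remaining work, namely the exhaustive case analysis over the three transverse scales $M,M_1,M_2$ (in particular the $M^{1/2+}$-losses and the logarithmic summations proper to the periodic transverse directions) and the bookkeeping of $\lambda$, is laborious but routine.
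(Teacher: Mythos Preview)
Your proposal is correct and follows essentially the same route as the paper: reduce to a dyadic trilinear estimate by moving the $\partial_x$ to the lowest $x$-frequency via a commutator/symmetrization argument, localize in time to scale $T(N_{\max})$, and then close with the case analysis of Proposition~\ref{prop:DyadicEnergyEstimate} (transverse bilinear estimate in the resonant regime, two $L^4$-Strichartz estimates or Cauchy--Schwarz in the non-resonant regime), with the \emph{High}$\times$\emph{High}$\to$\emph{Low} interaction under the weight $p_\lambda$ producing the threshold $s>3-\tfrac{\alpha}{2}$. The one organizational difference worth noting is that you keep $p_\lambda^2$ inside the symmetrized multiplier $b(\xi,\eta)=\langle\xi\rangle^{2s}p_\lambda(\xi,\eta)^2$, whereas the paper splits $p_\lambda=\lambda^{-1/2}+|\eta|/|\xi|$ and propagates $\|P_N u\|_{L^2}^2$ and $\|P_N\partial_x^{-1}\nabla_y u\|_{L^2}^2$ separately (writing $v=\partial_x^{-1}\nabla_y u$ so that the weighted equation looks like the unweighted one); this sidesteps the non-smoothness of $|\eta|$ in the symbol and makes the reduction to Proposition~\ref{prop:DyadicEnergyEstimate} slightly cleaner, but the substance is the same.
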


	\subsection{Reductions}\label{subsection:Reductions}	We consider the equation \eqref{eq:SolutionEnergyEstimates} for the Littlewood-Paley pieces $P_{N} u$. Multiplying this equation by $P_{N} u$ and integrating, we get
	\begin{equation}\label{eq:FToCSolution}
		\sup_{|t_N|\leqslant T} \|  P_{N} u(t_N)\|_{L^2}^2 \leqslant \| P_{N} \phi\|_{L^2}^2 + \sup_{|t_N|\leqslant T} \Big| \int_{\D_\lambda \times [0,t_N]} P_{N} u \cdot P_{N} \partial_x(u^2) dxdydt \Big|.
	\end{equation}
	We write the integrand as
	\begin{equation*}
		\begin{split}
			P_N u P_N(\partial_x u^2) =2P_NuP_N (u\partial_x u)  = 2P_N u P_N(P_{\gtrsim N} u \cdot \partial_x u) + 2P_N u P_N(P_{\ll N}u \cdot \partial_x u):=2(I+II).
		\end{split}
	\end{equation*}
	We can further decompose $I$ as
	\begin{equation*}
		\begin{split}
			I = P_N u P_N(P_{\gg N}u \cdot P_{\gg N}\partial_x u) + P_N u P_N (P_{\gtrsim N}u \cdot P_{\lesssim N} \partial_x u )=:a+b,
		\end{split}
	\end{equation*}
	while $II$ can be written as
	\begin{equation*}
		II= P_N u P_{\ll N}u P_N \partial_x u + P_N u [P_N(P_{\ll N}u \partial_x u )-P_{\ll N}u P_N\partial_x u]:=c+d.
	\end{equation*}
	
	We have
	\begin{equation*}
		\begin{split}
			2a+2c &= P_N u \cdot \partial_x(P_{\gg N} u)^2 + \partial_x(P_N u)^2 \cdot P_{\ll N}u.
		\end{split}
	\end{equation*}
	For $b$, we observe that the derivative already hits the low frequency, while for $a+c$, using an integration by parts, we have
	\begin{equation*}
		\int_{\D_\lambda \times [0,T]} 2(a+c) ~dxdydt  = \sum_{N \ll N_1}\int_{\D_\lambda \times [0,T]} \partial_x P_N u \cdot P_{N_1}u \cdot \tilde{P}_{N_1}u ~ dxdydt +\sum_{N_1 \ll N} \int_{\D_\lambda \times [0,T]} \partial_x P_{N_1}u \cdot P_N u\cdot P_N u~dxdydt,
	\end{equation*}
	where $\tilde{P}_{N_1}=\sum_{N'\sim N_1} P_{N_1}$ (and the multiplier is denoted by $\tilde{\phi}_{N_1}$).
	
	Next, we treat $d$ by the same argument as in \cite[eq. 6.10]{IKT} to transfer the derivative to the low frequency factor. We have
	\begin{equation*}
		d= P_N u\sum_{N_1\ll N} [P_N(P_{N_1}u \cdot \partial_x u )-P_{N_1}u\cdot  P_N\partial_x u].
	\end{equation*}
	We fix an extension of $u$ which we still denote by $u$. We have
	\begin{equation*}
		\begin{split}
			&\mathcal{F}[P_N(P_{N_1}u \partial_x u )-P_{N_1}u P_N\partial_x u](\xi,\eta,\tau) \\
			=&\int \frac{(\xi-\xi_1)}{\xi_1} (\tilde{\phi}_N(\xi)-\tilde{\phi}_N(\xi-\xi_1))\widehat{P_{N}u}(\xi-\xi_1,\eta-\eta_1,\tau-\tau_1) \widehat{P_{N_1}\partial_x u} (\xi_1,\eta_1,\tau_1)d\xi_1d\eta_1d\tau_1 =:M(P_N u,P_{N_1} \partial_xu),
		\end{split}
	\end{equation*}
	where the (bilinear) multiplier is
	\[
	m(\xi,\xi_1)=\frac{(\xi-\xi_1)}{\xi_1} (\tilde{\phi}_N(\xi)-\tilde{\phi}_N(\xi-\xi_1)) \tilde{\phi}_{N_1}(\xi_1).\]
	Using the mean value theorem,
	\begin{equation*}
		(\tilde{\phi}_N(\xi)-\tilde{\phi}_N(\xi-\xi_1))  = -\int_0^1 \tilde{\phi}_{N}^{'}(\xi-h\xi_1) \xi_1 ~dh,
	\end{equation*}
	we obtain the uniform boundedness of the multiplier
	\begin{equation*}
		|m(\xi,\xi_1)| \lesssim \int_0^1  \Big|\frac{\xi-\xi_1}{N}\tilde{\phi}^{'}\Big(\frac{\xi-h\xi_1}{N}\Big) \Big|~dh \lesssim 1.
	\end{equation*}
	To conclude, we have shown that by taking the advantage of the form of the nonlinearity, we can transfer the derivative to the low frequency in all the cases. More precisely, we can assume that our integrand is of the form
	\begin{equation}\label{eq:ReducedIntegrand}
		P_{N}u\cdot P_{N_2}u \cdot (P_{N_1}\partial_x u) \text{ or } P_{N}u \cdot  M(P_{N}u, P_{N_1}\partial_x u) \quad \text{ with } N_1\lesssim N\sim N_2
	\end{equation}
	with a bilinear Fourier-multiplier $M$ with bounded symbol $m$. \\
	
	Considering an integrand of the form $P_{N}u\cdot P_{N_2}u \cdot (P_{N_1}\partial_x u)$, we divide the time interval into sub-intervals $I$ of size $N^{-(2-\frac{\alpha}{2})}$ to estimate the functions in short-time norms. Let $\gamma \in C^\infty_c([-1,1];\R_{+})$ such that
	\begin{equation*}
		\sum_{n \in \Z} \gamma^3(t-n) \equiv 1.
	\end{equation*}
	We write
	\begin{equation}
		\label{eq:IntegrationByParts}
		\begin{split}
			&\Big| \int_{\D_\lambda \times [0,T]}  P_{N}u\cdot P_{N_2}u \cdot (P_{N_1}\partial_x u) ~dx dy dt \Big| \\
			&\quad \lesssim \sum_n \Big| \int_{\D_\lambda \times [0,T]} (\gamma (N^{-(2-\frac{\alpha}{2})} t-n)P_N  u)\cdot  ( \gamma(N^{-(2-\frac{\alpha}{2})} t - n)P_{N_2} u )\cdot  ( \gamma(N^{-(2-\frac{\alpha}{2})} t - n) P_{N_1} \partial_x u ) ~ dx dy dt \Big|.
		\end{split}
	\end{equation}
	We consider the sets
	\begin{align*}
		A &= \{ n \in \Z : \text{supp} (\gamma (N^{-(2-\frac{\alpha}{2})} \cdot - n)) \subseteq (0,T) \}, \\
		B &= \{ n \in \Z : 0 \in \text{supp} (\gamma(N^{-(2-\frac{\alpha}{2})} \cdot - n)) \vee T \in \text{supp}(\gamma(N^{-(2-\frac{\alpha}{2})} \cdot - n)) \}.
	\end{align*}
	Note that $|A| \sim T N^{(2-\frac{\alpha}{2})}$ and $|B| \leqslant 4$.
	First, we consider the bulk of the cases given by the set $A$.
	We change for $n \in A$ in \eqref{eq:IntegrationByParts} to Fourier space after an additional dyadic decomposition for the $y$-frequencies: Let
	\begin{equation*}
		\begin{split}
			f^{(1)}_{N_1,M_1} &= \mathcal{F}_{x,y,t} [ \gamma(N^{-(2-\frac{\alpha}{2})} t-n) P_{N_1,M_1} u ], \quad f^{(2)}_{N_2,M_2} = \mathcal{F}_{x,y,t} [\gamma(N^{-(2-\frac{\alpha}{2})} t-n) P_{N_2,M_2} u], \\
			f^{(3)}_{N_3,M_3} &= \mathcal{F}_{x,y,t} [\gamma(N^{-(2-\frac{\alpha}{2})} t-n) P_{N,M_3} u].
		\end{split}
	\end{equation*}
	We make an additional dyadic decomposition in modulation for $L \geq N_3^{(2-\frac{\alpha}{2})}$ according to the time localization:
	\begin{equation*}
		f^{(i)}_{N_i,M_i}= \sum_{L_i \geq N_3^{(2-\frac{\alpha}{2})}} f^{(i)}_{N_i,M_i,L_i}, \qquad f^{(i)}_{N_i,M_i,L_i} = \mathbf{1}_{D_{N_i,M_i,L_i}} f^{(i)}_{N_i,M_i}.
	\end{equation*}
	
	The same can be imitated for an integrand of the type $P_{N}u \cdot  M(P_{N}u, P_{N_1}\partial_x u)$ because we require a bound in terms of the $L^2$ norm on the right hand side. Taking into account the additional derivatives and time localization, the proof of Proposition \ref{prop:EnergyEstimates} reduces to the following estimate:
	\begin{equation}
		\label{eq:DyadicEnergyEstimate}
		\sum_{L_i \geq N_3^{(2-\frac{\alpha}{2})}} \big|  \int f^{(1)}_{N_1,M_1,L_1} \cdot (f^{(2)}_{N_2,M_2,L_2} \ast f^{(3)}_{N_3,M_3,L_3} ) \big| \lesssim_\lambda N_1^{(0+,\frac{1}{2}+)} N_3^{-1} M_{\min}^{\frac{1}{2}} \prod_{i=1}^3 \sum_{L_i \geq N_3^{(2-\frac{\alpha}{2})}}L_i^{\frac{1}{2}}  \| f^{(i)}_{N_i,M_i,L_i} \|_{L^2}
	\end{equation}
	for $N_1 \leq N_2 \sim N_3$, $N_1 \in 2^{\Z}$, $N_2,N_3 \in 2^{\N_0}$. Like in Section \ref{section:ShorttimeBilinear}, we suppose that $N_i \geq \nu^{-1}$ if $\D_\lambda = \T_\nu \times \R^{d_1} \times \T^{d_2}_\lambda$. We always suppose that $M_i \geq \lambda^{-1}$, which is the lowest scale for $y$-frequencies.
	
	Once the above display is proved, we can conclude the proof of \eqref{eq:ShorttimeEnergyEstimate} by \eqref{prop}:
	Recall that
	\begin{equation*}
		p_\lambda(\xi,\eta) = \lambda^{-\frac{1}{2}} + \frac{|\eta|}{|\xi|}.
	\end{equation*}
	The constant term is then estimated by the above argument, and we trade $\eta$-factors into the weight and powers of $\xi$ like in Section \ref{section:ShorttimeBilinear}:
	\begin{equation*}
		|\eta|^{\frac{1}{2}} \lesssim
		\begin{cases}
			(1+|\xi|)^{\frac{1}{2}}, \quad &|\eta| \lesssim |\xi|, \\
			\big( 1+ \frac{|\eta|}{|\xi|} \big) |\xi|^{\frac{1}{2}}, \quad &|\eta| \gtrsim |\xi|.
		\end{cases}
	\end{equation*}
	This way the factor $M_{\min}^{\frac{1}{2}}$ is traded to $N_1^{\frac{1}{2}}$. Together with the factor $N_3^{(2-\frac{\alpha}{2})}$ from the time localization, we see how \eqref{eq:DyadicEnergyEstimate} can be summed to \eqref{eq:ShorttimeEnergyEstimate} by \eqref{prop}. To deal with the second term from the weight,	 we consider the equation for the Littlewood-Paley piece $P_N \partial_x^{-1} \nabla_y u$:
	\begin{equation*}
		\| P_N \partial_x^{-1} \nabla_y u(t_N) \|^2_{L^2} \leq \| P_N \partial_x^{-1} \nabla_y u(0) \|_{L^2}^2 + \big| \int_{\D_\lambda \times [0,t_N]} P_N \partial_{x}^{-1} \nabla_y u P_N (\nabla_y u^2) dx dy dt \big|.
	\end{equation*}
	We write
	\begin{equation*}
		\int_{\D_\lambda \times [0,t_N]} P_N (\partial_x^{-1} \nabla_y u) P_N ((\partial_x \partial_x^{-1} \nabla_y u) \cdot u), \text{ and let } v = \partial_x^{-1} \nabla_y u.
	\end{equation*}
	Then,
	\begin{equation*}
		\begin{split}
			\int_{\D_\lambda \times [0,t_N]} P_N v P_N (\partial_x v \cdot u) dx dy dt &= \int_{\D_\lambda \times [0,t_N]} P_N v (P_N \partial_x v P_{\ll N} u ) dx dy dt + \int_{\D_\lambda \times [0,t_N]} P_N v P_N (P_{\ll N} \partial_x v \cdot u) dx dy dt \\
			&\quad + \int_{\D_\lambda \times [0,t_N]} P_N v P_N (P_{\gtrsim N} \partial_x v P_{\gtrsim N} u ) dx dy dt.
		\end{split}
	\end{equation*}
	The first term can be reduced to the second after integration by parts and the third does not require integration by parts. Then, it suffices to show the estimates for $N' \lesssim N$:
	\begin{equation}
		\label{eq:EnergyEstimateReductionI}
		\sum_{\substack{N \geq 1, \\ N' \lesssim N}} N^{2s } \big| \int_{\D_\lambda \times [0,t_N]} P_N v P_N v (P_{N'} \partial_x u) dx dy dt \big| \lesssim_\lambda \| v \|^2_{\bar{F}^{s}(T)} \| u \|_{F^r(T)},
	\end{equation}
	and for $N_1' \sim N_2' \gtrsim N$:
	\begin{equation}
		\label{eq:EnergyEstimateReductionII}
		\sum_{\substack{N \geq 1, \\  N_1' \sim N_2' \gtrsim N}} N^{2s} \big| \int_{\D_\lambda \times [0,t_N]} P_N v P_N( P_{N_1'} v \cdot P_{N_2'} \partial_x u) dx dy dt \big| \lesssim_\lambda  \| v \|^2_{\bar{F}^{s}(T)} \| u \|_{F^r(T)}.
	\end{equation}
	In the above display we denote with $\bar{F}^s$ the short-time space in the Sobolev scale, i.e., $F^s$ without the weight $p_\lambda$. 	The estimates \eqref{eq:EnergyEstimateReductionI} and \eqref{eq:EnergyEstimateReductionII} can both be reduced to \eqref{eq:DyadicEnergyEstimate}, which we summarize in the following proposition:
	\begin{proposition}
		\label{prop:DyadicEnergyEstimate}
		Let $N_1 \in 2^{\Z}$, $N_2, N_3 \in 2^{\N_0}$, $N_1 \leq N_2 \sim N_3$, $L_i \in 2^{\N_0}$, $M_i \in 2^{\N_0},$  $i=1,2,3$, and $f^{(i)}_{N_i,M_i,L_i} \in L^2(\D_\lambda^* \times \R;\R_{+})$ with $\text{supp}(f^{(1)}) \subseteq \tilde{D}_{N_1,M_1,L_1} $, and $\text{supp}(f^{(k)}) \subseteq D_{N_k,M_k,L_k}$ for $k=2,3$. If $\D_\lambda = \T_\nu \times \R^{d_1} \times \T_\lambda^{d_2}$, we additionally require $N_1 \geq \nu^{-1}$. Then the following estimate holds:
		\begin{equation*}
			\begin{split}
				&\quad	\sum_{L_i \geq N_3^{(2-\frac{\alpha}{2})}} \Big| \int_{\D_\lambda^* \times \R} f^{(1)}_{N_1,M_1,L_1} \cdot (f_{N_2,M_2,L_2}^{(2)} \ast f_{N_3,M_3,L_3}^{(3)}) d\xi d\eta d\tau \Big| \\
				&\lesssim_\lambda N_1^{(0+,\frac{1}{2}+)}  N_3^{-1} M_{\min}^{\frac{1}{2}} \prod_{i=1}^3 \sum_{L_i \geq N_3^{(2-\frac{\alpha}{2})}} L_i^{\frac{1}{2}} \| f^{(i)}_{N_i,M_i,L_i} \|_{L^2}.
			\end{split}
		\end{equation*}
		\begin{proof}
			We consider  the following cases depending on the size of $N_1$ and $N_2$:\\
			\textbf{(i)} \underline{$N_1\leq N_2^{-\alpha}$}: In this case, we do not distinguish resonant and non-resonant interactions. Using Lemma \ref{lem:L4Summary}, we have
			\begin{equation*}
				\begin{split}
					\big| \int_{\D_\lambda^* \times \R} f^{(1)}_{N_1,M_1,L_1} \cdot (f_{N_2,M_2,L_2}^{(2)} \ast f_{N_3,M_3,L_3}^{(3)}) d\xi d\eta d\tau \big| &\lesssim_\lambda N_1^{\frac{1}{2}} M_{\min}^{0+} N_2^{\frac{1}{4}} N_3^{\frac{\big(\frac{\alpha}{2}-2 \big)}{2}} \prod_{i=1}^3 L_i^{\frac{1}{2}}\|f^{(i)}_{N_i,M_i,L_i}\|_{L^2}\\
					&\lesssim_\lambda N_1^{0+} N_2^{\frac{1}{4}} N_3^{-1-\frac{\alpha}{4}+} M_{\min}^{0+}\prod_{i=1}^3L_i^{\frac{1}{2}}\|f^{(i)}_{N_i,M_i,L_i}\|_{L^2},
				\end{split}
			\end{equation*}
			which is sufficient if $N_3\leq M_{\min}^2$. If $M_{\min}^2 \leq N_3$, we use Lemma \ref{lem:CauchySchwarz} to obtain
			\begin{equation*}
				\begin{split}
					\big| \int_{\D_\lambda^* \times \R} f^{(1)}_{N_1,M_1,L_1} \cdot (f_{N_2,M_2,L_2}^{(2)} \ast f_{N_3,M_3,L_3}^{(3)}) d\xi d\eta d\tau \big| &\lesssim M_{\min} N_1^{\frac{1}{2}} N_2^{\big( \frac{\alpha}{2} - 2 \big) }   \prod_{i=1}^3L_i^{\frac{1}{2}}\|f^{(i)}_{N_i,M_i,L_i}\|_{L^2}\\
					&\lesssim M_{\min}^{\frac{1}{2}} N_1^{0+} N_2^{-2+\frac{1}{4}} \prod_{i=1}^3L_i^{\frac{1}{2}}\|f^{(i)}_{N_i,M_i,L_i}\|_{L^2}.
				\end{split}
			\end{equation*}
			\textbf{(ii)} \underline{$N_2^{-\alpha} \leq N_1 \leq 1$}: We consider the resonant and non-resonant cases as follows:\\
			$\bullet ~L_{\max}\leq N_1N_2^\alpha$: From the bilinear Strichartz estimate \eqref{eq:dyadicbilinear} applied to $f^{(1)}_{N_1,M_1,L_1} \ast f^{(2)}_{N_2,M_2,L_2}$ (or $f^{(1)}_{N_1,M_1,L_1} \ast f^{(3)}_{N_3,M_3,L_3}$), we have
			\begin{equation*}
				\begin{split}
					&\quad \big| \int_{\D_\lambda^* \times \R} f^{(1)}_{N_1,M_1,L_1} \cdot (f_{N_2,M_2,L_2}^{(2)} \ast f_{N_3,M_3,L_3}^{(3)}) d\xi d\eta d\tau \big| \\
					&\lesssim   M_{\min}^{\frac{1}{2}} \frac{N_1^{\frac{1}{2}}}{N_3^{\frac{\alpha}{4}}} (L_1L_2)^{\frac{1}{2}} \|f^{(1)}_{N_1,M_1,L_1}\|_{L^2}\|f^{(2)}_{N_2,M_2,L_2}\|_{L^2}  \| f^{(3)}_{N_3,M_3,L_3}\|_{L^2}\\
					&\lesssim N_1^{\frac{1}{2}} N_2^{-1} M_{\min}^{\frac{1}{2}} \prod_{i=1}^3L_i^{\frac{1}{2}}\|f^{(i)}_{N_i,M_i,L_i}\|_{L^2}.
				\end{split}
			\end{equation*}
			$\bullet ~ L_{\max} \geq N_1N_2^\alpha$: We shall use Lemma \ref{lem:L4Summary} by estimating the function with the highest modulation in $L^2$. Here, the worst case occurs when $L_{\max} =L_1$. We have
			\begin{equation*}
				\begin{split}
					\big| \int_{\D_\lambda^* \times \R} f^{(1)}_{N_1,M_1,L_1} \cdot (f_{N_2,M_2,L_2}^{(2)} \ast f_{N_3,M_3,L_3}^{(3)}) d\xi d\eta d\tau \big| &\lesssim_\lambda N_1^{\frac{1}{2}} M_{\min}^{0+} N_2^{\frac{1}{2}} (N_1N_2^\alpha)^{-\frac{1}{2}} \prod_{i=1}^3L_i^{\frac{1}{2}}\|f^{(i)}_{N_i,M_i,L_i}\|_{L^2} \\
					&\lesssim_\lambda N_2^{\frac{1}{2}-\frac{\alpha}{2}} M_{\min}^{0+} \prod_{i=1}^3L_i^{\frac{1}{2}}\|f^{(i)}_{N_i,M_i,L_i}\|_{L^2},
				\end{split}
			\end{equation*}
			which is sufficient if $N_2 \leq M_{\min}$. If $M_{\min} \leq N_2$, we use Lemma \ref{lem:CauchySchwarz}:
			\begin{equation*}
				\begin{split}
					\big| \int_{\D_\lambda^* \times \R} f^{(1)}_{N_1,M_1,L_1} \cdot (f_{N_2,M_2,L_2}^{(2)} \ast f_{N_3,M_3,L_3}^{(3)}) d\xi d\eta d\tau \big| &\lesssim  N_1^{\frac{1}{2}} (N_1N_2^\alpha)^{-\frac{1}{2}} N_2^{ \frac{\big( \frac{\alpha}{2}-2\big)}{2}}  M_{\min} \prod_{i=1}^3L_i^{\frac{1}{2}}\|f^{(i)}_{N_i,M_i,L_i}\|_{L^2}\\
					&\lesssim N_2^{-\frac{1}{2}-\frac{\alpha}{4}} M^{\frac{1}{2}}_{\min} \prod_{i=1}^3L_i^{\frac{1}{2}}\|f^{(i)}_{N_i,M_i,L_i}\|_{L^2}.
				\end{split}
			\end{equation*}
			\textbf{(iii)} \underline{$N_1\gg 1$}: We consider two subcases:\\
			$\bullet ~ L_{\max}\leq N_1 N_2^\alpha$: After using Cauchy-Schwarz inequality, we apply the bilinear Strichartz estimate \eqref{eq:dyadicbilinear} to $f^{(1)}_{N_1,M_1,L_1} * f^{(j)}_{N_j,M_j,L_j}$, $j=2$ or $3$ and obtain
			\begin{equation*}
				\begin{split}
					\big| \int_{\D^* \times \R} f^{(1)}_{N_1,M_1,L_1} \cdot (f_{N_2,M_2,L_2}^{(2)} \ast f_{N_3,M_3,L_3}^{(3)}) d\xi d\eta d\tau \big| \lesssim    M_{\min}^{\frac{1}{2}} N_1^{\frac{1}{2}} N_3^{-\frac{\alpha}{4}} N_3^{(\frac{\alpha}{4}-1)} \prod_{i=1}^3 L_i^{\frac{1}{2}}\|f^{(i)}\|_{L^2},
				\end{split}
			\end{equation*}
			which is sufficient.\\
			$\bullet ~L_{\max}\geq N_1N_2^\alpha$: We assume $L_{\max} = L_1$, since the estimate is better or same in the other cases. For $M_{\min} \leq N_3$, we use Lemma \ref{lem:CauchySchwarz} to obtain
			\begin{equation*}
				\begin{split}
					\big| \int_{\D^* \times \R} f^{(1)}_{N_1,M_1,L_1} \cdot (f_{N_2,M_2,L_2}^{(2)} \ast f_{N_3,M_3,L_3}^{(3)}) d\xi d\eta d\tau \big| &\lesssim N_1^{\frac{1}{2}} M_{\min} L_{\min}^{\frac{1}{2}} \prod_{i=1}^3\|f^{(i)}_{N_i,M_i,L_i}\|_{L^2}\\
					&\lesssim M_{\min} N_3^{-\frac{\alpha}{2}} N_3^{(\frac{\alpha}{4}-1)} \prod_{i=1}^3L_i^{\frac{1}{2}}\|f^{(i)}_{N_i,M_i,L_i}\|_{L^2} \\
					&\lesssim M_{\min}^{\frac{1}{2}} N_3^{-1} \prod_{i=1}^3L_i^{\frac{1}{2}}\|f^{(i)}_{N_i,M_i,L_i}\|_{L^2}.
				\end{split}
			\end{equation*}
			For $M_{\min} \geq N_3$, we use Lemma \ref{lem:L4Summary} on $f^{(2)}_{N_2,M_2,L_2} * f^{(3)}_{N_3,M_3,L_3}$ as follows:
			\begin{equation*}
				\begin{split}
					\big| \int_{\D^* \times \R} f^{(1)}_{N_1,M_1,L_1} \cdot (f_{N_2,M_2,L_2}^{(2)} \ast f_{N_3,M_3,L_3}^{(3)}) d\xi d\eta d\tau \big| &\lesssim_\lambda \|f^{(1)}_{N_1,M_1,L_1}\|_{L^2} ~N_1^{\frac{1}{2}} N_2^{\frac{1}{2}} M_{\min}^{0+} \prod_{i=2}^3 L_i^{\frac{1}{2}} \|f^{(i)}_{N_i,M_i,L_i}\|_{L^2}\\
					&\lesssim_\lambda  (N_1N_2^\alpha)^{-\frac{1}{2}} N_1^{\frac{1}{2}} M_{\min}^{\frac{1}{2}+} \prod_{i=1}^3 L_i^{\frac{1}{2}} \|f^{(i)}_{N_i,M_i,L_i}\|_{L^2}.
				\end{split}
			\end{equation*}
		\end{proof}
	\end{proposition}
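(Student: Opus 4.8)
The plan is to reduce the estimate to a single modulation-localized trilinear bound and then to combine a frequency case distinction with a resonant/non-resonant dichotomy. Since the right-hand side already carries the full sums $\sum_{L_i\ge N_3^{(2-\alpha/2)}}L_i^{1/2}\|f^{(i)}_{N_i,M_i,L_i}\|_{L^2}$, it suffices to prove, for each fixed triple $(L_1,L_2,L_3)$ with every $L_i\ge N_3^{(2-\alpha/2)}$,
\begin{equation*}
\Big|\int_{\D_\lambda^*\times\R}f^{(1)}_{N_1,M_1,L_1}\cdot\bigl(f^{(2)}_{N_2,M_2,L_2}\ast f^{(3)}_{N_3,M_3,L_3}\bigr)\Big|\lesssim_\lambda N_1^{(0+,\frac{1}{2}+)}\,N_3^{-1}\,M_{\min}^{\frac{1}{2}}\,L_1^{\frac{1}{2}}L_2^{\frac{1}{2}}L_3^{\frac{1}{2}}\prod_{i=1}^3\|f^{(i)}_{N_i,M_i,L_i}\|_{L^2}.
\end{equation*}
On the Fourier support one has $\tau_1=\tau_2+\tau_3$, so the modulation identity yields $|\Omega_\alpha(\xi_2,\eta_2,\xi_3,\eta_3)|\lesssim L_{\max}$ with $L_{\max}=\max_i L_i$, while the quantitative KdV resonance bound (valid because $|\xi_2+\xi_3|=|\xi_1|$ and $N_1\le N_2\sim N_3$) gives $|\Omega^{(\alpha)}_{KdV}(\xi_2,\xi_3)|\sim N_1N_2^{\alpha}$. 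I split the integral into the part on which \eqref{eq:rescon} holds and its complement; on the latter $|\Omega_\alpha|\gtrsim|\Omega^{(\alpha)}_{KdV}|\sim N_1N_2^{\alpha}$, hence $L_{\max}\gtrsim N_1N_2^{\alpha}$.

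On the resonant part I use the symmetry of the trilinear form to rewrite the integral as $\int f^{(2)}\cdot(f^{(1)}\ast g^{(3)})$, where $g^{(3)}(\zeta):=f^{(3)}(-\zeta)$ is again supported, up to a reflection under which $|\xi|$, $|\eta|$ and $|\tau-\omega_\alpha|$ are all preserved (as $\omega_\alpha$ is odd), in $\tilde D_{N_3,M_3,L_3}$. The pair $f^{(1)}\ast g^{(3)}$ is then a low-frequency $\times$ high-frequency interaction whose output frequency has size $\sim N_3=\max$, and its transversality hypothesis is precisely \eqref{eq:rescon} for the pair $(\xi_1,\eta_1),(-\xi_3,-\eta_3)$, which is equivalent to \eqref{eq:rescon} for $(\xi_2,\eta_2),(\xi_3,\eta_3)$. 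Proposition \ref{prop:GeneralBilinear} then provides a factor $M_{\min}^{1/2}N_1^{1/2}L_{\min}^{1/2}\bigl(d_2+L_{\max}N_3^{-\alpha/2}\bigr)^{1/2}$, and a Cauchy--Schwarz in $f^{(2)}$ closes the bound; the weight $L_2^{1/2}$ that $f^{(2)}$ does not receive is recovered at the cost of a factor $(N_3^{(2-\alpha/2)})^{-1/2}=N_3^{\alpha/4-1}$, using $L_2\ge N_3^{(2-\alpha/2)}$, and this is exactly what turns the transversality exponent $N_3^{-\alpha/4}$ into the needed $N_3^{-1}$. For $\alpha\in(2,4)$ one has $d_2=0$, while for $\alpha=2$ the extra $d_2$-term is harmless since it carries an additional $L_{\min}^{1/2}\le(N_3^{(2-\alpha/2)})^{-1/2}L_{\max}^{1/2}$ and $2-\alpha/2=1$.

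On the non-resonant part I pull out in $L^2$ the factor carrying the largest modulation, which is $\gtrsim N_1N_2^{\alpha}$ so that $L_{\max}^{-1/2}\lesssim(N_1N_2^{\alpha})^{-1/2}$, and estimate the $L^2$-norm of the product of the remaining two factors either by two $L^4_{x,y,t}$-Strichartz estimates (Lemma \ref{lem:L4Summary}), which is favorable when $M_{\min}$ is large, or by the elementary Cauchy--Schwarz bound (Lemma \ref{lem:CauchySchwarz}), which is favorable when $M_{\min}$ is small. To make the numerology transparent I would organize the argument into the three regimes $N_1\le N_2^{-\alpha}$ (where $N_1N_2^{\alpha}\le1$, so there is no resonant subregion and one argues purely by Lemma \ref{lem:L4Summary} and Lemma \ref{lem:CauchySchwarz}), $N_2^{-\alpha}\le N_1\le1$, and $N_1\gg1$, in the latter two further splitting according to the size of $L_{\max}$ relative to $N_1N_2^{\alpha}$ and, inside the non-resonant branch, according to whether $M_{\min}$ exceeds the relevant power of $N_3$ (essentially $M_{\min}$ against $N_3$, sometimes $M_{\min}^2$ against $N_3$). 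In each branch one inserts the relevant bilinear or Cauchy--Schwarz input and checks, using $N_1\le N_3$ and $L_i\ge N_3^{(2-\alpha/2)}$, that the resulting power of $N_3$ is $\le N_3^{-1}$ and the power of $N_1$ is $\le N_1^{0+}$, respectively $\le N_1^{1/2+}$ when $N_1>1$.

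I expect the main obstacle to be the non-resonant regime with $N_1\gg1$ and $N_1\ll N_3$: there the transversality gain of Proposition \ref{prop:GeneralBilinear} is unavailable, the $L^4$-Strichartz constant $C(N_2,N_3)\sim(N_2N_3)^{1/4}\sim N_3^{1/2}$ is large, and one must carefully balance it against the modulation gain $(N_1N_2^{\alpha})^{-1/2}$; this is precisely the branch that forces the two-sided distinction in $M_{\min}$ and is ultimately the reason the right-hand side carries the factor $N_1^{(0+,\frac{1}{2}+)}N_3^{-1}M_{\min}^{\frac{1}{2}}$ rather than something sharper.
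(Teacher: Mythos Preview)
Your proposal is correct and follows essentially the same route as the paper: the same three-regime split in $N_1$ (namely $N_1\le N_2^{-\alpha}$, $N_2^{-\alpha}\le N_1\le 1$, $N_1\gg1$), the same resonant/non-resonant dichotomy at the threshold $L_{\max}\sim N_1N_2^{\alpha}$, the bilinear transverse estimate (Proposition~\ref{prop:GeneralBilinear}) applied to a low$\times$high pair in the resonant case, and the alternation between Lemma~\ref{lem:L4Summary} and Lemma~\ref{lem:CauchySchwarz} governed by the size of $M_{\min}$ in the non-resonant case. Your remark that the missing $L_i^{1/2}$ is recovered via $L_i\ge N_3^{2-\alpha/2}$, turning $N_3^{-\alpha/4}$ into $N_3^{-1}$, is exactly how the paper proceeds. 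One small correction: your inequality $L_{\min}^{1/2}\le(N_3^{(2-\alpha/2)})^{-1/2}L_{\max}^{1/2}$ for handling the $d_2$-term is not literally true; the right way is to use the two \emph{remaining} factors $L_{\max(1,j)}^{1/2}L_k^{1/2}\ge N_3^{2-\alpha/2}$, which for $\alpha=2$ gives precisely the needed $N_3^{-1}$.
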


	\begin{proof}[Proof of Proposition \ref{prop:EnergyEstimates}]
		As described, we can consider the integrand to be of the form in \eqref{eq:ReducedIntegrand}. Consider the first case, i.e., $P_{N}u\cdot P_{N_2}u \cdot (P_{N_1}\partial_x u)$ with $N_1\lesssim N\sim N_2$. For $N_3=N$, we shall apply Proposition \ref{prop:DyadicEnergyEstimate} to the following:
		\begin{equation*}
			\begin{array}{clcc}
				f^{(1)}_{N_1,M_1} &= \mathcal{F}_{x,y,t} [ \gamma(N^{(2-\frac{\alpha}{2})} t-n) \partial_x P_{N_1,M_1} u]&=&\sum_{L_1 \geq N^{(2-\frac{\alpha}{2})}} f^{(1)}_{N_1,M_1,L_1}, \\
				f^{(2)}_{N_2,M_2} &= \mathcal{F}_{x,y,t} [\gamma(N^{(2-\frac{\alpha}{2})} t - n) P_{N_2,M_2} u ] &=& \sum_{L_2 \geq N^{(2-\frac{\alpha}{2})}} f^{(2)}_{N_2,M_2,L_2}, \\
				f^{(3)}_{N_3,M_3} &= \mathcal{F}_{x,y,t} [\gamma(N^{(2-\frac{\alpha}{2})} t -n) P_{N_3,M_3} u] &=& \sum_{L_3 \geq N^{(2-\frac{\alpha}{2})}} f^{(3)}_{N_3,M_3,L_3}.
			\end{array}
		\end{equation*}
		Secondly, the estimate \eqref{eq:EnergyEstimateReductionI} follows from an application of Proposition \ref{prop:DyadicEnergyEstimate} to
		\begin{equation*}
			\begin{array}{clcc}
				f^{(1)}_{N_1,M_1} &= \mathcal{F}_{x,y,t} [ \gamma(N^{(2-\frac{\alpha}{2})} t-n) \partial_x P_{N_1,M_1} u]&=&\sum_{L_1 \geq N^{(2-\frac{\alpha}{2})}} f^{(1)}_{N_1,M_1,L_1}, \\
				f^{(2)}_{N_2,M_2} &= \mathcal{F}_{x,y,t} [\gamma(N^{(2-\frac{\alpha}{2})} t - n) P_{N_2,M_2} v ] &=& \sum_{L_2 \geq N^{(2-\frac{\alpha}{2})}} f^{(2)}_{N_2,M_2,L_2}, \\
				f^{(3)}_{N_3,M_3} &= \mathcal{F}_{x,y,t} [\gamma(N^{(2-\frac{\alpha}{2})} t -n) P_{N_3,M_3} u] &=& \sum_{L_3 \geq N^{(2-\frac{\alpha}{2})}} f^{(3)}_{N_3,M_3,L_3}
			\end{array}
		\end{equation*}
		with $N_2 = N_3 = N$, $N_1 = N'$.
		
		Lastly, the estimate \eqref{eq:EnergyEstimateReductionII} follows from applying Proposition \ref{prop:DyadicEnergyEstimate} to
		\begin{equation*}
			\begin{array}{clcc}
				f^{(1)}_{N_1,M_1} &= \mathcal{F}_{x,y,t} [ \gamma({N'}^{(2-\frac{\alpha}{2})} t-n) P_{N_1,M_1} v]&=&\sum_{L_1 \geq {N'}^{(2-\frac{\alpha}{2})}} f^{(1)}_{N_1,M_1,L_1}, \\
				f^{(2)}_{N_2,M_2} &= \mathcal{F}_{x,y,t} [\gamma({N'}^{(2-\frac{\alpha}{2})} t - n) P_{N_2,M_2} v ] &=& \sum_{L_2 \geq {N'}^{(2-\frac{\alpha}{2})}} f^{(2)}_{N_2,M_2,L_2}, \\
				f^{(3)}_{N_3,M_3} &= \mathcal{F}_{x,y,t} [\gamma({N'}^{(2-\frac{\alpha}{2})} t -n) \partial_x P_{N_3,M_3} u] &=& \sum_{L_3 \geq {N'}^{(2-\frac{\alpha}{2})}} f^{(3)}_{N_3,M_3,L_3}
			\end{array}
		\end{equation*}
		with $N_1 = N$, $N_2 = N_3 = N'$.
	\end{proof}
	
	\subsection{Energy estimate for the difference of solutions}\label{subsec:diff}
	Let $u_1, u_2$ solve the equation \eqref{eq:SolutionEnergyEstimates} with initial data $\phi_1$ and $\phi_2$, respectively. The difference of the solutions viz. $v:=u_1-u_2$ satisfies the following equation
	\begin{equation}\label{eq:DiffEq}
		\left\{ \begin{array}{cl}
			\partial_t v + \partial_{x}^3 v - \partial_{x}^{-1} \Delta_y v &= \partial_x (v(u_1+u_2)), \quad (x,y) \in \D_{\lambda}, ~~~t\in  \times [-T,T], \\
			v(0) &= \phi_1-\phi_2 =: \phi.
		\end{array} \right.
	\end{equation}
	We have the following result for $v$.
	
	\begin{proposition}\label{prop:EnergyEstDiff}
		Let $\alpha \in [2,4)$, $s > 3 - \frac{\alpha}{2}$. For all $T\in (0,1]$, and with notations from \eqref{eq:DiffEq}, the following estimate holds:
		\begin{equation}
			\label{eq:EnergyEstimateDifferences}
			\| v \|^2_{B^0(T)} \lesssim \| v(0) \|^2_{E^0} + \lambda^{0+} \| v \|^2_{F^0(T)} ( \| u_1 \|_{F^s(T)} + \| u_2 \|_{F^s(T)} ).
		\end{equation}
		\begin{proof}
			As in the proof of Proposition \ref{prop:EnergyEstimates}, we consider the equation \eqref{eq:DiffEq} for Littlewood-Paley pieces  $ P_{N} v$.
			We write
			\begin{equation*}
				\| P_N v(t_N) \|^2_{L^2} = \| P_N v(0) \|^2_{L^2} + \int_{\D_\lambda \times [0,t_N]} P_N v P_N \partial_x (v \cdot (u_1 + u_2)) dx dy dt.
			\end{equation*}
			By an integration by parts argument, similar to the above, it suffices to estimate
			\begin{equation}
				\label{eq:DifferenceEnergyA}
				\big| \int_{\D_\lambda \times [0,t_N]} P_N v P_N v (P_{N_1} \partial_x u_i) dx dy dt \big| \text{ for } N_1 \lesssim N
			\end{equation}
			and
			\begin{equation}
				\label{eq:DifferenceEnergyB}
				\big| \int_{\D_\lambda \times [0,t_N] } P_N v \partial_x P_N (P_{\tilde{N}_1} v P_{\tilde{N}_2} u_i) dx dy dt \big| \text{ for } N \lesssim \tilde{N}_1 \sim \tilde{N}_2.
			\end{equation}
			Adding time localization and frequency localization in $\eta$, we let for \eqref{eq:DifferenceEnergyA} with $N_2 = N_3 = N$:
			\begin{equation*}
				\begin{array}{clcl}
					f^{(1)}_{N_1,M_1} &= \mathcal{F}_{x,y,t}[\gamma(N^{(2-\frac{\alpha}{2})} t - n) \partial_x P_{N_1,M_1} u_i] &=& \sum_{L_1 \geq N^{(2-\frac{\alpha}{2})}} f^{(1)}_{N_1,M_1,L_1}, \\
					f^{(2)}_{N_2,M_2} &= \mathcal{F}_{x,y,t}[\gamma(N^{(2-\frac{\alpha}{2})} t - n) P_{N_2,M_2} v] &=& \sum_{L_2 \geq N^{(2-\frac{\alpha}{2})}} f^{(2)}_{N_2,M_2,L_2}, \\
					f^{(3)}_{N_3,M_3} &= \mathcal{F}_{x,y,t}[\gamma(N^{(2-\frac{\alpha}{2})} t - n) P_{N_3,M_3} v] &=& \sum_{L_3 \geq N^{(2-\frac{\alpha}{2})}} f^{(3)}_{N_3,M_3,L_3}.
				\end{array}
			\end{equation*}
			Then, the claim follows from Proposition \ref{prop:DyadicEnergyEstimate}.
			
			For \eqref{eq:DifferenceEnergyB}, we let with $N_1 = N$ and $N_2 = \tilde{N}_1$, $N_3 = \tilde{N}_3$:
			\begin{equation*}
				\begin{array}{clcl}
					f^{(1)}_{N_1,M_1} &= \mathcal{F}_{x,y,t}[\gamma(\tilde{N}_1^{(2-\frac{\alpha}{2})} t - n) \partial_x P_{N_1,M_1} v] &=& \sum_{L_1 \geq \tilde{N}_1^{(2-\frac{\alpha}{2})}} f^{(1)}_{N_1,M_1,L_1},  \\
					f^{(2)}_{N_2,M_2} &= \mathcal{F}_{x,y,t}[\gamma(\tilde{N}_1^{(2-\frac{\alpha}{2})} t - n) P_{N_2,M_2} v] &=& \sum_{L_2 \geq \tilde{N}_1^{(2-\frac{\alpha}{2})}} f^{(2)}_{N_2,M_2,L_2}, \\
					f^{(3)}_{N_3,M_3} &= \mathcal{F}_{x,y,t}[\gamma(\tilde{N}_1^{(2-\frac{\alpha}{2})} t - n) P_{N_3,M_3} u_i] &=& \sum_{L_3 \geq \tilde{N}_1^{(2-\frac{\alpha}{2})}} f^{(3)}_{N_3,M_3,L_3}.
				\end{array}
			\end{equation*}
			Then, the claim is a consequence of Proposition \ref{prop:DyadicEnergyEstimate}.
			
			We shall also estimate the contribution of the weight by estimating the Littlewood-Paley pieces
			\begin{equation*}
				\| P_N \partial_x^{-1} \nabla_y v(t_N) \|_{L^2}^2 = \| P_N \partial_x^{-1} \nabla_y v(0) \|_{L^2}^2 + 2 \int_{\D_\lambda \times [0,t_N] } P_N (\partial_x^{-1} \nabla_y v) P_N (\nabla_y ( v( u_1 + u_2))) dx dt.
			\end{equation*}
			We let
			\begin{equation*}
				\begin{split}
					\int_{\D_\lambda \times [0,t_N] } P_N (\partial_x^{-1} \nabla_y v) P_N (\nabla_y v \cdot u_i) &= \int_{\D_\lambda \times [0,t_N]} P_N (\partial_x^{-1} \nabla_y v) P_N (\nabla_y v \cdot u_i) dx dt \\
					&\quad + \int_{\D_\lambda \times [0,t_N]} P_N (\partial_x^{-1} \nabla_y v) P_N(v \cdot \nabla_y u_i) dx dt \\
					&= I + II.
				\end{split}
			\end{equation*}
			Let $w = \partial_x^{-1} \nabla_y v$ and rewrite
			\begin{equation*}
				I = \int_{\D_\lambda \times [0,t_N]} P_N w P_N (\partial_x w \cdot u_i).
			\end{equation*}
			We decompose
			\begin{equation*}
				P_N (\partial_x w \cdot u_i) = P_N (\tilde{P}_N \partial_x w \cdot P_{\ll N} u_i) + P_N (P_{\ll N} \partial_x w \cdot P_N u_i) + P_N (P_{\gtrsim N} \partial_x w \cdot P_{\gtrsim N} u_i).
			\end{equation*}
			The first term allows for integration by parts and reduces to the second term. These contributions can be summed like above. Only the third term is a little different
			because the derivative cannot be transferred to the lowest frequency. However, in this case this is acceptable because of $High \times High \to Low$-interaction.
			We have to estimate
			\begin{equation*}
				\sum_{\substack{ N \geq 1, \\ N_1' \sim N_2' \geq N}} \big| \int_{\D_\lambda \times [0,t_N]} P_N w (P_{N_1'} \partial_x w) (P_{N'_2} u_i) dx dt \big|.
			\end{equation*}
			We smoothly localize time to intervals of size $N_1'^{(\frac{\alpha}{2} -2)}$ and let like above
			\begin{equation*}
				\begin{array}{clcl}
					f^{(1)} &= \mathcal{F}_{x,y,t}[\gamma(N_1'^{(2-\frac{\alpha}{2})} t - n) P_{N_1,M_1} w ] &=& \sum_{L_1 \geq N_1'^{(2-\frac{\alpha}{2})}} f^{(1)}_{N_1,M_1,L_1}, \\
					f^{(2)} &= \mathcal{F}_{x,y,t}[\gamma(N_1'^{(2-\frac{\alpha}{2})} t- n) P_{N_2,M_2} w] &=& \sum_{L_2 \geq N_1'^{(2-\frac{\alpha}{2})}} f^{(2)}_{N_2,M_2,L_2}, \\
					f^{(3)} &= \mathcal{F}_{x,y,t}[\gamma(N_1'^{(2-\frac{\alpha}{2})} t-n) P_{N_3,M_3} u_i] &=& \sum_{L_3 \geq N_1'^{(2-\frac{\alpha}{2})}} f^{(3)}_{N_3,M_3,L_3}
				\end{array}
			\end{equation*}
			with $N_1 = N$, $N_2 = N_1'$, $N_3 = N_2'$.
			The claim again follows from applying Proposition \ref{prop:DyadicEnergyEstimate}.
			
			We turn to the estimate of
			\begin{equation*}
				II = \int_{\D_\lambda \times [0,t_N]} P_N (\partial_x^{-1} \nabla_y v) P_N (v \nabla_y u_2).
			\end{equation*}
			As above, we write
			\begin{equation*}
				P_N (v \nabla_y u_2) = P_N (\tilde{P}_N v P_{\ll N} \nabla_y u_2) + P_N (P_{\ll N} v \tilde{P}_N \nabla_y u_2) + P_N (P_{\gtrsim N} v P_{\gtrsim N} \nabla_y u_2).
			\end{equation*}
			After additional decomposition in time and in the $y$-frequencies, the claim follows from applying Proposition \ref{prop:DyadicEnergyEstimate}. For the first term, we have to estimate
			\begin{equation*}
				\int_{\D_\lambda \times [0,1]} P_{N_1,M_1} (\partial_x^{-1} \nabla_y v \cdot \gamma(N^{(2-\frac{\alpha}{2})} t - n)) P_{N_2,M_2} (v \gamma (N^{(2-\frac{\alpha}{2})} t - n)) P_{N_3,M_3} (\partial_x \partial_{x}^{-1} \nabla_y u_i \gamma(N^{(2-\frac{\alpha}{2})} t - n)).
			\end{equation*}
			for $|n| \lesssim T N^{2-\frac{\alpha}{2}}$ and $M_i \in 2^{\N_0}$ with $N_1= N$, $N_2 \sim N$, $N_3 = N' \lesssim N$.
			Letting
			\begin{equation*}
				\begin{array}{clcl}
					f^{(1)} &= \mathcal{F}_{x,y,t}[\gamma(N^{(2-\frac{\alpha}{2})} t - n) P_{N_1,M_1} \partial_x^{-1} \nabla_y v ] &=& \sum_{L_1 \geq N^{(2-\frac{\alpha}{2})}} f^{(1)}_{N_1,M_1,L_1}, \\
					f^{(2)} &= \mathcal{F}_{x,y,t}[\gamma(N^{(2-\frac{\alpha}{2})} t- n) P_{N_2,M_2} v] &=& \sum_{L_2 \geq N^{(2-\frac{\alpha}{2})}} f^{(2)}_{N_2,M_2,L_2}, \\
					f^{(3)} &= \mathcal{F}_{x,y,t}[\gamma(N^{(2-\frac{\alpha}{2})} t-n) P_{N_3,M_3} \partial_x^{-1} \nabla_y u_i] &=& \sum_{L_3 \geq N^{(2-\frac{\alpha}{2})}} f^{(3)}_{N_3,M_3,L_3},
				\end{array}
			\end{equation*}
			the claim becomes a consequence of Proposition \ref{prop:DyadicEnergyEstimate} and carrying out the summations. The other terms are estimated likewise.
		\end{proof}
	\end{proposition}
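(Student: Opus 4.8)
The plan is to follow the scheme of the proof of Proposition~\ref{prop:EnergyEstimates}, with $v$ now playing the role of the $E^0$-quantity to be propagated and $u_1,u_2$ absorbing every derivative loss at the cost of the stronger $F^s$-norm. First I would fix $N$, test the equation \eqref{eq:DiffEq} for $P_Nv$ against $P_Nv$, and integrate to get
\[
\|P_Nv(t_N)\|_{L^2}^2=\|P_Nv(0)\|_{L^2}^2+\int_{\D_\lambda\times[0,t_N]}P_Nv\cdot P_N\partial_x\big(v(u_1+u_2)\big)\,dx\,dy\,dt.
\]
Writing $\partial_x(vu_i)=v\,\partial_xu_i+u_i\,\partial_xv$ and repeating the commutator and integration-by-parts reductions of Subsection~\ref{subsection:Reductions} verbatim (the asymmetry of the nonlinearity plays no role, $u_i$ being simply treated as ``the other factor''), the cubic term reduces to integrands of the form $P_Nv\cdot P_{N_2}v\cdot(P_{N_1}\partial_xu_i)$ with $N_1\lesssim N\sim N_2$, a bounded-multiplier commutator variant of it, and the genuine $High\times High\to Low$ term $P_Nv\cdot\partial_xP_N(P_{\tilde N_1}v\cdot P_{\tilde N_2}u_i)$ with $N\lesssim\tilde N_1\sim\tilde N_2$, in which the derivative is forced onto a high-frequency factor.

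Next I would split $[0,T]$ into $\sim T N^{(2-\frac\alpha2)}$ (resp.\ $T\tilde N_1^{(2-\frac\alpha2)}$) subintervals using the cutoffs $\gamma$, pass to the space-time Fourier side, decompose in the $\eta$-frequencies on scales $M_i\ge\lambda^{-1}$ and in modulation on scales $L_i\ge N^{(2-\frac\alpha2)}$, as in the paragraph preceding \eqref{eq:DyadicEnergyEstimate}. Each piece is then of the form controlled by Proposition~\ref{prop:DyadicEnergyEstimate}, the $\partial_x$ being placed on the $u_i$-factor (at frequency $N_1\le N_3$ in the first shape, at the high frequency $\tilde N_2$ in the $High\times High$ shape). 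The gain $N_3^{-1}$ in Proposition~\ref{prop:DyadicEnergyEstimate} compensates the $\partial_x\simeq N_1$ loss since $N_1\le N_3$; the factor $M_{\min}^{1/2}$ is traded to $(1+|\xi|)^{1/2}$, equivalently absorbed into the weight $p_\lambda$ carried by the two $v$-factors, exactly as in Section~\ref{section:ShorttimeBilinear}; and summing over the $\sim T N^{(2-\frac\alpha2)}$ time windows via \eqref{prop} together with the leftover power $N_1^{(0+,\frac12+)}$ is precisely what forces the threshold $s>3-\frac\alpha2$.

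It then remains to propagate the weighted part of $\|v\|_{B^0}^2$ coming from $\tfrac{|\eta|}{|\xi|}$. For this I would test the equation for $P_N\partial_x^{-1}\nabla_yv$ against itself, producing the cubic term $\int P_N(\partial_x^{-1}\nabla_yv)\cdot P_N\nabla_y\big(v(u_1+u_2)\big)$; splitting $\nabla_y(vu_i)=\nabla_yv\cdot u_i+v\cdot\nabla_yu_i$, in the first term one sets $w=\partial_x^{-1}\nabla_yv$, writes it as $\int P_Nw\cdot P_N(\partial_xw\cdot u_i)$, and decomposes the product into $High\times Low$ (integrate by parts, reducing to the $Low\times High$ case), $Low\times High$, and $High\times High\to Low$; only the last cannot move the derivative to the lowest frequency, but there this is harmless because of the $High\times High\to Low$ structure, and every piece reduces to Proposition~\ref{prop:DyadicEnergyEstimate} after time- and $\eta$-localization. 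The second term $\int P_N(\partial_x^{-1}\nabla_yv)\cdot P_N(v\,\nabla_yu_i)$ is handled in the same way, now with $f^{(1)}$ and $f^{(3)}$ carrying the operator $\partial_x^{-1}\nabla_y$.

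The hard part will be, exactly as in Proposition~\ref{prop:EnergyEstimates}, the $High\times High\to Low$ interaction: there the weight $\tfrac{|\eta|}{|\xi|}$ consumes the $\partial_x$ and one is left with a genuine derivative loss on a high-frequency factor, so integration by parts is unavailable. Controlling it rests on the transversality/resonance analysis behind Proposition~\ref{prop:GeneralBilinear} combined with the frequency-dependent time localization $T(N)=N^{-(2-\frac\alpha2)}$; this is the mechanism that makes Proposition~\ref{prop:DyadicEnergyEstimate} applicable and that pins the regularity threshold at $s>3-\frac\alpha2$. One must also keep track of every $\lambda$-dependence to obtain the stated $\lambda^{0+}$ factor, but this is routine given the $\lesssim_\lambda$ bookkeeping already in place.
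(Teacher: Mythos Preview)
Your proposal is correct and follows essentially the same approach as the paper's proof: the energy identity for $P_Nv$, the integration-by-parts/commutator reduction to the two shapes (your analogues of \eqref{eq:DifferenceEnergyA} and \eqref{eq:DifferenceEnergyB}), time and $\eta$-localization, and repeated appeal to Proposition~\ref{prop:DyadicEnergyEstimate}; the weighted part is likewise handled by setting $w=\partial_x^{-1}\nabla_y v$, splitting $\nabla_y(vu_i)$ into the two pieces $I$ and $II$, and reducing each to Proposition~\ref{prop:DyadicEnergyEstimate} after the same trichotomy. One small inaccuracy: in the $High\times High\to Low$ shape the paper places $\partial_x$ on the \emph{low}-frequency $P_Nv$ factor (so $f^{(1)}=\mathcal{F}[\gamma(\cdot)\partial_xP_{N,M_1}v]$ at frequency $N_1=N$), not on the high-frequency $u_i$; this is harmless for the argument since Proposition~\ref{prop:DyadicEnergyEstimate} only sees the frequency sizes, but it is worth stating correctly.
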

	
	\section{Proof of Theorem \ref{thm:LWPAnisotropic}}\label{section:Proof}
	This section is devoted to the proof of Theorem \ref{thm:LWPAnisotropic}, which asserts low regularity well-posedness of the fractional KP-I equation:
	\begin{equation}
		\label{eq:fKPISection}
		\left. \begin{split}
			\partial_t u - \partial_x D_x^\alpha u - \partial_{x}^{-1} \Delta_y u &= \partial_x (u^2), \quad (t,x,y) \in \R \times \D, \\
			u(0) &= \phi \in E^s(\D)
		\end{split} \right\}
	\end{equation}
	with $\alpha \in [2,4]$, and $s$ and $\D$ to be specified. The starting point is the local existence on arbitrary domains $\D = \K_1 \times \K_2 \times \K_3$ with $\K_i \in \{ \R; \T \}$:
	\begin{proposition}
	\label{prop:LocalExistence}
		There is a data-to-solution mapping $S_T^\infty:E^\infty \to C_T E^\infty$ with $T=T(\| \phi \|_{E^4})$ of \eqref{eq:fKPISection}.
	\end{proposition}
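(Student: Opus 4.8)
The plan is the standard compactness scheme for quasilinear dispersive equations: regularise, establish energy estimates uniform in the regularisation, and pass to the limit.

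\emph{Regularisation.} For $R\in 2^{\N}$ I would let $P_{\le R}$ be the smooth Fourier cutoff to $\{(\xi,\eta):|\xi|+|\eta|\le R\}$, composed with the projection to mean zero when $\K_1=\T$ and, when $\K_1=\R$, further restricted to $\{|\xi|\ge R^{-1}\}$ (harmless, since $\widehat{\partial_x(\cdot)}$ vanishes at $\xi=0$), and consider
\begin{equation*}
\partial_t u^R-\partial_x D_x^\alpha u^R-\partial_x^{-1}\Delta_y u^R=P_{\le R}\,\partial_x\big((P_{\le R}u^R)^2\big),\qquad u^R(0)=P_{\le R}\phi .
\end{equation*}
On the Hilbert space of $L^2(\D)$-functions with Fourier support in that compact set the right-hand side is a polynomial vector field ($\partial_x^{-1}\Delta_y$ being bounded there), so Picard--Lindel\"of gives a unique maximal solution $u^R\in C((-T_R,T_R);E^\infty)$, real-valued and (on tori in $x$) mean zero. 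Since the linear part is a skew-adjoint Fourier multiplier and $P_{\le R}$ is self-adjoint, an integration by parts gives $\tfrac{d}{dt}\|u^R\|_{L^2}^2=2\langle P_{\le R}u^R,\partial_x((P_{\le R}u^R)^2)\rangle=0$, so $\|u^R(t)\|_{L^2}$ is conserved and hence $T_R=\infty$.

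\emph{Uniform energy estimates.} Let $w_s(D)$ be the multiplier with symbol $\langle\xi\rangle^s p(\xi,\eta)$, $p(\xi,\eta)=1+|\eta|/|\xi|$, so that $\|f\|_{E^s}=\|w_s(D)f\|_{L^2}$. Applying $w_s(D)$ and pairing with $w_s(D)u^R$, the linear part again drops out and
\begin{equation*}
\tfrac{d}{dt}\|u^R(t)\|_{E^s}^2=2\big\langle w_s(D)u^R,\ w_s(D)P_{\le R}\,\partial_x\big((P_{\le R}u^R)^2\big)\big\rangle .
\end{equation*}
The right-hand side loses one derivative; I would recover it in the usual way by symmetrising the top-order term $\langle w_s(D)u,u\,\partial_x w_s(D)u\rangle$ via integration by parts, and estimating the commutator $[w_s(D),u]\partial_x u$ by a Kato--Ponce/Moser-type bound --- this is where the weight $p$ and the anisotropic fractional Leibniz rule of Appendix \ref{sec:anisotropicLeibniz} enter. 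The outcome should be, uniformly in $R$, a differential inequality $\tfrac{d}{dt}\|u^R(t)\|_{E^s}^2\lesssim_s\|u^R(t)\|_{E^4}\|u^R(t)\|_{E^s}^2$ for all $s\ge4$ (the $E^4$-norm controlling the relevant sup-type quantities in $d=3$). For $s=4$ this closes, producing $T=c\|\phi\|_{E^4}^{-1}>0$ with $\sup_{|t|\le T}\|u^R(t)\|_{E^4}\le 2\|\phi\|_{E^4}$ for all $R$; Gr\"onwall then yields $\sup_{|t|\le T}\|u^R(t)\|_{E^s}\le C_s\|\phi\|_{E^s}$ for every $s$, again uniformly in $R$.

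\emph{Passage to the limit and uniqueness.} For $R'\ge R$, the difference $v=u^R-u^{R'}$ solves a linear dispersive equation with forcing $P_{\le R}\partial_x\big(v\cdot(P_{\le R}u^R+P_{\le R'}u^{R'})\big)$ plus truncation errors controlled by $R^{-M}$ for every $M\ge0$ (with constants depending on $M$ and on the uniform higher-order bounds); a Gr\"onwall argument for $\|v(t)\|_{L^2}^2$ then shows $(u^R)_R$ is Cauchy in $C([-T,T];L^2)$. Let $u$ be the limit. Interpolating the uniform $E^s$-bounds against $L^2$-convergence upgrades this to $u^R\to u$ in $C([-T,T];E^s)$ for every $s$, so $u\in C([-T,T];E^\infty)$; passing to the limit in the Duhamel formula shows that $u$ solves \eqref{eq:fKPISection}. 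Applying the same energy identity to a difference of two $E^\infty$-solutions gives uniqueness in $C([-T,T];E^\infty)$ and, on $E^4$-bounded sets, Lipschitz dependence in a weaker metric (a difference estimate in the spirit of Proposition \ref{prop:EnergyEstDiff}), which defines the map $S_T^\infty$ with $T=T(\|\phi\|_{E^4})$. The hard step is the uniform $E^4$ energy estimate: beyond the derivative loss in $\partial_x(u^2)$, which symmetrisation handles, one must commute the anisotropic weight $p(\xi,\eta)=1+|\eta|/|\xi|$ past multiplication by $u$ --- exactly the commutator mechanism underlying Appendix \ref{sec:anisotropicLeibniz} --- while keeping the low $x$-frequency behaviour (the $|\xi|^{-1}$ in $p$ and the $\partial_x^{-1}$ in the linear part) under control; this is why the frequency truncation is arranged to respect the mean-zero/low-frequency structure.
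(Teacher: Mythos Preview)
Your scheme has the right architecture, but the key step --- the uniform energy inequality $\tfrac{d}{dt}\|u^R\|_{E^s}^2\lesssim\|u^R\|_{E^4}\|u^R\|_{E^s}^2$ --- does not close as stated. The weight $p(\xi,\eta)=1+|\eta|/|\xi|$ carries exactly one $y$-derivative, so for high frequencies $E^s$ behaves like $H^{s,0}\cap H^{s-1,1}$; since $y$ is two-dimensional, this sits precisely at the Sobolev endpoint and $E^4$ (indeed any $E^s$) does \emph{not} embed into $L^\infty_{xy}$. After symmetrising, the surviving term $\int(w_s(D)u)^2\,\partial_x u$ still requires $\|\partial_x u\|_{L^\infty_{xy}}$, and the anisotropic Leibniz rule of Appendix~\ref{sec:anisotropicLeibniz} you invoke keeps an $\|u\|_{L^\infty_{xy}}$ on its right-hand side --- the paper explicitly notes that Sobolev embedding ``barely fails'' here. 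The assertion that ``the $E^4$-norm controls the relevant sup-type quantities in $d=3$'' is therefore the gap; the commutator with the non-classical symbol $\langle\xi\rangle^s p(\xi,\eta)$ is also not covered by off-the-shelf Kato--Ponce bounds, so the sketch would need substantially more work even modulo the embedding issue.

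The paper circumvents this by a genuinely different mechanism: the uniform-in-$M$ bounds on the Galerkin approximants come not from a direct-in-time energy inequality but from the short-time $F^s/B^s/\mathcal{N}^s$ framework (Lemma~\ref{lemma:LinShortTime}, Proposition~\ref{prop:ShortTimeBilinear}, Proposition~\ref{prop:EnergyEstimates}) --- i.e.\ the machinery of Sections~\ref{section:ShorttimeBilinear}--\ref{section:EnergyEstimates} is already needed at this stage --- first for small data and then for large data via rescaling. The missing $L^\infty$-control is only recovered in time-integrated form $\|\partial_x u\|_{L^1_T L^\infty_{xy}}$, by combining the $L^4$-Strichartz estimate (Proposition~\ref{prop:L4Strichartz}) with the anisotropic Leibniz rule (Proposition~\ref{prop:AnisotropicLeibnizRule}); this is what drives uniqueness and continuity of $t\mapsto\partial_x^{-1}\nabla_y u$. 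Passage to the limit is done by Aubin--Lions compactness rather than by a Cauchy-in-$L^2$ argument.
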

	In the following we prove that we have data-to-solution mappings $S_T^s : E^s \to L_T^\infty E^s$ with $T=T(\| \phi \|_{E^4})$ for $s \geq 4$ with $u \in C^1_T H^{-1} \cap C_T H^1 \cap C_T E^0$, which is the unique distributional solution to \eqref{eq:fKPISection}. By interpolation, $u \in C_T E^{s'}$ for $0 \leq s' < s$. Hence, we find $S_T^\infty: E^\infty \to C_T E^\infty$. We have reduced to the following:
	\begin{proposition}[Local~existence~at~high~regularity]
\label{prop:LocalExistenceHighAppendix}
Let $s \geq 4$. For every $\phi \in E^s$ there exists $T=T(\| \phi \|_{E^4})$ and a unique solution $u$ to \eqref{eq:fKPISection} in the distributional sense on the time interval $[-T,T]$ satisfying
\begin{equation*}
u \in C_T H^\sigma \; (-1 \leq \sigma < 1), \quad \partial_t u \in L_T^\infty H^{-1}, \quad u \in C_T E^{s'} \cap L_T^\infty E^s \quad (0 \leq s' < s).
\end{equation*}
\end{proposition}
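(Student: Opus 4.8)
The plan is a classical compactness argument in the spirit of the energy method: first regularize the equation so that it becomes genuinely parabolic and hence locally solvable in $E^\infty$ by a contraction argument; then derive energy estimates that are uniform in the regularization parameter and valid on a time interval depending only on $\|\phi\|_{E^4}$; then pass to the limit; and finally upgrade weak-$*$ continuity to strong continuity in $E^{s'}$ by a Bona--Smith approximation of the data. If $\K_1=\T$ one works throughout on the mean-zero subspace so that $\partial_x^{-1}$ only acts on frequencies $|\xi|\ge\nu^{-1}$, and if $\K_1=\R$ the singularity of $\partial_x^{-1}\Delta_y$ and of the weight $p$ at $\xi=0$ is precisely the one built into the definition of $E^s$ through the factor $p$. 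Concretely, for $\varepsilon\in(0,1]$ and a fixed large integer $m$ I would study
\begin{equation*}
\partial_t u^\varepsilon-\partial_x D_x^\alpha u^\varepsilon-\partial_x^{-1}\Delta_y u^\varepsilon+\varepsilon(1-\Delta_{x,y})^{m}u^\varepsilon=\partial_x\big((u^\varepsilon)^2\big),\qquad u^\varepsilon(0)=\phi .
\end{equation*}
The linear part generates an analytic, smoothing semigroup to which the dispersive terms $\partial_x D_x^\alpha$ and $\partial_x^{-1}\Delta_y$ are lower order; a Duhamel contraction in $C([-T_\varepsilon,T_\varepsilon];E^s)$, using that $H^s$ ($s>3/2$) is an algebra for the nonlinearity, yields a unique local solution, which by the parabolic smoothing lies in $C\big((-T^*_\varepsilon,T^*_\varepsilon);E^\infty\big)$ on its maximal interval.

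\textbf{Uniform energy estimates.} This is the heart of the proof. Pairing the equation for $P_N u^\varepsilon$ with $\langle\xi\rangle^{2\sigma}p(\xi,\eta)^2\widehat{P_N u^\varepsilon}$ and summing in $N$, the skew-adjoint dispersive part and the dissipative term drop out with a favourable sign, since $i\big(\xi|\xi|^\alpha+|\eta|^2/\xi\big)$ and $-\varepsilon(1+|\xi|^2+|\eta|^2)^m$ are Fourier multipliers that commute with $\langle\xi\rangle^\sigma p(\xi,\eta)$; it remains to bound $\big|\langle\langle\partial_x\rangle^\sigma p\,u^\varepsilon,\ \langle\partial_x\rangle^\sigma p\,\partial_x((u^\varepsilon)^2)\rangle\big|$. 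Writing $\partial_x((u^\varepsilon)^2)=2u^\varepsilon\partial_x u^\varepsilon$ and carrying out the same Littlewood--Paley symmetrization as in Subsection \ref{subsection:Reductions} (the identity producing $P_N u\,\partial_x(P_{\gg N}u)^2+\partial_x(P_N u)^2 P_{\ll N}u$, together with the bounded commutator multiplier $m(\xi,\xi_1)$), every surviving term carries the $\partial_x$ on a comparatively low $x$-frequency factor; the singular anisotropic weight $p$ is moved across the product by the fractional anisotropic Leibniz rule of Appendix \ref{sec:anisotropicLeibniz}, at worst shifting one derivative onto a high $x$-frequency. Since no sharp regularity count is needed here, these losses are harmless. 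Because $E^\sigma$ does not embed into $L^\infty$ in the transverse variable, the factors left after symmetrization are controlled by Sobolev embedding in $x$ combined with the $L^4_{x,y,t}$-Strichartz estimate of Lemma \ref{lem:L4StrichartzLocalized} in $y$ on the slab $[-T,T]\subseteq[-1,1]$, rather than by a naive $L^\infty$ bound. Altogether one obtains, for $4\le\sigma\le s$,
\begin{equation*}
\sup_{|t|\le T}\|u^\varepsilon(t)\|_{E^\sigma}^2\lesssim\|\phi\|_{E^\sigma}^2+\int_{-T}^{T}\big(1+\|u^\varepsilon(t)\|_{E^4}\big)^{C_0}\|u^\varepsilon(t)\|_{E^\sigma}^2\,dt ,
\end{equation*}
uniformly in $\varepsilon$. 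Taking $\sigma=4$ and a continuity/Gronwall argument gives $\sup_{|t|\le T}\|u^\varepsilon\|_{E^4}\le2\|\phi\|_{E^4}$ for some $T=T(\|\phi\|_{E^4})>0$ independent of $\varepsilon$ (hence $T^*_\varepsilon>T$); inserting this into the inequality for general $\sigma\le s$ and applying the linear Gronwall lemma yields $\sup_{|t|\le T}\|u^\varepsilon\|_{E^s}\le C(\|\phi\|_{E^4})\|\phi\|_{E^s}$, again uniformly in $\varepsilon$, and from the equation one reads off a uniform bound for $\partial_t u^\varepsilon$ in $L^\infty_T H^{-1}$ (using $\alpha+1\le5\le s+1$ for the dispersive terms, the weight $p$ for $\partial_x^{-1}\Delta_y u^\varepsilon$, the $H^s$-algebra for the nonlinearity, and $\varepsilon\le1$ for the parabolic term).

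\textbf{Passage to the limit, uniqueness, continuity.} The family $(u^\varepsilon)$ is bounded in $L^\infty_T E^s$ with $(\partial_t u^\varepsilon)$ bounded in $L^\infty_T H^{-1}$, so by Banach--Alaoglu and the Aubin--Lions--Simon lemma a subsequence converges weakly-$*$ in $L^\infty_T E^s$ and strongly in $C_T H^\sigma_{\mathrm{loc}}$ for every $\sigma<s$; the latter suffices to pass to the limit in $\partial_x((u^\varepsilon)^2)$, while $\varepsilon(1-\Delta)^m u^\varepsilon\to0$ in $\mathcal{D}'$. Hence the limit $u$ solves \eqref{eq:fKPISection} distributionally with $u\in L^\infty_T E^s$, $\partial_t u\in L^\infty_T H^{-1}$, and interpolating these two facts gives $u\in C_T H^\sigma$ for $-1\le\sigma<1$. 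For uniqueness, if $u_1,u_2$ are two solutions with the stated regularity, then $v=u_1-u_2$ solves the equation linear in $v$ with coefficients built from $u_1+u_2\in L^\infty_T E^4$; the same $L^2$-level energy estimate (symmetrization plus Appendix \ref{sec:anisotropicLeibniz} to move the derivative off $v$) gives $\frac{d}{dt}\|v(t)\|_{L^2}^2\lesssim\big(1+\|u_1\|_{E^4}+\|u_2\|_{E^4}\big)^{C_0}\|v(t)\|_{L^2}^2$, whence $v\equiv0$. Finally, to get $u\in C_T E^{s'}$ for $0\le s'<s$ I would run a Bona--Smith argument: apply the above to the truncated data $\phi_\delta=P_{\le\delta^{-1}}\phi$, producing solutions $u_\delta\in C_T E^\infty$ with uniform $E^s$-bounds, and then combine the difference estimate with these bounds and interpolation to obtain $\|u_\delta-u_{\delta'}\|_{C_T E^{s'}}\to0$ and $\|u_\delta-u\|_{C_T E^{s'}}\to0$ as $\delta,\delta'\to0$, so $u\in C_T E^{s'}$; a routine refinement upgrades this to weak continuity $u\in C_w([-T,T];E^s)$.

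\textbf{Main obstacle.} The real difficulty is the weighted energy estimate of the second step: commuting the anisotropic weight $p(\xi,\eta)=1+|\eta|/|\xi|$ --- singular at $\xi=0$ --- through the quadratic nonlinearity, while compensating for the failure of $E^\sigma$ to embed into $L^\infty$ in the transverse directions. This is precisely what the tools built in the body of the paper are for --- the symmetrization of Subsection \ref{subsection:Reductions}, the fractional anisotropic Leibniz rule of Appendix \ref{sec:anisotropicLeibniz}, and the $L^4_{x,y,t}$-Strichartz estimates of Lemma \ref{lem:L4StrichartzLocalized} --- here applied with ample room to spare, since at regularity $s\ge4$ no sharp derivative count is required.
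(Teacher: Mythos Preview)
Your overall strategy --- regularize, derive uniform a priori estimates, pass to the limit via Aubin--Lions, then upgrade continuity --- is sound and parallels the paper's, though you use parabolic regularization where the paper uses Galerkin frequency truncation $\tilde P_M$. The substantive difference, and the place where your sketch has a real gap, is the uniform energy estimate.

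You claim an inequality of the form
\[
\sup_{|t|\le T}\|u^\varepsilon(t)\|_{E^\sigma}^2\lesssim\|\phi\|_{E^\sigma}^2+\int_{-T}^{T}\big(1+\|u^\varepsilon(t)\|_{E^4}\big)^{C_0}\|u^\varepsilon(t)\|_{E^\sigma}^2\,dt
\]
by combining the symmetrization of Subsection~\ref{subsection:Reductions} with the $L^4_{x,y,t}$-Strichartz estimate of Lemma~\ref{lem:L4StrichartzLocalized}. But Lemma~\ref{lem:L4StrichartzLocalized} is a linear estimate for free solutions $S_\alpha(t)u_0$; it does not apply directly to the nonlinear solution $u^\varepsilon$, and in any case a space-time Strichartz norm cannot be controlled by the instantaneous spatial quantity $\|u^\varepsilon(t)\|_{E^4}$ appearing in your integrand. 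Since $E^\sigma$ fails to embed into $L^\infty_{xy}$ for every $\sigma$ (the transverse regularity encoded by the weight $p$ is only one derivative in $y$, while $y\in\R^2$ or $\T^2$), some dispersive input is genuinely required to close, and it must be made part of the bootstrap rather than invoked after the fact. The same issue recurs in your uniqueness argument: after symmetrization the $L^2$ difference estimate needs $\partial_x(u_1+u_2)\in L^1_T L^\infty_{xy}$, which again is not available from $L^\infty_T E^4$ alone.

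The paper resolves this by applying the full short-time $F^s/B^s/\mathcal N^s$ machinery of Sections~\ref{section:Notations}--\ref{section:EnergyEstimates} directly to the Galerkin approximants (see \eqref{est:first}--\eqref{eq:APriori3}); Strichartz is then built into the function spaces via Lemma~\ref{lem:L4Summary} and Proposition~\ref{prop:GeneralBilinear}. Only \emph{after} the uniform $L^\infty_T E^4$ bound is secured does the paper run the Duhamel/Strichartz bootstrap \eqref{eq:BoundL1Linf} on $\|u^M\|_{L^1_T L^\infty_{xy}}$ (using Propositions~\ref{prop:L4Strichartz} and~\ref{prop:AnisotropicLeibnizRule}) to obtain $u\in C_TE^0$ and, via the analogous bound on $\|\partial_x u\|_{L^1_T L^\infty_{xy}}$, uniqueness through \eqref{eq:DifferenceBound}. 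A repair of your approach closer in spirit to what you wrote would be to set up from the outset a \emph{joint} bootstrap on $\|u^\varepsilon\|_{L^\infty_T E^4}$ and $\|\partial_x u^\varepsilon\|_{L^1_T L^\infty_{xy}}$, propagating the latter by Duhamel plus Propositions~\ref{prop:L4Strichartz} and~\ref{prop:AnisotropicLeibnizRule}; this would then feed into the energy inequality and close for $T$ small depending on $\|\phi\|_{E^4}$. Your Bona--Smith argument for $C_TE^{s'}$ is a legitimate alternative to the paper's direct Duhamel argument for $\partial_x^{-1}\nabla_y u\in C_TL^2$, but it too presupposes the a priori bound.
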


In the first step we construct solutions via Galerkin approximation. Let $\varphi \in C^\infty_c(\R^3)$ be a radially decreasing function with
\begin{equation*}
\varphi(\xi,\eta) \equiv 1 \text{ for } |(\xi,\eta)| \leq 1 \text{ and } \varphi(\xi,\eta) = 0 \text{ for } |(\xi,\eta)| \geq 2.
\end{equation*}
We define $\chi(\xi,\eta) = \varphi((\xi,\eta)/2) - \varphi(\xi,\eta)$, which is supported in $B(0,4) \backslash B(0,1)$. In the first step we consider the Galerkin approximations with low and high frequency cutoff for $M \in 2^{\N}$:
\begin{equation*}
(\tilde{P}_M f) \widehat (\xi,\eta) = \sum_{K=M^{-1}}^M \chi((\xi,\eta)/K) \hat{f}(\xi,\eta).
\end{equation*}
We consider
\begin{equation}
\label{eq:TruncatedEvolution}
\left. \begin{array}{cl}
\partial_t u - \partial_x D_x^\alpha u - \partial_x^{-1} \Delta_y u &= \tilde{P}_M (\partial_x (\tilde{P}_M u)^2), \\
u(0) &= \phi \in E^s(\D).
\end{array} \right\}
\end{equation}
By rewriting \eqref{eq:TruncatedEvolution} as an integral equation,
\begin{equation*}
u^M(t) = S_\alpha(t) \phi + \int_0^t S_\alpha(t-\tau) \tilde{P}_M (\partial_x (\tilde{P}_M u)^2(\tau)) d\tau,
\end{equation*}
we infer local existence in $E^s(\D)$ for $s \geq 0$ by the Cauchy-Picard-Lipschitz theorem. This is based on $S_\alpha(t)$ being bounded on $E^s(\D)$ and the nonlinearity trivially being bounded on $E^s(\D)$ by Sobolev embedding. This however yields a bad constant and the Cauchy-Picard-Lipschitz theorem yields an existence time, which depends on $M$. Denote the emanating solutions by $(u^M) \subseteq C_T E^4$.
To show a bound independent of $M$, we apply the analysis of the previous sections to \eqref{eq:TruncatedEvolution} for $s \geq 4$.

First we suppose that $\| \phi \|_{E^{4}} \leq \varepsilon_0 \ll 1$. From Lemma \ref{lemma:LinShortTime}, Proposition \ref{prop:ShortTimeBilinear}, and Proposition \ref{prop:EnergyEstimates}, we have the following set of estimates for $T= \min(T_{\max},1)$ with $T_{\max}$ denoting the time of existence according to the Cauchy-Picard-Lipschitz theorem in $C_T E^s$:
	\begin{equation}\label{est:first}
		\left. \begin{array}{cl}
			\| u^M \|_{F^{s}(T)} &\lesssim \| u^M \|_{B^{s}(T)} + \| \partial_x ((\tilde{P}_M u^M)^2) \|_{\mathcal{N}^{s}(T)}, \\
			\| \partial_x((\tilde{P}_M u^M)^2) \|_{\mathcal{N}^{s}(T)} &\lesssim \| u^M \|_{F^{s}(T)} \| u^M \|_{F^4(T)}, \\
			\| u^M \|^2_{B^{s}(T)} &\lesssim \| \phi \|^2_{E^{s}} +  \| u^M \|^2_{F^{s}(T)} \| u^M \|_{F^4(T)}.
		\end{array} \right\}
	\end{equation}
	Above we use monotonicity $\| \tilde{P}_M u^M \|_{X^s(T)} \leq \| u^M \|_{X^s(T)}$ with $X \in \{F,\mathcal{N} \}$.  For $s=4$, this gives
	\begin{equation}
		\label{eq:APrioriS}
		\| u^M \|_{F^{4}(T)}^2 \lesssim \| \phi \|_{E^{4}}^2 + \| u^M \|_{F^{4}(T)}^4 + \|u^M \|_{F^{4}(T)}^3.
	\end{equation}
	Hence, by choosing $\varepsilon_0$ small enough, we find by \eqref{eq:APrioriS}
	from the continuity of $\|u^M \|_{B^{4}(T)}$ in $T$,
	\begin{equation*}
		\lim_{T \rightarrow 0} \|u^M \|_{B^{4}(T)} \lesssim \| \phi \|_{E^{4}} , \text{ and } \lim_{T \rightarrow 0} \|\partial_x((\tilde{P}_M u^M)^2)\|_{\mathcal{N}^{4}(T)} =0,
	\end{equation*}
	that
	\begin{equation}
		\label{eq:FSApriori}
		\| u^M \|_{F^{4}(T)} \lesssim \| \phi \|_{E^{4}} \lesssim \varepsilon_0 \ll 1.
	\end{equation}
	Hence, by continuity of $T \mapsto \| u^M \|_{F^4(T)}$, $\sup_{t \in [-T,T]} \| u^M(t) \|_{E^s} \lesssim \| u^M \|_{F^4(T)}$ and iterating the Cauchy-Picard-Lipschitz theorem, we find that the time of existence for solutions in $C_T E^4$ satisfies the bound $T_{\max} \gtrsim 1$ provided that $\| \phi \|_{E^4} \leq \varepsilon_0 \ll 1$.

	Another application of Lemma \ref{lemma:LinShortTime}, Proposition \ref{prop:ShortTimeBilinear}, and Proposition \ref{prop:EnergyEstimates} for $s \geq 4$ yields
	\begin{equation}\label{est:second}
		\left. \begin{array}{cl}
			\| u^M \|_{F^s(T)} &\lesssim \| u^M \|_{B^s(T)} + \| \partial_x ((\tilde{P}_M u^M)^2) \|_{\mathcal{N}^s(T)}, \\
			\| \partial_x((\tilde{P}_M u^M)^2) \|_{\mathcal{N}^{s}(T)} &\lesssim \| u^M \|_{F^{s}(T)} \| u^M \|_{F^{4}(T)}, \\
			\| u^M \|^2_{B^{s}(T)} &\lesssim \| \phi \|^2_{E^{s}} +  \| u^M \|_{F^{s}(T)} \| u^M \|^2_{F^{4}(T)}.
		\end{array} \right\}
	\end{equation}
	This set of estimates gives
	\begin{equation}
		\label{eq:APrioriF3}
		\| u^M \|^2_{F^s(T)} \lesssim \| u_0 \|^2_{E^s} + \| u^M \|^2_{F^4(T)} \| u^M \|^2_{F^{s}(T)} + \| u^M \|_{F^4(T)} \| u^M \|^2_{F^{s}(T)},
	\end{equation}
	and therefore, for $\| u^M \|_{F^{4}(T)} \lesssim \| \phi \|_{E^{4}} \lesssim \varepsilon_0 \ll 1$ and $T=T_{\max}(\| \phi \|_{E^{4}}) \gtrsim 1$, we find
	\begin{equation}
		\label{eq:APriori3}
		\| u^M \|_{F^s(T)} \lesssim \| \phi \|_{E^s}.
	\end{equation}
	We summarize that we have a priori estimates
	\begin{equation*}
	\sup_{t \in [-1,1]} \| u^M(t) \|_{E^s} \lesssim \| \phi \|_{E^s}
	\end{equation*}
	for $s \geq 4$ provided that $\| \phi \|_{E^4} \leq \varepsilon_0 \ll 1$. 
We have now ensured existence of $(u^M)_{M \in 2^{\N}}$ on a common time interval. We observe that for any ball $B = B(0,N) \subseteq \D$\footnote{Restricting to compact domains is only necessary if $\D \neq \T^3$.} we have bounds uniform in $M$:
\begin{equation*}
u^M \in L_T^\infty H^1(B), \quad \partial_t u^M \in L_T^\infty H^{-1}(B).
\end{equation*}
By the compact embedding $H^1(B) \hookrightarrow L^2(B)$ together with the continuous embedding $L^2(B) \hookrightarrow H^{-1}(B)$, we can apply the Aubin--Lions compactness lemma (cf. \cite{Lions1969}) to find that there is a subsequence $u^M \to u \in C_T L^2_{loc}$. We have $u \in L_T^\infty E^s(\D)$, which yields $u \in L_T^\infty H^1(\D)$ and $\partial_{x}^{-1} \nabla_y u \in L_T^\infty L^2(\D)$. By dual pairing in $L^2$ for $\varphi \in C^\infty_c([-T,T],\D)$ and passing to the limit $M \to \infty$, we find that $u$ is a distributional solution to \eqref{eq:fKPISection}. Since $u \in C^1([-T,T],H^{-1}) \cap L_T^\infty H^1(\D)$, we conclude $u \in C([-T,T],H^\sigma(\D))$ for $\sigma \in [-1,1)$.
Next, we show $\partial_{x}^{-1} \nabla_y u \in C_T L^2$. We use Duhamel's formula for $t_1 \leq t_2$:
\begin{equation*}
\partial_x^{-1} \nabla_y u(t_1) - \partial_x^{-1} \nabla_y u(t_2) = (S_\alpha(t_1) - S_\alpha(t_2)) \partial_x^{-1} \nabla_y \phi + 2 \int_{t_1}^{t_2} S_\alpha(t-s) (u \nabla_y u)(s) ds.
\end{equation*}
The linear part converges to zero for $t_1 \to t_2$ since $(S_\alpha(t))_{t \in \R}$ is a $C_0$-semigroup in $L^2$. With $I = [t_1,t_2]$, we have
\begin{equation*}
\| \int_{t_1}^{t_2} S_\alpha(t-s) (u \nabla_y u)(s) ds \|_{L^2(\D)} \lesssim \| u \|_{L_I^1 L_{xy}^\infty(\D)} \| u \|_{L_T^\infty E^1(\D)}.
\end{equation*}
We use Strichartz estimates to improve on Sobolev embedding, which barely fails to control the $L_y^\infty$-contribution. This was previously used in the 2d case to obtain estimates in $H^{2,0}$ in \cite{MolinetSautTzvetkov2007}. We use the following Strichartz estimates, which is straight-forward from Section \ref{section:Notations} by Littlewood-Paley decomposition:
						\begin{proposition}[Linear $L^4$-Strichartz estimates]
						\label{prop:L4Strichartz}
							Let $T \in (0,1]$, $\alpha > 0$, $\phi: \D \to \C$, $u(t) = S_\alpha(t) \phi$, and $\varepsilon > 0$. The following estimate holds:
							\begin{equation}
								\label{eq:StrichartzEstimates}
								\| u \|_{L_T^4 L_{xy}^4(\D)} \lesssim \| \langle \partial_x \rangle^{\frac{1}{2}} \langle \partial_y \rangle^{\varepsilon} \phi \|_{L^2_{xy}}.
							\end{equation}
						\end{proposition}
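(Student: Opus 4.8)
The plan is to reduce to the dyadic $L^4$-Strichartz bounds of Lemma~\ref{lem:L4StrichartzLocalized} and Corollary~\ref{cor:L4StrichartzSmallFrequencies} (used on the un-rescaled domain, so with $\lambda=1$ and $\nu=1$) and then sum over dyadic frequency blocks. Since $T\le1$, the time interval of integration is contained in a unit interval, and using translation together with the reflection symmetry $(x,y,t)\mapsto(-x,-y,-t)$ of the linear evolution (which replaces $\phi$ by $\psi(x,y)=\phi(-x,-y)$ without changing its $L^2$- or $\langle\partial_x\rangle^{1/2}\langle\partial_y\rangle^{\varepsilon}$-weighted $L^2$-norm), it suffices to bound $\|S_\alpha(\cdot)\phi\|_{L^4_{xyt}(\D\times[0,1])}$.

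Write $u=\sum_{N,M\in2^{\N_0}}P_{N,M}u$, where $P_{N,M}$ localizes $|\xi|\sim N$ and $|\eta|\sim M$, and note $P_{N,M}u(t)=S_\alpha(t)P_{N,M}\phi$. For fixed $(N,M)$ I would cover $\{|\eta|\sim M\}$ by $O(1)$ cubes of side-length $\sim M$ (possible since $M\ge1$) and, when $N\ge2$, cover $\{|\xi|\sim N\}$ by at most two intervals of length $\sim N$; applying Lemma~\ref{lem:L4StrichartzLocalized} with $K\sim N$, $\lambda=1$, $\epsilon=\varepsilon$ to each of the $O(1)$ resulting pieces and summing gives
\[
\|P_{N,M}u\|_{L^4_{xyt}(\D\times[0,1])}\lesssim N^{\frac14}C(N,M)\,\|P_{N,M}\phi\|_{L^2}\lesssim\langle N\rangle^{\frac12}\langle M\rangle^{\varepsilon}\,\|P_{N,M}\phi\|_{L^2},
\]
since $C(N,M)=M^{\frac12}\lesssim1\le N^{\frac14}$ for $M\sim1$ and $C(N,M)=(N^{\frac14}\vee1)M^{\varepsilon}=N^{\frac14}M^{\varepsilon}$ for $M\gg1$. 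For the block $N=1$ one invokes Corollary~\ref{cor:L4StrichartzSmallFrequencies} on each $\eta$-cube instead, obtaining $\|P_{1,M}u\|_{L^4_{xyt}}\lesssim\langle M\rangle^{\varepsilon}\|P_{1,M}\phi\|_{L^2}$, again of the claimed form; if $\K_2,\K_3$ are not both periodic one may even take $\varepsilon=0$ in Lemma~\ref{lem:LinearStrichartz}.

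Finally, by the Littlewood--Paley square function estimate in $x$ and then in $y$ (valid on $\R$ and on $\T$, hence on $\D$) together with Minkowski's inequality for the exponent $4/2=2\ge1$,
\[
\|u\|_{L^4_{xyt}}\lesssim\Big\|\Big(\sum_{N,M}|P_{N,M}u|^2\Big)^{1/2}\Big\|_{L^4_{xyt}}=\Big\|\sum_{N,M}|P_{N,M}u|^2\Big\|_{L^2_{xyt}}^{1/2}\le\Big(\sum_{N,M}\|P_{N,M}u\|_{L^4_{xyt}}^{2}\Big)^{1/2}.
\]
Inserting the dyadic bound and using Plancherel with the almost orthogonality of the $P_{N,M}\phi$ yields $\|u\|_{L^4_{xyt}}\lesssim\big(\sum_{N,M}\langle N\rangle\langle M\rangle^{2\varepsilon}\|P_{N,M}\phi\|_{L^2}^2\big)^{1/2}\sim\|\langle\partial_x\rangle^{1/2}\langle\partial_y\rangle^{\varepsilon}\phi\|_{L^2_{xy}}$, as claimed. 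I do not expect a real obstacle; the only delicate points are the $\ell^2$-summation of the frequency-localized pieces in $L^4$ (handled by Littlewood--Paley theory and Minkowski) and the case bookkeeping for $C(N,M)$ and for the low-$x$-frequency block --- which is exactly why the estimate was announced as straightforward.
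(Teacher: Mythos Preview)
Your proposal is correct and is precisely the argument the paper has in mind when it says the estimate is ``straight-forward from Section~\ref{section:Notations} by Littlewood--Paley decomposition'': apply the dyadic $L^4$ bounds of Lemma~\ref{lem:L4StrichartzLocalized} and Corollary~\ref{cor:L4StrichartzSmallFrequencies} to each block $P_{N,M}\phi$ and sum via the square function and Minkowski. The case bookkeeping for $C(N,M)$ and the low-$x$-frequency block is handled correctly.
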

Secondly, we need the following anisotropic Leibniz rule, possibly on mixed domains, whose proof is postponed to the Appendix:
						\begin{proposition}
							\label{prop:AnisotropicLeibnizRule}
							Let $\D = \K_1 \times \K_2 \times \K_3$ with $\K_i \in \{ \R; \T \}$, $\alpha,\beta \geq 0$, $\delta > 0$, and $\frac{1}{2} = \frac{1}{p} + \frac{1}{q}$. Then the following estimate holds:
							\begin{equation}
								\label{eq:AnisotropicFractionalLeibniz}
								\| \langle \partial_x \rangle^\alpha \langle \partial_y \rangle^\beta (u^2) \|_{L^2_{xy}(\D)} \lesssim \| \langle \partial_x \rangle^\alpha \langle \partial_y \rangle^\beta u \|_{L^2_{xy}(\D)} \| u \|_{L^\infty_{xy}(\D)} + \| \langle \partial_x \rangle^{\alpha + \delta} u \|_{L^2_{x} L_y^p(\D)} \| \langle \partial_y \rangle^\beta u \|_{L^\infty_{x} L_y^q(\D)}.
							\end{equation}
						\end{proposition}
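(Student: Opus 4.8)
\emph{Strategy.} The plan is to prove \eqref{eq:AnisotropicFractionalLeibniz} by a Littlewood--Paley decomposition in the $x$-frequency alone, reducing matters to an estimate that is one-dimensional in $x$ and whose $y$-part is handled, for a.e.\ fixed $x$, by a fractional Leibniz rule on the (possibly mixed) two-dimensional domain $\K_2\times\K_3$. (By density it suffices to treat $u$ smooth, so all manipulations below are justified.) Writing $u=\sum_N P_N u$ with $P_N$ the $x$-frequency projection, I would split $u^2$ according to the relative size of the two $x$-frequencies into the \emph{balanced} part $\sum_N (P_N^x u)(\tilde P_N^x u)$, with $\tilde P_N=\sum_{N'\sim N}P_{N'}$, and the \emph{unbalanced} part $\sum_N (P_{\sim N}^x u)(P_{\ll N}^x u)$ (two symmetric halves; the low--low interaction is a trivial instance of the latter).

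For the unbalanced part I would project the output onto $x$-frequency $\sim N$ — an $L^2_{xy}$-orthogonal decomposition — so that $\langle\partial_x\rangle^\alpha$ effectively becomes $N^\alpha$ and can be moved onto the factor $P_{\sim N}^x u$, the low factor $P_{\ll N}^x u$ being harmless since low-pass $x$-projections are bounded on $L^\infty_x$ and on $L^\infty_x L^q_y$. It then suffices to bound, for each $N$, the quantity $N^\alpha\|\langle\partial_y\rangle^\beta\big((P_{\sim N}^x u)(P_{\ll N}^x u)\big)\|_{L^2_{xy}}$. Here I apply the $y$-fractional Leibniz rule pointwise in $x$, in the form $\|\langle\partial_y\rangle^\beta(fg)\|_{L^2_y}\lesssim \|\langle\partial_y\rangle^\beta f\|_{L^2_y}\|g\|_{L^\infty_y}+\|f\|_{L^p_y}\|\langle\partial_y\rangle^\beta g\|_{L^q_y}$, and then take $L^2_x$ together with Hölder $\tfrac12=\tfrac12+\tfrac1\infty$ in $x$, producing $N^\alpha\|\langle\partial_y\rangle^\beta P_{\sim N}^x u\|_{L^2_{xy}}\|u\|_{L^\infty_{xy}}$ and $N^\alpha\|P_{\sim N}^x u\|_{L^2_x L^p_y}\|\langle\partial_y\rangle^\beta u\|_{L^\infty_x L^q_y}$. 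Squaring and summing in $N$: the first sums to $\|u\|_{L^\infty_{xy}}^2\|\langle\partial_x\rangle^\alpha\langle\partial_y\rangle^\beta u\|_{L^2}^2$ by Littlewood--Paley in $L^2_{xy}$; the second, after writing $N^\alpha=N^{-\delta}N^{\alpha+\delta}$, by $\sum_N N^{2\alpha}\|P_{\sim N}^x u\|_{L^2_x L^p_y}^2\le(\sum_N N^{-2\delta})\sup_N\|\langle\partial_x\rangle^{\alpha+\delta}P_{\sim N}^x u\|_{L^2_x L^p_y}^2\lesssim\|\langle\partial_x\rangle^{\alpha+\delta}u\|_{L^2_x L^p_y}^2$, using \emph{only} $L^2_x L^p_y$-boundedness of $P_{\sim N}^x$ — this is exactly where the loss of $\delta$ derivatives in $x$ and the exponent $p$ are forced.

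For the balanced part, $x$-output orthogonality gives $\|\langle\partial_x\rangle^\alpha\langle\partial_y\rangle^\beta(\mathrm{bal})\|_{L^2}^2=\sum_M M^{2\alpha}\|\langle\partial_y\rangle^\beta P_M^x\sum_{N\gtrsim M}(P_N^x u)(\tilde P_N^x u)\|_{L^2_{xy}}^2$, and a Cauchy--Schwarz in the inner sum with the weight $(M/N)^{2\alpha}$ — summable precisely because $\alpha>0$ — reduces this to $\sum_N N^{2\alpha}\|\langle\partial_y\rangle^\beta\big((P_N^x u)(\tilde P_N^x u)\big)\|_{L^2_{xy}}^2$, whereupon the $y$-Leibniz rule, Hölder in $x$, and the two summations from the previous paragraph apply verbatim. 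Assembling the balanced and unbalanced contributions yields \eqref{eq:AnisotropicFractionalLeibniz}.

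The one input still to supply, and the place where mixed domains genuinely intervene, is the $y$-fractional Leibniz rule on $\K_2\times\K_3$ used above: on $\R^2$ this is the classical Kato--Ponce inequality (including the endpoint term $\|\langle\partial_y\rangle^\beta f\|_{L^2}\|g\|_{L^\infty}$), and on $\T^2$ and $\R\times\T$ it follows either by periodisation/transference from the Euclidean case or, self-containedly, by repeating the Littlewood--Paley argument in the $y$-variable, which is legitimate because every $y$-norm occurring there is an $L^r_y$ with $1<r<\infty$, so the relevant square-function estimates hold on $\R^{d_1}\times\T^{d_2}$. I expect the main obstacle to be exactly this mixed-domain bookkeeping, compounded by the fact that one cannot run a Littlewood--Paley square function in the $L^\infty_x$ slot; this is what dictates the $\delta$-derivative/geometric-series device for the $x$-summations and the asymmetric placement of the two derivatives — $\langle\partial_x\rangle^{\alpha+\delta}$ on one factor, $\langle\partial_y\rangle^\beta$ on the other — in the cross term.
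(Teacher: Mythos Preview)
Your approach is essentially the same as the paper's: Littlewood--Paley decomposition in the $x$-frequency, the $y$-variable Kato--Ponce inequality applied pointwise in $x$, and the $N^{-\delta}$ device to sum the cross term in $L^2_xL^p_y$; the paper treats the High--High$\to$Low piece by the same mechanism you spell out. Two small remarks: (i) your Cauchy--Schwarz summation for the balanced part needs $\alpha>0$, but the case $\alpha=0$ is immediate from the $y$-Leibniz rule alone, so this is not a gap; (ii) for the mixed-domain $y$-Leibniz input the paper uses a clean extension-operator argument (extend periodic functions to $\R^d$ by a smooth cutoff, apply the Euclidean Kato--Ponce, and check $\|\langle\partial\rangle^\alpha\tilde f\|_{L^p}\lesssim\|\langle\partial\rangle^\alpha f\|_{L^p}$ by interpolation from integer $\alpha$), which you may find simpler than the transference or square-function route you propose.
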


						Based on Propositions \ref{prop:L4Strichartz}, \ref{prop:AnisotropicLeibnizRule}, and Sobolev embedding, we can estimate:
						\begin{equation}
						\label{eq:BoundL1Linf}
						\begin{split}
						\| u^M \|_{L_T^1 L_{xy}^\infty} &\lesssim T^{\frac{3}{4}} \| \langle \partial_x \rangle^{\frac{3}{4}+\varepsilon} \langle \partial_y \rangle^{\frac{1}{2}+\varepsilon} \phi \|_{L^2_{xy}} \\
						&\quad + T^{\frac{3}{4}} \| \langle \partial_x \rangle^{\frac{7}{4}+\varepsilon} \langle \partial_y \rangle^{\frac{1}{2}+\varepsilon} u^M \|_{L_T^\infty L^2_{xy}} \| u^M \|_{L_T^1 L^\infty_{xy}} + T^{\frac{7}{4}} \| \langle \partial_x \rangle^{\frac{7}{4}+\varepsilon} u^M \|_{L_T^\infty L_x^2 L_y^6} \| \langle \partial_y \rangle^{\frac{1}{2}+\varepsilon} u^M \|_{L_T^\infty L_x^\infty L_y^3} \\
						&\lesssim T^{\frac{3}{4}} \| \phi \|_{E^4} + T^{\frac{3}{4}} \| u^M \|_{L_T^\infty E^4} \| u^M \|_{L_T^1 L_{xy}^\infty} + T^{\frac{7}{4}} \| u^M \|^2_{L_T^\infty E^4}.
						\end{split}
						\end{equation}

By a priori estimates in $E^4$ uniform in $M$, we find $\| u^M \|_{L_I^1 L_{xy}^\infty} \to 0$ as $|I| \to 0$ uniformly in $M$. By limiting arguments, this shows continuity of $\partial_{x}^{-1} \nabla_y u \in C_T L^2$, hence $u \in C_T E^0$. By interpolation with $u \in L_T^\infty E^s(\D)$, we infer that $u \in C_T E^{s'}(\D)$ for $0 \leq s' < s$. This concludes the proof of existence for small initial data. 

\medskip

To extend the argument to large initial data, we use rescaling. Recalling \eqref{eq:ScaleInvariant}, we find that
	\begin{equation*}
		u_{\lambda}(x,y,t) = \lambda^{- \frac{2\alpha}{\alpha + 2}} u(\lambda^{-\frac{2}{\alpha + 2}} x, \lambda^{-1} y, \lambda^{-\frac{2(\alpha+1)}{\alpha+2}} t)
	\end{equation*}
	is solution on $\mathbb{D}_\lambda$ with scaled initial data $\phi^\lambda(x,y):=\lambda^{- \frac{2\alpha}{\alpha + 2}} \phi(\lambda^{-\frac{2}{\alpha + 2}} x, \lambda^{-1} y)$ whenever $u$ is a solution on $\mathbb{D}$ with $\phi$.
	
	\medskip
	
	We find $\| \phi^\lambda \|_{E^{4}} \lesssim_{\| \phi \|_{E^4}} \lambda^{-\kappa}$, that is, the norm is polynomially decreasing in $\lambda$. On the rescaled domain, we consider
\begin{equation}
\label{eq:TruncatedEvolutionRescaled}
\left. \begin{array}{cl}
\partial_t u^M_\lambda - \partial_x D_x^\alpha u^M_\lambda - \partial_x^{-1} \Delta_y u^M_\lambda &= \tilde{P}_M (\partial_x (\tilde{P}_M u^M_\lambda)^2), \\
u^M_\lambda(0) &= \phi^\lambda \in E^s(\D_\lambda).
\end{array} \right\}
\end{equation}	
 	Like above, for $s \geq 4$ we have the following set of estimates for $T= \min(T_{\max},1)$ with $T_{\max}$ denoting the time of existence according to the Cauchy-Picard-Lipschitz theorem in $C_T E^s$:
	\begin{equation}\label{est:third}
		\left. \begin{array}{cl}
			\| u^M_{\lambda} \|_{F^{s}(T)} &\lesssim \| u^M_\lambda \|_{B^{s}(T)} + \| \partial_x ((\tilde{P}_M u^M_\lambda)^2) \|_{\mathcal{N}^{s}(T)}, \\
			\| \partial_x((\tilde{P}_M u^M_\lambda)^2) \|_{\mathcal{N}^{s}(T)} &\lesssim \lambda^{0+} \| u^M_\lambda \|_{F^{s}(T)}^2, \\
			\| u^M_\lambda \|^2_{B^{s}(T)} &\lesssim \| \phi^\lambda \|^2_{E^{s}} + \lambda^{0+} \| u^M_\lambda \|_{F^{s}(T)}^3.
		\end{array} \right\}
	\end{equation}
	Applying the set of estimates with $s=4$, we find
	\begin{equation*}
		\| u^M_\lambda \|_{F^4(T)}^2 \lesssim \| \phi^\lambda \|^2_{E^{4}} + \lambda^{0+} \| u^M_\lambda \|_{F^{4}(T)}^4 + \lambda^{0+} \| u^M_\lambda \|_{F^4(T)}^3.
	\end{equation*}
	Since $\| \phi^\lambda \|_{E^4}$ is polynomially decreasing in $\lambda$, we can choose $\lambda = \lambda(\| \phi \|_{E^4})$ large enough such that we obtain like in the case of small initial data the a priori estimate:
	\begin{equation*}
		\sup_{t \in [-1,1]} \| u^M_\lambda(t) \|_{E^4(\D_\lambda)} \lesssim \| \phi^\lambda \|_{E^4}.
	\end{equation*}
	Like above we can infer the existence at higher regularities $s \geq 4$, likewise on $[-1,1]$. By the same compactness arguments, we obtain a distributional solution $u_\lambda$ to 
	\begin{equation*}
		\left. \begin{split}
			\partial_t u_\lambda - \partial_x D_x^\alpha u_\lambda - \partial_{x}^{-1} \Delta_y u_\lambda &= \partial_x (u_\lambda^2), \quad (t,x,y) \in \R \times \D_\lambda, \\
			u_\lambda(0) &= \phi^\lambda \in E^s(\D_\lambda)
		\end{split} \right\}
	\end{equation*}
	
	 Moreover, repeating the arguments from the small-data analysis yields $u_\lambda \in C_T H^\sigma(\D_\lambda)$ for $\sigma \in [-1,1)$ and $u_\lambda \in C_T E^{s'}(\D_\lambda) \cap L_T^\infty E^s(\D_\lambda)$ for $0 \leq s' < s$. We can scale back to infer that for $T=T(\lambda) = T(\| \phi \|_{E^4})$ we have $u \in C_T H^\sigma(\D)$ for $\sigma \in [-1,1)$ and $u \in C_T E^{s'}(\D) \cap L_T^\infty E^s(\D)$ for $0 \leq s' < s$.
	 
\medskip	 
	 
	 This completes the proof of existence of distributional solutions with the claimed regularity properties. We still have to prove uniqueness. To this end, we consider differences of solutions $v = u_1 - u_2$ with initial data $\phi_i \in E^s$, $s \geq 4$, $i=1,2$. Suppose that 
\begin{equation}
\label{eq:FinitenessL1Linf}	 
	 \| \partial_x u \|_{L_T^1 L_{xy}^\infty(\D)} < \infty
\end{equation}	 
	  for the constructed solutions. We compute by integration by parts, H\"older's inequality, and Gr\o nwall's lemma:
	 \begin{equation}
	 \label{eq:DifferenceBound}
	 \partial_t \| v(t) \|_{L^2}^2 \lesssim \| \partial_x(u_1 + u_2) \|_{L^\infty_{xy}} \| v(t) \|^2_{L^2} \Rightarrow \| v(t) \|_{L^2}^2 \lesssim e^{c \int_0^t \| \partial_x (u_1 + u_2) \|_{L^\infty_{xy}}} \| v(0) \|_{L^2}^2.
	 \end{equation}
	Hence, \eqref{eq:FinitenessL1Linf} would clearly imply uniqueness. 
	We apply Strichartz estimates from Proposition \ref{prop:L4Strichartz} together with Sobolev embedding and the fractional Leibniz rule to find
							\begin{equation*}
								\begin{split}
									\| \partial_x u \|_{L_T^1 L_{xy}^\infty} &\lesssim T^{\frac{3}{4}} \| \langle \partial_x \rangle^{\frac{7}{4}+\varepsilon} \langle \partial_y \rangle^{\frac{1}{2}+\varepsilon} \phi \|_{L^2_{xy}} + T^{\frac{3}{4}} \| \langle \partial_x \rangle^{\frac{11}{4}+\varepsilon} \langle \partial_y \rangle^{\frac{1}{2}+\varepsilon} (u^M)^2 \|_{L_T^1 L_{xy}^2} \\
									&\lesssim T^{\frac{3}{4}} \| \phi \|_{E^4} + T^{\frac{3}{4}} \| \langle \partial_x \rangle^{\frac{11}{4}+\varepsilon} \langle \partial_y \rangle^{\frac{1}{2}+\varepsilon} u \|_{L_T^\infty L_{xy}^2} \| u \|_{L_T^1 L_{xy}^\infty} \\
									&\quad + T^{\frac{7}{4}} \| \langle \partial_x \rangle^{\frac{11}{4}+\varepsilon} u \|_{L_T^\infty L_x^2 L_y^{6} } \| \langle \partial_y \rangle^{\frac{1}{2}+\varepsilon} u \|_{L_T^\infty L_x^\infty L_y^{3}} \\
									&\lesssim T^{\frac{3}{4}} \| \phi \|_{E^4} + T^{\frac{3}{4}} \| u \|_{L_T^\infty E^4} \| u \|_{L_T^1 L_{xy}^\infty} + T^{\frac{7}{4}} \| u \|_{L_T^\infty E^4 }^2.
								\end{split}
							\end{equation*}
By the bound of $\| u \|_{L_T^1 L_{xy}^\infty}$ proved in \eqref{eq:BoundL1Linf} after taking limits, and $u \in L_T^\infty E^4$, we find that $ \partial_x u \in L_T^1 L_{xy}^\infty$.

$\hfill \Box$	
	
	With local-in-time solutions $u \in C_T E^\infty$ at hand for smooth initial data $\phi \in E^\infty$, we state a more precise version of Theorem \ref{thm:LWPAnisotropic}, which will be proved subsequently. Let $s(\alpha) = 3 - \frac{\alpha}{2}$.
	\begin{theorem}
		\label{thm:QuasilinearLWP}
		Let $\alpha \in [2,4)$, and $s > s(\alpha)$. For $\alpha = 2$, suppose $\D = \mathbb{K}_1 \times \mathbb{K}_2 \times \mathbb{K}_3$, $\mathbb{K}_i \in \{ \R; \T \}$ and for $\alpha \in (2,4)$, suppose that $\D = \K \times \R^2$ with $\mathbb{K} \in \{  \R; \T \}$.
		
		Then there is a continuous $T=T(\| \phi \|_{E^s(\D)})$ and a continuous data-to-solution mapping $S_T^s : E^s(\D) \to C_T E^s(\D)$ for \eqref{eq:fKPISection}, which extends $S_T^\infty: E^\infty \to C_T E^\infty$.
	\end{theorem}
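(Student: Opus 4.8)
The plan is to upgrade the smooth solutions furnished by Proposition~\ref{prop:LocalExistenceHighAppendix} to the low-regularity class $E^s(\D)$, $s>s(\alpha)$, by closing an \emph{a priori} bootstrap with the three-fold estimate from the introduction, proving Lipschitz dependence at $E^0$ via the difference estimates, passing to rough data by density, and finally establishing continuous dependence in $E^s$ through a frequency-envelope argument in the spirit of \cite{IfrimTataru2020}. Note first that for $\alpha\in[2,4)$ one has $s(\alpha)=3-\tfrac\alpha2>\max\{\tfrac52-\tfrac\alpha2,1\}$, so for $s>s(\alpha)$ we may fix $r$ with $s(\alpha)<r\le s$, and then Lemma~\ref{lemma:LinShortTime}, Proposition~\ref{prop:ShortTimeBilinear}, and Proposition~\ref{prop:EnergyEstimates} all apply simultaneously.

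\textbf{Step 1: a priori bounds and extended time of existence.} For a smooth solution $u$ on $[-T,T]$ I would chain Lemma~\ref{lemma:LinShortTime}, \eqref{eq:ShorttimeBilinearEstimateRegularity} (with $u=v$, using $\|u\|_{F^r(T)}\le\|u\|_{F^s(T)}$), and Proposition~\ref{prop:EnergyEstimates} to obtain $\|u\|_{F^s(T)}\lesssim\|\phi\|_{E^s}+\|u\|_{F^s(T)}^{3/2}+\|u\|_{F^s(T)}^2$, and likewise a linear-in-$s'$ estimate $\|u\|_{F^{s'}(T)}\lesssim\|\phi\|_{E^{s'}}+\|u\|_{F^s(T)}\|u\|_{F^{s'}(T)}(1+\|u\|_{F^{s'}(T)}^{-1/2}\cdots)$ for $s'\ge s$. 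A continuity argument in $T$ (using $\|u\|_{B^s(T)}\to\|\phi\|_{E^s}$, $\|\partial_x(u^2)\|_{\mathcal N^s(T)}\to0$ as $T\to0$, and continuity of $T\mapsto\|u\|_{F^s(T)}$) then yields $\|u\|_{F^s(T)}\lesssim\|\phi\|_{E^s}$ on $T\sim1$ provided $\|\phi\|_{E^s}\le\varepsilon_0\ll1$; in particular the smooth solution extends to $[-1,1]$ with $\sup_{|t|\le1}\|u(t)\|_{E^{s'}}\lesssim\|u\|_{F^{s'}(1)}\lesssim\|\phi\|_{E^{s'}}$ for all $s'\ge s$ by Lemma~\ref{lemma:Embed}. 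Large data is handled by the scaling \eqref{eq:ScaleInvariant}: $\|\phi^\lambda\|_{E^s(\D_\lambda)}\to0$ polynomially in $\lambda$ (because $p_\lambda$ carries the factor $\lambda^{-1/2}$), so choosing $\lambda=\lambda(\|\phi\|_{E^s})$ large reduces to the small-data case on $\D_\lambda$, and scaling back gives a solution on $[-T,T]$ with $T=T(\|\phi\|_{E^s})$ and $\sup_{|t|\le T}\|u(t)\|_{E^s}\lesssim\|u\|_{F^s(T)}$.

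\textbf{Step 2: Lipschitz dependence at $E^0$ and passage to rough data.} For two smooth solutions $u_1,u_2$ with $E^s$-data $\lesssim\varepsilon_0$, write $v=u_1-u_2$; applying Lemma~\ref{lemma:LinShortTime} to \eqref{eq:DiffEq}, the estimate \eqref{eq:L2ShorttimeBilinearEstimate}, and Proposition~\ref{prop:EnergyEstDiff} gives $\|v\|_{F^0(T)}^2\lesssim\|v(0)\|_{E^0}^2+\|v\|_{F^0(T)}^2(\|u_1\|_{F^s(T)}+\|u_2\|_{F^s(T)})$, hence $\|v\|_{F^0(T)}\lesssim\|v(0)\|_{E^0}$; rescaling gives the same with $T$ depending on the $E^s$-norms. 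Now for $\phi\in E^s$ take $\phi_n\in E^\infty$ with $\phi_n\to\phi$ in $E^s$; the solutions $u_n$ live on a common $[-T,T]$ with uniform $F^s(T)$-bound, and by the above they are Cauchy in $F^0(T)\hookrightarrow C_TE^0$. Interpolating the $F^0$-convergence against the uniform $F^s$-bound yields $u_n\to u$ in $C_TE^{s'}$ for every $s'<s$, with $u\in L_T^\infty E^s$ a distributional solution of \eqref{eq:fKPISection}; uniqueness within the class of $F^s(T)$-solutions follows from the same difference estimate.

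\textbf{Step 3: continuity in $E^s$ via frequency envelopes --- the main obstacle.} The hard part is that the $E^0$-Lipschitz bound together with boundedness in $E^s$ does not by itself rule out a jump of the $E^s$-norm in the limit. Following \cite{IfrimTataru2020} I would associate to $\phi\in E^s$ an admissible frequency envelope $(c_N)_N$ with $N^s\|P_N p_\lambda(-i\partial_x,-i\nabla_y)\phi\|_{L^2}\le c_N$, $\sum_N c_N^2\lesssim\|\phi\|_{E^s}^2$, and $c_N/c_{N'}\lesssim(N/N')^{\pm\delta}$ slowly varying. Since every dyadic bilinear estimate of Section~\ref{section:ShorttimeBilinear} and the dyadic energy estimate Proposition~\ref{prop:DyadicEnergyEstimate} come with a genuine off-diagonal gain in the frequency ratios, their summations upgrade to frequency-envelope versions, yielding the localized a priori bound $N^s\|P_N p_\lambda(-i\partial_x,-i\nabla_y)u\|_{F_N(T)}\lesssim c_N$. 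Consequently, for $\phi_n\to\phi$ in $E^s$ one may take the envelopes to converge, which forces $\|P_{>R}u_n\|_{C_TE^s}$ to be small uniformly in $n$ for $R$ large; splitting $\|u_n-u\|_{C_TE^s}\le\|P_{\le R}(u_n-u)\|_{C_TE^s}+\|P_{>R}u_n\|_{C_TE^s}+\|P_{>R}u\|_{C_TE^s}$ and bounding the first term by $R^s\|u_n-u\|_{C_TE^0}\to0$ gives $u_n\to u$ in $C_TE^s$, hence $u\in C_TE^s$; applied to arbitrary (not necessarily smooth) convergent data this gives continuity of $S_T^s$, which by construction extends $S_T^\infty$. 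The delicate point is precisely the frequency-envelope bookkeeping in this last step: one must re-run the dyadic summations of Sections~\ref{section:ShorttimeBilinear}--\ref{section:EnergyEstimates} carrying the slowly-varying weights $c_N$, which is routine given the off-diagonal decay but needs care in the $High\times High\to Low$ interactions, where the derivative is recovered only through the weight $p_\lambda$.
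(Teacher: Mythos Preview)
Your three-step strategy---a priori bounds, $E^0$-Lipschitz for differences, and frequency envelopes for continuity in $E^s$---is exactly the route the paper takes, and Steps~1--2 match essentially line by line (including the rescaling to handle large data and the continuity-in-$T$ bootstrap).

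The only genuine difference is in the implementation of Step~3. You propose to \emph{propagate} the frequency envelope through the nonlinear evolution by re-running the dyadic summations of Sections~\ref{section:ShorttimeBilinear}--\ref{section:EnergyEstimates} with the slowly-varying weights $c_N$ attached, thereby proving a localized bound $N^s\|P_N p_\lambda(-i\partial_x,-i\nabla_y)u\|_{F_N(T)}\lesssim c_N$ directly. The paper instead uses the \emph{regularize--interpolate--telescope} variant of the Ifrim--Tataru scheme: it considers the truncated data $\phi^M=P_{\le M}\phi$, invokes the already-established \emph{tame} a priori bound $\|u^M\|_{C_TE^{s+j}}\lesssim\|\phi^M\|_{E^{s+j}}\lesssim M^j c_M$ (which is Step~1 at regularity $s+j$, linear in the high norm once $\|\phi\|_{E^s}$ is small) together with the $E^0$-Lipschitz bound $\|u^{2M}-u^M\|_{C_TE^0}\lesssim M^{-s}c_M$ from Step~2, interpolates, and telescopes $u-u^M=\sum_{L\ge M}(u^{2L}-u^L)$ to get $\|u-u^M\|_{C_TE^s}\lesssim(\sum_{L\ge M}c_L^2)^{1/2}\to0$. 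Continuous dependence then follows from the triangle inequality $\|u_n-u\|_{C_TE^s}\le\|u_n^M-u^M\|_{C_TE^s}+\|u^M-u\|_{C_TE^s}+\|u_n^M-u_n\|_{C_TE^s}$ and $\ell^2$-convergence of the envelopes $c^{(n)}\to c$. The advantage of the paper's route is that it requires no new dyadic bookkeeping whatsoever---only the two black-box estimates already proved---so the concern you flag about $High\times High\to Low$ interactions with the weight $p_\lambda$ simply does not arise. Your route is equally valid but, as you note, demands that the off-diagonal gains in Proposition~\ref{prop:DyadicEnergyEstimate} and the bilinear lemmas be tracked through the envelope summation.
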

	The proof of local well-posedness proceeds in three steps:
	\begin{itemize}
		\item We show a priori estimates in $E^s$ for $s > 3 - \frac{\alpha}{2}$ up to times $T=T(\| \phi \|_{E^s})$.
		\item We show Lipschitz continuous dependence in $E^0$ for initial data in $E^s$ with $s > 3 - \frac{\alpha}{2}$ and $T=T( \| u_1(0) \|_{E^s}, \|u_2(0) \|_{E^s})$.
		\item By using frequency envelopes (cf. \cite{Tao2001,IfrimTataru2020}) adjusted to the $E^s$-norms, we conclude the proof of continuous dependence.
	\end{itemize}
	In the following, we work with smooth initial data $\phi \in E^\infty$. It suffices to prove the claims for data in $E^\infty$ because once the continuous dependence on the initial data is established, we obtain extensions $S_T^s$ by density.
	
	\medskip
	
	\emph{A priori estimates:}	Let $\alpha \in [2,4)$ and $\bar{s} > s(\alpha)$. The following is a reprise of the analysis to show existence of distributional solutions; we shall be brief. First we suppose that $\| \phi \|_{E^{\bar{s}}} \leq \varepsilon_0 \ll 1$. Let $u = S_T^{\infty}(\phi)$. Let $T_{\max}(\| \phi \|_{E^4})$ be the time of existence according to Proposition \ref{prop:LocalExistence}. From Lemma \ref{lemma:LinShortTime}, Proposition \ref{prop:ShortTimeBilinear}, and Proposition \ref{prop:EnergyEstimates}, we have the following set of estimates for $T= \min(T_{\max},1)$:
	\begin{equation}\label{est:firstapriori}
		\left. \begin{array}{cl}
			\| u \|_{F^{\bar{s}}(T)} &\lesssim \| u \|_{B^{\bar{s}}(T)} + \| \partial_x (u^2) \|_{\mathcal{N}^{\bar{s}}(T)}, \\
			\| \partial_x(u^2) \|_{\mathcal{N}^{\bar{s}}(T)} &\lesssim \| u \|^2_{F^{\bar{s}}(T)}, \\
			\| u \|^2_{B^{\bar{s}}(T)} &\lesssim \| u_0 \|^2_{E^{\bar{s}}} +  \| u \|^3_{F^{\bar{s}}(T)}.
		\end{array} \right\}
	\end{equation}
	This gives
	\begin{equation}
		\label{eq:APrioriSII}
		\|u\|_{F^{\bar{s}}(T)}^2 \lesssim \|u_0\|_{E^{\bar{s}}}^2 + \|u\|_{F^{\bar{s}}(T)}^4 + \|u\|_{F^{\bar{s}}(T)}^3.
	\end{equation}
	Hence, by choosing $\varepsilon_0$ small enough, we find by \eqref{eq:APrioriSII}
	that
	\begin{equation}
		\label{eq:FSAprioriII}
		\| u \|_{F^{\bar{s}}(T)} \lesssim \| u_0 \|_{E^{\bar{s}}}.
	\end{equation}
	
	Another application of Lemma \ref{lemma:LinShortTime}, Proposition \ref{prop:ShortTimeBilinear}, and Proposition \ref{prop:EnergyEstimates} yields
	\begin{equation}\label{est:secondAPriori}
		\left. \begin{array}{cl}
			\| u \|_{F^4(T)} &\lesssim \| u \|_{B^4(T)} + \| \partial_x (u^2) \|_{\mathcal{N}^4(T)}, \\
			\| \partial_x(u^2) \|_{\mathcal{N}^{4}(T)} &\lesssim \| u \|_{F^{4}(T)} \| u \|_{F^{\bar{s}}(T)}, \\
			\| u \|^2_{B^{4}(T)} &\lesssim \| u_0 \|^2_{E^{4}} +  \| u \|_{F^{\bar{s}}(T)} \| u \|^2_{F^{4}(T)}.
		\end{array} \right\}
	\end{equation}
	This set of estimates gives
	\begin{equation}
		\label{eq:APrioriF3II}
		\| u \|^2_{F^4(T)} \lesssim \| u_0 \|^2_{E^4} + \| u \|^2_{F^4(T)} \| u \|^2_{F^{\bar{s}}(T)} + \| u \|^2_{F^4(T)} \| u \|_{F^{\bar{s}}(T)},
	\end{equation}
	and therefore, for $\| u \|_{F^{\bar{s}}(T)} \lesssim \| u_0 \|_{E^{\bar{s}}} \lesssim \varepsilon_0 \ll 1$ and $T=T_{\max}(\| u_0 \|_{E^{4}})$, we find
	\begin{equation}
		\label{eq:APriori3II}
		\| u \|_{F^4(T)} \lesssim \| u_0 \|_{E^4}.
	\end{equation}
	Combining \eqref{eq:FSAprioriII} and \eqref{eq:APriori3II}, we infer existence and a priori estimates in $E^{\bar{s}}$ for $T=1$ after choosing $\| u_0 \|_{E^{\bar{s}}} \leq \varepsilon_0 \ll 1$.
	Rescaling allows us to extend the argument to large initial data, like in the proof of local existence. The details are omitted to avoid repetition. 

	\medskip
	
	\emph{Lipschitz continuous dependence in $F^0$:} Let $u_1$ and $u_2$ be solutions to \eqref{eq:FKPI} with initial data $u_1(0)$ and $u_2(0)$, respectively. We let $v:= u_1 - u_2$, which solves the equation
	\begin{equation*}
		\partial_t v - \partial_x D_x^\alpha v - \partial_x^{-1} \Delta_y v = \partial_x (v(u_1+u_2)).
	\end{equation*}
	From Lemma \ref{lemma:LinShortTime}, Proposition \ref{prop:ShortTimeBilinear}(b), Proposition \ref{prop:EnergyEstDiff}, we have for $\bar{s} > s(\alpha)$:
	\begin{equation*}
		\left\{ 	\begin{array}{cl}
			\|v\|_{F^{0}(T)} &\lesssim \|v\|_{B^{0}(T)} + \| \partial_x(v(u_1+u_2))\|_{\mathcal{N}^{0}(T)}	, \\
			\|\partial_x(v(u_1+u_2))\|_{\mathcal{N}^{0}(T)} &\lesssim \| v \|_{F^{0}(T)} (\|u_1\|_{F^{\bar{s}}(T)} + \|u_2\|_{F^{\bar{s}}(T)}), \\
			\|v\|_{B^{0}(T)}^2 &\lesssim \|v(0)\|_{E^0}^2 + \|v\|_{F^{0}(T)}^2 (\|u_1\|_{F^{\bar{s}}(T)} + \|u_2\|_{F^{\bar{s}}(T)} ).
		\end{array}\right.
	\end{equation*}
	Taking the estimates together, we find
	\begin{equation*}
		\| v \|^2_{F^0(T)} \lesssim \| v(0) \|^2_{E^0} + \| v \|^2_{F^0(T)} ( \| u_1 \|^2_{F^{\bar{s}}(T)} + \| u_2 \|^2_{F^{\bar{s}}(T)}) + \| v \|^2_{F^0(T)} ( \| u_1 \|_{F^{\bar{s}}(T)} + \| u_2 \|_{F^{\bar{s}}(T)}).
	\end{equation*}
	For $T=T(\|u_1(0) \|_{E^{\bar{s}}}, \|u_2(0) \|_{E^{\bar{s}}})$, we obtain by the previously established a priori estimates
	\begin{equation*}
		\| v \|_{F^0(T)} \lesssim \| v(0) \|_{E^0}.
	\end{equation*}
	
	\medskip
	
	\emph{Continuity of the data-to-solution map:}
	In this paragraph we extend the data-to-solution mapping from smooth data to $E^s$ and make use of frequency envelopes. We follow the exposition of Ifrim--Tataru \cite{IfrimTataru2020}, which we adjust to the present setting
	of the $E^s$-scale of regularity. Let $u_0 \in E^{\bar{s}}$ with size $M$. We define frequency envelopes in the $E^s$-scale:
	\begin{definition}
		We say that $(c_N)_{N \in 2^{\N}} \in \ell^2$ is a frequency envelope for a function $u$ in $E^s$ if we have the following properties:
		\begin{itemize}
			\item[a)] Energy bound:
			\begin{equation*}
				\| P_N u \|_{E^s} \leq c_N,
			\end{equation*}
			\item[b)] Slowly varying:
			\begin{equation*}
				\frac{c_N}{c_J} \lesssim \big[ \frac{N}{J} \big]^\delta.
			\end{equation*}
			Above and in the following let $\big[ \frac{x}{y} \big] = \max( \frac{x}{y}, \frac{y}{x})$ for $x,y > 0$.
		\end{itemize}
	\end{definition}
	An envelope which satisfies
	\begin{equation*}
		\| u \|^2_{E^s} \approx \sum_N c_N^2
	\end{equation*}
	is called sharp. Frequency envelopes for the datum $u_0 \in E^s$ are constructed by mollifying the initial guess $\tilde{c}_J = \| P_J u_0 \|_{E^s}$ to
	\begin{equation*}
		c_N = \sup_J \big( \big[ \frac{N}{J} \big]^{-\delta} \tilde{c}_J \big).
	\end{equation*}
	
	We turn to \emph{regularization} in the setting of frequency envelopes: Let $u_0 \in E^s$ with $\| u_0 \|_{E^s} = C$ and let $(c_N)_{N \in 2^{\N_0}}$ denote a sharp frequency envelope for $u_0$ in $E^s$.
	For $u_0$ we consider a family of regularizations $u_0^M \in E^{\infty}$ defined by truncating the $x$-frequencies at $M$, i.e.\ $u_0^M = P_{\leq M} u_0$. These regularizations satisfy the following:
	\begin{itemize}
		\item[i)] Uniform bounds: $\| P_K u_0^M \|_{E^s} \lesssim c_K$,
		\item[ii)] High frequency bounds: $\| u_0^M \|_{E^{s+j}} \lesssim M^j c_M$,
		\item[iii)] Difference bounds: $\| u_0^{2M} - u_0^M \|_{E^0} \lesssim M^{-s} c_M$,
		\item[iv)] Limit as $M \to \infty$: $u_0 = \lim_{M \to \infty} u_0^M$ in $E^s$.
	\end{itemize}
	Associated with $u_0^M$ we obtain a solution $u^M$ in $E^\infty$ which exists up to time $T=T(C)$, uniformly in $M$. We have the following uniform bounds:
	\begin{itemize}
		\item [i)] High frequency bounds:
		\begin{equation}
			\label{eq:HighFrequencyBoundsRegularization}
			\| u^M \|_{C([0,T],E^{s+j})} \lesssim M^j c_M, \quad j \geq 0,
		\end{equation}
		\item [ii)] Difference bounds:
		\begin{equation}
			\label{eq:DifferenceBoundsRegularization}
			\| u^{2M} - u^M \|_{C([0,T],E^0)} \lesssim M^{-s} c_M.
		\end{equation}
	\end{itemize}
	Interpolation gives
	\begin{equation*}
		\| u^{2M} - u^M \|_{C_T E^m} \lesssim M^{-(s-m)} c_M, \quad m \geq 0.
	\end{equation*}
	Therefore, we obtain the uniform frequency envelope bounds
	\begin{equation*}
		\| P_K u^M \|_{C_T E^s} \lesssim_N c_K \max \{ \frac{K}{M}; 1 \}^{-N}
	\end{equation*}
	for any $N \in \N_0$.
	
	We analyze the convergence of $u^M$ as $M \to \infty$. 	By writing the difference as a telescopic sum
	\begin{equation*}
		u - u^M = \sum_{L \geq M} u^{2L} - u^L,
	\end{equation*}
	the difference bound \eqref{eq:DifferenceBoundsRegularization} implies convergence of $u^M$ in $E^0$ to a limit $u \in C_T E^0$ with
	\begin{equation*}
		\| u - u^M \|_{C_T E^0} \lesssim M^{-s}.
	\end{equation*}
	Now, we prove convergence in $ C_T E^s$:
	For $L' \ll L$ we have from \eqref{eq:DifferenceBoundsRegularization}:
	\begin{equation*}
		\| P_{L'} (u^{2L} - u^L) \|_{C_T E^s} \lesssim (L')^s \| u^{2L} - u^L \|_{C_T E^0} \lesssim \big( \frac{L'}{L} \big)^s c_L.
	\end{equation*}
	For $L' \gg L$, we can use \eqref{eq:HighFrequencyBoundsRegularization} (and the slowly varying property) to argue
	\begin{equation*}
		\| P_{L'} (u^{2L} - u^L) \|_{C_T E^s} \lesssim (L')^{-s} ( \| P_{L'} u^{2L} \|_{C_T E^{2s}} + \| P_{L'} u^L \|_{C_T E^s}) \lesssim \big( \frac{L}{L'} \big)^s c_L.
	\end{equation*}
	Consequently, an application of Young's inequality combined with the slowly varying property gives
	\begin{equation*}
		\| u - u^M \|_{C_T E^s} \lesssim  \big( \sum_{L \geq M} c_L^2 \big)^{\frac{1}{2}}\to 0 \quad (M \to \infty).
	\end{equation*}
	
	Now we turn to the proof of continuous dependence. We shall see that the previously constructed data-to-solution mapping is also continuous. Let $u_{0n} \to u_0$ in $E^s$ for $s > 3 - \frac{\alpha}{2}$ and the
	corresponding solutions $u_n$, $u$, which exist with a uniform lifespan $T=T(\| u_0 \|_{E^s})$. We have to show that $u_n \to u$ in $C([0,T],E^s)$. We have seen
	that $u_n \to u$ in $C_T L^2$. Moreover, uniform boundedness in $C([0,T],E^s)$, which was proved in the previous paragraph, implies convergence in $C_T E^\sigma$ for every $0 \leq \sigma < s$.
	For $\sigma = s$, we shall again use frequency envelopes. To carry out the argument, we consider the approximate solutions $u_n^M$ and $u^M$. We use that the initial data converge in all $E^\sigma$-norms:
	\begin{equation*}
		u_{0n}^M \to u_0^M \text{ in } E^{\sigma} \text{ for } \sigma \geq 0.
	\end{equation*}
	By the above, we have convergence of the regular solutions in all $E^\sigma$-norms:
	\begin{equation*}
		u_n^M \to u^M \text{ in } C_T E^\sigma, \quad \sigma \geq 0.
	\end{equation*}
	We use the triangle inequality to compare $u$ and $u_n$:
	\begin{equation*}
		\| u_n - u \|_{C_T E^s} \lesssim \| u_n^M - u^M \|_{C_T E^s} + \| u^M - u \|_{C_T E^s} + \| u_n^M - u_n \|_{C_T E^s}.
	\end{equation*}
	The first term goes to zero as $n \to \infty$ for fixed $M$; the second term goes to zero as $M \to \infty$. It remains to obtain uniform smallness of the third term for large $n$.
	For this purpose we consider frequency envelopes $c_J^{(n)}$ for the initial data $u_{n0}$. By construction, we can argue that
	\begin{equation*}
		\big( c_J^{(n)} \big)_{J \in 2^{\N_0}} \to (c_J)_{J \in 2^{\N_0}} \text{ in } \ell^2
	\end{equation*}
	with $(c_J)$ denoting a sharp frequency envelope for $u_0$. Hence, in terms of frequency envelopes, we find
	\begin{equation*}
		\| u_n - u \|_{C_T E^s} \lesssim \| u_n^M - u^M \|_{C_T E^s} + \big( \sum_{L \geq M} c_L^2 \big)^{\frac{1}{2}}+\big( \sum_{L \geq M} (c^{(n)}_L)^2 \big)^{\frac{1}{2}} ,
	\end{equation*}
	implying
	\begin{equation*}
		\limsup_{n \to \infty} \| u_n - u \|_{C_T E^s} \lesssim \big( \sum_{L \geq M} c_L^2 \big)^{\frac{1}{2}} \to 0 \quad (M \to \infty)
	\end{equation*}
	and the proof of continuous dependence is complete.
	$\hfill \Box$
	
	\section{Local well-posedness for fifth order KP-I equation}\label{section:lwp5}
	In the following we prove semilinear local well-posedness
	\begin{equation}
		\label{eq:FifthKPI}
		\left. \begin{split}
			\partial_t u - \partial_x^5 u - \partial_{x}^{-1} \Delta_y u &= \partial_x (u^2), \quad ((x,y),t) \in  \mathbb{D} \times \R, \\
			u(0) &= u_0 \in H^{s_1,s_2}(\D)
		\end{split} \right\}
	\end{equation}
	for $\D = \K \times \mathbb{R}^2$, $\mathbb{K} \in \{ \R; \T \}$ in anisotropic Sobolev spaces
	\begin{equation*}
		H^{s_1,s_2}(\D) = \{ \phi \in L^2 (\D) : \| \phi \|^2_{H^{s_1,s_2}} = \int |\hat{\phi}(\xi,\eta)|^2 \langle \xi \rangle^{2s_1} \langle \eta \rangle^{2s_2} d\xi d\eta < \infty \}
	\end{equation*}
	with $s_1,s_2 > \frac{1}{2}$ as stated in Theorem \ref{thm:SemilinearLWP}. We show local well-posedness via the contraction mapping principle in adapted $U^p$-/$V^p$-function spaces. Let $u \in X^{s_1,s_2}_T$ (to be defined below) be a solution to
	\begin{equation*}
		\left. \begin{split}
			\partial_t u - \partial_x^5 u - \partial_x^{-1} \Delta_y u &= f, \\
			u(0) &= u_0 \in H^{s_1,s_2}(\D).
		\end{split} \right\}
	\end{equation*}
	We show the estimates
	\begin{equation*}
		\left. \begin{split}
			& \| u \|_{X_T^{s_1,s_2}} \lesssim \| u_0 \|_{H^{s_1,s_2}} + \| 1_{[0,T]}(t) \int_0^t e^{(t-s)(\partial_x^5 + \partial_{x}^{-1} \Delta_y)} f(s) ds \|_{X_T^{s_1,s_2}}, \\
			&\| 1_{[0,T]}(t) \int_0^t e^{(t-s)(\partial_x^5 + \partial_{x}^{-1} \Delta_y)} \partial_x (u_1 u_2)(s) ds \|_{X_T^{s_1,s_2}} \lesssim T^{\alpha} \| u_1 \|_{X_T^{s_1,s_2}} \| u_2 \|_{X_T^{s_1,s_2}},
		\end{split} \right\}
	\end{equation*}
	for some $\alpha >0$, from which the result follows by standard arguments. The linear estimate is immediate from the definition of the function spaces, and its proof will be omitted. We shall focus on the bilinear estimate.
	\subsection{Function spaces}
	We shall be brief and refer to \cite{HadacHerrKoch2009,HadacHerrKoch2009II} for details.
	
	Let $\mathcal{Z}$ be the set of finite partitions $-\infty = t_0 < t_1 \leq \ldots < t_K = \infty$, and let $\mathcal{Z}_0$ be the set of finite partitions $- \infty < t_0 < t_1 < \ldots < t_K < \infty $.
	\begin{definition}
		Let $1 \leq p < \infty$. For $\{ t_k \}_{k=0}^K \in \mathcal{Z}$ and $\{ \phi_k \}_{k=0}^{K-1} \subseteq L^2(\D)$ with
		\begin{equation*}
			\sum_{k=0}^{K-1} \| \phi_k \|_{L^2}^p = 1 \text{ and } \phi_0 =0.
		\end{equation*}
		We call the function $a: \R \to L^2$ given by
		\begin{equation*}
			a = \sum_{k=1}^K 1_{[t_{k-1},t_k)} \phi_k
		\end{equation*}
		a $U^p$-atom. We define the atomic space
		\begin{equation*}
			U^p = \{ u = \sum_{j=1}^\infty \lambda_j a_j : \, a_j \text{ is a } U^p \text{-atom}, \, \text{ and } \lambda_j \in \C \text{ such that } \sum_{j=1}^\infty |\lambda_j| < \infty \}
		\end{equation*}
		with norm
		\begin{equation*}
			\| u \|_{U^p}  = \inf \{ \sum_{j=1}^\infty |\lambda_j| \, : \, u = \sum_{j=1}^\infty \lambda_j a_j, \, \lambda_j \in \C, \, a_j: U^p\text{-atom} \}.
		\end{equation*}
	\end{definition}
	
	The slightly larger spaces of bounded $p$-variation are defined as follows:
	\begin{definition}
		Let $1 \leq p < \infty$. We define $V^p$ as function spaces $v: \R \to L^2(\D)$ such that $v(\infty) := \lim_{t \to \infty} v(t) = 0$ and $v(-\infty)$ exists, for which the following norm is finite:
		\begin{equation*}
			\| v \|_{V^p} = \sup_{ \{t_k \}_{k=0}^K \in \mathcal{Z}} \big( \sum_{k=1}^K \| v(t_k) - v(t_{k-1}) \|_{L^2}^p \big)^{\frac{1}{p}}.
		\end{equation*}
	\end{definition}
	Let $(S(t) u_0) \widehat (\xi,\eta) = e^{it (\xi^5 + \eta^2/\xi)} \hat{u}_0(\xi,\eta)$ denote the linear propagator in $L^2$.
	\begin{definition}
		We define
		\begin{itemize}
			\item $U^p_S = S(\cdot) U^p$ with norm $\| u \|_{U^p_S} = \| S(-\cdot) u \|_{U^p}$,
			\item $V^p_S = S(\cdot) V^p$ with norm $\| u \|_{V^p_S} = \| S(-\cdot) u \|_{V^p}$.
		\end{itemize}
	\end{definition}
	
	The function space in which we apply the contraction mapping principle is defined as
	\begin{equation*}
		X_T^{s_1,s_2} = \{ f \in C([0,T],L^2) : \| f \|^2_{X_T^{s_1,s_2}} = \sum_{N,M} N^{2s_1} M^{2s_2} \| P_{N,M} f \|_{U^2_S}^2 < \infty \}.
	\end{equation*}
	
	We define the smooth (modulation) projection for $M \in 2^{\N}$:
	\begin{equation*}
		\widehat{Q_M^S u}(\xi,\eta,\tau) = \phi_M(\tau - \xi^5 - \eta^2/\xi) \hat{u}(\xi,\eta,\tau)
	\end{equation*}
	with $\phi \in C^\infty_c$ like in Section \ref{subsection:FunctionSpaces}.
	
	Recall the following bounds for $Q_M^S$ on $U^2_S$ and $V^2_S$:
	\begin{lemma}
		We have
		\begin{equation*}
			\begin{split}
				\| Q_M^S u \|_{L^2} &\lesssim M^{-\frac{1}{2}} \| u \|_{U^2_S}, \\
				\| Q_{\geq M}^S u \|_{L^2} &\lesssim M^{-\frac{1}{2}} \| u \|_{V^2_S}
			\end{split}
		\end{equation*}
		and uniform boundedness of $Q^S_{<M}$ and $Q^S_{\geq M}$ in $M$ on $U^p_S$ and $V^p_S$.
	\end{lemma}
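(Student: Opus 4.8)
The plan is to transfer everything to statements about Littlewood--Paley projections in the time variable acting on the (non-adapted) spaces $U^p$ and $V^p$ of $L^2(\D)$-valued functions, which is the setting of \cite{HadacHerrKoch2009, HadacHerrKoch2009II}; we then indicate the arguments. Since $S(t)$ is unitary on $L^2(\D)$ and the modulation cut-offs are, by construction, conjugates by $S$ of Fourier multipliers in $t$, one has $Q_M^S u = S(\cdot)\, P_M^{(t)}\bigl(S(-\cdot)u\bigr)$, where $P_M^{(t)}$ is the Fourier multiplier in $t$ with symbol $\tau\mapsto\phi_M(\tau)$, and likewise $Q^S_{<M}$, $Q^S_{\ge M}$ become the corresponding low- and high-modulation multipliers in $t$ conjugated by $S$. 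In particular $\|Q_M^S u\|_{L^2_{x,y,t}} = \|P_M^{(t)}(S(-\cdot)u)\|_{L^2_{x,y,t}}$, and by the very definitions of $U^p_S$ and $V^p_S$ it suffices to prove the estimates with $Q^S$ replaced by the pure time-frequency cut-offs and $U^p_S$, $V^p_S$ replaced by $U^p$, $V^p$.

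For the uniform boundedness, observe that after conjugation by $S$ the operator $Q^S_{<M}$ is convolution in $t$ with a kernel $K_M$ of the form $K_M(t) = M\,K_1(Mt)$, where $K_1\in\mathcal S(\R)$ is a fixed profile (the inverse Fourier transform of the fixed smooth cut-off defining the modulation projections), so that $\|K_M\|_{L^1} = \|K_1\|_{L^1}$ is independent of $M$. Convolution in time with a kernel $K\in L^1(\R)$ is bounded by $\|K\|_{L^1}$ on both $U^p$ and $V^p$: on $V^p$ this follows from Minkowski's inequality applied to $(K*v)(t_k)-(K*v)(t_{k-1}) = \int K(r)\bigl[v(t_k-r)-v(t_{k-1}-r)\bigr]\,dr$, using that a shift of a partition is again a partition; on $U^p$ it follows because a time-translate of a $U^p$-atom is again a $U^p$-atom, so $K*a = \int K(r)\,a(\cdot-r)\,dr$ is a superposition of atoms with total weight $\|K\|_{L^1}$. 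Writing $Q^S_{\ge M} = \mathrm{Id} - Q^S_{<M}$ then yields the uniform boundedness of $Q^S_{<M}$ and $Q^S_{\ge M}$ on $U^p_S$ and $V^p_S$.

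For the two decay estimates it is enough to prove $\|P^{(t)}_{\ge M} w\|_{L^2_{x,y,t}}\lesssim M^{-1/2}\|w\|_{V^2}$ for $w\in V^2$ with $w(+\infty)=0$; the first estimate of the lemma then follows from this together with the continuous embedding $U^2\hookrightarrow V^2$ and the elementary bound $\|Q_M^S u\|_{L^2}\lesssim\|Q^S_{\ge cM}u\|_{L^2}$ for a small fixed $c>0$. To prove the $V^2$-estimate one reduces, by a standard approximation argument (sampling $w$ along finer and finer partitions, which does not increase the $V^2$-norm; cf.\ \cite{HadacHerrKoch2009}), to the case that $w = \sum_j d_j\mathbf{1}_{[s_j,\infty)}$ is a step function with jumps $d_j\in L^2(\D)$ at points $s_j$. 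Since the symbol $\rho_{\ge M}$ of $P^{(t)}_{\ge M}$ is supported in $\{|\tau|\gtrsim M\}$, one has $\rho_{\ge M}(\tau)\widehat{w}(\tau) = \rho_{\ge M}(\tau)(i\tau)^{-1}\sum_j d_j e^{-is_j\tau}$, so that Plancherel in $t$ gives
\[
\|P^{(t)}_{\ge M} w\|_{L^2_{x,y,t}}^2 \;\sim\; \sum_{j,l}\langle d_j,d_l\rangle_{L^2(\D)}\, K_M(s_j-s_l), \qquad K_M(s) = \int \rho_{\ge M}(\tau)^2\,\tau^{-2}\,e^{-is\tau}\,d\tau,
\]
where $K_M(s) = M^{-1}\Psi(Ms)$ with $|\Psi(\zeta)|\lesssim_N\langle\zeta\rangle^{-N}$ for every $N$. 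One then groups the jump points into the intervals $[m/M,(m+1)/M)$: the rapid decay of $\Psi$ controls the off-diagonal blocks, while the diagonal blocks are estimated using the definition of $\|w\|_{V^2}$ as a supremum over partitions, applied to the partition into these very intervals, which bounds the whole sum by $M^{-1}\sum_m\|D_m\|_{L^2(\D)}^2\le M^{-1}\|w\|_{V^2}^2$, with $D_m$ the total jump of $w$ over $[m/M,(m+1)/M)$. The first estimate can alternatively be proved directly by reducing to a single $U^2$-atom $a=\sum_k\mathbf{1}_{[t_{k-1},t_k)}\phi_k$ and arguing as above with $d_j$ the jumps of $a$.

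The delicate point is precisely this last step, which is the reason these estimates require the dedicated analysis of \cite{HadacHerrKoch2009, HadacHerrKoch2009II}. On the Fourier side the naive termwise bounds fail: the points $s_j$ may cluster on scales $\ll 1/M$ and the jumps $d_j$ may be strongly correlated, in which case estimating $\sum_{j,l}\langle d_j,d_l\rangle K_M(s_j-s_l)$ by $\sum_j\|d_j\|^2$ (let alone by $(\sum_j\|d_j\|)^2$) loses powers of the number of jumps. One has to exploit that such clustering and correlation is exactly what makes $\|w\|_{V^2}$ large — which is what the coarsening to scale $1/M$ encodes — and, in the atomic formulation, that one must use the full normalization $\sum_k\|\phi_k\|_{L^2}^2=1$ of an atom rather than only the cruder control on its jump sizes. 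We refer to \cite{HadacHerrKoch2009, HadacHerrKoch2009II} for the precise execution.
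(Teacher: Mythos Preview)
The paper does not prove this lemma at all; it is stated as a known fact and the reader is implicitly referred to \cite{HadacHerrKoch2009,HadacHerrKoch2009II}. Your proposal therefore goes well beyond what the paper does: you correctly reduce to the non-adapted $U^p$/$V^p$ spaces by conjugation with $S(\cdot)$, give a clean $L^1$-kernel argument for the uniform boundedness of $Q^S_{<M}$ and $Q^S_{\ge M}$, and outline the Fourier-side computation for the decay estimates, while honestly flagging the delicate step and deferring to the same references the paper relies on.

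One remark on your sketch of the diagonal-block estimate: the claim that the diagonal contribution is $\lesssim M^{-1}\sum_m\|D_m\|_{L^2}^2$ does not follow from the pointwise bound $|K_M|\lesssim M^{-1}$ alone, since $K_M(s_j-s_l)$ is not constant within a block and the cross terms $\langle d_j,d_l\rangle$ need not be nonnegative. The actual mechanism in \cite{HadacHerrKoch2009} is somewhat different (it passes through an intermediate Besov-type norm rather than directly summing blocks), but since you explicitly defer the ``precise execution'' of this step to the references, this is not a gap in your write-up so much as a slightly misleading heuristic for where the difficulty is resolved.
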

	
	\subsection{Proof of bilinear estimate}
	
	This section is devoted to the proof of
	\begin{equation*}
		\big\| 1_{[0,T]}(t) \int_0^t S(t-s) \partial_x (u_1(s) u_2(s)) ds \big\|_{X_T^{s_1,s_2}} \lesssim T^{\alpha} \| u_1 \|_{X_T^{s_1,s_2}} \| u_2 \|_{X_T^{r_1,r_2}}
	\end{equation*}
	with $s_i \geq r_i > \frac{1}{2}$.
	
	We carry out an inhomogeneous Littlewood-Paley decomposition as well in $\xi$- as $\eta$-frequencies
	\begin{equation*}
		u_i = \sum_{N,M} P_{N,M} u_i.
	\end{equation*}
	By duality $(U^2)^{\ast} = V^{2}$ (cf. \cite[Proposition~2.8]{HadacHerrKoch2009}), it suffices to prove the following for some $\alpha >0$:
	\begin{equation}
		\label{eq:DyadicEstimate}
		\sup_{\| v \|_{V^2_S} = 1} \big| \iint P_{N,M} v \partial_x (P_{N_1,M_1} u_1 P_{N_2,M_2} u_2 ) dx dy dt \big| \lesssim T^\alpha C(\underline{N},\underline{M}) \| P_{N_1,M_1} u_1 \|_{U^2_S} \| P_{N_2,M_2} u_2 \|_{U^2_S}.
	\end{equation}
	The claim then follows from square summation for acceptable bounds of $C(\underline{N},\underline{M})$.
	
	\begin{lemma}[HighxLow-interaction]
		Let $N_1 \gg 1$, and $N_1 \gg N_2$. Then, estimate \eqref{eq:DyadicEstimate}  holds true with $C(\underline{N},\underline{M}) = N_2^{\frac{1}{2}} M_{\min}^{\frac{1}{2}}$.
	\end{lemma}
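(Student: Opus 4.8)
The plan is to prove the dyadic bound \eqref{eq:DyadicEstimate} in the regime $N_1 \gg 1$, $N_1 \gg N_2$, where necessarily $N \sim N_1$ since $\xi = \xi_1+\xi_2$; this is a $U^2$--$V^2$ bilinear estimate in the style of Hadac--Herr--Koch, the only genuine issue being that the derivative $\partial_x$ in the nonlinearity costs a factor $\sim N_1$ which must be recovered. Having already reduced to the trilinear form by $(U^2)^\ast = V^2$, I would pull out $|\partial_x|\sim N_1$ as a scalar and perform an almost-orthogonal decomposition of the transverse frequencies into cubes of side $\sim M_{\min}$, which produces the factor $M_{\min}^{\frac12}$; it then suffices to bound $N_1\,\bigl|\iint 1_{[0,T]}\,\overline{P_{N,M}v}\,(P_{N_1,M_1}u_1)(P_{N_2,M_2}u_2)\,dx\,dy\,dt\bigr|$ for $\|v\|_{V^2_S}=1$. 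The two inputs, and when convenient the output $v$, are split into modulation pieces $Q^S_{L_i}$, using the recalled bounds $\|Q^S_{L}w\|_{L^2}\lesssim L^{-\frac12}\|w\|_{U^2_S}$ and $\|Q^S_{\ge L}v\|_{L^2}\lesssim L^{-\frac12}\|v\|_{V^2_S}$, together with the transference to $U^2_S$ of the $L^4_{x,y,t}$-Strichartz estimates (Lemma \ref{lem:L4StrichartzLocalized}, Lemma \ref{lem:L4Summary}).

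The decisive dichotomy is resonant versus non-resonant, measured against the fifth-order KdV resonance, which on the High$\times$Low range satisfies $|\Omega^{(4)}_{KdV}(\xi_1,\xi_2)|\sim N_1^4 N_2$. In the non-resonant regime $|\Omega_4|\gtrsim N_1^4 N_2$, the modulation of the output or of one input is $\gtrsim N_1^4 N_2$; placing that factor in $L^2$ gains $(N_1^4 N_2)^{-\frac12}$, while a bilinear $L^4_{x,y,t}$-estimate (Lemma \ref{lem:L4Summary}) on the remaining two factors costs at most $N_2^{\frac12}(N_1 N_2)^{\frac14}M_{\min}^{0+}$; the product beats $N_1^{-1}N_2^{\frac12}M_{\min}^{\frac12}$ with a large power of $N_1$ to spare, and no modulation summation is needed. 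In the resonant regime $|\Omega_4|\ll N_1^4 N_2$ the interacting frequencies are transverse, so I would invoke the transverse $L^2$-bilinear estimate of Proposition \ref{prop:GeneralBilinear} (with $\alpha=4$, $d_2=0$ since $\D=\K\times\R^2$), which is a purely $L^2$ statement and therefore applies verbatim to the modulation pieces $Q^S_{L_i}u_i$, giving
\[
\bigl\| P_N\bigl((Q^S_{L_1}u_1)(Q^S_{L_2}u_2)\bigr)\bigr\|_{L^2}\lesssim M_{\min}^{\frac12}N_2^{\frac12}N_1^{-1}(L_1 L_2)^{\frac12}\,\|Q^S_{L_1}u_1\|_{L^2}\,\|Q^S_{L_2}u_2\|_{L^2}.
\]
The gain $N_1^{-1}=N^{-\alpha/4}$ precisely cancels the $\partial_x$-loss; pairing against the output, invoking the $Q^S$-bounds and placing the largest of the three modulations in $L^2$ so that the modulation sums converge (with at worst a harmless logarithm), one arrives at the bound $M_{\min}^{\frac12}N_2^{\frac12}\,\|P_{N_1,M_1}u_1\|_{U^2_S}\|P_{N_2,M_2}u_2\|_{U^2_S}$.

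The remaining factor $T^\alpha$ in \eqref{eq:DyadicEstimate} is recovered by H\"older's inequality in the time variable, exploiting that the Strichartz and embedding estimates entering the argument use time-integrability exponents strictly above the scaling-critical one (and, in the resonant branch, a fraction of the room afforded by $s_1,s_2>\frac12$); this step is routine and I would not spell it out. Square summation over $N,M,N_i,M_i$ in the High$\times$Low range then yields the claimed $C(\underline N,\underline M)=N_2^{\frac12}M_{\min}^{\frac12}$.

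I expect the main obstacle to be the resonant branch: there the $\partial_x$ costs a full power $N_1$ that is not compensated by any $L^4$-Strichartz gain and can only be recovered from the transversality inherent in \eqref{eq:rescon} via the bilinear estimate \eqref{eq:dyadicbilinear}; one must check both that this $L^2$-based estimate passes to the $U^2/V^2$ framework after modulation decomposition, and that the resulting modulation sums close. By contrast the non-resonant branch is comfortable and essentially mechanical, which is why High$\times$Low is the easiest of the interaction types.
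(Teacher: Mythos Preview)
Your proposal is correct and follows essentially the paper's approach: the same resonant/non-resonant dichotomy at the threshold $N_1^4 N_2$, the transverse bilinear estimate of Proposition~\ref{prop:GeneralBilinear} in the resonant branch, and high-modulation-in-$L^2$ plus two $L^4$-Strichartz estimates in the non-resonant branch, followed by interpolation for the $T^\alpha$ factor.

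One small correction: your explanation of the $T^\alpha$ factor in the resonant branch is off. It does \emph{not} come from the slack $s_1,s_2>\tfrac12$ (those exponents do not enter the dyadic estimate at all). Rather, as the paper does, the bilinear $L^2_{t,x,y}$ bound on $u_1u_2$ is upgraded to $L^1_tL^2_{x,y}$ by H\"older in $t$ on $[0,T]$, yielding $T^{1/2}$, and then paired against $v\in V^2_S\hookrightarrow L^\infty_tL^2_{x,y}$. Your alternative scheme of decomposing all three factors in modulation and placing the largest in $L^2$ recovers the correct dyadic constant but does not by itself produce a power of $T$; you would still need the H\"older-in-time step (which you do mention) to close. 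Also note that the paper treats the range $N_2\le N_1^{-4}$ separately by a direct $L^4$ argument; your scheme covers it implicitly but it is cleaner to dispose of it first.
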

	\begin{proof}
		If $N_2 = 1$, we make an additional homogeneous frequency decomposition of the low frequencies, so that $N_2 \in 2^{\Z}$.
		We estimate the frequencies with $N_2 \leq N_1^{-4}$ by the $L^4$-Strichartz estimates:
		\begin{equation*}
			\begin{split}
				\big| \iint v_{N,M} \partial_x (P_{N_1,M_1} u_1 P_{N_2,M_2} u_2) \big| &\lesssim N T^{\frac{1}{2}} \| P_{N_1,M_1} u_1 \|_{L^4} \| P_{N_2,M_2} u_2 \|_{L^4} \\
				&\lesssim T^{\frac{1}{2}} N N_1^{\frac{1}{4}} N_2^{\frac{1}{2}} \| P_{N_1,M_1} u_1 \|_{U^2_S} \| P_{N_2,M_2} u_2 \|_{U^2_S}.
			\end{split}
		\end{equation*}
		We suppose in the following that $N_2 \geq N_1^{-4}$. First, we estimate the resonant contribution when all modulations are smaller than $N_1^4 N_2$, i.e.,
		\begin{equation}
			\label{eq:ResonantSemilinear}
			\big| \iint Q_{\ll N_1^4 N_2} P_{N,M} v \partial_x (P_{N_1,M_1} Q_{\ll N_1^4 N_2} u_1 P_{N_2,M_2} Q_{\ll N_1^4 N_2} u_2) \big|.
		\end{equation}
		This is amenable to the bilinear Strichartz estimate in Proposition \ref{prop:GeneralBilinear}, which gives
		\begin{equation*}
			\begin{split}
				\eqref{eq:ResonantSemilinear} &\lesssim \| Q_{\ll N_1^4 N_2} \partial_x (P_{N_1,M_1} Q_{\ll N_1^4 N_2} u_1 P_{N_2,M_2} Q_{\ll N_1^4 N_2} u_2) \|_{L_t^1 L_x^2} \\
				&\lesssim T^{\frac{1}{2}} N_2^{\frac{1}{2}} M_{\min}^{\frac{1}{2}} \| P_{N_1,M_1} u_1 \|_{U^2_S} \| P_{N_2,M_2} u_2 \|_{U^2_S}.
			\end{split}
		\end{equation*}
		In case there is one function in the trilinear expression carrying high modulation we shall rely on the $L^4$-Strichartz estimate. Suppose that $v$ is at high modulation. We use by Lemma \ref{lem:L4Summary}
		\begin{equation}
			\label{eq:BilinearAuxI}
			\begin{split}
				&\quad \big| \iint Q_{\gtrsim N_1^4 N_2} P_{N,M} v \partial_x (P_{N_1,M_1} u_1 P_{N_2,M_2} u_2 ) \big| \\
				&\lesssim (N_1^4 N_2)^{-\frac{1}{2}} N_1 \| P_{N_1,M_1} u_1 \|_{L^4_{x,y,t}} \| P_{N_2,M_2} u_2 \|_{L^4_{x,y,t}} \\
				&\lesssim (N_1^4 N_2)^{-\frac{1}{2}} N_1 N_2^{\frac{1}{2}} N_1^{\frac{1}{2}} \| u_1 \|_{U^2_S} \| u_2 \|_{U^2_S}.
			\end{split}
		\end{equation}
		Note that depending on $N_2 \lesssim 1$, the estimate further improves, but we do not need this. To find an additional factor $T^\alpha$, we can use two $L^4$-Strichartz estimates, but estimate $Q_{\gtrsim N_1^4 N_2} P_{N,M} v$ in $L_t^\infty L_{x}^2$ by H\"older's inequality to find:
		\begin{equation}
			\label{eq:BilinearAuxII}
			\begin{split}
				&\quad \big| \iint Q_{\gtrsim N_1^4 N_2} P_{N,M} v \partial_x (P_{N_1,M_1} u_1 P_{N_2,M_2} u_2 ) \big| \\
				&\lesssim T^{\frac{1}{2}} N_1 \| P_{N_1,M_1} u_1 \|_{L^4_{x,y,t}} \| P_{N_2,M_2} u_2 \|_{L^4_{x,y,t}} \\
				&\lesssim T^{\frac{1}{2}} N_1 N_2^{\frac{1}{2}} N_1^{\frac{1}{2}} \| u_1 \|_{U^2_S} \| u_2 \|_{U^2_S}.
			\end{split}
		\end{equation}
		Interpolation of \eqref{eq:BilinearAuxI} and \eqref{eq:BilinearAuxII} yields a favorable power of $N_1^{-1}$ and a factor of $T^{\alpha}$.
		
		If $u_2$ is at high modulation, then two $L^4_{x,y,t}$-Strichartz estimates applied to $v$ and $u_1$ yield together with
		\begin{equation*}
			\begin{split}
				&\quad \big| \iint P_{N,M} v \partial_x (P_{N_1,M_1} u_1 Q_{\gtrsim N_1^4 N_2} P_{N_2,M_2} u_2 ) \big| \\
				&\lesssim N_1 \| P_{N,M} v \|_{L^4_{x,y,t}} \| P_{N_1,M_1} u_1 \|_{L^4_{x,y,t}} \| Q_{\gtrsim N_1^4 N_2} P_{N_2,M_2} u_2 \|_{L^2} \\
				&\lesssim N_1 N_2^{\frac{1}{2}} N_1^{\frac{1}{2}} ( N_1^4 N_2)^{-\frac{1}{2}} \| P_{N_1,M_1} u_1 \|_{U^2_S} \| P_{N_2,M_2} u_2 \|_{U^2_S}.
			\end{split}
		\end{equation*}
		The case with $u_1$ being estimated at high modulation and $v$ and $u_2$ via $L^4_{x,y,t}$-Strichartz estimates is better behaved because the $L^4$-estimates lose fewer derivatives. The same interpolation argument like in the proof of \eqref{eq:BilinearAuxII} allows us to gain a factor of $T^\alpha$ at dispensing a fraction of $(N_1^4 N_2)$, which is affordable. The proof is complete.
	\end{proof}
	
	\begin{lemma}[HighxHigh-High-interaction]
		Let $N \sim N_1 \sim N_2 \gg 1$. Then, we find \eqref{eq:DyadicEstimate} to hold with $C(\underline{N},\underline{M}) = N_{\min}^{\frac{1}{2}} M_{\min}^{\frac{1}{2}}$.
	\end{lemma}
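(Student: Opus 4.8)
The plan is to follow the same template as the HighxLow-interaction lemma. By the duality $(U^2)^\ast = V^2$ it suffices to establish \eqref{eq:DyadicEstimate}, i.e.\ to bound $\big|\iint P_{N,M}v\,\partial_x(P_{N_1,M_1}u_1\,P_{N_2,M_2}u_2)\,dx\,dy\,dt\big|$ for $\|v\|_{V^2_S}=1$; after the standard almost orthogonal reduction we may also assume the $\eta$-supports of the $u_i$ are localized to cubes of side $M_{\min}$. Since $N\sim N_1\sim N_2\gg1$ and the interacting $x$-frequencies $\xi_1,\xi_2$ have the same sign with $|\xi_1+\xi_2|\sim N$, the KdV resonance satisfies $|\Omega^{(4)}_{KdV}(\xi_1,\xi_2)|\sim N^5=N_1^{\alpha+1}$. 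Accordingly I would decompose into modulations and split into the region where all three modulations (of $v$, $u_1$, $u_2$, measured against the respective dispersion relations) are $\ll cN^5$ and the region where some modulation is $\gtrsim cN^5$; in the first region the convolution constraint $\tau=\tau_1+\tau_2$ forces $|\Omega_4|\leq L+L_1+L_2\ll cN^5\ll|\Omega^{(4)}_{KdV}|$ on the support of the integrand, so the resonance condition \eqref{eq:rescon} holds there and Proposition \ref{prop:GeneralBilinear} applies.

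In the resonant region I would project all three factors to $Q_{\ll cN^5}$, estimate the trilinear integral by $\|Q_{\ll cN^5}v\|_{L^\infty_tL^2_{xy}}\,\|\partial_x(P_{N_1,M_1}Q_{\ll cN^5}u_1\,P_{N_2,M_2}Q_{\ll cN^5}u_2)\|_{L^1_tL^2_{xy}}$, use $\|Q_{\ll cN^5}v\|_{L^\infty_tL^2}\lesssim\|v\|_{V^2_S}=1$ and Hölder in time $\|\cdot\|_{L^1_tL^2}\lesssim T^{1/2}\|\cdot\|_{L^2_{x,y,t}}$ over the time interval of length $\lesssim T$, and then apply the transverse bilinear estimate \eqref{eq:dyadicbilinear} (here $d_2=0$, so $N^{\alpha/2}=N^2$) to the pair of inputs dyadically decomposed in modulation. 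The decisive computation is that for each pair $L_1,L_2\ll cN^5$ the bilinear estimate yields the factor $M_{\min}^{1/2}N_{\min}^{1/2}L_{\min}^{1/2}(L_{\max}/N^{2})^{1/2}$, while the two $U^2_S$-modulation weights $\|Q_{L_i}u_i\|_{L^2}\lesssim L_i^{-1/2}\|P_{N_i,M_i}u_i\|_{U^2_S}$ contribute $L_1^{-1/2}L_2^{-1/2}=L_{\min}^{-1/2}L_{\max}^{-1/2}$, so all powers of $L_1,L_2$ cancel and one is left with $M_{\min}^{1/2}N_{\min}^{1/2}N^{-\alpha/4}=M_{\min}^{1/2}N_{\min}^{1/2}N^{-1}$; the surviving $N^{-1}$ is exactly the derivative recovered from the fifth-order dispersion in the transverse variable and absorbs the $\partial_x\sim N$. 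Summing the dyadic modulation pieces costs at most a logarithm, harmless for the square summation since $s_1,s_2>\tfrac12$ (and removable by an $\ell^2$-orthogonality argument in the output modulation). This produces the bound $T^{1/2}M_{\min}^{1/2}N_{\min}^{1/2}\|P_{N_1,M_1}u_1\|_{U^2_S}\|P_{N_2,M_2}u_2\|_{U^2_S}$, i.e.\ \eqref{eq:DyadicEstimate} with $C(\underline N,\underline M)=N_{\min}^{1/2}M_{\min}^{1/2}$.

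In the non-resonant region $L_{\max}\gtrsim cN^5$ some factor carries modulation $\gtrsim cN^5$; I would estimate that factor in $L^2$, gaining $N^{-5/2}$ from the bound $\|Q^S_{\geq M}\cdot\|_{L^2}\lesssim M^{-1/2}\|\cdot\|_{V^2_S}$ resp.\ $\|Q^S_M\cdot\|_{L^2}\lesssim M^{-1/2}\|\cdot\|_{U^2_S}$, and estimate the remaining two factors by two $L^4_{x,y,t}$-Strichartz estimates (Lemma \ref{lem:L4Summary}, equivalently Lemma \ref{lem:L4StrichartzLocalized} with $K\sim N$ lifted to $U^2_S$), each costing $\lesssim N^{1/2}$ and, with $\epsilon=0$, no power of $M$. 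Together with $\partial_x\sim N$ and the $\log N$ from summing $L$ over $[cN^5,L_{\max}]$ this is $\lesssim N^{-1/2+}$, comfortably inside $N_{\min}^{1/2}M_{\min}^{1/2}\sim N^{1/2}M_{\min}^{1/2}$; the large surplus of positive $N$-powers (and of $M_{\min}$, since $M_{\min}\geq1$) is then traded for a factor $T^{\delta}$ by interpolating this bound with the one obtained by replacing one $L^4$-Strichartz estimate with Hölder's inequality $L^\infty_tL^2_{xy}$ and $L^1_t\hookrightarrow T^{1/2}L^2_t$, exactly as in the interpolation of \eqref{eq:BilinearAuxI} and \eqref{eq:BilinearAuxII}.

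The main obstacle is the resonant region, and precisely the bookkeeping that makes the single derivative reappear: one must verify that the transverse gain $N^{-\alpha/4}$ of Proposition \ref{prop:GeneralBilinear}, which for $\alpha=4$ is a full derivative, is not eroded by the modulation summations. This is where the $U^2/V^2$ structure is essential, since the $L^{-1/2}$ modulation weights dovetail exactly with the $L^{1/2}$'s produced by the bilinear estimate; a naive $X^{s,b}$ bookkeeping would be off by a power of $N$. The non-resonant region is routine given the $L^4$-Strichartz estimates, and the extraction of $T^\delta$ in both regions is standard.
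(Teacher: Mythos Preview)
Your proposal is correct and follows essentially the same route as the paper: split into resonant ($L_{\max}\ll N^5$) and non-resonant ($L_{\max}\gtrsim N^5$) regions, handle the former with the transverse bilinear estimate and the latter with two $L^4$-Strichartz bounds plus the $V^2$ modulation gain, then interpolate to extract $T^\alpha$. The only technical difference is in the resonant part: the paper applies the bilinear Strichartz estimate directly in $U^2_S$ via the transfer principle, obtaining $T^{1/2}N(N_2/N_1^2)^{1/2}M_{\min}^{1/2}$ without a logarithm, whereas you decompose in modulation $L_1,L_2$, use $\|Q_{L_i}u_i\|_{L^2}\lesssim L_i^{-1/2}\|u_i\|_{U^2_S}$, and sum; your bookkeeping of the cancellation $L_{\min}^{1/2}L_{\max}^{1/2}\cdot L_1^{-1/2}L_2^{-1/2}=1$ is correct, and the residual $(\log N)^2$ is indeed harmless for the square summation (and, as you note, removable).
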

	\begin{proof}
		In the resonant case, when all modulations are much smaller than $N_1^4 N_2$, we can use a bilinear Strichartz estimate to obtain
		\begin{equation*}
			\begin{split}
				&\quad \sup_{\| v \|_{V^2_S} = 1} \big| \iint Q_{\ll N_1^4 N_2} P_{N,M} v \partial_x (Q_{\ll N_1^4 N_2} P_{N_1,M_1} u_1 Q_{\ll N_1^4 N_2} P_{N_2,M_2} u_2 ) \big| \\
				&\lesssim T^{\frac{1}{2}} N \| Q_{\ll N_1^4 N_2} (Q_{\ll N_1^4 N_2} P_{N_1,M_1} u_1 Q_{\ll N_1^4 N_2} P_{N_2,M_2} u_2) \|_{L^2} \\
				&\lesssim T^{\frac{1}{2}} N \big( \frac{N_2}{N_1^2} \big)^{\frac{1}{2}} M_{\min}^{\frac{1}{2}} \| P_{N_1,M_1} u_1 \|_{U^2_S} \| P_{N_2,M_2} u_2 \|_{U^2_S}.
			\end{split}
		\end{equation*}
		In the non-resonant case, when there is a function with high modulation, we can use two $L^4$-Strichartz estimates to find
		\begin{equation*}
			\begin{split}
				\sup_{\| v \|_{V^2_S} = 1} \big| \iint Q_{\gtrsim N_1^4 N_2} P_{N,M} v \partial_x (P_{N_1,M_1} u_1 P_{N_2,M_2} u_2) \big| &\lesssim N \sup_{\| v \|_{V^2_S} = 1} \| Q_{\gtrsim N_1^4 N_2} P_{N,M} v \|_{L^2_x} \| P_{N_1,M_1} u_1 \|_{L^4} \|P_{N_2,M_2} u_2 \|_{L^4} \\
				&\lesssim N (N_1^4 N_2)^{-\frac{1}{2}} N_1 \| P_{N_1,M_1} u_1 \|_{V^2_S} \| P_{N_2,M_2} u_2 \|_{V^2_S}.
			\end{split}
		\end{equation*}
		Interpolation with
		\begin{equation*}
			\begin{split}
				\sup_{\| v \|_{V^2_S} = 1} \big| \iint Q_{\gtrsim N_1^4 N_2} P_{N,M} v \partial_x (P_{N_1,M_1} u_1 P_{N_2,M_2} u_2) \big| &\lesssim N \sup_{\| v \|_{V^2_S} = 1} \| Q_{\gtrsim N_1^4 N_2} P_{N,M} v \|_{L^2_x} \| P_{N_1,M_1} u_1 \|_{L^4} \|P_{N_2,M_2} u_2 \|_{L^4} \\
				&\lesssim T^{\frac{1}{2}} N N_1 \| P_{N_1,M_1} u_1 \|_{V^2_S} \| P_{N_2,M_2} u_2 \|_{V^2_S}
			\end{split}
		\end{equation*}
		yields the required factor of $T^\alpha$. 
		Since this estimates the functions in $V^2_S$, the argument also applies when $u_i$ is at high modulation.
	\end{proof}
	
	\begin{lemma}[HighxHigh-Low-interaction]
		Let $N_1 \sim N_2 \gg 1$, and $N \ll N_1$. Then, we find \eqref{eq:DyadicEstimate} to hold with
		$C(\underline{N},\underline{M}) = N_1^{\frac{1}{2}+} M_{\min}^{\frac{1}{2}}$.
	\end{lemma}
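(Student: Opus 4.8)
The plan is to run the familiar resonant/non-resonant dichotomy, now in the $U^2_S/V^2_S$ framework. By duality ($(U^2_S)^\ast=V^2_S$) and an integration by parts it suffices to bound, for $\|v\|_{V^2_S}=1$,
\[
\mathcal T:=\iint \partial_x\big(P_{N,M}v\big)\cdot P_{N_1,M_1}u_1\cdot P_{N_2,M_2}u_2\;dx\,dy\,dt,
\]
where, since $N\ll N_1$, the derivative contributes only a factor $\lesssim N\lesssim N_1$. The resonance threshold is fixed by the KdV resonance: for $|\xi_1|\sim|\xi_2|\sim N_1\gg N\sim|\xi_1+\xi_2|$ one has $|\Omega^{(4)}_{KdV}(\xi_1,\xi_2)|\sim N_1^4N$, so set $R=cN_1^4N$ with $c$ a small absolute constant and decompose $v,u_1,u_2$ into $Q^S_{<R}$ and $Q^S_{\ge R}$ pieces, using $\sigma_v=\sigma_1+\sigma_2+\Omega_\alpha$ on the relevant Fourier support, whence $|\Omega_\alpha|\le 3\max(|\sigma_v|,|\sigma_1|,|\sigma_2|)$.

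\emph{Resonant part:} all of $v,u_1,u_2$ carry modulation $<R$. Then $|\Omega_\alpha|<3R\ll|\Omega^{(4)}_{KdV}|$, so \eqref{eq:rescon} holds, and since $d_2=0$ on $\D=\K\times\R^2$ the interacting frequencies are transverse with gap $\gtrsim N_1^{\alpha/2}=N_1^2$. As in the proof of Lemma~\ref{lemma:HHDyadic}, I would pair the dual factor $v$ with one high-frequency factor, say $u_2$, so that their combined frequency is $\sim N_1\sim\max$ — the configuration covered by Proposition~\ref{prop:GeneralBilinear} — and estimate the remaining factor $u_1$ by Cauchy–Schwarz via $\|u_1\|_{L^2_TL^2_{xy}}\lesssim T^{1/2}\|u_1\|_{L^\infty_TL^2}\lesssim T^{1/2}\|u_1\|_{U^2_S}$. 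The transverse $L^2$-bilinear estimate gains the clean factor $N_1^{-\alpha/4}=N_1^{-1}$ (the $d_2$-term being absent); one uses here its free-solution form, in which the full $L^2_\tau$ norm is kept instead of a supremum over $\tau$, so that only the transversality gain survives, and this form transfers to $U^2_S$ (respectively $V^2_S$, at the cost of an $N_1^{0+}$) by the atomic structure of these spaces. Since on the region cut out by $u_1=Q^S_{<R}u_1$ the frequencies of $v$ and $u_2$ are transverse, this yields
\[
|\mathcal T_{\mathrm{res}}|\lesssim N\cdot\big(M_{\min}^{1/2}N^{1/2}N_1^{-1}\big)\cdot T^{1/2}N_1^{0+}\lesssim T^{1/2}N_1^{1/2+}M_{\min}^{1/2},
\]
using $N\le N_1$, i.e.\ $N^{3/2}N_1^{-1}\le N_1^{1/2}$.

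\emph{Non-resonant part:} at least one of $v,u_1,u_2$ has modulation $\ge R$; place that factor into $L^2_{xyt}$ via $\|Q^S_{\ge R}w\|_{L^2}\lesssim R^{-1/2}\|w\|_{V^2_S}$ and estimate the other two by the $L^4_{xyt}$-Strichartz estimate of Lemma~\ref{lem:L4Summary} (valid on $\D=\K\times\R^2$). In every subcase the gain $R^{-1/2}=N_1^{-2}N^{-1/2}$ overwhelms the derivative loss $N$ and the $L^4$-losses: e.g.\ when a high-frequency factor is the heavy one and the remaining $L^4$-pair is one low and one high factor, one gets $\lesssim N\cdot R^{-1/2}\cdot C(N_1,N)N^{1/2}M_{\min}^{0+}\lesssim N_1^{-1/2}M_{\min}^{0+}$, well inside the claimed bound. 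The extra $T^\alpha$ is obtained, exactly as in the previous two lemmas, by interpolating this estimate against the crude one in which the heavy factor is placed in $L^\infty_TL^2_{xy}$ and Hölder in time spends a $T^{1/2}$ but no modulation gain. Square-summing the dyadic constant $C(\underline N,\underline M)=N_1^{1/2+}M_{\min}^{1/2}$ against $N^{2s_1}M^{2s_2}$ then converges for $s_1,s_2>\tfrac12$, since $N\ll N_1$ and at least two of $M,M_1,M_2$ are comparable.

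I expect the one genuine difficulty to lie in the resonant part: making the transference of the transverse $L^2$-bilinear estimate to the $U^2_S/V^2_S$ spaces precise while retaining the full transversality gain $N_1^{-\alpha/4}$ and simultaneously extracting a positive power $T^\alpha$. Once that is set up, the non-resonant estimates and the frequency/modulation bookkeeping are routine, given Lemma~\ref{lem:L4Summary}, Proposition~\ref{prop:GeneralBilinear}, and the mapping properties of $Q^S_M$.
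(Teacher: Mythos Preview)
Your approach is essentially the same as the paper's: split at the KdV resonance threshold $R\sim N_1^4N$, use the transverse bilinear estimate (Proposition~\ref{prop:GeneralBilinear}) in the resonant case by pairing the low-frequency dual factor $v$ with one high-frequency $u_j$, and handle the non-resonant case by $L^4$-Strichartz plus the high-modulation $L^2$ bound, with interpolation for the factor $T^\alpha$. The paper additionally disposes of the degenerate range $N\le N_1^{-4}$ (where $R\lesssim1$ and the modulation splitting degenerates) by two direct $L^4$-estimates, which you omit but is routine. The point you correctly flag as the genuine difficulty --- transferring the bilinear estimate to $V^2_S$ for the dual factor $v$ --- is resolved in the paper not by the atomic structure alone (which only gives $U^2_S$) but by interpolating the $U^2_S$-bilinear bound against the coarser $U^4_S$ bound coming from $L^4\times L^4$ Strichartz, via \cite[Proposition~2.20]{HadacHerrKoch2009}; this produces the logarithmic loss $\log(N^{1/4}N_1^{5/4})\lesssim N_1^{0+}$ that you anticipated.
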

	\begin{proof}
		If $N = 1$, we carry out an additional homogeneous dyadic decomposition in $N \in 2^{\Z}$.
		Let $N \leq N_1^{-4}$. In this case we simply use two $L^4$-Strichartz estimates to find
		\begin{equation*}
			\begin{split}
				\big| \iint P_{N,M} v \partial_x( P_{N_1,M_1} u_1 P_{N_2,M_2} u_2 ) \big| &\lesssim N T^{\frac{1}{2}} \| P_{N_1,M_1} u_1 \|_{L^4} \| P_{N_2,M_2} u_2 \|_{L^4} \\
				&\lesssim N T^{\frac{1}{2}} (N_1 N)^{\frac{1}{2}} \| P_{N_1,M_1} u_1 \|_{U^2_S} \| P_{N_2,M_2} u_2 \|_{U^2_S}.
			\end{split}
		\end{equation*}
		In the following let $N \geq N_1^{-4}$.
		We begin with the resonant case when all functions have modulation $\ll N_1^4 N$. This allows us to use a bilinear Strichartz estimate
		\begin{equation*}
			\begin{split}
				&\quad \big| \iint Q_{\ll N_1^4 N} P_{N,M} v \partial_x (Q_{\ll N_1^4 N} P_{N_1,M_1} u_1 Q_{\ll N_1^4 N} P_{N_2,M_2} u_2 ) \big| \\
				&\lesssim N \| Q_{\ll N_1^4 N} (Q_{\ll N_1^4 N} P_{N_1,M_1} u_1 Q_{\ll N_1^4 N} P_{N,M} v) \|_{L_t^1 L_x^2} \\
				&\lesssim N T^{\frac{1}{2}} \big( \frac{N}{N_1^2} \big)^{\frac{1}{2}} M_{\min}^{\frac{1}{2}} \| P_{N_1,M_1} u_1 \|_{U^2_S} \| P_{N_2,M_2} v \|_{U^2_S} \| P_{N_2,M_2} u_2 \|_{U^2_S}.
			\end{split}
		\end{equation*}
		But we have to estimate $v$ in $V^2_S$. For this purpose we interpolate (cf. \cite[Proposition~2.20]{HadacHerrKoch2009}) with the Strichartz estimate
		\begin{equation*}
			\| P_{N_1,M_1} u_1 P_{N,M} v \|_{L^2} \lesssim N^{\frac{3}{4}} N_1^{\frac{1}{4}} \| P_{N_1,M_1} u_1 \|_{U^4_S} \| P_{N_2,M_2} v \|_{U^4_S}.
		\end{equation*}
		Hence, we obtain
		\begin{equation*}
			\lesssim T^{\frac{1}{2}} \big( \frac{N}{N_1^2} \big)^{\frac{1}{2}} N \log(N^{\frac{1}{4}} N_1^{\frac{5}{4}}) \| P_{N_1,M_1} u_1 \|_{V^2_S} \| P_{N_2,M_2} u_2 \|_{V^2_S} \| P_{N,M} v \|_{V^2_S}.
		\end{equation*}
		In the non-resonant case we have one modulation comparable to $N_1^4 N$. We obtain
		\begin{equation*}
			\begin{split}
				\big| \iint Q_{\gtrsim N_1^4 N} P_{N,M} v \partial_x (P_{N_1,M_1} u_1 P_{N_2,M_2} u_2) \big| &\lesssim \| Q_{\gtrsim N_1^4 N} \partial_x P_{N,M} v \|_{L^2} \| P_{N_1,M_1} u_1 \|_{L^4} \| P_{N_2,M_2} u_2 \|_{L^4} \\
				&\lesssim N (N_1^4 N)^{-\frac{1}{2}} (N_1 N_2)^{\frac{1}{2}} \| P_{N_1,M_1} u_1 \|_{V^2_S} \| P_{N_2,M_2} u_2 \|_{V^2_S}.
			\end{split}
		\end{equation*}
		An interpolation argument like above yields an additional factor of $T^\alpha$.
		The estimates with $u_i$ at high modulation are better behaved because of improved $L^4$-Strichartz estimates at low frequencies (applied to $P_{N,M} v$).
	\end{proof}
	
	\begin{lemma}[LowxLow-interaction]
		Let $N,N_1,N_2 \lesssim 1$. Then, we find \eqref{eq:DyadicEstimate} to hold with $C(\underline{N},\underline{M}) =1 $.
	\end{lemma}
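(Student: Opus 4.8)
The plan is to exploit that all $x$- and $y$-frequencies are bounded, so that neither the derivative $\partial_x$ nor the Strichartz losses cost anything; the only genuine point is to extract a power $T^\alpha$. First I would rewrite the left-hand side of \eqref{eq:DyadicEstimate} via Plancherel as
\begin{equation*}
\Big| \iint_{[0,T] \times \D} P_{N,M} v \cdot \partial_x\big(P_{N_1,M_1} u_1 \, P_{N_2,M_2} u_2\big) \, dx\, dy\, dt \Big|,
\end{equation*}
and then apply the Cauchy--Schwarz inequality in $(x,y,t)$ together with the bound $|\xi| \lesssim N \lesssim 1$ for the symbol of $\partial_x$, followed by a H\"older inequality, to dominate this by $\| P_{N,M} v \|_{L^2_{x,y,t}([0,T]\times\D)} \, \| P_{N_1,M_1} u_1 \|_{L^4_{x,y,t}} \, \| P_{N_2,M_2} u_2 \|_{L^4_{x,y,t}}$.

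For the factor involving $v$ I would use H\"older in time over $[0,T]$, the embedding $V^2_S \hookrightarrow L^\infty_t L^2_{x,y}$, and boundedness of the Littlewood--Paley projector on $V^2_S$ to obtain $\| P_{N,M} v \|_{L^2_{x,y,t}([0,T])} \lesssim T^{1/2} \| v \|_{V^2_S} = T^{1/2}$. For the two $L^4$-factors I would invoke the $L^4$-Strichartz estimate at bounded frequencies (Corollary \ref{cor:L4StrichartzSmallFrequencies} with $\lambda = 1$, $M \lesssim 1$) together with the transference principle for $U^p_S$-spaces (cf.\ \cite{HadacHerrKoch2009}), which yields $\| P_{N_i,M_i} u_i \|_{L^4_{x,y,t}} \lesssim \| P_{N_i,M_i} u_i \|_{U^2_S}$ since $U^2_S \hookrightarrow U^4_S$ and all frequency supports are $O(1)$. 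Combining these bounds with $N \lesssim 1$ gives \eqref{eq:DyadicEstimate} with $C(\underline{N},\underline{M}) = 1$ and $\alpha = 1/2$.

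I do not expect any genuine obstacle here: since all frequencies are $O(1)$ there is no derivative loss to recover, so neither the resonance analysis of Proposition \ref{prop:GeneralBilinear} nor any splitting into resonant and non-resonant interactions is needed, and the crude estimate via two $L^4$-Strichartz norms and one $L^\infty_t L^2$-bound suffices. The only minor point to verify is that the $L^4$-Strichartz estimate and the projector bounds are uniform over the finitely many bounded dyadic blocks occurring here, which is immediate from Corollary \ref{cor:L4StrichartzSmallFrequencies} and from $M_i \geq 1$ in the inhomogeneous $\eta$-decomposition.
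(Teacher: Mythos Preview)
Your proposal is correct and follows essentially the same route as the paper: estimate $P_{N,M}v$ in $L^2_{x,y,t}([0,T])$ via $V^2_S \hookrightarrow L^\infty_t L^2_{x,y}$ to gain the factor $T^{1/2}$, and bound the two remaining factors by $L^4$-Strichartz estimates transferred to $U^2_S$. One small clarification: the hypothesis of this lemma is only $N,N_1,N_2 \lesssim 1$, so the $\eta$-frequencies $M_i$ are not a priori bounded; your reference to ``$M \lesssim 1$'' in Corollary~\ref{cor:L4StrichartzSmallFrequencies} is therefore not quite the right justification. What actually makes the $L^4$-bound uniform in $M_i$ here is that in Section~\ref{section:lwp5} one works on $\D = \K \times \R^2$, so $d_2 = 0$ and hence $\epsilon = 0$ in Lemma~\ref{lem:L4StrichartzLocalized} and Corollary~\ref{cor:L4StrichartzSmallFrequencies}, giving $C(N_i,M_i) \lesssim 1$ for all $M_i$ once $N_i \lesssim 1$.
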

	\begin{proof}
		By two $L^4$-Strichartz estimates we find
		\begin{equation*}
			\sup_{\| v \|_{V^2_S} =1} \big| \iint P_{N,M} v \partial_x (P_{N_1,M_1} u_1 P_{N_2,M_2} u_2) \big| \lesssim T^{\frac{1}{2}} \| P_{N_1,M_1} u_1 \|_{L^4} \| P_{N_2,M_2} u_2 \|_{L^4} \lesssim T^{\frac{1}{2}} \prod_{i=1}^2 \| u_i \|_{U^2_S},
		\end{equation*}
		which is good enough.
	\end{proof}

	\appendix
	
	\section{Anisotropic Leibniz rule on mixed domains}
\label{sec:anisotropicLeibniz}	
		This section is devoted to the proof of Proposition \ref{prop:AnisotropicLeibnizRule}. We use the following fractional Leibniz rule, which is based on the well-known Kato-Ponce estimate (cf. \cite{Grafakos2012,GrafakosTorres2002}) for $\alpha \geq 0$, $\frac{1}{2} = \frac{1}{p_1} + \frac{1}{q_1} = \frac{1}{p_2} + \frac{1}{q_2}$ for $1 < p_1,p_2,q_1,q_2 \leq \infty$:
	\begin{equation}
		\label{eq:KatoPonceEstimate}
		\| \langle \partial \rangle^{\alpha} (fg) \|_{L^2(\R^d)} \lesssim \| \langle \partial \rangle^\alpha f \|_{L^{p_1}(\R^d)} \| g \|_{L^{q_1}(\R^d)} + \| f \|_{L^{p_2}(\R^d)} \| \langle \partial \rangle^\alpha g \|_{L^{q_2}(\R^d)}.
	\end{equation}

						We also need the above on $\R^{d_1} \times \T^{d_2}$. It turns out we can transfer the above estimate for $1<p_1,q_2 < \infty$ in a straightforward manner by an extension operator. Let $\varphi \in C^\infty_c(\R^d;\R_{\geq 0})$ denote a radially decreasing function with $\varphi(x) = 1$ for $|x| \leq 10$ and $\varphi(x) = 0$ for $|x| \geq 15$. We denote the extension $\tilde{f} = f \varphi \in L^2(\R^d)$ of $f \in L^2(\T^d)$. We have for $k \in \N$ clearly
	\begin{equation*}
		\| \langle \partial \rangle^k f \|_{L^2(\T^d)} \lesssim \| \langle \partial \rangle^k \tilde{f} \|_{L^2(\R^d)} \lesssim \| \langle \partial \rangle^k f \|_{L^2(\T^d)}.
	\end{equation*}
	By interpolation, the equivalence also holds for $\langle \partial \rangle^s$, and by obvious modification of the extension operator also holds on $\R^{d_1} \times \T^{d_2}$. So, let $f,g:\T^d \to \C$, and let $\tilde{f}$, $\tilde{g}:\R^d \to \C$ denote their extensions. Then we obtain
	\begin{equation*}
		\| \langle \partial \rangle^\alpha (fg) \|_{L^2(\T^d)} \lesssim \| \langle \partial \rangle^\alpha (\tilde{f} \tilde{g}) \|_{L^2(\R^d)} \lesssim \| \langle \partial \rangle^\alpha \tilde{f} \|_{L^{p_1}(\R^d)} \| \tilde{g} \|_{L^{p_2}(\R^d)} + \| \tilde{f} \|_{L^{q_1}(\R^d)} \| \langle \partial \rangle^\alpha \tilde{g} \|_{L^{q_2}(\R^d)}.
	\end{equation*}
	Clearly,
	\begin{equation*}
		\| \tilde{g} \|_{L^{p_2}(\R^d)} \lesssim \| g \|_{L^{p_2}(\T^d)}, \quad \| \tilde{f} \|_{L^{q_1}(\R^d)} \lesssim \| f \|_{L^{q_1}(\T^d)}.
	\end{equation*}
	To argue that
	\begin{equation}
		\label{eq:ContinuityExtensionDerivatives}
		\| \langle \partial \rangle^\alpha \tilde{f} \|_{L^p(\R^d)} \lesssim \| \langle \partial \rangle^\alpha f \|_{L^p(\T^d)}
	\end{equation}
	for $1<p<\infty$, we again use interpolation such that it suffices to show \eqref{eq:ContinuityExtensionDerivatives} for $\alpha = k \in \N$. We use the characterization of $L^p$-based Sobolev spaces
	\begin{equation*}
		\| \langle \partial \rangle^k \tilde{f} \|_{L^p(\R^d)} \simeq \| \tilde{f} \|_{L^p(\R^d)} + \sum_{|\alpha| = k} \| \partial^\alpha \tilde{f} \|_{L^p(\R^d)}
	\end{equation*}
	from which \eqref{eq:ContinuityExtensionDerivatives} is immediate by the product rule. We have proved the following:
	\begin{proposition}[Fractional Leibniz rule on cylinders]
		Let $d_1,d_2 \in \N_0$ with $d_1 + d_2 \geq 1$, $\alpha \geq 0$, $\frac{1}{2} = \frac{1}{p_1} + \frac{1}{q_1} = \frac{1}{p_2} + \frac{1}{q_2}$ for $1 < p_1,p_2,q_1,q_2 \leq \infty$ and $1<p_1,q_2 < \infty$:
		\begin{equation}
			\label{eq:KatoPonceEstimateII}
			\| \langle \partial \rangle^{\alpha} (fg) \|_{L^2(\R^{d_1} \times \T^{d_2})} \lesssim \| \langle \partial \rangle^\alpha f \|_{L^{p_1}(\R^{d_1} \times \T^{d_2})} \| g \|_{L^{q_1}(\R^{d_1} \times \T^{d_2})} + \| f \|_{L^{p_2}(\R^{d_1} \times \T^{d_2})} \| \langle \partial \rangle^\alpha g \|_{L^{q_2}(\R^{d_1} \times \T^{d_2})}.
		\end{equation}
	\end{proposition}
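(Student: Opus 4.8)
The plan is to transplant the estimate to full Euclidean space, where the Kato--Ponce inequality \eqref{eq:KatoPonceEstimate} is available. Writing $\T = \R/2\pi\Z$ and identifying functions on $\R^{d_1}\times\T^{d_2}$ with $2\pi$-periodic functions on $\R^{d_1}\times\R^{d_2}$, I would fix a radial cutoff $\varphi\in C^\infty_c(\R^{d_2};\R_{\ge0})$ with $\varphi\equiv1$ on a ball $B=\{|y|\le10\}$ containing a fundamental period cube $Q$ of $\T^{d_2}$ and $\varphi\equiv0$ for $|y|\ge15$, and define the extension $\tilde h(x,y)=\varphi(y)h(x,y)$. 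The argument then reduces to three transplantation statements: (i) the two-sided equivalence $\|\langle\partial\rangle^s h\|_{L^2(\R^{d_1}\times\T^{d_2})}\sim\|\langle\partial\rangle^s\tilde h\|_{L^2(\R^{d_1}\times\R^{d_2})}$ for $s\ge0$; (ii) the one-sided bound $\|\langle\partial\rangle^s\tilde h\|_{L^p(\R^{d_1}\times\R^{d_2})}\lesssim\|\langle\partial\rangle^s h\|_{L^p(\R^{d_1}\times\T^{d_2})}$ for $1<p<\infty$; and (iii) the trivial $\|\tilde h\|_{L^p(\R^{d_1}\times\R^{d_2})}\lesssim\|h\|_{L^p(\R^{d_1}\times\T^{d_2})}$ for $1<p\le\infty$, immediate from $|\tilde h|\le|h|$ and periodicity.

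I would establish (i) and (ii) first at integer order $s=k$. One uses the classical characterization $\|\langle\partial\rangle^k g\|_{L^p}\sim\|g\|_{L^p}+\sum_{|\gamma|=k}\|\partial^\gamma g\|_{L^p}$ (on $\R^{d_1}\times\R^{d_2}$, and, with periodic derivatives, on $\R^{d_1}\times\T^{d_2}$), which is just Plancherel for $p=2$ and the Mikhlin multiplier theorem for $1<p<\infty$. Since $\partial^\gamma$ is local and $\varphi\equiv1$ on $Q$, the $W^{k,p}$-norm of $h$ over $Q$ equals that of $\tilde h$ over $Q$, hence is $\lesssim\|\tilde h\|_{W^{k,p}(\R^{d_1}\times\R^{d_2})}$; conversely, by the product rule $\partial^\gamma\tilde h$ is a finite sum of terms $(\partial^\beta\varphi)(\partial^{\gamma-\beta}h)$ supported in a bounded number of translates of $Q$, giving $\|\tilde h\|_{W^{k,p}(\R^{d_1}\times\R^{d_2})}\lesssim\|h\|_{W^{k,p}(\R^{d_1}\times\T^{d_2})}$. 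Together these give (i) for $p=2$ and (ii) for integer $k$; the fractional cases follow by complex interpolation between consecutive integers, identifying the Bessel-potential spaces on the cylinder with the resulting interpolation scale.

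Granting (i)--(iii), \eqref{eq:KatoPonceEstimateII} is short: since $\tilde f\tilde g=\varphi^2 fg$ and $\varphi^2$ is again a smooth cutoff that is $\equiv1$ on $Q$, the argument for (i) (which uses nothing about $\varphi$ beyond these properties) yields $\|\langle\partial\rangle^\alpha(fg)\|_{L^2(\R^{d_1}\times\T^{d_2})}\lesssim\|\langle\partial\rangle^\alpha(\tilde f\tilde g)\|_{L^2(\R^{d_1}\times\R^{d_2})}$; I would then apply the Euclidean estimate \eqref{eq:KatoPonceEstimate} to $\tilde f\tilde g$ and bound the four resulting factors, the two carrying a derivative by (ii) -- which is exactly why the hypotheses demand $1<p_1,q_2<\infty$ -- and the two without by (iii). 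The case $d_2=0$ is trivial and the case $d_1=0$ is identical with $\varphi$ depending on all of the (periodic) variables. I expect the only point needing genuine care to be the interpolation step passing from integer to fractional order in (i) and (ii); everything else is bookkeeping with the cutoff and the Leibniz rule.
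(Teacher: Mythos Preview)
Your proposal is correct and follows essentially the same route as the paper: extend via a smooth cutoff in the periodic variables, establish the norm comparisons at integer order from the $W^{k,p}$ characterization and the product rule, interpolate to fractional order, then apply the Euclidean Kato--Ponce estimate and pull the four factors back. Your observation that $\tilde f\tilde g=\varphi^2 fg$ with $\varphi^2$ again an admissible cutoff, and your explicit identification of why the constraint $1<p_1,q_2<\infty$ enters (namely, step (ii) needs the Mikhlin-based equivalence), are exactly the points the paper uses, stated perhaps a bit more carefully.
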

	From the above we derive the following anisotropic version. Let $\D = \K_1 \times \K_2 \times \K_3$ and write $(x,y) \in \K_1 \times (\K_2 \times \K_3 ) = \D$. Dual variables will be denoted by $\xi$ and $\eta$. We define Fourier multipliers $\langle \partial_x \rangle^\alpha$ and $\langle \partial_y \rangle^\beta$ on $\D$ by
	\begin{equation*}
		(\langle \partial_x \rangle^\alpha f) \widehat (\xi,\eta) = \langle \xi \rangle^\alpha \hat{f}(\xi), \quad (\langle \partial_y \rangle^\beta f) \widehat(\xi,\eta) = \langle \eta \rangle^\beta \hat{f}(\xi,\eta).
	\end{equation*}
	We are ready for the proof of Proposition \ref{prop:AnisotropicLeibnizRuleAppendix}, whose statement is repeated for convenience:
	\begin{proposition}
		\label{prop:AnisotropicLeibnizRuleAppendix}
		Let $\D = \K_1 \times \K_2 \times \K_3$ with $\K_i \in \{ \R; \T \}$, $\alpha,\beta \geq 0$, $\delta > 0$, and $\frac{1}{2} = \frac{1}{p} + \frac{1}{q}$. Then the following estimate holds:
		\begin{equation}
			\label{eq:AnisotropicFractionalLeibnizAppendix}
			\| \langle \partial_x \rangle^\alpha \langle \partial_y \rangle^\beta (u^2) \|_{L^2_{xy}(\D)} \lesssim \| \langle \partial_x \rangle^\alpha \langle \partial_y \rangle^\beta u \|_{L^2_{xy}(\D)} \| u \|_{L^\infty_{xy}(\D)} + \| \langle \partial_x \rangle^{\alpha + \delta} u \|_{L^2_{x} L_y^p(\D)} \| \langle \partial_y \rangle^\beta u \|_{L^\infty_{x} L_y^q(\D)}.
		\end{equation}
	\end{proposition}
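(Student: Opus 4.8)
The strategy is to reduce the anisotropic estimate to the already-established Kato--Ponce-type estimate \eqref{eq:KatoPonceEstimateII} on cylinders by handling the $x$- and $y$-variables in two stages, treating $\langle \partial_y \rangle^\beta$ with the Leibniz rule in the $y$-variables first (for fixed $x$) and then $\langle \partial_x \rangle^\alpha$ in the $x$-variable. First I would freeze $x$ and apply the fractional Leibniz rule \eqref{eq:KatoPonceEstimateII} on $\K_2 \times \K_3$ to the square $u(x,\cdot)^2$, with the splitting $\tfrac12 = \tfrac1\infty + \tfrac12$ on the first term and $\tfrac12 = \tfrac1p + \tfrac1q$ on the second, obtaining pointwise in $x$:
\begin{equation*}
\| \langle \partial_y \rangle^\beta (u(x,\cdot)^2) \|_{L^2_y} \lesssim \| \langle \partial_y \rangle^\beta u(x,\cdot) \|_{L^2_y} \| u(x,\cdot) \|_{L^\infty_y} + \| u(x,\cdot) \|_{L^p_y} \| \langle \partial_y \rangle^\beta u(x,\cdot) \|_{L^q_y}.
\end{equation*}
Then I would apply $\langle \partial_x \rangle^\alpha$ and take $L^2_x$, and again invoke a fractional Leibniz rule, now in the $x$-variable, to distribute $\langle \partial_x \rangle^\alpha$ over the products appearing above.

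The main technical point is that the two ``factors'' in each product are no longer scalar functions but elements of function spaces in $y$ (e.g.\ $x \mapsto \| \langle \partial_y\rangle^\beta u(x,\cdot)\|_{L^2_y}$), so one must use the Kato--Ponce estimate in the Banach-space-valued / mixed-norm form, or, more elementarily, bound $\langle \partial_x \rangle^\alpha$ of a product $F(x,\cdot) G(x,\cdot)$ by $\|\langle\partial_x\rangle^\alpha F\|_{L^2_x(L^2_y)}\|G\|_{L^\infty_x(L^\infty_y)} + \|F\|_{L^2_x(L^p_y)}\| \langle \partial_x\rangle^\alpha G\|_{L^\infty_{x}(L^q_y)}$ via the vector-valued Coifman--Meyer/Kato--Ponce theorem (which holds for $UMD$-valued, in particular $L^r_y$-valued, functions). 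To control the second resulting term I would pay the extra $\delta$ derivative: Sobolev embedding $\langle \partial_x \rangle^\delta : L^2_x \hookrightarrow L^\infty_x$ (in the $x$-variable, after freezing $y$ or in mixed norm) converts $\| \langle \partial_x \rangle^\alpha G \|_{L^\infty_x L^q_y}$ into $\| \langle \partial_x \rangle^{\alpha+\delta} G \|_{L^2_x L^q_y}$, which is exactly the shape $\| \langle \partial_x \rangle^{\alpha+\delta} u \|_{L^2_x L^p_y} \| \langle \partial_y \rangle^\beta u \|_{L^\infty_x L^q_y}$ after relabeling the exponent roles; here one uses that $\langle\partial_x\rangle^\alpha$ commutes with $\langle\partial_y\rangle^\beta$ and that the $x$- and $y$-Littlewood--Paley projections are independent, so the mixed-norm bounds split cleanly.

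I would then collect the terms: the ``diagonal'' contributions where $\langle \partial_x \rangle^\alpha$ hits the same factor that already carries $\langle \partial_y \rangle^\beta$ combine into $\| \langle \partial_x \rangle^\alpha \langle \partial_y \rangle^\beta u \|_{L^2_{xy}} \| u \|_{L^\infty_{xy}}$ (using $\|u\|_{L^\infty_x L^\infty_y} = \|u\|_{L^\infty_{xy}}$ and Minkowski to pull the $L^2_x$ inside), while all ``off-diagonal'' contributions, where a derivative lands on a low-regularity factor, are absorbed into the second term $\| \langle \partial_x \rangle^{\alpha+\delta} u \|_{L^2_x L^p_y} \| \langle \partial_y \rangle^\beta u \|_{L^\infty_x L^q_y}$ after applying Sobolev embedding in $x$ wherever an $L^\infty_x$-norm of a derivative would otherwise appear. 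The $\delta>0$ is used precisely (and only) to afford these Sobolev embeddings in the periodic as well as Euclidean $x$-directions; no endpoint issues arise since $p,q$ are strictly between $2$ and $\infty$ by the constraint $\tfrac12 = \tfrac1p+\tfrac1q$ together with the choices made above (one may take, e.g., $p$ large and $q$ close to $2$, or symmetrically).

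The step I expect to be the main obstacle is justifying the vector-valued (mixed-norm) fractional Leibniz rule in the $x$-variable on a possibly periodic $\K_1$: the scalar transference argument via the cutoff extension operator $\tilde f = f\varphi$ used for \eqref{eq:KatoPonceEstimateII} must be upgraded to $L^r_y$-valued functions, which requires that the extension and the $L^p$-Sobolev characterization of $\langle\partial_x\rangle^k$ still hold in the Bochner setting --- this is true because $L^r_y$ is $UMD$ for $1<r<\infty$ and the relevant multiplier theorems (Mikhlin, Kato--Ponce/Coifman--Meyer) have $UMD$-valued analogues, but it needs to be invoked carefully rather than quoted as the scalar statement. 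Once that transference is in place, the remaining manipulations are bookkeeping of the finitely many product-rule terms and routine applications of Hölder and Sobolev embedding.
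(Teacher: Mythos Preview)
Your plan has a structural gap and a quantitative one.

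The structural issue: after applying the $y$-Leibniz rule pointwise in $x$, you obtain an inequality for the scalar function $x \mapsto \|\langle\partial_y\rangle^\beta(u(x,\cdot)^2)\|_{L^2_y}$. You then propose to ``apply $\langle\partial_x\rangle^\alpha$ and take $L^2_x$,'' but $\langle\partial_x\rangle^\alpha$ does not commute with taking the $L^2_y$-norm, so this does not recover $\|\langle\partial_x\rangle^\alpha\langle\partial_y\rangle^\beta(u^2)\|_{L^2_{xy}}$. You try to repair this by invoking a vector-valued Kato--Ponce for a product $F\cdot G$, but $\langle\partial_y\rangle^\beta(u^2)$ is not itself a product of $L^r_y$-valued functions of $x$; writing the left side as $\|\langle\partial_x\rangle^\alpha(u\cdot u)\|_{L^2_x(H^\beta_y)}$ requires a bilinear Leibniz estimate whose target $H^\beta_y$ is not an algebra, so one would need Coifman--Meyer theory for bilinear maps into $H^\beta_y$ with two different domain spaces --- considerably more machinery than you indicate, and not what ``UMD-valued Kato--Ponce'' delivers off the shelf.

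The quantitative issue: your use of the embedding $H^\delta_x \hookrightarrow L^\infty_x$ to convert $\|\langle\partial_x\rangle^\alpha G\|_{L^\infty_x L^q_y}$ into $\|\langle\partial_x\rangle^{\alpha+\delta} G\|_{L^2_x L^q_y}$ requires $\delta > \tfrac12$ in one dimension. The proposition asserts the estimate for every $\delta > 0$, so this step fails for small $\delta$.

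The paper avoids both problems by reversing the order: it decomposes in $x$-frequencies first via a paraproduct $P_N(u^2) = P_N(P_{\sim N}u\cdot P_{\ll N}u) + (\text{High--High})$. On each piece $\langle\partial_x\rangle^\alpha P_N \sim N^\alpha P_N$ is just multiplication by a constant, so one may apply the \emph{scalar} $y$-Leibniz rule \eqref{eq:KatoPonceEstimateII} pointwise in $x$ and then H\"older in $x$ --- no vector-valued theory needed. The $\delta$ enters not through Sobolev embedding but as the decay factor $N^{-\delta}$ obtained by writing $N^\alpha = N^{-\delta}\cdot N^{\alpha+\delta}$ on the off-diagonal term, which gives $\ell^2$-summability in $N$ for any $\delta > 0$.
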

	\begin{proof}
		We use a paraproduct decomposition for the $x$ frequencies. Let $P_N$ denote dyadic Littlewood-Paley projections in the $x$-frequencies. First note that by the usual fractional Leibniz rule \eqref{eq:KatoPonceEstimate} we find for the low frequencies:
		\begin{equation*}
			\| \langle \partial_x \rangle^\alpha \langle \partial_y \rangle^\beta P_{\lesssim 1} (u^2) \|_{L^2_{xy}} \lesssim \| \langle \partial_y \rangle^\beta (u^2) \|_{L^2_{xy}} \lesssim \| \langle \partial_y \rangle^\beta u \|_{L^2_x L^2_{y}} \| u \|_{L^\infty_{xy}}.
		\end{equation*}
		By Littlewood-Paley theory, we write
		\begin{equation*}
			\| P_{\gtrsim 1} u \|^2_{L^2_{xy}} = \sum_{N \in 2^{\N_0}} \| P_N u \|_{L^2_{xy}}^2.
		\end{equation*}

							For $N \in 2^{\N_0}$ we estimate the High-Low interaction as follows:
		\begin{equation*}
			\begin{split}
				\| \langle \partial_x \rangle^\alpha \langle \partial_y \rangle^\beta P_N (P_N u P_{\ll N} u ) \|_{L^2_{xy}}
				&\sim N^\alpha \big\| \| \langle \partial_y \rangle^\beta (P_{\sim N} u P_{\ll N} u) \|_{L^2_y} \big\|_{L^2_x} \\
				&\lesssim N^\alpha \big\| \| \langle \partial_y \rangle^\beta P_{\sim N} u \|_{L^2_y} \| P_{\ll N} u \|_{L^\infty_{y}} + \| P_{\sim N} u \|_{L_y^p} \| \langle \partial_y \rangle^\beta P_{\ll N} u \|_{L_y^q} \big\|_{L^2_x} \\
				&\lesssim \| \langle \partial_x \rangle^\alpha \langle \partial_y \rangle^\beta P_N u \|_{L^2_{xy}} \| P_{\ll N} u \|_{L^\infty_{xy}} \\
				&\quad + N^{- \delta} \| \langle \partial_x \rangle^{\alpha + \delta} P_N u \|_{L_x^2 L^p_{y}} \| P_{\ll N} \langle \partial_y \rangle^\beta u \|_{L_{x}^\infty L_y^q}.
			\end{split}
		\end{equation*}
		The claim follows from square summation. The High-High-Low interaction can be estimated likewise.
	\end{proof}
	\begin{remark}
		The $\delta$-derivative loss can likely be removed by modifying the arguments from \cite{Grafakos2012} to the anisotropic setting, but we do not need this in the following.
	\end{remark}

	\section{Semilinear ill-posedness issues}\label{sec:IllPosed}
	In this section, we prove that the Cauchy problem for \eqref{eq:FKPI} is semilinearly ill-posed for initial data in $H^{s_1,s_2}(\R^3)$ for any $(s_1,s_2) \in \R^2$ and $\alpha < \frac{15}{7}$, namely we prove that the flow-map for \eqref{eq:FKPI} cannot be $C^2$-differentiable at the origin. For similar results for the two-dimensional equation on $\R^2$, we refer to \cite{MST, LPS, SS22}.

	\begin{theorem}\label{thm:illposed}
		Let $\alpha<\frac{15}{7}$, $\bar{s} \in \R^2$ and $\D=\R^3$. Then there exists no $T>0$ such that \eqref{eq:FKPI} admits a unique local solution defined on the interval $[-T,T]$ such that the flow-map for \eqref{eq:FKPI}
		\begin{equation*}
			\Gamma: u_0 \mapsto u,
		\end{equation*}
		is $C^2$-differentiable at zero from $H^{\bar{s}}(\R^3)$ to $C([-T,T],H^{\bar{s}}(\R^3))$.
		\begin{proof}
			We consider the following Cauchy problem for $\gamma \in \R$:
			\begin{equation}\label{eq:ScaledData}
				\left\{ \begin{array}{cl}
					\partial_t u -D_x^{\alpha}\partial_{x} u - \partial_{x}^{-1} \Delta_y u &= \partial_x(u^2), \quad (x,y,t) \in \R \times \R^2 \times \R, \\
					u(0) &= \gamma \phi \in H^{s_1,s_2}(\R^3).
				\end{array} \right.
			\end{equation}
			Suppose that $u(\gamma, x,y,t)$ solves \eqref{eq:ScaledData}. Fix $T>0$ such that the map $\Gamma$ is $C^2$ and let $t\in (0,T)$. Then,
			\begin{equation}
				u(\gamma,x,y,t)= \gamma S_{\alpha}(t)\phi(x,y) + \int_0^t S_{\alpha}(t-t')u(\gamma,x,y,t')u_x(\gamma,x,y,t')dt'.
			\end{equation}
			Then \begin{equation}
				\begin{split}
					\frac{\partial u}{\partial \gamma}(0,x,y,t) &= S_{\alpha}(t)\phi(x,y)=:u_1(x,y,t)\\
					\frac{\partial^2 u}{\partial \gamma^2}(0,x,y,t) &= 2\int_0^t S_{\alpha}(t-t')u_1(x,y,t')\partial_x u_1(x,y,t')=:u_2(x,y,t).
				\end{split}
			\end{equation}
			The $C^2$ assumption enables us to write
			\begin{equation*}
				u(\gamma,x,y,t)=\gamma u_1(x,y,t)+\frac{\gamma^2}{2!} u_2(x,y,t)+ o(\gamma^2),
			\end{equation*}
			and
			\begin{equation}\label{eq:Cont}
				\|u_2 (\cdot,\cdot,t)\|_{H^{\bar{s}}(\R^3)} \lesssim \|\phi\|_{H^{\bar{s}}(\R^3)}^2.
			\end{equation}
			We show that there exists initial data $\phi$ such that \eqref{eq:Cont} fails. For $\xi \in \R, \eta\in \R^2$, define $\phi$ by its Fourier transform as follows:
			\begin{equation}
				\hat{\phi}(\xi,\eta) = |D_1|^{-\frac{1}{2}} \mathbf{1}_{D_1}(\xi,\eta) + |D_2|^{-\frac{1}{2}} N^{-s_1-(1+\frac{\alpha}{2})s_2}\mathbf{1}_{D_2}(\xi,\eta),
			\end{equation}
			where $D_1$ and $D_2$ are defined as follows and $|D_i|$ denotes the measure of the sets $D_i, i=1,2$:
			\begin{equation}\label{eq:DefinitionSets}
				\begin{split}
					D_1&:= [\beta/2,\beta]\times [-\sqrt{\alpha+1}\beta^2,\sqrt{\alpha+1}\beta^2]\times [-\beta^{\frac{1}{2}+2\delta}, \beta^{\frac{1}{2}+2\delta}],\\
					D_2&:= [N,N+\beta] \times [\sqrt{\alpha+1}N^2, \sqrt{\alpha+1}N^2+\beta^2]\times [-N^{\frac{1}{2}-\delta}, N^{\frac{1}{2}-\delta}].
				\end{split}
			\end{equation}
			Here $N,\beta,\delta>0$ are real numbers such that $N\gg 1, \beta,\delta \ll 1$ and will be chosen later.
			A simple computation gives that $\|\phi\|_{H^{\bar{s}}(\R^3)}\sim 1$. Using \cite[Lemma 4]{MST2}, we can write $u_2$ as follows:
			\begin{equation}
				u_2(x,y,t)=c\int_{\R^4}e^{i(x\xi+y.\eta+t(\xi|\xi|^{\alpha}+|\eta|^2/\xi))} \frac{\xi e^{it(\tau-\xi|\xi|^{\alpha}-\frac{|\eta|^2}{\xi})}} {\tau-\xi|\xi|^{\alpha}-\frac{|\eta|^2}{\xi}}\widehat{u_1}\ast \widehat{u_1}(\xi,\eta,\tau) d\xi d\eta d\tau.
			\end{equation}
			Using the definition of $u_1$, the expression for $\widehat{u_1}\ast \widehat{u_1}$ is given by
			\begin{equation}
				\widehat{u_1}\ast \widehat{u_1}(\xi,\eta,\tau) = \int_{\R^3}\delta\Big(\tau-\xi_1|\xi_1|^{\alpha} -\frac{|\eta_1|^2}{\xi_1}-\xi_2|\xi_2|^{\alpha}-\frac{|\eta_2|^2}{\xi_2}\Big)\hat{\phi}(\xi_1,\eta_1)\hat{\phi}(\xi_2,\eta_2)d\xi_1 d\eta_1.
			\end{equation}
			Set
			\begin{equation*}
				\Phi(x,y,t,\xi_1,\eta_1,\xi_2,\eta_2):= \xi e^{i(x\xi+y\cdot \eta +t (\xi|\xi|^{\alpha}+|\eta|^2/\xi))} \frac{e^{-it\Omega_{\alpha}(\xi_1,\eta_1,\xi_2,\eta_2)}-1}{\Omega_{\alpha}(\xi_1,\eta_1,\xi_2,\eta_2)}.
			\end{equation*}
			We split $u_2$ into three parts:
			\begin{equation*}
				u_2(x,y,t) = c(f_1(x,y,t)+f_2(x,y,t)+f_3(x,y,t)),
			\end{equation*}
			where
			\begin{equation*}
				\begin{split}
					f_1(x,y,t) &= \frac{c}{|D_1|}\int_{\substack{{(\xi_1,\eta_1)\in D_1}\\ {(\xi_2,\eta_2)\in D_1}}} \Phi(x,y,t,\xi_1,\eta_1,\xi_2,\eta_2)d\xi_1 d\eta_1 d\xi_2 d \eta_2,\\
					f_2(x,y,t)&= \frac{c}{|D_2|N^{2(s_1+(1+\frac{\alpha}{2})s_2)}}\int_{\substack{{(\xi_1,\eta_1)\in D_2}\\ { (\xi_2,\eta_2)\in D_2}}} \Phi(x,y,t,\xi_1,\eta_1,\xi_2,\eta_2)d\xi_1 d\eta_1 d\xi_2 d \eta_2,\\
					f_3(x,y,t)&=	\frac{c}{|D_1|^{\frac{1}{2}} |D_2|^{\frac{1}{2}}N^{s_1+(1+\frac{\alpha}{2})s_2}} \Big(\int_{\substack{{(\xi_1,\eta_1)\in D_1}\\{(\xi_2,\eta_2)\in D_2}}} +\int_{\substack{{(\xi_1,\eta_1)\in D_2}\\ {(\xi_2,\eta_2)\in D_1}}} \Big)\Phi(x,y,t,\xi_1,\eta_1,\xi_2,\eta_2)d\xi_1 d\eta_1 d\xi_2 d \eta_2.
				\end{split}
			\end{equation*}
			We focus on the high-low interaction viz. $f_3$. The spatial Fourier transform of the same is given by
			\begin{equation*}
				\widehat{f_3}(\xi,\eta,t)=\frac{c\xi e^{it(\xi|\xi|^{\alpha}+|\eta|^2/\xi)}}{|D_1|^{\frac{1}{2}} |D_2|^{\frac{1}{2}}N^{s_1+(1+\frac{\alpha}{2})s_2}}\Big(\int_{\substack{{(\xi_1,\eta_1)\in D_1}\\ {(\xi_2,\eta_2)\in D_2}}} +\int_{\substack{{(\xi_1,\eta_1)\in D_2}\\ {(\xi_2,\eta_2)\in D_1}}} \Big) \frac{e^{-it\Omega_{\alpha}(\xi_1,\eta_1,\xi_2,\eta_2)}-1}{\Omega_{\alpha}(\xi_1,\eta_1,\xi_2,\eta_2)}d\xi_1d\eta_1.
			\end{equation*}

			Employing \cite[Lemma 5]{MST2}, we have the following bound on the size of the resonance function.
			\begin{lemma}[Size of the resonance function]
				Let $(\xi_1,\eta_1)\in D_1, (\xi_2,\eta_2)\in D_2$ or $(\xi_1,\eta_1)\in D_1, (\xi_2,\eta_2)\in D_2$, then
				\begin{equation*}
					|\Omega_{\alpha}(\xi_1,\eta_1,\xi_2\,\eta_2)| \lesssim  N^{\alpha-1}\beta^2.
				\end{equation*}
				\begin{proof}
					We first note that we can relate the resonance functions in the two and three-dimensional cases as follows:
					\begin{equation}\label{eq:2d3dResonance}
						\Omega_{\alpha}^{3d}(\xi_1,\eta_1,\mu_1, \xi_2,\eta_2,\mu_2) = \Omega_{\alpha}^{2d}(\xi_1,\eta_1,\xi_2,\eta_2) -\frac{(\xi_1\mu_2-\xi_2\mu_1)^2}{\xi_1\xi_2(\xi_1+\xi_2)},
					\end{equation}
					where the notation is self explanatory. Consequently,
					\begin{equation*}
						|\Omega_{\alpha}^{3d}(\xi_1,\eta_1,\mu_1,\xi_2,\eta_2,\mu_2) | \lesssim \max \Big(|\Omega_{\alpha}^{2d}(\xi_1,\eta_1,\xi_2,\eta_2)|, \frac{(\xi_1\eta_2-\xi_2\eta_1)^2}{|\xi_1\xi_2(\xi_1+\xi_2)|}\Big).
					\end{equation*}
					From \cite[Theorem 1.2]{SS22}, we have the following bound:
					\begin{equation*}
						|\Omega^{2d}_{\alpha}(\xi_1,\eta_1,\xi_2,\eta_2)| \lesssim N^{\alpha-1}\beta^2.
					\end{equation*}
					We bound the second term on the right-hand side of \eqref{eq:2d3dResonance} as follows:
					\begin{equation*}
						\frac{(\xi_1\eta_2-\xi_2\eta_1)^2}{|\xi_1\xi_2(\xi_1+\xi_2|} \lesssim \max\Big( \frac{|\xi_1\eta_2|^2}{|\xi_1\xi_2(\xi_1+\xi_2)|}, \frac{|\xi_2\eta_1|^2}{|\xi_1\xi_2(\xi_1+\xi_2)|}\Big) \lesssim \max(\beta N^{-1-2\delta}, \beta^{4\delta}).
					\end{equation*}
					
					We  choose $\delta \ll 1$ so that
					\begin{equation}\label{eq:FinalBound}
						|\Omega_{\alpha}^{3d}| \lesssim N^{\alpha-1}\beta^2.
					\end{equation}
				\end{proof}
			\end{lemma}

			We continue with the proof of Theorem \ref{thm:illposed} and denote $\Omega_{\alpha}^{3d}$ by $\Omega_{\alpha}$ as there is no confusion.\\
			
			\emph{Proof (ctd.)}
			We choose $N, \beta,\delta$ such that $N^{\alpha-1}\beta^2 \sim N^{-\theta},$ i.e., $\beta \sim N^{\frac{1}{2}-\frac{\alpha}{2}-\frac{\theta}{2}}$ and  $\delta=\frac{\theta}{2}$ for $0 <\theta \ll 1$. Then,
			\begin{equation*}
				\Big | \frac{e^{it\Omega_{
							\alpha}(\xi_1,\eta_1,\xi_1,\eta_2)} - 1}{\Omega_{\alpha
					}(\xi_1,\eta_1,\xi_2,\eta_2)}\Big| = |t| + O(N^{-\epsilon}).
			\end{equation*}
			We calculate the $H^{\bar{s}}(\R^3)$ norm of $f_3(t)$:
			\begin{equation}
				\|f_3(\cdot, \cdot,t)\|_{H^{\bar{s}}(\R^3)} \gtrsim N \frac{|D_1|^{\frac{1}{2}}} {|D_2|^{\frac{1}{2}}} \beta^{\frac{3}{2}} N^{\frac{1}{4}-\frac{\delta}{2}} = N|D_1|^{\frac{1}{2}}.
			\end{equation}
			
			From \eqref{eq:Cont}, we have
			\begin{equation*}
				1\sim \|\phi\|_{H^{\bar{s}}(\R^3)} ^2 \gtrsim N\beta^{\frac{7}{4}+\delta} =N^{(\frac{15}{8}-\frac{7\alpha}{8})-},
			\end{equation*}
			which is true only if $\alpha\geqslant \frac{15}{7}$.
		\end{proof}
	\end{theorem}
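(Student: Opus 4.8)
The plan is to prove this by the standard obstruction to $C^2$-smoothness of the flow map, in the spirit of Bourgain and Molinet--Saut--Tzvetkov: we exhibit initial data $\phi$ with $\|\phi\|_{H^{\bar{s}}}\sim 1$ for which the second term in the Taylor expansion of the would-be smooth flow is unbounded in $C([-T,T];H^{\bar{s}})$. If $\Gamma$ were $C^2$ at zero, then writing $u(\gamma)=\Gamma(\gamma\phi)$ and setting $u_1=S_\alpha(t)\phi$, the quadratic coefficient
\[
u_2(t)=2\int_0^t S_\alpha(t-t')\big(u_1(t')\cdot\partial_x u_1(t')\big)\,dt'
\]
would have to satisfy $\|u_2(t)\|_{H^{\bar{s}}}\lesssim\|\phi\|_{H^{\bar{s}}}^2$ for all $t\in[0,T]$. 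First I would pass to the spatial/space-time Fourier side and write $\widehat{u_2}$ as an integral over the resonance function $\Omega_\alpha(\xi_1,\eta_1,\xi_2,\eta_2)$ with kernel $(e^{-it\Omega_\alpha}-1)/\Omega_\alpha$ (this is the three-dimensional analogue of the computation used in \cite{MST2}).

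Next I would choose $\hat\phi$ supported on two well-separated frequency boxes $D_1,D_2\subseteq\R^3$: $D_1$ concentrated at $|\xi|\sim\beta$ and $D_2$ at $|\xi|\sim N$ with $N\gg 1$, $\beta,\delta\ll 1$ to be optimized, normalized by characteristic functions and with the $D_2$-piece weighted by $N^{-s_1-(1+\alpha/2)s_2}$ so that $\|\phi\|_{H^{\bar{s}}}\sim 1$. The widths of the boxes in $\xi$ and in the two transverse variables are tuned so that, for $(\xi_1,\eta_1)\in D_1$ and $(\xi_2,\eta_2)\in D_2$, the resonance $\Omega_\alpha$ is tiny. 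Decomposing $u_2=c(f_1+f_2+f_3)$ according to whether both, neither, or exactly one of the interacting frequencies lies in $D_2$, the three pieces have pairwise disjoint output frequency supports (the $\xi$-sums $\sim\beta$, $\sim 2N$, $\sim N$ respectively), so no cancellation is possible and it suffices to bound the high--low cross term $f_3$ from below.

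The hard part will be the quantitative control of the genuinely three-dimensional resonance function on $D_1\times D_2$. Using the identity $\Omega_\alpha^{3d}=\Omega_\alpha^{2d}(\xi_1,\eta_1,\xi_2,\eta_2)-(\xi_1\mu_2-\xi_2\mu_1)^2/(\xi_1\xi_2(\xi_1+\xi_2))$ relating it to the two-dimensional (KdV-transverse) resonance plus a purely transverse correction, I would invoke the known two-dimensional bound $|\Omega_\alpha^{2d}|\lesssim N^{\alpha-1}\beta^2$ from \cite{SS22} and kill the correction term by taking the $\mu$-widths of $D_1,D_2$ to be $\beta^{1/2+2\delta}$ and $N^{1/2-\delta}$ with $\delta$ small, which yields $|\Omega_\alpha^{3d}|\lesssim N^{\alpha-1}\beta^2$ on the whole product. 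Once this is available, choosing $N^{\alpha-1}\beta^2\sim N^{-\theta}$, i.e.\ $\beta\sim N^{(1-\alpha-\theta)/2}$, and fixing $|t|\sim T$ makes $(e^{-it\Omega_\alpha}-1)/\Omega_\alpha=-it+O(N^{-\theta})$, so there is no oscillatory cancellation in $f_3$; after the (routine) measure estimates for the output support, a direct computation gives $\|f_3(t)\|_{H^{\bar{s}}}\gtrsim N|D_1|^{1/2}\sim N\beta^{7/4+\delta}\sim N^{15/8-7\alpha/8-}$. Comparing with the required bound $\lesssim\|\phi\|_{H^{\bar{s}}}^2\sim 1$ forces $\alpha\ge\frac{15}{7}$; hence for $\alpha<\frac{15}{7}$, letting $N\to\infty$ produces a contradiction. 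The remaining verifications — that $\|\phi\|_{H^{\bar{s}}}\sim 1$, that $f_1$ and $f_2$ do not interfere with $f_3$, and the bookkeeping of the exponents — are elementary.
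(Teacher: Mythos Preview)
Your proposal is correct and follows essentially the same approach as the paper's own proof: the same two-box initial data $D_1,D_2$ with the specific transverse widths $\beta^{1/2+2\delta}$ and $N^{1/2-\delta}$, the same decomposition $u_2=c(f_1+f_2+f_3)$, the same resonance bound via the identity $\Omega_\alpha^{3d}=\Omega_\alpha^{2d}-(\xi_1\mu_2-\xi_2\mu_1)^2/(\xi_1\xi_2(\xi_1+\xi_2))$ together with the 2d estimate from \cite{SS22}, and the same parameter choice $\beta\sim N^{(1-\alpha-\theta)/2}$ leading to $\|f_3\|_{H^{\bar s}}\gtrsim N\beta^{7/4+\delta}\sim N^{15/8-7\alpha/8-}$. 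The only addition is your explicit remark that $f_1,f_2,f_3$ have disjoint output $\xi$-supports, which the paper leaves implicit.
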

	
	\subsection*{Acknowledgment}
	Funded by the Deutsche Forschungsgemeinschaft (DFG, German Research Foundation) -- Project-ID 317210226 -- SFB 1283 (S.H.) and Project-ID -- 258734477 -- SFB 1173 (R.S.).


	\bibliographystyle{abbrv}

\end{document}